\theoremstyle{plain}
\newtheorem{theorem}{Theorem}[section]
\newtheorem{proposition}[theorem]{Proposition}
\newtheorem{lemma}[theorem]{Lemma}
\newtheorem{fact}[theorem]{Fact}
\newtheorem{corollary}[theorem]{Corollary}
\newtheorem{example}[theorem]{Example}
\newtheorem*{claim*}{Claim}
\theoremstyle{definition}
\newtheorem{definition}[theorem]{Definition}
\newtheorem*{notation*}{Notation}
\newtheorem{question}[theorem]{Question}
\theoremstyle{remark}
\newtheorem{remark}[theorem]{Remark}
\setlist[enumerate]{itemsep=0mm}
\newcommand{\bA}{\mathbb{A}}
\newcommand{\bB}{\mathbb{B}}
\newcommand{\NN}{\mathbb{N}}
\newcommand{\ZZ}{\mathbb{Z}}
\newcommand{\QQ}{\mathbb{Q}}
\newcommand{\RR}{\mathbb{R}}
\newcommand{\CC}{\mathbb{C}}
\newcommand{\cA}{\mathcal{A}}
\newcommand{\cB}{\mathcal{B}}
\newcommand{\cC}{\mathcal{C}}
\newcommand{\cE}{\mathcal{E}}
\newcommand{\cG}{\mathcal{G}}
\newcommand{\cCO}{\mathcal{CO}}
\newcommand{\bRO}{\mathbb{RO}}
\newcommand{\cNWD}{\mathcal{NWD}}
\newcommand{\cSBP}{\mathcal{SBP}}
\newcommand{\cM}{\mathcal{M}}
\newcommand{\cN}{\mathcal{N}}
\newcommand{\cP}{\mathcal{P}}
\newcommand{\fC}{\mathfrak{C}}
\newcommand{\fU}{\mathfrak{U}}
\newcommand{\id}{\mathrm{id}}
\newcommand{\ext}{\mathrm{ext}}
\newcommand{\bd}{\operatorname{bd}}
\newcommand{\cl}{\operatorname{cl}}
\newcommand{\st}{\operatorname{st}}
\newcommand{\dom}{\operatorname{dom}}
\newcommand{\Def}{\operatorname{Def}}
\newcommand{\End}{\operatorname{End}}
\newcommand{\Ker}{\operatorname{Ker}}
\newcommand{\Per}{\operatorname{Per}}
\newcommand{\Sym}{\operatorname{Sym}}
\newcommand{\lef}{\le_{\text{fin}}}
\newcommand{\gne}{\ge \adjustbox{lap=-6mm, raise=-0.3mm, scale=0.5}{\textbf{\textbackslash}}}
\newcommand{\symdif}{\mathbin{\Delta}}
\newcommand{\sqsubsetneq}{\mathbin{\sqsubseteq \adjustbox{lap=-5.5mm, raise=-1mm, scale=0.5}{\textbf{/}} \hspace{-1.5mm}}}
\newcommand{\sqsupsetneq}{\mathbin{\sqsupseteq \adjustbox{lap=-6mm, raise=-1mm, scale=0.5}{\textbf{/}} \hspace{-1.5mm}}}
\newcommand{\zero}{0}
\newcommand{\one}{1}
\newcommand{\<}{\langle}
\renewcommand{\>}{\rangle}
\renewcommand{\Im}{\operatorname{Im}}
\renewcommand{\le}{\leqslant}
\renewcommand{\ge}{\geqslant}
\renewcommand{\bar}{\overline}
\renewcommand{\int}{\operatorname{int}}
\newcommand{\noproof}{\pushQED{\qed} \qedhere \popQED}
\begin{document}

\begin{titlepage}
\begin{center}
\vspace{10mm}
{\huge \textsc{University of Wrocław}}

\vspace{25mm}

{\LARGE \textsc{Doctoral Thesis}}

\vspace{25mm} 

\hspace*{-3mm} \rule{12.57cm}{2pt} \\[3ex]
{\Huge \textbf{Ellis groups in model theory and strongly generic sets}} \\[1ex]
\rule{11cm}{2pt}

\vspace{25mm}

{\huge Adam Malinowski}

\vspace{25mm}

{\Large Advisor: prof. Ludomir Newelski}

\vspace{40mm}

{\Large Wrocław 2023}
\end{center}
\end{titlepage}

\begin{titlepage}

\section*{\centering Abstract}
Assume $G$ is a group and $\cA$ is an algebra of subsets of $G$ closed under left translation. We study various ways to understand the Ellis group of the $G$-flow $S(\cA)$ (the Stone space of $\cA$), with particular interest in the model-theoretic setting where $G$ is definable in a first order structure $M$ and $\cA$ consists of externally definable subsets of $G$.

In one part of the thesis we explore strongly generic sets. Maximal algebras of such sets are shown to carry enough information to retrieve the Ellis group. A subset of $G$ is strongly generic if each non-empty Boolean combination of its translates is generic. Trivial examples include what we call \emph{periodic} sets, which are unions of cosets of finite index subgroups of $G$. We give several characterizations of strongly generic sets, in particular, we relate them to almost periodic points of the flow $2^G$. For groups without a smallest finite index subgroup we show how to construct non-periodic strongly generic subsets in a systematic way. When $G$ is definable in a model $M$, a definable, strongly generic subset of $G$ will remain as such in any elementary extension of $M$ only if it is strongly generic in $G$ in an adequately uniform way. Sets satisfying this condition are called \emph{uniformly strongly generic}. We analyse a few examples of these sets in different groups.

In the second part we assume that $G$ is a topological group and consider a particular algebra of its subsets denoted $\cSBP$. It consists of subsets of $G$ that have the \emph{strong Baire property}, meaning nowhere dense boundary. We explicitly describe the Ellis group of $S(\cA)$ for an arbitrary subalgebra $\cA$ of $\cSBP$ under varying assumptions on the group $G$, including the case when $G$ is a compact topological group. We use this description to relate the Ellis groups computed for a model and its elementary extension in particular scenarios. Under some of those assumptions we also decide whether the obvious inclusions between the families of strongly generic, uniformly strongly generic and periodic sets can be reversed. These results can be applied in o-minimal structures, in which externally definable subsets are proved to have the strong Baire property. Finally, we propose a procedure of finding a maximal generic algebra in a given subalgebra of $\cSBP$ given that we succeeded in doing so while neglecting nowhere dense sets.

\vspace{6mm}

\subsection*{\centering Acknowledgements}
I would like to express deep gratitude to my advisor, Ludomir Newelski. His expert guidance, limitless engagement and incredibly dry wit have earned my truest appreciation.
\end{titlepage}

\thispagestyle{empty}
\tableofcontents
\newpage

\pagenumbering{arabic}

\section{Introduction}

The idea of applying topological dynamics in model theory is due to Newelski \cite{New09, New12Aug}, who suggested it could serve to extend the results from stable group theory to unstable context. It was later explored and broadened by several authors, see e.g. \cite{Pil13, GPP14, Jag15, CS18, KNS19}. One area of study is to investigate dynamical objects, such as the Ellis group,\footnote{The phrase \emph{Ellis group} is now a common name in model theory for an object defined in Section 2.} associated with any definable group. From a model-theoretic perspective an important question arises: to what extent, if any, are these objects preserved when computed for different models of the same theory? This dissertation aims to examine some constructions and techniques related to the issue, among which are the strongly generic sets.

Assume $G$ is a group definable in a first-order structure $M$. The space $S_G(M)$, the Stone space of the algebra of $M$-definable subsets of $G$, is naturally a $G$-flow. In case $M$ is stable, $S_G(M)$ is canonically isomorphic to the enveloping (Ellis) semigroup of itself and there is a unique minimal ideal $\cM$ of $S_G(M)$, consisting of the generic types of $G$. $\cM$ is also a group isomorphic to $G/G^0$, where $G^0$ is the connected component of $G$. However, when $M$ is not stable, $S_G(M)$ is typically not isomorphic to its Ellis semigroup and we need to consider $S_{\ext, G}(M)$ instead, which is the Stone space of the algebra of \emph{externally} definable subsets of $G$. As there is usually little relation between subsets externally definable in a model $M$ and its elementary extension $M^*$, it remains unclear whether any properties of the Ellis groups defined with respect to the corresponding $G$-flows, $S_{\ext, G}(M)$ and $S_{\ext, G}(M^*)$, are preserved.

An idea to overcome this is to express the Ellis group in terms of objects that are closer to definability, so that their properties will carry over between models. An algebra of subsets of $G$ closed under left translation is called a \emph{$G$-algebra}. It was observed in \cite{New12Feb} that to any Ellis group we can assign an \emph{image algebra}, which is a particular $G$-algebra of externally definable subsets of $G$. Its unique property is that it consists of generic sets, i.e. subsets of $G$ for which we can find finitely many translates covering $G$. It is easy to find examples of generic sets, but it is quite unusual for them to form non-trivial $G$-algebras. A set generating a $G$-algebra consisting only of generic sets (except the empty set) is thereby called \emph{strongly generic}. 

Newelski proved that any maximal $G$-algebra consisting of externally definable generic sets is an image algebra and any minimal ideal is determined by image algebras up to homeomorphism. With a little bit more work, also the algebraic structure of the Ellis group can be retrieved from strongly generic sets. We present the details in Section 2. On the other hand, strongly generic sets feature a certain kind of regularity which ideally could reveal itself uniformly across many models of the same theory. For instance, Newelski proved in \cite{New12Feb} that a strongly generic definable subset of a stable group must be \emph{periodic}, i.e. a union of cosets of a finite index subgroup. These properties lead us to believe that strongly generic sets are worth a study on their own. Ultimately this may result in a discovery of some interesting connections between Ellis groups of a model and its elementary extension.

We also consider a strenghtening of the notion of strongly generic sets, which are the uniformly strongly generic sets. These are precisely the sets that remain strongly generic in every elementary extension, taking it one step closer to the realization of the idea from the previous paragraph. However, not every strongly generic set is uniformly strongly generic. In fact, we prove that certain kinds of groups, including the compact groups, are guaranteed to have a strongly generic subset that is not uniformly strongly generic. 

Our second idea is localization. Given a large Boolean algebra $\cA$, any ultrafilter $p \in S(\cA)$ is determined by the family of its restrictions $p \cap \cA_i$ to subalgebras $\cA_i$ of $\cA$ that jointly generate it. For example, it is common in stability theory to view a type $p \in S(M)$ as a consistent collection of $\Delta$-types, where $\Delta$ is a finite set of formulas. This representation is sufficient to capture vital properties of types, such as forking or genericity, by means of their local ranks. It is easy to show, as we do in Section 4, that the Ellis group of $S_{\ext, G}(M)$ can be similarly decomposed into simpler, local parts. Thereby a potentially fruitful approach to understanding the Ellis group is to identify and study its tractable fragments with respect to this decomposition. Assuming that $G$ is a topological group, we explore one such fragment, corresponding to the algebra $\cSBP$ of all sets having what we name the \emph{strong Baire property} (abbreviated SBP), in Section 4. 

The main contribution of the thesis consists in:
\begin{itemize}[label=--]
\item Establishing a correspondence between strongly generic sets in an arbitrary group $G$ and almost periodic points in the flow $2^G$; see Theorem~\ref{thm:sgen-aper}.
\item Providing a wide range of examples of sets that are strongly generic but not periodic; see Proposition~\ref{prop:tf-sg} together with Proposition~\ref{prop:tree-nper}.
\item Providing  and analysing a few examples of uniformly strongly generic sets that are not periodic; see Examples~\ref{ex:usg-nper1}, \ref{ex:usg-nper2}, \ref{ex:usg-nper3}.
\item Discovering the algebra $\cSBP$, which allows of a full description of the Ellis group in multiple scenarios; see Theorems~\ref{thm:pf-ell}, \ref{thm:comp-ell}, \ref{thm:pre-ell}. Furthermore, identifying the boundary of the techniques used to give that description; see Subsection~\ref{ss:infty}.
\item Proving that every infinite compact Hausdorff group has a strongly generic subset with SBP that is not periodic; see Theorem~\ref{thm:comp-nusg}.
\item Showing that externally definable subsets of densely ordered o-minimal structures have SBP, making the previous results applicable to groups definable in such structures; see Proposition~\ref{prop:ext-sbp} and Corollary~\ref{cor:ext-sbp}.
\item Revealing a correspondence between maximal generic subalgebras of $\cSBP$ and maximal generic subalgebras of the algebra of regular open subsets of $G$; see Theorem~\ref{thm:max-reg}.
\end{itemize}

The dissertation is organized as follows. Section 2 serves as a reminder of known theory that is the starting point of our study. We review classical topics such as topological dynamics and Stone duality, then proceed to recount the work from \cite{New12Feb, New12Aug} that motivates our interest in strongly generic sets. The next two sections contain our original results. Section 3 explores the basic properties and characterizations of strongly generic sets and describes an organized way to construct them. It also introduces uniformly strongly generic sets and gives a few interesting examples. In Section 4 we compute the Ellis group of an arbitrary subalgebra of $\cSBP$ under varying assumptions on the group $G$. Next we explain why our approach fails for other groups. Finally, we propose a technique of obtaining maximal generic algebras in $\cSBP$ from like algebras consisting of regular open subsets of $G$.

\section{Preliminaries}

\subsection{Topological dynamics}

In this subsection we recall the relevant notions and results from topological dynamics. For a comprehensive study on the subject the reader is directed to \cite{Aus88}. Throughout the subsection $G$ is a fixed (discrete) group.

A $G$\emph{-flow} is a compact (Hausdorff) topological space $X$ on which $G$ acts by homeomorphisms.\footnote{The classical definition of a $G$-flow is more general, assuming that $G$ is a topological group and the action need be jointly continuous. However, in our usage of the notion $G$ will always be treated as discrete.} A \emph{subflow} is a subset $Y \subseteq X$ closed both topologically and under the action of $G$. For any point $p \in X$ there is a smallest subflow of $X$ containing $p$, namely $\cl(G \cdot p)$. A minimal subflow is a non-empty subflow without a non-empty proper subflow. By compactness and the Zorn's lemma, any non-empty subflow of $X$ contains a minimal subflow. A point $p \in X$ is called \emph{almost periodic} if it belongs to any minmal subflow, or equivalently, if $\cl(G \cdot p)$ is a minimal subflow. A $G$\emph{-flow morphism} is a continuous function between $G$-flows preserving the action of $G$. It is a $G$\emph{-flow isomorphism} if it has an inverse which is a morphism.

Below we state some basic facts about $G$-flows to be used later.

\begin{remark} \label{rem:gen} Let $f_1, f_2 : X \to Y$ be $G$-flow morphisms. Then the set
\[
X_0 = \{ x \in X : f_1(x) = f_2(x) \}
\]
is a subflow of $X$.
\end{remark}

\begin{fact} \label{fact:per} Assume $f : X \to Y$ is a $G$-flow morphism.
\begin{enumerate}[label=(\roman{*})]
\item If $X_0 \subseteq X$ is a subflow, then $f[X_0] \subseteq Y$ is a subflow.
\item If $Y_0 \subseteq Y$ is a subflow, then $f^{-1}[Y_0] \subseteq X$ is a subflow.
\item If $X_0 \subseteq X$ is a minimal subflow, then $f[X_0] \subseteq Y$ is a minimal subflow.
\item If $p \in X$ is almost periodic, then $f(p) \in Y$ is almost periodic.
\item If $q \in Y$ is almost periodic and $f$ is onto (or just $q \in f[X]$), then $q = f(p)$ for some almost periodic $p \in X$.
\end{enumerate}
\end{fact}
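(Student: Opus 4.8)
The plan is to prove each of the five items in turn, noting that (i), (ii), (iii) are standard and (iv), (v) follow quickly.

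First I would handle (i): given a subflow $X_0$, the image $f[X_0]$ is $G$-invariant since $f$ commutes with the $G$-action, so $g \cdot f[X_0] = f[g \cdot X_0] = f[X_0]$. Compactness of $X_0$ (being closed in the compact space $X$) and continuity of $f$ give that $f[X_0]$ is compact, hence closed in the Hausdorff space $Y$. So $f[X_0]$ is a subflow.

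For (ii): given a subflow $Y_0 \subseteq Y$, the preimage $f^{-1}[Y_0]$ is closed by continuity of $f$, and $G$-invariant because $g \cdot f^{-1}[Y_0] = f^{-1}[g \cdot Y_0] = f^{-1}[Y_0]$ (using that $g$ acts by a homeomorphism, so $g \cdot f^{-1}[Y_0] = (g^{-1})^{-1} \cdot f^{-1}[Y_0]$ equals the preimage of the shifted set). Hence it is a subflow.

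For (iii): by (i), $f[X_0]$ is a non-empty subflow of $Y$, so it contains some minimal subflow $Y_0$. Then $f^{-1}[Y_0] \cap X_0$ is a non-empty subflow of $X$ contained in $X_0$; by minimality of $X_0$ it equals $X_0$, whence $f[X_0] = f[f^{-1}[Y_0] \cap X_0] \subseteq Y_0$, so $f[X_0] = Y_0$ is minimal. For (iv): if $p$ is almost periodic then $\cl(G \cdot p)$ is a minimal subflow, so by (iii) its image $f[\cl(G \cdot p)]$ is a minimal subflow; since $f$ is continuous and $\cl(G \cdot p)$ compact, $f[\cl(G \cdot p)] = \cl(f[G \cdot p]) = \cl(G \cdot f(p))$, which is therefore minimal, i.e. $f(p)$ is almost periodic. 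Finally for (v): if $q \in f[X]$ is almost periodic, pick any $x \in X$ with $f(x) = q$; then $\cl(G \cdot x)$ is a non-empty subflow, so it contains some minimal subflow $X_0$, and any $p \in X_0$ is almost periodic. By (iv), $f(p)$ is almost periodic, and $f(p) \in f[\cl(G\cdot x)] = \cl(G \cdot q)$; but $q$ being almost periodic means $\cl(G \cdot q)$ is minimal, and $\cl(G \cdot f(p))$ is a subflow of it, forcing $\cl(G \cdot f(p)) = \cl(G \cdot q)$. Since minimal subflows are homogeneous under... — more simply, $f[X_0]$ is by (iii) a minimal subflow contained in $\cl(G \cdot q)$, hence equals $\cl(G \cdot q) \ni q$, so $q = f(p')$ for some $p' \in X_0$, which is almost periodic. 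This last point — locating the almost periodic preimage inside the right minimal subflow — is the only step requiring a moment's care; everything else is routine manipulation of closures, continuity and $G$-invariance.
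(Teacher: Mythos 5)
Your proof is correct and follows essentially the same route as the paper: items (i)--(iv) are the standard closure/invariance/minimality arguments the paper leaves as "easy," and for (v) you pull back to a non-empty subflow upstairs (the orbit closure of one preimage point, where the paper uses the full preimage $f^{-1}[\cl(G \cdot q)]$), find a minimal subflow inside it, and use minimality of $\cl(G \cdot q)$ to conclude its image is all of $\cl(G \cdot q)$, hence hits $q$. This is the same argument up to an inessential choice of the ambient subflow.
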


\begin{proof} (i) -- (iv) are easy. To prove (v), let $Y_0 = \cl(G \cdot q)$ and $X_0 = f^{-1}[Y_0]$. Then $X_0 \subseteq X$ is a non-empty subflow, so it contains a minimal subflow $X_1$. We have that $f[X_1] \subseteq Y_0$ is a subflow, so in fact $f[X_1] = Y_0$ by minimality of $Y_0$. In particular, there is $p \in X_1$ such that $f(p) = q$.
\end{proof}

\begin{remark} \label{rem:min-gen} A non-empty subflow $X_0 \subseteq X$ is minimal if and only if for each open $U \subseteq X$ satisfying $U \cap X_0 \neq \varnothing$ there exist $g_1, \ldots, g_n \in G$ such that $X_0 \subseteq g_1U \cup \ldots \cup g_n U$.
\end{remark}

\begin{proof} $({\implies})$ The set $X_0 \setminus G \cdot U$ is a proper subflow of $X_0$, so it is empty, i.e. $X_0 \subseteq G \cdot U$. The conclusion follows from compactness.

\noindent
$({\impliedby})$ Assume for contradiction that there is a non-empty proper subflow $X_1 \subseteq X_0$ and take an open set $U \subseteq X$ such that $U \cap X_0 \neq \varnothing$ and $U \cap X_1 = \varnothing$. Then $gU \cap X_1 = \varnothing$ for each $g \in G$, which contradicts the assumption.
\end{proof}

\begin{definition} \label{def:gen} Assume $X$ is a $G$-flow.
\begin{enumerate}[label=(\roman{*})]
\item A subset $U \subseteq X$ is \emph{generic} if $X = \bigcup_{i=1}^n g_i U$ for some $g_1, \ldots, g_n \in G$.
\item A point $p \in X$ is \emph{generic} if every open neighbourhood of $p$ is generic.
\end{enumerate}
\end{definition}

\begin{fact}[{\cite[Lemma 1.7, Corollary 1.9]{New09}}] \label{fact:gen-min} A generic point $p \in X$ exists if and only if there is a unique minimal subflow $X_0 \subseteq X$. In this case $X_0$ consists precisely of the generic points in $X$.
\end{fact}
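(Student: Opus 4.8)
The plan is to prove the equivalence in two directions and to read off the description of $X_0$ along the way.

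$({\implies})$ Suppose a generic point $p$ exists. The first step is to show that $p$ lies in \emph{every} minimal subflow. Indeed, if $X_1$ were a minimal subflow with $p \notin X_1$, then, $X_1$ being closed, there would be an open $U \ni p$ disjoint from $X_1$; since $X_1$ is $G$-invariant, every translate $gU$ is also disjoint from $X_1$, so no finite union of translates of $U$ could cover the non-empty set $X_1$ — contradicting that $U$ is generic. Next, fix any minimal subflow $Y$ (one exists by compactness and Zorn's lemma, as recalled above). If $Y'$ is an arbitrary minimal subflow then $p \in Y'$, hence $Y'$ contains the smallest subflow $\cl(G \cdot p)$ through $p$, and minimality forces $Y' = \cl(G \cdot p)$; the same applies to $Y$, so $Y' = Y$. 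Thus there is a unique minimal subflow, which we call $X_0$, and moreover $X_0 = \cl(G \cdot p)$.

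$({\impliedby})$ Suppose $X_0$ is the unique minimal subflow. I claim every $q \in X_0$ is a generic point; since $X_0 \neq \varnothing$ this already yields a generic point. Fix an open $U \ni q$ and consider $W = \bigcup_{g \in G} gU$, an open, $G$-invariant set meeting $X_0$ (at $q$). If $W = X$, then $\{ gU : g \in G \}$ is an open cover of the compact space $X$, so a finite subfamily covers $X$ and $U$ is generic. If $W \neq X$, then $X \setminus W$ is a non-empty subflow, hence contains a minimal subflow, which by uniqueness equals $X_0$; but then $X_0 \subseteq X \setminus W$, contradicting $q \in X_0 \cap W$. So $W = X$ and $U$ is generic; as $U$ was an arbitrary neighbourhood of $q$, the point $q$ is generic. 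Finally, for the description of $X_0$ in the case a generic point exists: by the argument in $({\impliedby})$ every point of the unique minimal subflow $X_0$ is generic, and by the argument in $({\implies})$ every generic point belongs to every minimal subflow, hence to $X_0$; therefore the set of generic points is exactly $X_0$.

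I do not expect a serious obstacle — the proof is a short exercise in compactness and the behaviour of subflows. The one point that takes a moment of thought is the reverse direction: rather than trying to cover $X$ by finitely many translates of $U$ directly, one should split on whether the full (possibly infinite) $G$-saturation $\bigcup_{g \in G} gU$ already equals $X$, using that otherwise its complement is a subflow and so must contain the unique minimal subflow — which is impossible since that minimal subflow meets $U$.
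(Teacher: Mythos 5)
Your proof is correct and follows essentially the same route as the paper: the same two ingredients appear (translates of an open set disjoint from a closed invariant minimal subflow cannot cover $X$, and the complement of the $G$-saturation $\bigcup_{g} gU$ is a subflow containing a minimal subflow, via compactness). The only difference is organizational — the paper isolates the lemma ``$p$ is generic iff $p$ lies in every minimal subflow'' and invokes disjointness of distinct minimal subflows, while you obtain uniqueness directly from $\cl(G\cdot p)$ — which is an immaterial variation.
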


\begin{proof} Since distinct minimal subflows are clearly disjoint, it suffices to show that a point $p \in X$ is generic if and only if $p$ belongs to every minimal subflow of $X$. For one direction, assume that $p \in X$ is generic and $p \notin X_0$ for some minimal subflow $X_0 \subseteq X$. The set $U = X \setminus X_0$ is an open neighbourhood of $p$ closed under the action of $G$, so $p$ is not generic, which is a contradiction. 

For the other direction, assume a non-generic point $p$ belongs to every minimal subflow and take an open neighbourhood $U$ of $p$ that is not generic. By compactness, $X_1 := X \setminus G \cdot U$ is a non-empty subflow of $X$, so it contains a minimal subflow $X_0 \subseteq X_1$. Clearly $p \notin X_0$, a contradiction.
\end{proof}

Assume $X$ is a $G$-flow. For any $g \in G$, we denote by $\pi_g : X \to X$ the homemorphism given by the action of $G$ on $X$. Regarding $\{ \pi_g : g \in G \}$ as a subset of $X^X$ endowed with the product topology, the \emph{enveloping semigroup} or \emph{Ellis semigroup} $E(X)$ is the closure of this subset. $E(X)$ equipped with function composition is a semigroup with identity $\pi_e$. It is moreover a \emph{left topological semigroup}, meaning that the semigroup operation is continuous in the first coordinate. $E(X)$ is also a $G$-flow itself with the action $g \cdot f = \pi_g \circ f$, where $g \in G, f \in E(X)$. 

More generally, assume $S$ is a left topological semigroup with identity. A subset $I \subseteq S$ is a (left) ideal, written $I \trianglelefteqslant S$, provided that $S \cdot I \subseteq I$. It is a minimal ideal, denoted $I \trianglelefteqslant_m S$, if it is minimal among the non-empty ideals. For every element $a \in S$ there is a smallest ideal containing $a$, namely $S \cdot a$. Moreover if $I \trianglelefteqslant_m S$, then $I \cdot a = I$ for each $a \in I$. When $S$ is compact, it is routine to show that any ideal contains a minimal ideal. An element $u \in S$ is called an idempotent if it satisfies $u^2 = u$.

\begin{remark} \label{rem:sub} \leavevmode
\begin{enumerate}[label=(\roman{*})]
\item For $f \in E(X)$, the smallest subflow of $E(X)$ containing $f$ equals the smallest ideal containing $f$.
\item In $E(X)$, the minimal subflows and minimal ideals coincide.
\end{enumerate}
\end{remark}
\begin{proof} (i) The function $r_f : E(X) \to E(X)$ defined by $r_f(h) = h \circ f$ is continuous, so by compactness of $E(X)$,
\[
\cl( G \cdot f ) = \cl r_f[\{ \pi_g : g \in G \}] = r_f[\cl \{ \pi_g : g \in G \}] = r_f[E(X)] = E(X) \circ f.
\]

\vspace{2mm}
\noindent
(ii) For $Y \subseteq E(X)$, we have that
\begin{align*}
Y \text{ is a minimal subflow} & \iff (\forall y \in Y) \, Y = \cl( G \cdot y ), \\
Y \text{ is a minimal ideal}& \iff (\forall y \in Y) \, Y = E(X) \circ y.
\end{align*}
The conclusion follows from (i).
\end{proof}

Below we formulate a fundamental theorem of Ellis:

\begin{theorem}[\cite{Ell69}] \label{thm:ell} Assume $S$ is a compact left topological semigroup.
\begin{enumerate}[label=(\roman{*})]
\item Given $I \trianglelefteqslant_m S$, the set $J(I)$ of idempotents in $I$ is non-empty. 
\item For every $I \trianglelefteqslant_m S$ and $u \in J(I)$, the set $uI$ is a group and $I$ is a disjoint union of the groups $uI$, where $u \in J(I)$.
\item The groups $uI$, where $I \trianglelefteqslant_m S$ and $u \in J(I)$, are all isomorphic.
\item For each $I, J \trianglelefteqslant_m S$ and $u \in J(I)$ there is $v \in J(J)$ such that $uv = v$ and $vu = u$.
\end{enumerate}
\end{theorem}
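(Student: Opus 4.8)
The plan is to deduce all four parts from the Ellis--Numakura lemma: every non-empty compact subsemigroup $T$ of a left topological semigroup contains an idempotent. I would prove this by Zorn's lemma, ordering the non-empty closed subsemigroups of $T$ by reverse inclusion (a chain has an upper bound because the intersection of a chain of non-empty compacta is again a non-empty compact subsemigroup). Fixing a minimal such subsemigroup $T_0$ and a point $a \in T_0$, the set $T_0 a$ is the image of $T_0$ under $x \mapsto xa$, which is continuous since $S$ is left topological, so $T_0 a$ is a non-empty closed subsemigroup of $T_0$, whence $T_0 a = T_0$; then $\{ b \in T_0 : ba = a \}$ is a non-empty closed subsemigroup equal to $T_0$, so $a^2 = a$. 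For (i), I would note that a minimal ideal $I$ equals $Sa$ for any $a \in I$ (being the smallest ideal containing $a$, $Sa$ lies in $I$, and minimality forces equality), hence is compact, and $II \subseteq SI \subseteq I$, so the lemma applies to $I$ itself.

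For (ii), fix $I \trianglelefteqslant_m S$ and $u \in J(I)$. First I would check that $uI = \{ x \in I : ux = x \}$, that this is a subsemigroup on which $u$ is a left identity, and that each $a \in uI$ has a left inverse in $uI$: since $Ia = I$ by the quoted property of minimal ideals, pick $b \in I$ with $ba = u$; then $ub \in uI$ and $(ub)a = u$. A semigroup with a left identity in which every element has a left inverse is a group, so $uI$ is a group with two-sided identity $u$. For the decomposition, given $x \in I$ the set $\{ s \in I : sx = x \}$ is non-empty (again because $Ix = I$), closed, and a subsemigroup, hence contains an idempotent $u$ by Ellis--Numakura, so $x = ux \in uI$; thus $I = \bigcup_{u \in J(I)} uI$. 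For disjointness I would observe that $x \in uI$ forces $xI = uI$ (the inclusion $xI \subseteq uI$ is immediate, and $uI \subseteq xI$ follows because $x$ is invertible in the group $uI$); hence if $x \in uI \cap vI$ then $uI = xI = vI$, and uniqueness of the identity of a group gives $u = v$.

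For (iv), given $I, J \trianglelefteqslant_m S$ and $u \in J(I)$, I would note that $Ju$ is a non-empty left ideal contained in $I$, hence $Ju = I$, so $\{ w \in J : wu = u \}$ is a non-empty closed subsemigroup and contains an idempotent $v_0$ by Ellis--Numakura. Then $v := uv_0$ satisfies $v \in J$, $v^2 = v$, $uv = v$ and $vu = u$ by a direct computation.

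Part (iii) is the delicate one. First I would treat idempotents $u, v$ of the \emph{same} minimal ideal $I$: from $Iu = I$ and $Iv = I$ one gets $uv = u$, $vu = v$, and $xv = x$ for all $x \in uI$, and with these the map $uI \to vI$, $x \mapsto vx$, is a bijection with inverse $y \mapsto uy$ and a homomorphism (since $(vx_1)(vx_2) = v(x_1v)x_2 = vx_1x_2$), hence an isomorphism. For arbitrary $u \in J(I)$ and $v \in J(J)$, I would use (iv) to pick $v' \in J(J)$ with $uv' = v'$ and $v'u = u$; then $v'x = x$ for $x \in uI$, and the map $uI \to v'J$, $x \mapsto xv'$, is well defined (its image lies in $Iv'$, which is a minimal left ideal inside $J$ and therefore equals $J$, and is fixed by left multiplication by $v'$), bijective (with inverse $y \mapsto yu$, using $Ju = I$), and a homomorphism, giving $uI \cong v'J$; composing with the same-ideal isomorphism $v'J \cong vJ$ finishes the proof. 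The hard part is precisely this last step: between different minimal ideals the obvious left-translation maps leave the target ideal and fail to be homomorphic, so one must first invoke (iv) to align the idempotents and then transport by \emph{right} multiplication, carefully tracking which products move between minimal left ideals --- in particular using that $La$ is again a minimal left ideal whenever $L$ is one.
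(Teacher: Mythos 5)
Your proof is correct. Note that the paper does not prove Theorem~\ref{thm:ell} at all: it is quoted from \cite{Ell69} and used as a black box, so there is no in-paper argument to compare yours against. What you give is the standard self-contained proof: the Ellis--Numakura idempotent lemma via Zorn's lemma applied to closed subsemigroups; the identification $uI=\{x\in I : ux=x\}$ combined with $Ia=I$ (and $Sa=I$, giving compactness of $I$) to obtain the group structure, the covering $I=\bigcup_{u\in J(I)}uI$, and disjointness via $x\in uI\Rightarrow xI=uI$; the explicit element $v=uv_0$ for (iv); and, for (iii), first the same-ideal isomorphism $x\mapsto vx$ and then, after aligning idempotents with (iv), the right-translation isomorphism $x\mapsto xv'$ from $uI$ to $v'J$. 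All the computations check out (in particular $uv=u$, $vu=v$ for $u,v\in J(I)$, and $Iv'=J$, $Ju=I$ between distinct minimal ideals). Two points you may want to make explicit when writing this up: the topological steps (closedness of $\{s\in I: sx=x\}$ and of the compact set $Sa$) use the Hausdorffness that the paper builds into its convention for compact spaces; and in the bijectivity of $x\mapsto xv'$ one should record that $yu\in uI$ for $y\in v'J$, which follows from $uy=y$ (write $y=v'z$ and use $uv'=v'$), an ingredient you already have but did not state.
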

In particular, the theorem applies to $E(X)$. In the context of model theory the groups $uI$, into which any minimal ideal splits, are called \emph{ideal subgroups}, and their common isomorphism type is the \emph{Ellis group} of the $G$-flow $X$.

The following facts will be used for explicit Ellis group computation:

\begin{lemma} \label{lem:ell-epi} Assume $\cG$ is a group, $I \trianglelefteqslant_m E(X)$ and $\varphi : I \to \cG$ is a semigroup epimorphism such that $\varphi^{-1}[ \{ e \} ] = J(I)$. Then the Ellis group of $X$ is isomorphic to $\cG$.
\end{lemma}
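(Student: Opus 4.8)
The plan is to fix an idempotent $u \in J(I)$ and show that the restriction $\varphi \restriction uI : uI \to \cG$ is a group isomorphism; since $uI$ realizes the Ellis group by Theorem~\ref{thm:ell}(iii), this suffices. First I would observe that $\varphi \restriction uI$ is a group homomorphism: by Theorem~\ref{thm:ell}(ii), $uI$ is a group with identity $u$, and $\varphi(u) = e$ because $u \in J(I) = \varphi^{-1}[\{e\}]$, so $\varphi$ carries the product in $uI$ to the product in $\cG$. Surjectivity onto $\cG$ is where I would use that $\varphi$ is an epimorphism on all of $I$: given $g \in \cG$, pick $a \in I$ with $\varphi(a) = g$; then $ua \in uI$ and $\varphi(ua) = \varphi(u)\varphi(a) = eg = g$, so $\varphi \restriction uI$ is already onto.

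The heart of the argument, and the step I expect to be the main obstacle, is injectivity of $\varphi \restriction uI$. Suppose $a \in uI$ with $\varphi(a) = e$. The hypothesis $\varphi^{-1}[\{e\}] = J(I)$ gives $a \in J(I)$, i.e. $a$ is an idempotent lying in $I$. But $a$ also lies in the group $uI$, and the only idempotent in a group is its identity, so $a = u$, the identity of $uI$. Hence the kernel of $\varphi \restriction uI$ is trivial. Combining this with surjectivity, $\varphi \restriction uI$ is an isomorphism $uI \cong \cG$, and by Theorem~\ref{thm:ell}(iii) every ideal subgroup is isomorphic to $uI$, hence to $\cG$; that common isomorphism type is by definition the Ellis group of $X$.

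I should double-check one point that the argument quietly relies on: that $\varphi$, being a semigroup epimorphism defined on the semigroup $I$, indeed satisfies $\varphi(xy) = \varphi(x)\varphi(y)$ for $x, y$ in the subgroup $uI \subseteq I$ — this is immediate since $uI$ is closed under the semigroup operation of $I$ and that operation is the group operation of $uI$. Everything else is a formal manipulation of the Ellis structure theorem, so no further subtleties arise; the only genuinely load-bearing input beyond Theorem~\ref{thm:ell} is the precise form of the hypothesis $\varphi^{-1}[\{e\}] = J(I)$, which is exactly what forces the kernel computation to collapse to the identity.
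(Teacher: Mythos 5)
Your proposal is correct and follows essentially the same route as the paper: restrict $\varphi$ to $uI$ for a fixed $u \in J(I)$, get injectivity from $\ker(\varphi \restriction uI) = uI \cap J(I) = \{u\}$ (the only idempotent in a group is its identity), and get surjectivity by multiplying a preimage $a \in I$ of $g$ by $u$ on the left. No gaps; the argument matches the paper's proof step for step.
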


\begin{proof} Take any $u \in J(I)$. We will check that $\varphi \restriction uI$ is an isomorphism of groups $uI \to \cG$. It is clearly injective because $\ker( \varphi \restriction uI ) = uI \cap J(I) = \{ u \}$. For the proof of surjectivity, fix $g \in \cG$ and pick $q \in I$ such that $\varphi(q) = g$. Then $uq \in uI$ and $\varphi(uq) = \varphi(u) \varphi(q) = g$.
\end{proof}

\begin{corollary} \label{cor:ell-epi} Assume $\cG$ is a group, $I \trianglelefteqslant_m E(X)$ and $\varphi : E(X) \to \cG$ is a semigroup epimorphism such that $I \cap \varphi^{-1}[ \{ e \} ] = J(I)$. Then the Ellis group of $X$ is isomorphic to $\cG$.
\end{corollary}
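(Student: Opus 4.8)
The plan is to deduce this from Lemma~\ref{lem:ell-epi} by simply restricting $\varphi$ to the minimal ideal $I$. Put $\psi = \varphi \restriction I$. Since $I$ is closed under the semigroup operation and $\varphi$ is a homomorphism, $\psi : I \to \cG$ is again a semigroup homomorphism, and by hypothesis its kernel is
\[
\psi^{-1}[\{e\}] = I \cap \varphi^{-1}[\{e\}] = J(I).
\]
So the only thing that needs checking before Lemma~\ref{lem:ell-epi} applies verbatim is that $\psi$ is onto $\cG$.

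For surjectivity, I would first invoke Theorem~\ref{thm:ell}(i) to fix an idempotent $u \in J(I)$; by hypothesis $u \in \varphi^{-1}[\{e\}]$, i.e. $\varphi(u) = e$. Now take an arbitrary $g \in \cG$ and, using that $\varphi$ is an epimorphism, pick $q \in E(X)$ with $\varphi(q) = g$. Since $I \trianglelefteqslant_m E(X)$ is a left ideal and $u \in I$, we have $qu \in I$, and $\psi(qu) = \varphi(q)\varphi(u) = g e = g$. Hence $\psi : I \to \cG$ is a semigroup epimorphism with $\psi^{-1}[\{e\}] = J(I)$, and Lemma~\ref{lem:ell-epi} gives that the Ellis group of $X$ is isomorphic to $\cG$.

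I do not expect any real obstacle here: the corollary is essentially a repackaging of Lemma~\ref{lem:ell-epi}, and the only mildly substantive point is that the restriction of an epimorphism to a minimal ideal remains surjective — which is immediate once one has an idempotent in $I$ available to absorb the extra right factor.
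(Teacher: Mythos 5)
Your argument is correct and is essentially identical to the paper's proof: both reduce to Lemma~\ref{lem:ell-epi} by noting that the hypothesis gives $(\varphi \restriction I)^{-1}[\{e\}] = J(I)$, and both establish surjectivity of $\varphi$ on $I$ by fixing an idempotent $u \in J(I)$ (so $\varphi(u)=e$) and multiplying a preimage $q$ of $g$ on the right by $u$ to land in $I$ without changing the image. No gaps.
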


\begin{proof} By Lemma~\ref{lem:ell-epi} it suffices to show that $\varphi[I] = \cG$. Take any $u \in J(I)$, fix $g \in G$ and pick $f \in E(X)$ satisfying $\varphi(f) = g$. Then $fu \in I$ and 
\[
\varphi(fu) = \varphi(f) \varphi(u) = \varphi(f) = g. \qedhere
\]
\end{proof}

In the last lemma and corollary the respective assumptions $\varphi^{-1}[ \{ e \} ] = J(I)$ and $I \cap \varphi^{-1}[ \{ e \} ] = J(I)$ each can be replaced with a left-to-right inclusion ($\subseteq$), since the reverse inclusion holds automatically.

\subsection{Boolean algebras and Stone spaces}

Here we recall the correspondence (formally: dual equivalence of categories) between Boolean algebras and Stone spaces, which we will generally refer to as \emph{Stone duality}. A \emph{Stone space} is a topological space which is compact Hausdorff and totally disconnected, meaning it has a basis of sets that are both closed and open.\footnote{We call such sets \emph{clopen}.} To any Boolean algebra $\cA$ we assign the space of ultrafilters on $\cA$, called the \emph{Stone space of} $\cA$ and denoted as $S(\cA)$. For $A \in \cA$, define 
\[
[A] = \{ p \in S(\cA) : A \in p \}.
\] 
The set $S(\cA)$ equipped with the topology generated by $\{ [A] : A \in \cA \}$ is a Stone space. On the other hand, when $X$ is a topological space, we assign to it the family $\cCO(X)$ of clopen subsets of $X$, which is a Boolean algebra.

When $S$ is a set and $\cA$ is an algebra of subsets of $S$, for $s \in S$, let
\[
\hat{s} = \{ A \in \cA : s \in A \}.
\]
It is an ultrafilter on $\cA$ which we call the \emph{principal ultrafilter corresponding to} $s$. If additionally $S = X$ is a Stone space and $\cA = \cCO(X)$, the map $x \mapsto \hat{x}$ is a homemorphism between $X$ and $S(\cCO(X))$. On the other hand, if $\cA$ is a Boolean algebra, the map $A \mapsto [A]$ is an isomorphism between $\cA$ and $\cCO(S(\cA))$.

Let $\cA, \cB$ be Boolean algebras. To a homomorphism $\varphi : \cA \to \cB$ we assign the continuous function $\varphi^* : S(\cB) \to S(\cA)$, called the \emph{dual map of} $\varphi$, given by the condition $A \in \varphi^*(q) \iff \varphi(A) \in q$. Furthermore, to a continuous function $f : X \to Y$ between topological spaces $X, Y$ we assign the homomorphism $f^* : \cCO(Y) \to \cCO(X)$, also called the dual map of $f$, defined as $f^*(B) = f^{-1}[B]$. These assignments make a pair of contravariant functors between the category of Boolean algebras and the category of Stone spaces. So we have $(\varphi \circ \psi)^* = \psi^* \circ \varphi^*$ etc. 

When $\cCO(S(\cA))$ is identified with $\cA$ via the isomorphism defined above, the map $(\varphi^*)^* : \cCO(S(\cA)) \to \cCO(S(\cB))$ is identical to $\varphi : \cA \to \cB$. If $X$ and $Y$ are Stone spaces, the same holds the other way: for $f : X \to Y$, $(f^*)^*$ is identical to $f$ after the identification of $S(\cCO(X))$ with $X$. Hence the pair of functors establishes dual equivalence of the categories of Boolean algebras and Stone spaces.

\begin{fact} \label{fact:dual-func} Assume $\varphi : \cA \to \cB$ is a homomorphism of Boolean algebras and consider the dual map $\varphi^* : S(\cB) \to S(\cA)$.
\begin{enumerate}[label=(\roman{*})]
\item If $\varphi$ is injective, then $\varphi^*$ is surjective.
\item If $\varphi$ is surjective, then $\varphi^*$ is injective. 
\end{enumerate}
\end{fact}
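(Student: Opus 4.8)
The plan is to handle the two parts separately, in each case unwinding the definition $\varphi^*(q) = \{A \in \cA : \varphi(A) \in q\}$, so that the statement $\varphi^*(q) = p$ becomes the biconditional $A \in p \iff \varphi(A) \in q$ for all $A \in \cA$.

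For (i), given an ultrafilter $p$ on $\cA$, I want to produce an ultrafilter $q$ on $\cB$ with $\varphi^*(q) = p$. The natural candidate is any ultrafilter extending the image filter base $\varphi[p] = \{\varphi(A) : A \in p\}$. The only point that needs checking — and the only place injectivity of $\varphi$ enters — is that $\varphi[p]$ has the finite intersection property in $\cB$: for $A_1, \dots, A_n \in p$ one has $\varphi(A_1) \wedge \dots \wedge \varphi(A_n) = \varphi(A_1 \wedge \dots \wedge A_n)$, and since $p$ is a proper filter the meet $A_1 \wedge \dots \wedge A_n$ is non-zero, so by injectivity (an injective Boolean homomorphism sends non-zero elements to non-zero elements) its image is non-zero as well. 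The ultrafilter lemma then yields $q \in S(\cB)$ with $\varphi[p] \subseteq q$. This gives $A \in p \implies \varphi(A) \in q$ at once; for the converse, if $A \notin p$ then $\neg A \in p$, so $\varphi(\neg A) = \neg \varphi(A) \in q$, whence $\varphi(A) \notin q$. Thus $\varphi^*(q) = p$, and $\varphi^*$ is surjective.

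For (ii), suppose $q_1, q_2 \in S(\cB)$ satisfy $\varphi^*(q_1) = \varphi^*(q_2)$; I want $q_1 = q_2$. Given $B \in q_1$, surjectivity of $\varphi$ lets me write $B = \varphi(A)$ for some $A \in \cA$; then $A \in \varphi^*(q_1) = \varphi^*(q_2)$, so $B = \varphi(A) \in q_2$. By symmetry the two ultrafilters coincide, so $\varphi^*$ is injective.

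Neither part presents a genuine obstacle; the only mildly delicate point is the finite-intersection-property verification in (i), which is precisely the step that forces one to use injectivity rather than just the homomorphism property of $\varphi$.
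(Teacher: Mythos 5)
Your proof is correct: the finite-intersection-property argument via injectivity in (i) and the pullback argument via surjectivity in (ii) are exactly the standard Stone-duality argument, and both verifications are complete (including the converse direction $\varphi(A)\in q \implies A\in p$ via complements). The paper states this fact without proof, treating it as classical background, so there is nothing to compare beyond noting that your argument is the expected one.
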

The same fact holds for any continuous function $f : X \to Y$ between Stone spaces and its dual. It follows that both implications are actually equivalences. 

\begin{fact} \label{fact:im} Assume $\varphi : \cA \to \cB$ is a homomorphism of Boolean algebras. Then the image of $\varphi^* : S(\cB) \to S(\cA)$ consists precisely of all $p \in S(\cA)$ such that $p \cap \Ker \varphi = \varnothing$. \noproof
\end{fact}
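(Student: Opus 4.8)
The plan is to prove the two inclusions separately. The easy inclusion is $\subseteq$: if $p = \varphi^*(q)$ for some $q \in S(\cB)$, then for every $A \in \Ker \varphi$ we have $\varphi(A) = \zero \notin q$, hence $A \notin \varphi^*(q) = p$; thus $p \cap \Ker \varphi = \varnothing$. This uses only the defining condition $A \in \varphi^*(q) \iff \varphi(A) \in q$.

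For the reverse inclusion, fix $p \in S(\cA)$ with $p \cap \Ker \varphi = \varnothing$; the goal is to produce $q \in S(\cB)$ with $\varphi^*(q) = p$. First I would consider the image $\varphi[p] = \{ \varphi(A) : A \in p \} \subseteq \cB$ and the filter $F$ it generates. The key step — and the one place the hypothesis is really used — is to check that $F$ is proper, i.e.\ $\zero \notin F$. Since $p$ is a filter closed under finite meets and $\varphi$ is a homomorphism, any finite meet of elements of $\varphi[p]$ has the form $\varphi(A_1) \wedge \cdots \wedge \varphi(A_n) = \varphi(A_1 \wedge \cdots \wedge A_n)$ with $A_1 \wedge \cdots \wedge A_n \in p$; were this $\zero$, that meet would lie in $p \cap \Ker \varphi$, contradicting the assumption. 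Hence $F$ is proper and, by the ultrafilter theorem (Zorn's lemma), extends to some ultrafilter $q \in S(\cB)$.

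It then remains to verify $\varphi^*(q) = p$. By construction $\varphi[p] \subseteq q$, so $A \in p$ implies $\varphi(A) \in q$, i.e.\ $A \in \varphi^*(q)$; thus $p \subseteq \varphi^*(q)$. Since $\varphi^*(q)$ is itself an ultrafilter on $\cA$ and $p$ is an ultrafilter, the inclusion $p \subseteq \varphi^*(q)$ forces equality. This places $p$ in the image of $\varphi^*$ and completes the argument.

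The routine parts are the first inclusion and the final maximality remark; the only genuine content is the properness of $F$, which is a direct translation of the hypothesis $p \cap \Ker \varphi = \varnothing$ through the homomorphism property, so I do not anticipate a real obstacle beyond being careful that finite meets are handled correctly.
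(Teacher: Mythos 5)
Your proof is correct and is the standard argument; the paper itself omits the proof of this fact as routine (it is stated with no proof), and your argument — the easy inclusion from the definition of $\varphi^*$, plus extending the proper filter generated by $\varphi[p]$ to an ultrafilter and using maximality of ultrafilters to force $\varphi^*(q) = p$ — is exactly the verification the paper leaves to the reader.
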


Assume $G$ is a group. A Boolean algebra on which $G$ acts by automorphisms will be called a $G$\emph{-algebra}. A $G$\emph{-algebra homomorphism} is a Boolean algebra homomorphism preserving the group action. When $\cA$ is a $G$-algebra, there is a natural structure of a $G$-flow on $S(\cA)$, namely $g \cdot q = \{ gA : A \in q \}$ for $g \in G, q \in S(\cA)$. Conversely, when $X$ is a topological space on which $G$ acts by homeomorphisms (such as a $G$-flow), the action $g \cdot A = \{ g \cdot p : p \in A \}$, where $g \in G, A \in \cCO(X)$, defines the structure of a $G$-algebra on $\cCO(X)$. All facts related to Stone duality naturally extend to $G$-flows and $G$-algebras.

Assume $\cA$ is a $G$-algebra. When a subset $\cB \subseteq \cA$ is closed under all $G$-algebra operations, we call it a \emph{$G$-subalgebra}, denoted as $\cB \le \cA$, and regard it as a $G$-algebra with the induced structure. In particular, given $\cB \subseteq \cP(G)$, we write $\cB \le \cP(G)$ when $\cB$ is closed under union, complement and left translation, and regard it as a $G$-algebra equipped with these operations. On the other hand, sometimes $\cB$ already has a structure of a $G$-algebra and $\cB \subseteq \cA$, in which case we write $\cB \le \cA$ to indicate that the structures coincide, or $\cB \not \le \cA$ when they are different.\footnote{This happens in Subsection~\ref{sub:reg}: $\bRO \subseteq \cSBP$ are $G$-algebras, but $\bRO \not \le \cSBP$.} 

Sometimes we simply write \emph{algebra} instead of $G$-algebra. Whenever we speak of an algebra without the action of $G$, we use the full term \emph{Boolean algebra}.

The $G$-algebras have a natural variant of the first isomorphism theorem:
\begin{fact} Assume $\varphi : \cA \to \cB$ is a $G$-algebra homomorphism.
\begin{enumerate}[label=(\roman{*})]
\item $\Ker \varphi = \{ A \in \cA : \varphi(A) = \zero \}$ is a $G$-ideal in $\cA$, meaning an ideal closed under the action of $G$.
\item $\Im \varphi = \{ \varphi(A) : A \in \cA \}$ is a $G$-subalgebra of $\cB$.
\item If $I \trianglelefteqslant \cA$ is any $G$-ideal, there is a natural $G$-algebra structure on the quotient $\cA/I$ and a natural quotient epimorphism $\pi : \cA \to \cA / I$.
\item There is a unique isomorphism $\overline{\varphi} : \cA / \Ker \varphi \to \Im \varphi$ such that $\varphi$ decomposes as $\cA \xrightarrow{\ \pi \ } \cA / \Ker \varphi \xrightarrow{\ \overline{\varphi} \ } \Im \varphi$.
\end{enumerate}
\end{fact}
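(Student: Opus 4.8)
The plan is to treat this as the $G$-equivariant refinement of the classical first isomorphism theorem for Boolean algebras: in each clause I would cite (or reprove in one line) the underlying statement about Boolean algebras and homomorphisms, and then add a short check that the construction involved is compatible with the action of $G$. The recurring principle is that each $g \in G$ acts by automorphisms — so it fixes $\zero$ and $\one$ and commutes with meet, join, complement and hence with $\symdif$ — and that $\varphi$ is $G$-equivariant by hypothesis.

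For (i), I would first recall that the preimage of $\{\zero\}$ under a Boolean homomorphism is an ideal: it is downward closed because $B \le A \in \Ker\varphi$ forces $\varphi(B) \le \varphi(A) = \zero$, and closed under joins because $\varphi(A \vee B) = \varphi(A) \vee \varphi(B)$. Then $G$-invariance follows from $\varphi(gA) = g\varphi(A) = g \cdot \zero = \zero$. For (ii), the image of a Boolean homomorphism is a Boolean subalgebra (it contains $\varphi(\zero)$ and $\varphi(\one)$ and is closed under the operations since $\varphi$ preserves them), and it is $G$-invariant because $g\varphi(A) = \varphi(gA) \in \Im\varphi$. For (iii), I would take the ordinary Boolean quotient $\cA/I$ with classes $[A]$ under $A \sim B \iff A \symdif B \in I$ and quotient map $\pi(A) = [A]$, then define $g[A] = [gA]$; this is well defined because $A \symdif B \in I$ implies $g(A \symdif B) = gA \symdif gB \in I$ (using that $g$ is a Boolean automorphism and $I$ is $G$-invariant), and it manifestly makes $G$ act by automorphisms on $\cA/I$ with $\pi$ a $G$-algebra epimorphism. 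For (iv), I would set $\bar\varphi([A]) = \varphi(A)$, which is well defined since $A \symdif B \in \Ker\varphi$ gives $\varphi(A) \symdif \varphi(B) = \varphi(A \symdif B) = \zero$; the classical theorem already gives that $\bar\varphi$ is a Boolean isomorphism $\cA/\Ker\varphi \to \Im\varphi$ with $\varphi = \bar\varphi \circ \pi$, and equivariance is the one extra line $\bar\varphi(g[A]) = \bar\varphi([gA]) = \varphi(gA) = g\varphi(A) = g\bar\varphi([A])$. Uniqueness of $\bar\varphi$ is immediate because $\pi$ is surjective, so any factorization of $\varphi$ through $\pi$ is forced on all of $\cA/\Ker\varphi$.

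There is no genuine obstacle here: every step is a direct verification, and the only thing that needs consistent attention is invoking $G$-equivariance of $\varphi$ together with the fact that $G$ acts by Boolean automorphisms, which is exactly what upgrades each classical statement to its $G$-algebra counterpart.
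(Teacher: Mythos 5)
Your proposal is correct, and it is exactly the routine verification the paper has in mind: the paper states this fact without proof, treating it as the standard first isomorphism theorem for Boolean algebras upgraded by the observation that $G$ acts by automorphisms and $\varphi$ is equivariant, which is precisely your argument. Nothing is missing.
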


In the following remark and corollary, $\cA \le \cP(G)$ is arbitrary.

\begin{remark} \label{rem:den} The orbit of $\hat{e}$ is dense in $S(\cA)$, i.e. $S(\cA) = \cl( G \cdot \hat{e} )$.
\end{remark}
\begin{proof} We have that $G \cdot \hat{e} = \{ \hat{g} : g \in G \}$, which is clearly dense in $S(\cA)$.
\end{proof}

\begin{corollary} \label{cor:mor} $G$-flow morphisms $f_1, f_2 : S(\cA) \to Y$ are equal if and only if $f_1(\hat{e}) = f_2(\hat{e})$.
\end{corollary}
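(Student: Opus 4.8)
The plan is to deduce Corollary~\ref{cor:mor} directly from Remark~\ref{rem:gen} and Remark~\ref{rem:den}. One direction is trivial: if $f_1 = f_2$, then in particular $f_1(\hat{e}) = f_2(\hat{e})$.

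For the nontrivial direction, suppose $f_1(\hat{e}) = f_2(\hat{e})$. By Remark~\ref{rem:gen}, the equalizer
\[
X_0 = \{ x \in S(\cA) : f_1(x) = f_2(x) \}
\]
is a subflow of $S(\cA)$; in particular it is closed and $G$-invariant. Since $\hat{e} \in X_0$ and $X_0$ is $G$-invariant, we get $G \cdot \hat{e} \subseteq X_0$, and since $X_0$ is closed, $\cl(G \cdot \hat{e}) \subseteq X_0$. By Remark~\ref{rem:den}, $\cl(G \cdot \hat{e}) = S(\cA)$, so $X_0 = S(\cA)$, which means $f_1 = f_2$.

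I expect no real obstacle here: the statement is an immediate packaging of the two preceding remarks, the only mild point being to notice that one must invoke $G$-invariance of the equalizer (to pass from the single point $\hat{e}$ to its whole orbit) before taking the closure. No calculation is involved.
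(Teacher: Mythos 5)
Your argument is correct and is exactly the one the paper intends: the paper simply says the corollary "follows directly from Remarks~\ref{rem:gen} and \ref{rem:den}," and your write-up spells out that deduction (equalizer is a subflow, contains $\hat{e}$, hence contains the dense orbit closure $\cl(G \cdot \hat{e}) = S(\cA)$). No differences in approach and no gaps.
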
 

\begin{proof} Follows directly from Remarks~\ref{rem:gen} and \ref{rem:den}.
\end{proof}

\subsection{Alternative representation of the Ellis semigroup}

In the next two subsections we present the relevant portions of \cite{New12Feb, New12Aug} together with some thus far unpublished folklore. Several proofs have been reworked with the use of Stone duality, which we think offers a valuable perspective on the subject.

Assume that $G$ is a group and $\cA \le \cP(G)$. Thus $X = S(\cA)$ is a Stone space and a $G$-flow.

\begin{definition}[\cite{New12Aug}, before Lemma 1.2]
For $A \in \cA$ and $q \in S(\cA)$, let
\begin{equation} \label{eq:dop}
d_q(A) := \{ h \in G : A \in hq \} = \{ h \in G : h^{-1} A \in q \}.
\end{equation}
\end{definition}
It is easy to check that for a fixed $q \in S(\cA)$, the function $d_q : \cA \to \cP(G)$ is a $G$-algebra homomorphism. It need not always be the case that $d_q(A) \in \cA$. If, however, $d_q(A) \in \cA$ for all $q \in S(\cA)$ and $A \in \cA$, we say that $\cA$ is $d$\emph{-closed}.

\begin{remark} \label{rem:dop} The definition $(\ref{eq:dop})$ also makes sense (and defines a $G$-algebra homomorphism) whenever $q \in S(\cB)$, where $\cB$ is a $G$-algebra containing $\cA$, particularly for $q \in \beta G = S(\cP(G))$. Fix such $\cB$ and let $r = r_{\cB, \cA} : S(\cB) \to S(\cA)$ be the dual map of the inclusion $\cA \to \cB$, which we call the \emph{restriction}. Then for any $A \in \cA, q \in S(\cB)$ we have that
\[
d_q(A) = d_{r(q)}(A),
\]
which implies that the function $d_q : \cA \to \cA$ depends only on $r(q) = q \cap \cA$.
\end{remark}

More generally:
\begin{fact} \label{fact:dual} Assume $\varphi : \cA \to \cB$ is a homomorphism of $G$-algebras, $A \in \cA$ and $p \in S(\cB)$. Then $d_p( \varphi(A) ) = d_{\varphi^*(p)}(A)$. \noproof
\end{fact}

\begin{definition} Assume $\cA$ is $d$-closed and $p, q \in S(\cA)$. The multiplication on $S(\cA)$ is defined by
\begin{equation} \label{eq:ast}
p \ast q := \{ A \in \cA : d_q(A) \in p \}.
\end{equation}
\end{definition}
Again, it can be checked directly (see \cite{New12Aug}) that $p \ast q \in S(\cA)$ and $*$ is associative and continuous in the first coordinate, so $(S(\cA), \ast)$ is a left topological semigroup. Below we prove it using the Stone duality.

The operations $d_q$ and $\ast$ can be understood as follows. For $q \in S(\cA)$, denote as $r_q : G \to S(\cA)$ the function $r_q(g) = g \cdot q$. When $G$ is given the discrete topology and the action of itself by left translation, $r_q$ becomes a morphism between topological spaces on which $G$ acts by homeomorphisms. Thus from Stone duality we get the $G$-algebra homomorphism $r_q^* : \cCO(S(\cA)) \to \cP(G)$, defined by $r_q^*(C) = r_q^{-1}[C]$. After the standard identification $r_q^*$ can be thought of as a map $\cA \to \cP(G)$, whereafter it is identical to $d_q$.  It follows that $d_q$ is a $G$-algebra homomorphism.

In case $\cA$ is $d$-closed, we have that $d_q : \cA \to \cA$ and we denote the dual $G$-flow morphism $d_q^* : S(\cA) \to S(\cA)$ as $\hat{r}_q$. The maps $r_q$ and $\hat{r}_q$ agree on $G$ in the sense that $\hat{r}_q(\hat{g}) = r_q(g)$ for $g \in G$, thus from now we write $r_q$ for both.\footnote{We may think of $G$ as a subset of $S(\cA)$ via the map $G \to S(\cA), g \mapsto \hat{g}$, even though it need not be injective.} We also have that $p \ast q = r_q(p)$, hence $\ast$ is continuous in the first coordinate. Moreover, $r_q(r_p( \hat{e} )) = r_q(p) = p \ast q = r_{p \ast q}(\hat{e})$, where both $r_q \circ r_p$ and $r_{p \ast q}$ are $G$-flow morphisms. From Corollary~\ref{cor:mor} we get $r_{p \ast q} = r_q \circ r_p$, which means precisely that $\ast$ is associative.

\begin{lemma} \label{lem:ast-prod}
Assume that $\cA$ is d-closed, $p, q \in S(\cA)$ and $A \in p, B \in q$. Moreover, assume that the set
\[
AB := \{ ab : a \in A, b \in B \}
\]
belongs to $\cA$. Then $AB \in p \ast q$.
\end{lemma}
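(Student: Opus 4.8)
The plan is to unwind the definition of $\ast$ and reduce everything to a one-line translation computation. By the definition $(\ref{eq:ast})$ of the multiplication, the assertion $AB \in p \ast q$ is literally the statement that $d_q(AB) \in p$. Since $AB \in \cA$ by hypothesis and $\cA$ is $d$-closed, we indeed have $d_q(AB) \in \cA$, so this membership makes sense. Because $p$ is an ultrafilter (in particular, an upward-closed filter) and $A \in p$, it will be enough to show the inclusion
\[
A \subseteq d_q(AB).
\]

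To prove this inclusion, I would fix $a \in A$ and use the second description of $d_q$ from $(\ref{eq:dop})$, namely $a \in d_q(AB) \iff a^{-1}(AB) \in q$. The key observation is that since $a \in A$, we have $aB \subseteq AB$, and therefore
\[
B = a^{-1}(aB) \subseteq a^{-1}(AB).
\]
As $B \in q$ and $q$ is a filter, it follows that $a^{-1}(AB) \in q$, i.e. $a \in d_q(AB)$. Since $a \in A$ was arbitrary, this establishes $A \subseteq d_q(AB)$.

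Combining the two steps: $A \subseteq d_q(AB)$ together with $A \in p$ and the fact that $p$ is a filter gives $d_q(AB) \in p$, which by $(\ref{eq:ast})$ is exactly $AB \in p \ast q$. There is no genuine obstacle here; the only point requiring care is to invoke $d$-closedness of $\cA$ so that $d_q(AB)$ is an element of $\cA$ and the condition $d_q(AB) \in p$ is meaningful, and to keep the left-translation conventions straight when passing between the two forms of $d_q$ in $(\ref{eq:dop})$.
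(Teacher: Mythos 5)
Your proof is correct and follows essentially the same route as the paper's: you show $A \subseteq d_q(AB)$ via $B \subseteq a^{-1}(AB)$ for each $a \in A$, and then conclude $d_q(AB) \in p$ by upward closure of $p$, which is exactly the paper's argument (with the additional, correct, remark that $d$-closedness makes $d_q(AB) \in \cA$).
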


\begin{proof}
We have that $A \subseteq d_q( AB )$ because for any $a \in A$ we have $B \subseteq a^{-1} AB$, so $a^{-1} AB \in q$. It follows that $d_q(AB) \in p$, i.e. $AB \in p \ast q$.
\end{proof}

There is another description of the $d_q$ operation:
\begin{remark} \label{rem:dlim} Consider $Y = \cP(G)$ as a topological space with topology induced from the product topology on $2^G$ via the natural bijection. It is a Stone space and a $G$-flow with the action $\varphi_g(A) := Ag^{-1} = \{ a g^{-1} : a \in A \}$, which we denote here as $\varphi_g$ to avoid confusion with ordinary left translation.
\begin{enumerate}[label=(\roman{*})]
\item For $g \in G, A \in \cA$ we have that $d_{\hat{g}}(A) = Ag^{-1}$.
\item For a fixed $A \in \cA$, the map $f : S(\cA) \to \cP(G)$ defined by $f(q) = d_q(A)$ is a $G$-flow morphism.
\item For $A \in \cA$ and $q \in S(\cA)$,
\[
d_q(A) = \lim_{\hat{g} \to q} Ag^{-1}.
\]
\item For $A \in \cA$, we have that $\{ d_q A : q \in S(\cA) \} = \cl_{\cP(G)} \{ Ag^{-1} : g \in G \}$.
\end{enumerate}
\end{remark}

\begin{proof} \mbox{} \\[1ex]
(i) By definition
\[
h \in d_{\hat{g}}(A) \iff h^{-1} A \in \hat{g} \iff g \in h^{-1} A \iff h \in Ag^{-1}.
\]

\vspace{2mm} \noindent
(ii) The family $\{ V_b : b \in G \} \cup \{ \cP(G) \setminus V_b : b \in G \}$ is a subbasis of $\cP(G)$, where $V_b = \{ B \in \cP(G) : b \in B \}$. The map $f$ is continuous as for $b \in G$:
\[
f(q) \in V_b \iff b \in f(q) \iff b \in d_q(A) \iff b^{-1} A \in q \iff q \in [b^{-1} A],
\]
so the preimage $f^{-1}[V_b] = [b^{-1} A]$ is a clopen set. Moreover, for $g \in G$,
\begin{align*}
h \in f(gq) & \iff h \in d_{gq}(A) \iff A \in hgq \\
& \iff hg \in d_q(A) \iff h \in f(q) g^{-1},
\end{align*}
so $f(gq) = \varphi_g \big( f(q) \big)$.

\vspace{2mm} \noindent
(iii) Follows from (i) and (ii).

\vspace{1mm}
\noindent
(iv) By Remark~\ref{rem:den} and the compactness of $S(\cA)$:
\begin{align*}
\{ d_q A : q \in S(\cA) \} & = f[S(\cA)] = f[ \cl \{ \hat{g} : g \in G \} ] \\
& = \cl \{ f(\hat{g}) : g \in G \} = \cl \{ Ag^{-1} : g \in G \}. \qedhere
\end{align*}
\end{proof}

\begin{corollary} \label{cor:dcl-rt} If $\cA$ is d-closed, then it is closed under right translation.
\end{corollary}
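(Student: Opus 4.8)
The plan is to read off the conclusion directly from part (i) of Remark~\ref{rem:dlim}, which already computes $d_{\hat g}(A)$ explicitly. Specifically, I would fix an arbitrary $A \in \cA$ and $g \in G$ and observe that $\hat g \in S(\cA)$, so that $d$-closedness of $\cA$ guarantees $d_{\hat g}(A) \in \cA$. By Remark~\ref{rem:dlim}(i) we have $d_{\hat g}(A) = A g^{-1}$, hence $A g^{-1} \in \cA$.

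The final step is the trivial remark that as $g$ ranges over all of $G$, so does $g^{-1}$; therefore $Ag \in \cA$ for every $A \in \cA$ and every $g \in G$, i.e. $\cA$ is closed under right translation. (One could alternatively invoke part (iv) of the same remark: $\{ d_q A : q \in S(\cA)\} = \cl_{\cP(G)}\{ Ag^{-1} : g \in G\}$ shows the right translates of $A$ sit inside the image of $d_{(-)}(A)$, but the pointwise identity from (i) is cleaner and suffices.)

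I do not anticipate any real obstacle here; the content is entirely packaged in Remark~\ref{rem:dlim}(i), and the only thing to check is that the instance $q = \hat g$ is legitimate, which it is since $\hat g$ is an ultrafilter on $\cA$. So the proof is a two-line deduction.

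\begin{proof}
Fix $A \in \cA$ and $g \in G$. Since $\hat{g} \in S(\cA)$ and $\cA$ is $d$-closed, we have $d_{\hat{g}}(A) \in \cA$. By Remark~\ref{rem:dlim}(i), $d_{\hat{g}}(A) = Ag^{-1}$, so $Ag^{-1} \in \cA$. As $g$ was arbitrary, this shows $Ag \in \cA$ for all $A \in \cA$ and $g \in G$.
\end{proof}
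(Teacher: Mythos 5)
Your proof is correct and is exactly the argument the paper intends: the corollary is stated as an immediate consequence of Remark~\ref{rem:dlim}(i), namely $d_{\hat g}(A) = Ag^{-1}$ combined with $d$-closedness, just as you wrote. Nothing to add.
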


Since by Remark~\ref{rem:dop} the $G$-algebra $\cA$ is $d$-closed iff $d_q(A) \in \cA$ for each $A \in \cA, q \in \beta G$, i.e. it is closed under operations $\{ d_q : q \in \beta G \}$ which do not depend on $\cA$, any intersection of $d$-closed $G$-subalgebras of $\cP(G)$ is again $d$-closed. Thus the $G$-algebra $\cA$ has a $d$\emph{-closure} $\cA^d$, the smallest $d$-closed $G$-algebra containing it, which is the intersection of all such $G$-algebras. 

Using this notion we define a useful generalization of (\ref{eq:ast}): the same formula defines a function $\ast : S(\cB) \times S(\cA) \to S(\cA)$ whenever $\cA$ is a $G$-algebra and $\cB$ is a $G$-algebra containing $\cA^d$. Again, for $p \in S(\cB), q \in S(\cA)$, we have that $p \ast q = r_q(p)$, where $r_q : S(\cB) \to S(\cA)$ is the dual map of $d_q : \cA \to \cB$, hence $\ast$ is continuous in the first coordinate. By the same argument as before it is also associative in the sense that if $\cC$ is a $G$-algebra containing $\cB^d$, then $(p \ast q) \ast s = p \ast (q \ast s)$ for $p \in S(\cC), q \in S(\cB), s \in S(\cA)$.

The following fact is an explicit description of $d$-closure.

\begin{fact} \label{fact:dcl-gen} $\cA^d$ is the Boolean algebra generated by the set
\[
\cC = \bigcup_{q \in S(\cA)} d_q[ \cA ] = \bigcup_{q \in \beta G} d_q[ \cA ].
\]
\end{fact}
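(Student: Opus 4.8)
The plan is to show two inclusions. Let $\cD$ denote the Boolean algebra generated by $\cC = \bigcup_{q \in \beta G} d_q[\cA]$; note first that by Remark~\ref{rem:dop} this equals $\bigcup_{q \in S(\cA)} d_q[\cA]$, since each $d_q$ for $q \in \beta G$ agrees with $d_{r(q)}$ for the restriction $r(q) = q \cap \cA \in S(\cA)$, and conversely every $q \in S(\cA)$ extends to some $q' \in \beta G$. For the inclusion $\cD \subseteq \cA^d$: since $\cA^d$ is $d$-closed we have $d_q[\cA] \subseteq d_q[\cA^d] \subseteq \cA^d$ for every $q \in \beta G$, so $\cC \subseteq \cA^d$, and as $\cA^d$ is a Boolean algebra it contains the Boolean algebra $\cD$ generated by $\cC$. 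Also $\cA \subseteq \cD$ since $A = d_{\hat e}(A)$ for every $A \in \cA$ (using Remark~\ref{rem:dlim}(i) with $g = e$, giving $d_{\hat e}(A) = Ae^{-1} = A$).

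For the reverse inclusion $\cA^d \subseteq \cD$ it suffices, by minimality of $\cA^d$, to show that $\cD$ is itself a $d$-closed $G$-algebra containing $\cA$. We have just seen $\cA \subseteq \cD$, and $\cD$ is a Boolean algebra by construction; it remains to check that $\cD$ is closed under the action of $G$ and under every $d_p$ for $p \in \beta G$. For the $G$-action, observe that each $d_q : \cA \to \cP(G)$ is a $G$-algebra homomorphism, so $d_q[\cA]$ is closed under left translation, hence so is $\cC$, and therefore so is the Boolean algebra $\cD$ it generates (left translation is a Boolean automorphism of $\cP(G)$, so it preserves the property of lying in the subalgebra generated by a translation-invariant set). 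For $d$-closure: fix $p \in \beta G$; since $d_p : \cP(G) \to \cP(G)$ is a $G$-algebra homomorphism, it maps the subalgebra $\cD$ into the subalgebra generated by $d_p[\cC]$, so it is enough to show $d_p[\cC] \subseteq \cD$, i.e. $d_p(d_q(A)) \in \cD$ for all $A \in \cA$ and $q \in \beta G$. The key computation is that $d_p \circ d_q = d_{p \ast q}$ as maps $\cA \to \cP(G)$, where $p \ast q \in \beta G$ is the product in $\beta G = S(\cP(G))$ (which is $d$-closed, so $\ast$ is defined there); this is exactly the associativity/naturality already recorded, e.g. it follows by applying Fact~\ref{fact:dual} with $\varphi = d_q : \cA \to \cP(G)$ and unwinding $\varphi^*(p) = p \ast q$ from the definition of $\ast$ on $\beta G$, or directly from $h \in d_p(d_q(A)) \iff h^{-1}d_q(A) \in p \iff d_q(h^{-1}A) \in p \iff h^{-1}A \in p\ast q$. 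Hence $d_p(d_q(A)) = d_{p\ast q}(A) \in d_{p \ast q}[\cA] \subseteq \cC \subseteq \cD$, as required.

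The main obstacle is the composition identity $d_p \circ d_q = d_{p \ast q}$, i.e. making sure the right-hand product is interpreted in $\beta G$ rather than in $\cA$ (on which $\ast$ need not even be defined) and checking it is genuinely an equality of functions $\cA \to \cP(G)$, not merely of their restrictions to clopen sets after identification. Once one commits to working inside $\beta G$ — which is $d$-closed, so $(\beta G, \ast)$ is a legitimate left topological semigroup and the generalized $\ast : S(\cP(G)) \times S(\cP(G)) \to S(\cP(G))$ applies — this is a short unwinding of the definition $(\ref{eq:ast})$ of $d_q$ together with the fact that $d_q$ commutes with left translation. Everything else is routine: closure of a generated subalgebra under a Boolean automorphism (translation), and the observation that a homomorphism sends a generated subalgebra into the subalgebra generated by the image.
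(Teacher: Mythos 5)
Your proof is correct and follows essentially the same route as the paper: show the generated algebra $\langle \cC \rangle$ sits inside $\cA^d$, contains $\cA$ via $d_{\hat e} = \id$, is closed under left translation because each $d_q[\cA]$ is, and is $d$-closed by reducing to $\cC$ and using the composition identity $d_p(d_q(A)) = d_{p\ast q}(A)$, which is exactly the paper's application of Fact~\ref{fact:dual}. Your direct unwinding of that identity in $\beta G$ is just a spelled-out version of the same step, so no substantive difference.
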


\begin{proof} Let $\< \cC \>$ be the Boolean algebra generated by $\cC$. Clearly $\< \cC \> \subseteq \cA^d$ and $\cA \subseteq \< \cC \>$ because $d_{\hat{e}} = \id$ by Remark~\ref{rem:dlim}, so it suffices to show that $\< \cC \> \le \cP(G)$ and it is d-closed. The set $\cC$ is closed under left translation since $d_q[\cA] \le \cP(G)$ for any $q \in \beta G$. Thus $\< \cC \> \le \cP(G)$. To show that it is $d$-closed, it suffices to check that $\cC$ is closed under $d_p$ for $p \in \beta G$, as then $d_p[ \< \cC \> ] = \< d_p[ \cC ] \> \subseteq \< \cC \>$. Fix $p \in \beta G$ and $C \in \cC$, so that $C = d_q(A)$ for some $q \in S(\cA), A \in \cA$. By Fact~\ref{fact:dual}, $d_p(C) = d_p( d_q(A) ) = d_{d_q^*(p)}(A) = d_{p \ast q}(A) \in \cC$, where $d_q : \cA \to \cP(G)$.
\end{proof}

\begin{remark} \label{rem:ast-hom} Assume $\cA \le \cB \le \cP(G)$ are d-closed and consider $S(\cA)$, $S(\cB)$ as semigroups with $\ast$. Then the restriction $\pi : S(\cB) \to S(\cA)$ is an epimorphism.
\end{remark}
\begin{proof} 
Take $q \in S(\cB)$. The diagram on the left commutes, hence by Stone duality, the diagram on the right also commutes:
\begin{displaymath}
\begin{minipage}{0.3 \textwidth}
\begin{center}
\begin{tikzcd}
\cB \arrow{r}{d_q} & \cB \\
\cA \arrow{u}{i} \arrow{r}{d_{\pi(q)}} & \cA \arrow{u}{i}
\end{tikzcd}
\end{center}
\end{minipage}
\begin{minipage}{0.17 \textwidth}
\begin{center}
\begin{tikzpicture} 
\clip (-1, -1) rectangle (1, 1);
\draw (-0.4, -0.09) -- (0.1, -0.09) -- (0.1, -0.2) -- (0.4, 0) -- (0.1, 0.2) -- (0.1, 0.09) -- (-0.4, 0.09) -- cycle;
\end{tikzpicture}
\end{center}
\end{minipage}
\begin{minipage}{0.3 \textwidth}
\begin{center}
\begin{tikzcd}
S(\cB) \arrow{d}{\pi} & S(\cB)  \arrow{l}{r_q} \arrow{d}{\pi} \\
S(\cA) & S(\cA) \arrow{l}{r_{\pi(q)}}
\end{tikzcd}
\end{center}
\end{minipage}
\end{displaymath}
It follows that $\pi(r_q(p)) = r_{\pi(q)}(\pi(p))$ for $p \in S(\cB)$, so $\pi(p \ast q) = \pi(p) \ast \pi(q)$. 
\end{proof}

The next result, due to Newelski, has not been published before.

\begin{theorem} \label{thm:iso} The following are isomorphic as semigroups:
\begin{enumerate}[label=(\roman{*})]
\item $E(S(\cA))$ with the structure described before;
\item $S(\cA^d)$ with $\ast$;
\item $\End_G(\cA^d)$, the set of all $G$-algebra endomorphisms of $\cA^d$ with function composition;
\item $\End_G(S(\cA^d))^{\mathrm{op}}$, the opposite semigroup to the set of all $G$-flow endomorphisms of $S(\cA^d)$ with function composition.
\end{enumerate}
Moreover, the first two are isomorphic as $G$-flows via the same isomorphism.
\end{theorem}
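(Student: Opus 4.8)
The plan is to establish the chain of isomorphisms $(i) \cong (ii) \cong (iii) \cong (iv)$, arranging for the link between $(i)$ and $(ii)$ to be realized by a single map that is simultaneously an isomorphism of semigroups and of $G$-flows. The link $(iii) \cong (iv)$ is pure Stone duality in its $G$-equivariant form: the assignment $\varphi \mapsto \varphi^*$ is a bijection between the $G$-algebra endomorphisms of $\cA^d$ and the $G$-flow endomorphisms of $S(\cA^d)$, and since $(\varphi \circ \psi)^* = \psi^* \circ \varphi^*$ it reverses the order of composition, hence is a semigroup isomorphism onto $\End_G(S(\cA^d))^{\mathrm{op}}$.

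For $(ii) \cong (iii)$ I would define $\Phi : S(\cA^d) \to \End_G(\cA^d)$ by $\Phi(q) = d_q$; this makes sense because $\cA^d$ is $d$-closed, so each $d_q$ is a $G$-algebra endomorphism of $\cA^d$. Injectivity is immediate since $d_q$ recovers $q$ via $A \in q \iff e \in d_q(A)$. For surjectivity, given $\varphi \in \End_G(\cA^d)$ I would put $q = \varphi^*(\hat{e})$ (with $\hat{e}$ the principal ultrafilter on $\cA^d$) and verify, using that $\varphi$ commutes with left translation, that $d_q = \varphi$. Finally $\Phi$ is a homomorphism: directly from the definition of $\ast$ one has $d_q^*(p) = p \ast q$, so Fact~\ref{fact:dual} applied to $d_q : \cA^d \to \cA^d$ gives $d_p \circ d_q = d_{p \ast q}$, that is $\Phi(p) \circ \Phi(q) = \Phi(p \ast q)$.

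For $(i) \cong (ii)$ I would use the generalized multiplication $\ast : S(\cA^d) \times S(\cA) \to S(\cA)$ and set $\Psi : S(\cA^d) \to S(\cA)^{S(\cA)}$, $\Psi(q)(p) = q \ast p$. Since $\ast$ is continuous in its first coordinate, $\Psi$ is continuous for the product topology on $S(\cA)^{S(\cA)}$. As $\Psi(\hat{g}) = \pi_g$ for $g \in G$ and $\{\hat{g} : g \in G\}$ is dense in $S(\cA^d)$ by Remark~\ref{rem:den}, the image $\Psi[S(\cA^d)]$ is a compact subset of $S(\cA)^{S(\cA)}$ containing $\{\pi_g : g \in G\}$ and contained in its closure, hence equals $E(S(\cA))$. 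For injectivity, if $\Psi(q) = \Psi(q')$ then $q$ and $q'$ agree on every set of the form $d_p(A)$ with $p \in S(\cA)$ and $A \in \cA$, hence on the Boolean algebra these sets generate, which is $\cA^d$ by Fact~\ref{fact:dcl-gen}; so $q = q'$. Thus $\Psi$ is a continuous bijection from a compact space onto a Hausdorff space, hence a homeomorphism, and the associativity of the generalized $\ast$ yields both $\Psi(q \ast q') = \Psi(q) \circ \Psi(q')$ and $\Psi(\hat{g} \ast q) = \pi_g \circ \Psi(q)$, so that $\Psi$ is at once an isomorphism of semigroups and of $G$-flows.

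The step I expect to be most delicate is bookkeeping rather than an idea: one has to keep careful track of which $G$-algebras the operations $d_q$ and $\ast$ act between (for instance $\ast$ as a map $S(\cA^d) \times S(\cA) \to S(\cA)$ versus the ordinary operation on $S(\cA^d)$, or $d_q$ restricted to $\cA$ versus to $\cA^d$) so that continuity, associativity and Fact~\ref{fact:dual} are invoked with matching domains and codomains. The only genuinely non-formal point is the equality $\Psi[S(\cA^d)] = E(S(\cA))$, where the inclusion ``$\supseteq$'' rests on compactness together with density of the $\pi_g$, and ``$\subseteq$'' on continuity of $\Psi$.
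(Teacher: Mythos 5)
Your proposal is correct and follows essentially the same route as the paper: the map $q \mapsto (p \mapsto q \ast p)$ onto $E(S(\cA))$ with surjectivity via density of $\{\hat g : g\in G\}$ plus compactness, the identification of $S(\cA^d)$ with the endomorphisms via $q \mapsto d_q$ (the paper works with the dual maps $r_q = (d_q)^*$ into $\End_G(S(\cA^d))^{\mathrm{op}}$, which is the same argument seen through Stone duality), and pure Stone duality for (iii) $\cong$ (iv). Your only deviations are harmless simplifications/refinements: the homeomorphism is obtained from the compact-to-Hausdorff argument instead of the paper's explicit subbasis computation, and your injectivity argument via agreement on the generating family $\{d_p(A)\}$ (Fact~\ref{fact:dcl-gen}) is actually more careful than the paper's evaluation at $\hat e$, which on its face only determines the restriction to $\cA$.
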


\begin{proof}
(i) $\cong$ (ii) We will check that the function $\ell : S(\cA^d) \to E(S(\cA))$ defined by $\ell(p) = \ell_p$, where $\ell_p(q) = p \ast q$ for $q \in S(\cA)$, is an isomorphism of semigroups and $G$-flows. For $g \in G$, we have that $\ell_{\hat{g}}(q) = \hat{g} \ast q = g \cdot q$, so $\ell(\hat{g}) = \pi_g$. Since $\{ \hat{g} : g \in G \}$ is dense in $S(\cA^d)$, it follows that 
\[
\ell[S(\cA^d)] = \ell \big[ \cl \{ \hat{g} : g \in G \} \big] = \cl \{ \ell( \hat{g} ) : g \in G \} = \cl \{ \pi_g : g \in G \} = E(S(\cA)),
\]
so $\ell$ is both well defined and surjective. It is also injective since 
\[
\ell(p)(\hat{e}) = p \ast \hat{e} = p.
\]
Since $\ast$ is associative, $\ell$ is a semigroup homomorphism. Furthermore, 
\[
\ell(g \cdot q) = \ell( \hat{g} \ast q ) = \ell( \hat{g} ) \circ \ell( q ) = \pi_g \circ \ell(q),
\]
so $\ell(g \cdot q) = g \cdot \ell(q)$. It remains to prove that $\ell$ is a homemorphism.

A subbasis of $E(S(\cA))$ consists of sets of the form 
\[
V_{q, A} = \{ f \in E(S(\cA)) : f(q) \in [A] \},
\]
where $q \in S(\cA)$ and $A \in \cA$. We have that 
\begin{align*}
\ell(p) \in V_{q, A} \iff \ell_p(q) \in [A] \iff A \in p \ast q \iff d_q(A) \in p.
\end{align*}
Hence $\ell^{-1}[V_{q, A}] = [d_q(A)]$ is clopen in $S(\cA^d)$, which implies the continuity of $\ell$. By Fact~\ref{fact:dcl-gen}, the sets $\{ d_q(A) : q \in S(\cA), A \in \cA \}$ generate $\cA^d$ as a Boolean algebra. It follows that the family $\{ [d_q(A)] : q \in S(\cA), A \in \cA \}$ is a subbasis of $S(\cA^d)$, meaning that $\ell^{-1}$ is also continuous.

\vspace{2mm} \noindent
(ii) $\cong$ (iv) The function $r : S(\cA^d) \to \End_G(S(\cA^d))$, $r(q) = r_q = (d_q)^*$, is clearly a well defined semigroup antihomomorphism. It is injective because if $r(p) = r(q)$, then $p = r_p( \hat{e} ) = r_q( \hat{e} ) = q$. It remains to show the surjectivity of $r$. Let $f : S(\cA^d) \to S(\cA^d)$ be a $G$-flow endomorphism and take $q = f(\hat{e})$. Then $r_q : S(\cA^d) \to S(\cA^d)$ is also a $G$-flow morphism such that $r_q(\hat{e}) = q$, so by Corollary~\ref{cor:mor}, we have $f = r_q$.

\vspace{2mm} \noindent
(iii) $\cong$ (iv) Follows directly from Stone duality.
\end{proof}

\subsection{Image algebras}

In this subsection we continue to assume that $G$ is a group and $\cA 
\le \cP(G)$.

Theorem~\ref{thm:iso} allows us to better understand the structure of ideal subgroups in $E(S(\cA))$. For any $p \in S(\cA^d)$, we consider
\begin{align*}
\Ker d_p = \{ B \in \cA^d : d_p(B) = \varnothing \} && \text{and} && \Im d_p = \{ d_p(B) : B \in \cA^d \}.
\end{align*}

\begin{lemma} \label{lem:ker} If $p \in S(\cA^d)$, then 
\[
\cl( G \cdot p ) = \bigcap \{ [B]^c : B \in \Ker d_p \}.
\]
\end{lemma}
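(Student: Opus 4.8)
The plan is to identify $\cl(G \cdot p)$ with the image of the dual map of $d_p$ and then read off the description of that image from Fact~\ref{fact:im}. Since $\cA^d$ is $d$-closed, $d_p$ is a $G$-algebra endomorphism of $\cA^d$, so its dual $r_p = d_p^* \colon S(\cA^d) \to S(\cA^d)$ is a $G$-flow endomorphism. A direct computation from the definition gives
\[
r_p(\hat e) = \{ B \in \cA^d : d_p(B) \in \hat e \} = \{ B \in \cA^d : e \in d_p(B) \} = \{ B \in \cA^d : B \in p \} = p.
\]

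Next I would compute $\Im r_p$. Using Remark~\ref{rem:den}, the continuity of $r_p$, and the compactness of $S(\cA^d)$ (so that $r_p$ is a closed map commuting with the $G$-action),
\[
\Im r_p = r_p\big[\cl(G \cdot \hat e)\big] = \cl\big(r_p[G \cdot \hat e]\big) = \cl\big(G \cdot r_p(\hat e)\big) = \cl(G \cdot p).
\]
(Alternatively one could note via Fact~\ref{fact:per}(i) that $\Im r_p$ is a subflow containing $p$ for one inclusion, but the displayed identity handles both at once.) Then I would apply Fact~\ref{fact:im} with $\varphi = d_p$: the image of $d_p^* = r_p$ consists exactly of those $q \in S(\cA^d)$ with $q \cap \Ker d_p = \varnothing$. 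Finally, $q \cap \Ker d_p = \varnothing$ says precisely that $B \notin q$ for every $B \in \Ker d_p$, i.e. $q \in [B]^c$ for each such $B$, so this set equals $\bigcap \{ [B]^c : B \in \Ker d_p \}$; combined with the previous step this is the claim.

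The argument is essentially bookkeeping, so there is no serious obstacle; the points that deserve care are checking $r_p(\hat e) = p$ (so that $p$ genuinely lies in the image, and the two sides are non-empty) and correctly matching the kernel appearing in Fact~\ref{fact:im} with $\Ker d_p$. As a sanity check one may also observe directly that $\bigcap \{ [B]^c : B \in \Ker d_p \}$ is a closed, $G$-invariant set containing $p$ — $G$-invariance because $\Ker d_p$ is a $G$-ideal and $g \cdot [B] = [gB]$ — which already gives $\cl(G \cdot p) \subseteq \bigcap \{ [B]^c : B \in \Ker d_p \}$ without invoking Fact~\ref{fact:im}.
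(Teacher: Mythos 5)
Your argument is correct, but it takes a different route from the paper. You identify $\cl(G \cdot p)$ with $\Im(d_p^*)$ — using $r_p(\hat e)=p$, the density of the orbit of $\hat e$ (Remark~\ref{rem:den}), and compactness — and then invoke Fact~\ref{fact:im} to describe that image as $\{q : q \cap \Ker d_p = \varnothing\} = \bigcap\{[B]^c : B \in \Ker d_p\}$. The paper instead argues directly and pointwise: unwinding the definition gives $B \in \Ker d_p \iff (\forall g\in G)\, gp \notin [B] \iff [B] \cap \cl(G\cdot p) = \varnothing$, and then the standard Stone-space fact that a closed set $F$ equals $\bigcap\{[B]^c : [B]\cap F = \varnothing\}$ finishes the proof. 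Your version is slicker and essentially anticipates how the lemma is later used — Fact~\ref{fact:imalg} combines Fact~\ref{fact:im} with this lemma to conclude $\Im r_u = \cl(G\cdot u)$, which is exactly the identity you prove en route — but it leans on Fact~\ref{fact:im} (stated without proof, in particular its nontrivial direction that every ultrafilter avoiding $\Ker d_p$ lies in the image), whereas the paper's computation is self-contained and more elementary. All the individual steps you use ($d_p$ being an endomorphism of $\cA^d$, $r_p(\hat e)=p$, closedness of images under compactness, $\Ker d_p$ being a $G$-ideal) check out, so there is no gap.
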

\begin{proof}
For $B \in \cA^d$, we have that
\[
B \in \Ker d_p \iff (\forall g \in G) \, gp \notin [B] \iff [B] \cap \cl( G \cdot p ) = \varnothing.
\]
Given that for any closed subset $F \subseteq S(\cA^d)$ we have
\[
F = \bigcap \{ [B]^c : [B] \cap F = \varnothing \},
\]
the conclusion follows.
\end{proof}

\begin{lemma} \label{lem:im} For any set $X$, assume $S \subseteq X^X$ is a semigroup with function composition and let $I \trianglelefteqslant_m S$, $u \in J(I)$. Then
\begin{enumerate}[label=(\roman{*})]
\item $u(y) = y$ for all $y \in \Im u$;
\item $qu = q$ for any $q \in I$.
\end{enumerate}
\end{lemma}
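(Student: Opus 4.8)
The plan is to prove both items directly from the definitions, using only that $S \subseteq X^X$ consists of functions under composition, that $u$ is idempotent, and that $I$ is a \emph{minimal} ideal (so that $Iu = I$, as recorded among the basic facts about minimal ideals).

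For (i): take $y \in \Im u$, say $y = u(x)$ for some $x \in X$. Then $u(y) = u(u(x)) = (u \circ u)(x) = u^2(x) = u(x) = y$, where the middle equality uses exactly that $u^2 = u$ in the semigroup $S$, which (since the operation is composition) means $u \circ u = u$ as functions $X \to X$. This is the whole argument; there is no obstacle here.

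For (ii): fix $q \in I$. Since $I \trianglelefteqslant_m S$ and $u \in I$, we have $Iu = I$, so there exists $p \in I$ with $q = pu$, i.e. $q = p \circ u$. Then $qu = q \circ u = (p \circ u) \circ u = p \circ (u \circ u) = p \circ u = q$, again invoking $u \circ u = u$. Alternatively one can note $\Im q = \Im(p \circ u) \subseteq \Im u$ and apply (i) pointwise: for every $x \in X$, $q(x) \in \Im u$, so $(qu)(x) = u(q(x)) = q(x)$ by (i); hence $qu = q$. Either route works; I would present the first since it is shorter, but the second makes the dependence of (ii) on (i) explicit.

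The only point requiring a word of care is the appeal to $Iu = I$ in (ii): this is precisely the standard property of minimal ideals stated in the preliminaries (for a minimal ideal $I$ and any $a \in I$, $I \cdot a = I$), applied with $a = u \in J(I) \subseteq I$. No compactness or topology is needed. I expect no genuine obstacle; the lemma is a purely algebraic observation packaged for later use.
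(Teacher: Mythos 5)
Your primary argument is correct and is essentially the paper's own proof: (i) is immediate from $u \circ u = u$, and (ii) uses $I = I \circ u$ (minimality) to write $q = p \circ u$ and then associativity plus idempotency. The only misstep is the parenthetical ``alternative route'' in (ii): with the convention used here $qu = q \circ u$, so $(qu)(x) = q(u(x))$, not $u(q(x))$, and $\Im(p \circ u) \subseteq \Im p$ rather than $\Im u$ — so that variant does not reduce to (i) as stated, and you should present only the first route.
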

\begin{proof}
(i) Follows directly from $u \circ u = u$.

(ii) Since $q \in I = I \circ u$, there is $\alpha \in I$ such that $q = \alpha \circ u$. Therefore 
\[
qu = \alpha u u = \alpha u = q. \qedhere
\]
\end{proof}

\begin{theorem} Assume $I \trianglelefteqslant_m \End_G(\cA^d)$. 
\begin{enumerate}[label=(\roman{*})]
\item All $\varphi \in I$ share a common kernel $K_I$, which uniquely determines $I$.
\item Endomorphisms $\varphi, \psi \in I$ have the same image if and only if they belong to the same ideal subgroup $uI$, $u \in J(I)$.
\item Let $\mathcal{R}_I = \{ \Im \varphi : \varphi \in I \}$. Then $\mathcal{R}_I = \mathcal{R}_J$ for any $J \trianglelefteqslant_m \End_G(\cA^d)$. 
\item Let $\mathcal{K} = \{ K_I : I \trianglelefteqslant_m \End_G(\cA^d) \}$ and write $\mathcal{R}$ for the common $\mathcal{R}_I$ for all $I \trianglelefteqslant_m \End_G(\cA^d)$. Then the function 
\[
\{ uI : I \trianglelefteqslant_m \End_G(\cA^d), u \in J(I) \} \to \mathcal{K} \times \mathcal{R}
\]
assigning to any $uI$ the common $(\Ker \varphi, \Im \varphi)$ for all $\varphi \in uI$, is a bijection.
\end{enumerate}
\end{theorem}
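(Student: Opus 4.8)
The plan is to derive all four clauses from the structure of idempotents in minimal ideals. Two observations make the needed machinery available. First, $\End_G(\cA^d)$ is literally a set of functions $\cA^d \to \cA^d$ under composition, so Lemma~\ref{lem:im} applies to it directly. Second, by Theorem~\ref{thm:iso} it is isomorphic to $E(S(\cA))$, so Ellis's Theorem~\ref{thm:ell} applies: every minimal ideal $I$ has a nonempty set $J(I)$ of idempotents, splits as a disjoint union of ideal subgroups $uI$ with $u \in J(I)$, and for $u \in J(I)$ and any minimal ideal $J$ there is $v \in J(J)$ with $uv = v$ and $vu = u$. I shall also use that for a minimal ideal $I$ one has $I\varphi = I$ for every $\varphi \in I$, and $I = Su$ for every $u \in J(I)$ (since $Su$ is the smallest ideal containing $u$ and $I$ is minimal).

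The technical core is a rigidity statement: \emph{if $u, v \in \End_G(\cA^d)$ are idempotent with $\Ker u = \Ker v$ and $\Im u \subseteq \Im v$, then $u = v$.} I would prove this first: given $B \in \cA^d$, since $u$ is a Boolean homomorphism and $u \circ u = u$ we get $u(B \symdif u(B)) = u(B) \symdif u(B) = \varnothing$, so $B \symdif u(B) \in \Ker u = \Ker v$, whence $v(B) = v(u(B))$; but $u(B) \in \Im u \subseteq \Im v$, and $v$, being idempotent, fixes $\Im v$ pointwise (Lemma~\ref{lem:im}(i)), so $v(u(B)) = u(B)$; thus $v(B) = u(B)$.

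For (i): fix $u \in J(I)$. For $\varphi \in I$, Lemma~\ref{lem:im}(ii) gives $\varphi \circ u = \varphi$, so $\Ker \varphi \supseteq \Ker u$; and $I\varphi = I$ gives $u = \alpha \circ \varphi$ for some $\alpha \in I$, so $\Ker u \supseteq \Ker \varphi$. Hence all $\varphi \in I$ share the kernel $K_I := \Ker u$. If $J$ is minimal with $K_J = K_I$, pick $v \in J(J)$ with $uv = v$ and $vu = u$; from $u = v \circ u$ we get $\Im u = v[\Im u] \subseteq \Im v$, so the rigidity lemma gives $u = v$, and then $I = Su = Sv = J$. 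Next I record that every $\varphi \in uI$ has $\Im \varphi = \Im u$: indeed $\varphi = u \circ \varphi$ forces $\Im \varphi \subseteq \Im u$, while the group inverse $\varphi^{-1} \in uI$ satisfies $\varphi \circ \varphi^{-1} = u$, forcing $\Im u \subseteq \Im \varphi$. So elements of a common ideal subgroup have a common image; conversely, if $\varphi \in uI$ and $\psi \in vI$ (same $I$) satisfy $\Im \varphi = \Im \psi$, then $\Im u = \Im v$ and $\Ker u = \Ker v = K_I$, so the rigidity lemma gives $u = v$, hence $uI = vI$ — proving (ii). For (iii), given $u \in J(I)$ choose $v \in J(J)$ with $uv = v$, $vu = u$; then $u = v \circ u$ and $v = u \circ v$ give $\Im u \subseteq \Im v$ and $\Im v \subseteq \Im u$, so $\Im u = \Im v \in \mathcal{R}_J$, and symmetrically $\mathcal{R}_I = \mathcal{R}_J$.

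Finally (iv): the map is well-defined by (i) (common kernel on $I$) and the common-image observation; injectivity is exactly the converse direction of (ii) together with the uniqueness of $I$ given $K_I$ from (i); and for surjectivity, given $(K, R) \in \mathcal{K} \times \mathcal{R}$, take the minimal ideal $I$ with $K_I = K$, use $R \in \mathcal{R} = \mathcal{R}_I$ (by (iii)) to write $R = \Im \varphi$ for some $\varphi \in I$, and let $u \in J(I)$ be the idempotent with $\varphi \in uI$, so that $uI$ maps to $(K_I, \Im u) = (K, R)$. I expect the only genuine obstacle to be isolating and proving the rigidity lemma; once it is in hand, every clause reduces to a one- or two-line manipulation of the identities $\varphi u = \varphi$, $I\varphi = I$, $I = Su$ and the relation from Theorem~\ref{thm:ell}(iv). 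Minor care is needed with the direction of composition in these left ideals and with confirming that Ellis's theorem indeed applies, which it does via Theorem~\ref{thm:iso}.
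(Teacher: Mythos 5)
Your proposal is correct, and it reaches the conclusion by a genuinely different route on the two substantive points. The paper handles (i) by moving to $S(\cA^d)$ via Theorem~\ref{thm:iso} and invoking Lemma~\ref{lem:ker}: the identity $\cl(G \cdot p) = \bigcap\{[B]^c : B \in \Ker d_p\}$ yields in one stroke both that all elements of a minimal ideal share a kernel and that this kernel determines the ideal; and it proves the converse direction of (ii) without any mention of kernels, by combining Lemma~\ref{lem:im}(i) and (ii) to get $u = uv = v$. You instead stay entirely inside $\End_G(\cA^d)$ and isolate a rigidity lemma — idempotent Boolean endomorphisms with equal kernels and nested images coincide — whose proof exploits the Boolean-homomorphism structure (preservation of $\symdif$ and of $\varnothing$); the common kernel then falls out of the identities $\varphi = \varphi \circ u$ and $I \circ \varphi = I$, the uniqueness of $I$ from rigidity together with Theorem~\ref{thm:ell}(iv) and $I = S \circ u$, and the converse of (ii) again from rigidity. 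What your approach buys is self-containedness: no appeal to Lemma~\ref{lem:ker} or to the Stone space at all, only elementary semigroup identities plus the fact that the elements are Boolean homomorphisms; what the paper's approach buys is that the kernel--orbit-closure correspondence of Lemma~\ref{lem:ker} is established once and reused elsewhere (e.g.\ in Fact~\ref{fact:imalg}), and its argument for (ii) works for arbitrary idempotent maps without using the Boolean structure. Parts (iii) and (iv) are essentially identical in both treatments. Two small points you flagged are indeed harmless: Lemma~\ref{lem:im}(i) is stated for idempotents in minimal ideals, but any idempotent map fixes its image pointwise (and your idempotents lie in $J(I)$ anyway), and $I = S \circ u$ uses $u = u \circ u \in S \circ u$ together with minimality.
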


\begin{proof} 
(i) By Theorem~\ref{thm:iso}, for any $J \trianglelefteqslant_m \End_G(\cA^d)$ we can write $I = \{ d_p : p \in I' \}$ and $J = \{ d_q : q \in J' \}$, where $I', J' \trianglelefteqslant_m S(\cA^d)$. It suffices to show that for $p \in I', q \in J'$ we have that $\Ker d_p = \Ker d_q$ if and only if $I' = J'$. Indeed, if $\Ker d_p = \Ker d_q$, then by Lemma~\ref{lem:ker} we have  $I' = \cl( G \cdot p) = \cl( G \cdot q ) = J'$. On the other hand, if $I' = J'$, then as in Lemma~\ref{lem:ker},
\[
\Ker d_p = \{ B \in \cA^d : [B] \cap I' = \varnothing \} = \Ker d_q.
\]

(ii) Let $u, v \in J(I)$ and fix $\varphi \in uI, \psi \in vI$. We want to show that $\Im \varphi = \Im \psi$ if and only if $uI = vI$. On the one hand, if $uI = vI$, then by simple group properties there is $\alpha \in uI$ such that $\varphi = \psi \circ \alpha$. Consequently, $\Im \varphi \subseteq \Im \psi$ and $\Im \varphi = \Im \psi$ by symmetry. On the other hand, assume that $\Im \varphi = \Im \psi$. Using the already proved implication we get $\Im u = \Im \varphi = \Im \psi = \Im v$. From Lemma~\ref{lem:im} (i) it follows that $uv = v$ (because $u$ equals identity on $\Im v = \Im u$) and from Lemma~\ref{lem:im} (ii) that $uv = u$, hence $u = uv = v$ and $uI = vI$.

(iii) Take $R \in \mathcal{R}_I$ and pick $u \in J(I)$ so that $R = \Im u$. By Theorem~\ref{thm:ell}, there is $v \in J(J)$ such that $uv = v$ and $vu = u$. It follows that $\Im v = \Im u = R$, so $R \in \mathcal{R}_J$.

(iv) Follows from (i) -- (iii).
\end{proof}

The elements of $\mathcal{R}$ are called \emph{image algebras}. By Theorem~\ref{thm:iso}, they are exactly the algebras of the form $R = d_q[\cA^d]$, where $q \in S(\cA^d)$ is almost periodic. The image algebras determine any minimal ideal (or subflow) of $S(\cA^d)$ up to $G$-flow isomorphism.

\begin{fact} \label{fact:imalg} Assume $R \in \mathcal{R}$ and $I \trianglelefteqslant_m S(\cA^d)$. Then $I \cong S(R)$ as $G$-flows. Moreover, one such $G$-flow isomorphism between $I$ and $S(R)$ is $\pi \restriction I$, where $\pi : S(\cA^d) \to S(R)$ denotes the restriction.
\end{fact}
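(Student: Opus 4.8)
The plan is to prove that $\pi \restriction I$ is a continuous, $G$-equivariant bijection — which between $G$-flows forces it to be a $G$-flow isomorphism — and in fact to exhibit its two-sided inverse explicitly. First I would check that $I$ is genuinely a subflow of $S(\cA^d)$, so that $\pi \restriction I$ is defined and is a $G$-flow morphism: for any $q \in I$ the set $S(\cA^d) \ast q$ is a non-empty ideal contained in $I$, so $I = S(\cA^d) \ast q = r_q[S(\cA^d)]$ by minimality; being a continuous image of a compact space it is closed, and being an ideal it is $G$-invariant, hence a subflow. It then suffices to produce a two-sided inverse of $\pi \restriction I$.

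Next I would fix an idempotent $u \in J(I)$ with $d_u[\cA^d] = R$. Such a $u$ exists: by part (iii) of the theorem immediately preceding the statement, $\mathcal{R}_I = \mathcal{R} \ni R$, and by part (ii) every member of $\mathcal{R}_I = \{\Im d_q : q \in I\}$ equals $\Im d_u$, where $u \in J(I)$ is the idempotent of the ideal subgroup $uI$ containing the given $q$. Then I would factor $d_u = j \circ \delta_u$, where $j : R \hookrightarrow \cA^d$ is the inclusion — a $G$-algebra embedding, so $j^* = \pi$ — and $\delta_u : \cA^d \to R$ is the corestriction of the $G$-algebra homomorphism $d_u$ to its image $R$. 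The algebraic core is: $d_u \circ d_u = d_{u \ast u} = d_u$ by Fact~\ref{fact:dual} and idempotency of $u$, and therefore $d_u \restriction R = \id_R$ (every element of $R$ is some $d_u(A)$), i.e. $\delta_u \circ j = \id_R$. Dualizing the identities $\delta_u \circ j = \id_R$ and $j \circ \delta_u = d_u$ gives
\[
\pi \circ \delta_u^* = (\delta_u \circ j)^* = \id_{S(R)}, \qquad \delta_u^* \circ \pi = (j \circ \delta_u)^* = d_u^* = r_u .
\]

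Finally I would read off images and restrictions. Since $r_u(p) = d_u^*(p) = p \ast u$, we have $\Im r_u = S(\cA^d) \ast u = I$ (minimality of $I$ again), and, $\pi$ being onto, $\delta_u^*[S(R)] = (\delta_u^* \circ \pi)[S(\cA^d)] = \Im r_u = I$; together with the injectivity of $\delta_u^*$ this exhibits $\delta_u^*$ as a continuous bijection of $S(R)$ onto $I$. Moreover $r_u \restriction I = \id_I$: for $p \in I$ one has $p \ast u = p$, which is Lemma~\ref{lem:im}(ii) transported along the semigroup-and-$G$-flow isomorphism $(S(\cA^d), \ast) \cong E(S(\cA)) \subseteq S(\cA)^{S(\cA)}$ of Theorem~\ref{thm:iso}. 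Hence $\delta_u^* \circ (\pi \restriction I) = r_u \restriction I = \id_I$ and $(\pi \restriction I) \circ \delta_u^* = \pi \circ \delta_u^* = \id_{S(R)}$, so $\pi \restriction I$ is a bijection with inverse $\delta_u^*$; being $G$-equivariant and a continuous bijection of compact Hausdorff spaces, it is a $G$-flow isomorphism $I \to S(R)$.

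The main obstacle is organizational rather than conceptual: one must make sure the given image algebra $R$ is realized as $\Im d_u$ for an idempotent of the \emph{given} minimal ideal $I$ (this is exactly where parts (ii)--(iii) of the preceding theorem are used), transport Lemma~\ref{lem:im}(ii) correctly from the concrete function semigroup $E(S(\cA))$ to $(S(\cA^d), \ast)$, and confirm that $\delta_u$ is an honest $G$-algebra homomorphism so that Stone duality may be applied to it. Everything else is a routine chase with the contravariant duality functor.
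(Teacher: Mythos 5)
Your proposal is correct and follows essentially the same route as the paper's proof: choose an idempotent $u \in J(I)$ with $R = \Im d_u$, use $d_u \circ d_u = d_{u \ast u} = d_u$ to get that the corestriction of $d_u$ is a one-sided inverse of the inclusion $R \hookrightarrow \cA^d$, dualize via Stone duality, and identify the image of the dual map with $I$ so that $\pi \restriction I$ is its inverse. The only minor divergence is bookkeeping: the paper identifies that image as $\{ p \in S(\cA^d) : p \cap \Ker d_u = \varnothing \} = \cl(G \cdot u) = I$ via Fact~\ref{fact:im} and Lemma~\ref{lem:ker}, whereas you obtain it as $S(\cA^d) \ast u = I$ from minimality and then use Lemma~\ref{lem:im}(ii) (transported through Theorem~\ref{thm:iso}) to get $r_u \restriction I = \id_I$ — both routes are sound.
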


\begin{proof}
Take $u \in J(I)$ such that $R = \Im d_u$ and consider the dual diagrams

\begin{displaymath}
\begin{minipage}{0.5 \textwidth}
\begin{center}
\begin{tikzcd}[column sep=1.5em]
& \cA^d \arrow{dr}{d_u} \\
R \arrow{ur}{\iota} \arrow{rr}{\id} & & R
\end{tikzcd}
\end{center}
\end{minipage}
\begin{minipage}{0.5 \textwidth}
\begin{center}
\begin{tikzcd}[column sep=1em]
& S(\cA^d) \arrow{dl}{\pi} \\
S(R) & & S(R) \arrow{ll}{\id} \arrow{ul}{r_u}
\end{tikzcd}
\end{center}
\end{minipage}
\end{displaymath}
Since $d_u \circ d_u = d_{u \ast u} = d_u$, the diagram on the left is commutative, hence so is the one on the right. Moreover, $d_u$ is an epimorphism of $G$-algebras, so by Fact~\ref{fact:dual-func}, $r_u$ is an injective $G$-flow morphism. Therefore it is an isomorphism onto its image (the continuity of $r_u^{-1}$ follows from compactness). By Fact~\ref{fact:im} and Lemma~\ref{lem:ker}, that image is equal to
\[
\{ p \in S(\cA^d) : p \cap \Ker d_u = \varnothing \} = \cl( G \cdot u ) = I. \qedhere
\]
Hence $I \cong S(R)$. Finally, from the commutativity of the right hand side diagram we get that $\pi \restriction I$ is the inverse of $r_u$, so it is also an isomorphism.
\end{proof}

Now we will explain the relation between image algebras and strongly generic sets. We should note that some similar ideas have already been studied in \cite{BF97} for an arbitrary semigroup $G$, but the study only applies to the universal minimal $G$-flow and has no model-theoretic connotations.

\begin{definition} \label{def:sgen} \leavevmode
\begin{enumerate}[label=(\alph{*})] 
\item A subset $B \subseteq G$ is \emph{generic} if finitely many (left) translates of $B$ cover $G$, i.e.
\[
G = \bigcup_{i=1}^n g_i \cdot B \quad \text{for some } g_1, \ldots, g_n \in G.
\]
\item A $G$-algebra $\cB \le \cP(G)$ is \emph{generic} if every set in $\cB \setminus \{ \varnothing \}$ is generic.
\item A subset $B \subseteq G$ is \emph{strongly generic} if the $G$-algebra generated by $B$ is generic.
\end{enumerate}
\end{definition}

\begin{lemma} \label{lem:gen-min}
Assume $\cB \le \cP(G)$ is a $G$-algebra. Then 
\[
\cB \text{ is generic} \iff S(\cB) \text{ is a minimal subflow of itself.}
\]
\end{lemma}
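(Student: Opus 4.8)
The plan is to prove both implications by translating the combinatorial notion of genericity for subsets of $G$ directly into the dynamical notion of minimality for $S(\cB)$, using Stone duality together with Remark~\ref{rem:min-gen}. The key dictionary entries are: $[B] = \varnothing$ iff $B = \varnothing$; $[B] = S(\cB)$ iff $B = G$; $[B \cup C] = [B] \cup [C]$; and, crucially, $g \cdot [B] = [gB]$ for $g \in G$, which follows from the definition of the $G$-action on $S(\cB)$, namely $g \cdot q = \{ gA : A \in q \}$. Also note that $S(\cB) \neq \varnothing$ since $\varnothing \neq G$ in $\cB$, so that $S(\cB)$ is indeed a subflow of itself.

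For the implication ($\Leftarrow$), suppose $S(\cB)$ is a minimal subflow of itself and fix $B \in \cB \setminus \{ \varnothing \}$. Then $[B]$ is a non-empty clopen subset of $S(\cB)$, so by Remark~\ref{rem:min-gen} there are $g_1, \ldots, g_n \in G$ with $S(\cB) \subseteq g_1[B] \cup \cdots \cup g_n[B] = [g_1 B \cup \cdots \cup g_n B]$. The right-hand side can equal $S(\cB)$ only when $g_1 B \cup \cdots \cup g_n B = G$, so $B$ is generic; as $B$ was arbitrary, $\cB$ is generic.

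For ($\Rightarrow$), suppose $\cB$ is generic, and let $U \subseteq S(\cB)$ be open with $U \neq \varnothing$. Since the basic clopen sets $[B]$, $B \in \cB$, form a basis of $S(\cB)$, we may pick $B \in \cB$ with $\varnothing \neq [B] \subseteq U$; in particular $B \neq \varnothing$, so $B$ is generic. Choosing $g_1, \ldots, g_n \in G$ with $G = g_1 B \cup \cdots \cup g_n B$, we obtain $S(\cB) = [G] = g_1[B] \cup \cdots \cup g_n[B] \subseteq g_1 U \cup \cdots \cup g_n U$. By the non-trivial direction of Remark~\ref{rem:min-gen}, $S(\cB)$ is a minimal subflow of itself.

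There is no serious obstacle here; the only points that need a moment's care are checking the duality dictionary — especially the identity $g \cdot [B] = [gB]$ for the action on clopens — and recalling that an arbitrary open subset of a Stone space contains a non-empty basic clopen set, which is precisely what allows the general minimality criterion of Remark~\ref{rem:min-gen} to be tested on basic clopen sets alone.
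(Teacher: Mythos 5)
Your proof is correct and takes essentially the same route as the paper's: what you obtain by invoking both directions of Remark~\ref{rem:min-gen} on the basic clopen sets $[B]$ is exactly the proper-subflow/compactness argument the paper carries out directly, combined with the same Stone-duality dictionary ($g\cdot[B]=[gB]$, $[B\cup C]=[B]\cup[C]$, $[B]=S(\cB)$ iff $B=G$). No gaps.
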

\begin{proof} $({\implies})$ Assume for contradiction there is a proper non-empty subflow $Y \subseteq S(\cB)$. Pick $p \in S(\cB) \setminus Y$ and find $B \in \cB$ such that $p \in [B]$ and $[B] \cap Y = \varnothing$. Then $B \neq \varnothing$ and $g[B] \cap Y = g ( [B] \cap Y ) = \varnothing$ for any $g \in G$. On the other hand, for some $g_1, \ldots, g_n \in G$ we have that $G = \bigcup_{i=1}^n g_i B$, which implies that $S(\cB) = [G] = \bigcup_{i=1}^n g_i [B]$, a contradiction.

\vspace{2mm} \noindent
$({\impliedby})$ Take any $B \in \cB \setminus \{ \varnothing \}$ and consider the set 
\[
G \cdot [B] := \bigcup_{g \in G} g[B].
\]
Its complement is a proper $G$-subflow of $S(\cB)$, so by assumption it must be empty, i.e. $G \cdot [B] = S(\cB) = [G]$. By compactness, $[G] = \bigcup_{i=1}^n g_i[B]$ for some $g_1, \ldots, g_n \in G$, which means that $G = \bigcup_{i=1}^n g_i B$, so $B$ is generic.
\end{proof}

\begin{corollary} \label{cor:im-gen} Image algebras are generic.
\end{corollary}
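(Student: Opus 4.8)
The plan is to transfer the statement through Stone duality and apply Lemma~\ref{lem:gen-min}. Fix an image algebra $R \in \mathcal{R}$ and any minimal ideal $I \trianglelefteqslant_m S(\cA^d)$. By Lemma~\ref{lem:gen-min} it suffices to show that $S(R)$ is a minimal subflow of itself, equivalently that $S(R)$ is a minimal $G$-flow. By Fact~\ref{fact:imalg} there is a $G$-flow isomorphism $I \cong S(R)$, and minimality of $G$-flows is preserved by such isomorphisms (e.g.\ via Fact~\ref{fact:per}(iii) applied to the isomorphism and its inverse), so it is enough to prove that $I$ is a minimal subflow of $S(\cA^d)$.

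Now $I$ is a minimal \emph{ideal} of the left topological semigroup $(S(\cA^d), \ast)$, so I would invoke the coincidence of minimal ideals and minimal subflows in this semigroup. This follows from Remark~\ref{rem:sub}(ii) via the isomorphism of Theorem~\ref{thm:iso} (which is simultaneously one of semigroups and of $G$-flows), or directly: for $q \in S(\cA^d)$ the map $r_q = (d_q)^* : S(\cA^d) \to S(\cA^d)$ is continuous, so by Remark~\ref{rem:den} and compactness $\cl(G \cdot q) = \cl r_q[\{\hat{g} : g \in G\}] = r_q[S(\cA^d)] = S(\cA^d) \ast q$. For $y \in I$ this gives $\cl(G \cdot y) = S(\cA^d) \ast y = I$, the last equality because $I = I \ast y \subseteq S(\cA^d) \ast y$ and $S(\cA^d) \ast y \subseteq S(\cA^d) \ast I \subseteq I$. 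Hence $\cl(G \cdot y) = I$ for every $y \in I$, i.e.\ $I$ is a minimal subflow of $S(\cA^d)$.

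Chaining the reductions together: $I$ is a minimal subflow of $S(\cA^d)$, hence $S(R) \cong I$ is a minimal $G$-flow, hence a minimal subflow of itself, hence $R$ is generic by Lemma~\ref{lem:gen-min}. I do not anticipate a genuine obstacle; the only point deserving care is justifying ``minimal ideal $=$ minimal subflow'' for $S(\cA^d)$ rather than for an abstract $E(X)$, which is immediate since the isomorphism of Theorem~\ref{thm:iso} respects both structures (equivalently, the one-line argument above applies verbatim).
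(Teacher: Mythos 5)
Your proof is correct and follows essentially the same route as the paper, which deduces the corollary directly from Fact~\ref{fact:imalg} and Lemma~\ref{lem:gen-min}; you merely make explicit the step that minimal ideals of $S(\cA^d)$ are minimal subflows, which the paper leaves implicit via Theorem~\ref{thm:iso} and Remark~\ref{rem:sub}(ii). (The paper also later gives a second, different proof via almost periodic points of $2^G$, but your argument matches the original one.)
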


\begin{proof} This follows directly from Fact~\ref{fact:imalg} and Lemma~\ref{lem:gen-min}.
\end{proof}

\begin{corollary} If $p \in S(\cA^d)$ is almost periodic and $B \in \cA^d$, then $d_p(B)$ is strongly generic. \noproof
\end{corollary}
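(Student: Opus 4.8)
The plan is to notice that $d_p(B)$ sits inside the $G$-algebra $\Im d_p = d_p[\cA^d]$ and that this $G$-algebra is generic; then the $G$-algebra generated by $d_p(B)$, being a $G$-subalgebra of $\Im d_p$, is generic as well, which is exactly what is needed.

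Concretely, first I would recall that since $\cA^d$ is $d$-closed, $d_p \colon \cA^d \to \cA^d$ is a $G$-algebra endomorphism, so by the first isomorphism theorem for $G$-algebras $\Im d_p$ is a $G$-subalgebra of $\cA^d \le \cP(G)$. Because $p$ is almost periodic, $\Im d_p$ is an image algebra, i.e. a member of $\mathcal{R}$ — this is precisely the description of $\mathcal{R}$ recorded just after the preceding theorem (obtained from Theorem~\ref{thm:iso}, which identifies the almost periodic points of $S(\cA^d)$ with the elements of the minimal ideals of $\End_G(\cA^d)$). By Corollary~\ref{cor:im-gen}, every image algebra is generic, so $\Im d_p$ is generic.

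Finally, let $\cB \le \cP(G)$ be the $G$-algebra generated by $d_p(B)$. Since $\Im d_p$ is itself a $G$-algebra containing $d_p(B)$, we have $\cB \le \Im d_p$. Every non-empty member of $\cB$ is then a non-empty member of the generic $G$-algebra $\Im d_p$, hence admits finitely many left translates covering $G$; thus $\cB$ is generic, which by Definition~\ref{def:sgen} means $d_p(B)$ is strongly generic. There is no genuine obstacle here; the one point worth spelling out is the implication ``$p$ almost periodic $\implies \Im d_p$ is an image algebra'', but this is exactly the content of the image-algebra discussion preceding the corollary, so nothing new is required.
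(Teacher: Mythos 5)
Your argument is correct and is exactly the intended justification in the paper: since $p$ is almost periodic, $\Im d_p = d_p[\cA^d]$ is an image algebra (this is the description of $\mathcal{R}$ given right before Fact~\ref{fact:imalg}), hence generic by Corollary~\ref{cor:im-gen}, and the $G$-algebra generated by $d_p(B)$ is a $G$-subalgebra of it, so it is generic and $d_p(B)$ is strongly generic. The paper leaves this unproved precisely because it is this immediate combination of those two facts, so nothing further is needed.
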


\begin{lemma} \label{lem:imalg-eq} Assume $R_1, R_2 \in \mathcal{R}$ and $R_1 \subseteq R_2$. Then $R_1 = R_2$.
\end{lemma}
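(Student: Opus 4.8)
The claim is that two image algebras cannot be properly nested. My plan is to leverage the bijective correspondence between ideal subgroups of $\End_G(\cA^d)$ and pairs $(K_I, R)$ together with the structure theorem of Ellis, rather than arguing about the $G$-algebras directly. First I would pass to the semigroup side: write $R_1 = \Im d_u$ and $R_2 = \Im d_v$ for idempotents $u, v \in \End_G(\cA^d)$ lying in (possibly different) minimal ideals $I_1, I_2$ (this is possible by Theorem on image algebras and the remark that every element of $\mathcal{R}$ has this form, since $\mathcal{R} = \mathcal{R}_{I}$ for any minimal ideal $I$). The hypothesis $R_1 \subseteq R_2$ then says $\Im d_u \subseteq \Im d_v$, and I want to upgrade this to equality.

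The key step is to use the ``connecting idempotents'' clause of Ellis's theorem (Theorem~\ref{thm:ell}(iv)): given $u \in J(I_1)$ there is $v' \in J(I_2)$ with $uv' = v'$ and $v'u = u$; moreover, by part (ii) of the image-algebra theorem, $\Im d_{v'} = \Im d_u = R_1$, so we may as well assume $v$ itself satisfies $\Im d_v = R_1$ — wait, that would already give $R_1 = R_2$ only if $v$ was our chosen idempotent with $\Im d_v = R_2$. So instead I would argue directly with $u, v$ as given. From $R_1 \subseteq R_2$, i.e. $\Im d_u \subseteq \Im d_v$, and $d_v$ being idempotent (so $d_v$ restricted to its image is the identity by Lemma~\ref{lem:im}(i) applied in $\End_G(\cA^d) \subseteq (\cA^d)^{\cA^d}$, reading endomorphisms as functions on the set $\cA^d$): for every $B \in \cA^d$, $d_u(B) \in \Im d_u \subseteq \Im d_v$, hence $d_v(d_u(B)) = d_u(B)$, i.e. $d_v \circ d_u = d_u$. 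Dually this says the corresponding composition of $G$-flow endomorphisms satisfies $r_u \circ r_v = r_u$ — but let me instead stay with endomorphisms of $\cA^d$: $d_v d_u = d_u$ means $d_u = d_v d_u$, so $\Im d_u \subseteq \Im d_v$ is witnessed, and symmetrically I need $d_u d_v = d_v$ to conclude $\Im d_v \subseteq \Im d_u$.

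To get the reverse inclusion, I would invoke minimality of the ideals. Since $I_1 = \End_G(\cA^d) \cdot u$ is a minimal ideal and $d_v \cdot d_u \in I_1$ lies in it, while $d_u = d_v d_u$, multiplying on the left: the element $d_u d_v \in I_2$ (as $I_2 \ni d_v$ and $I_2$ is a left ideal... actually $d_u d_v \in \End_G(\cA^d) \cdot d_v = I_2$). Now $d_u d_v$ is an idempotent: $(d_u d_v)(d_u d_v) = d_u (d_v d_u) d_v = d_u d_u d_v = d_u d_v$ using $d_v d_u = d_u$ twice (once as $d_v d_u = d_u$, then $d_u d_u = d_u$ since $u$ is idempotent). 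So $d_u d_v \in J(I_2)$, and $\Im(d_u d_v) \subseteq \Im d_u$. But by the image-algebra theorem part (iii), all idempotents across all minimal ideals give the same family of images $\mathcal{R}$, and within $I_2$ part (ii) says two idempotents have equal image iff they lie in the same ideal subgroup; more usefully, $\Im(d_u d_v) \subseteq \Im d_u = R_1$ while $\Im(d_u d_v) \in \mathcal{R}$ — so if I had already shown the statement for idempotents in a single ideal I could finish, but that is circular.

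Let me restructure: the cleanest route is to note $\Im d_u = R_1 \subseteq R_2 = \Im d_v$ gives $d_v d_u = d_u$ as above. Then $d_u d_v d_u = d_u d_u = d_u$, so $d_u d_v$ acts as identity on $\Im d_u$, whence $\Im d_u \subseteq \Im(d_u d_v)$; combined with $\Im(d_u d_v) \subseteq \Im d_u$ we get $\Im(d_u d_v) = \Im d_u = R_1$. Since $d_u d_v \in I_2$ is idempotent (shown above) with image $R_1$, and $d_v \in I_2$ is idempotent with image $R_2$, part (iii) of the image-algebra theorem is not quite what I want; but $R_1, R_2 \in \mathcal{R}_{I_2}$ both arise as images of idempotents in the same minimal ideal $I_2$. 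Now I can finish by a direct computation inside $I_2$: by Lemma~\ref{lem:im}(ii), for the idempotent $d_u d_v \in I_2$ and any $q \in I_2$ we have $q \cdot (d_u d_v) = q$, in particular $d_v (d_u d_v) = d_v$; thus $d_v = d_v d_u d_v = (d_v d_u) d_v = d_u d_v$ using $d_v d_u = d_u$. Hence $d_v = d_u d_v$, giving $\Im d_v \subseteq \Im d_u$, i.e. $R_2 \subseteq R_1$, and therefore $R_1 = R_2$.

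The main obstacle is the bookkeeping with which idempotent lives in which minimal ideal and marshalling exactly the right instances of $uv = v$, $vu = u$, and Lemma~\ref{lem:im}(ii); once the identity $d_v d_u = d_u$ is extracted from the inclusion of images, the rest is a short manipulation, but getting there requires using that $d_v$ is idempotent and acts as the identity on its own image (Lemma~\ref{lem:im}(i)), which in turn forces $d_v d_u = d_u$ pointwise on $\cA^d$.
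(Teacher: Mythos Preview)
Your argument is correct. Both your proof and the paper's hinge on the same opening observation: since $d_v$ is idempotent, Lemma~\ref{lem:im}(i) gives that $d_v$ acts as the identity on $\Im d_v$, and hence the inclusion $\Im d_u \subseteq \Im d_v$ forces $d_v d_u = d_u$. From here the two diverge slightly. The paper first uses the preceding structure theorem to place \emph{both} idempotents in a single minimal ideal $I$; then Lemma~\ref{lem:im}(ii) applied to $v \in I$ and $u \in J(I)$ gives $v \ast u = v$, i.e.\ $d_v d_u = d_v$, and combining with $d_v d_u = d_u$ yields $d_u = d_v$ (so in fact $u = v$) in one stroke. You instead allow $u$ and $v$ to sit in different minimal ideals, manufacture the new idempotent $d_u d_v \in J(I_2)$, and apply Lemma~\ref{lem:im}(ii) to \emph{that} idempotent to get $d_v(d_u d_v) = d_v$, whence $d_v = d_u d_v$ and $R_2 \subseteq R_1$. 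Your route is a couple of steps longer but has the minor advantage of not relying on the equality $\mathcal{R}_{I_1} = \mathcal{R}_{I_2}$ established just before; the paper's route is shorter precisely because it cashes in that reduction up front.
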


\begin{proof} Take $I {\trianglelefteqslant_m} S(\cA^d)$ and $u, v \in J(I)$ such that $R_1 = d_u[\cA^d]$, $R_2 = d_v[\cA^d]$. It suffices to show that $u = v$. Take any $B \in \cA^d$. By assumption we can find $B' \in \cA^d$ such that $d_u B = d_v B'$. By Lemma~\ref{lem:im}, $v \ast u = v$, so
\[
d_u B = d_v B' = d_v( d_v B' ) = d_v( d_u B ) = d_{v \ast u} B = d_v B.
\]
In particular,
\[
B \in u \iff e \in d_u B \iff e \in d_v B \iff B \in v,
\]
hence $u = v$, as desired.
\end{proof}

\begin{lemma} \label{lem:gen-imalg} Every generic $G$-subalgebra $\cB \le \cA^d$ can be extended to an image algebra $R \in \mathcal{R}$.
\end{lemma}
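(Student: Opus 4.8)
The plan is to produce a single almost periodic point $q \in S(\cA^d)$ for which the endomorphism $d_q$ fixes every element of $\cB$; then $\cB = d_q[\cB] \subseteq d_q[\cA^d]$, and since $q$ is almost periodic, $R := d_q[\cA^d]$ is an image algebra (it lies in $\mathcal{R}$ by the characterization of image algebras recorded right after the theorem introducing $\mathcal{R}$). So the whole proof reduces to finding a suitable $q$.

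To locate it I would first use Lemma~\ref{lem:gen-min}: since $\cB$ is generic, $S(\cB)$ is a minimal subflow of itself. Let $\pi : S(\cA^d) \to S(\cB)$ be the restriction morphism, i.e. the dual of the inclusion $\cB \le \cA^d$; it is a $G$-flow morphism with $\pi(p) = p \cap \cB$, so $\pi(\hat e) = \hat e \cap \cB$. Pick any minimal subflow $I \subseteq S(\cA^d)$ (one exists by compactness and Zorn's lemma). By Fact~\ref{fact:per}(i), $\pi[I]$ is a non-empty subflow of $S(\cB)$, hence $\pi[I] = S(\cB)$ by minimality of $S(\cB)$; in particular $\hat e \cap \cB \in \pi[I]$, so there is $q \in I$ with $q \cap \cB = \hat e \cap \cB$. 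Being a point of a minimal subflow, $q$ is almost periodic.

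The crux is to upgrade the equality of traces $q \cap \cB = \hat e \cap \cB$ to the stronger statement $d_q(B) = B$ for every $B \in \cB$, and this is exactly where closure of $\cB$ under left translation is essential: for $B \in \cB$ and $h \in G$ we have $h^{-1}B \in \cB$, whence
\[
h \in d_q(B) \iff h^{-1}B \in q \iff h^{-1}B \in \hat e \iff h \in B,
\]
so $d_q(B) = B$. Therefore $\cB \subseteq d_q[\cA^d] =: R$, and as noted $R \in \mathcal{R}$ because $q$ is almost periodic (if one prefers to work with an idempotent, replace $q$ by the identity $u$ of the group $uI \ni q$ and use that $\Im d_u = \Im d_q$ since $u$ and $q$ lie in the same ideal subgroup).

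I expect the only genuinely non-routine point to be the translation-closedness computation together with the idea of chasing a preimage of $\hat e \cap \cB$ under $\pi$; everything else (existence of a minimal subflow, minimality of $S(\cB)$, almost periodicity of points of a minimal subflow, the description of $\mathcal{R}$ via almost periodic $q$) is already packaged in the preliminaries. The main obstacle is thus conceptual rather than technical: recognizing that genericity of $\cB$ should be used only through "$S(\cB)$ is minimal", and that a point sitting over $\hat e_{\cB} = \hat e \cap \cB$ automatically makes $d_q$ act as the identity on the translation-closed algebra $\cB$.
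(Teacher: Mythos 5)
Your proof is correct and follows essentially the same route as the paper: use genericity to see that $S(\cB)$ is minimal, lift $\hat e \cap \cB$ along the restriction $S(\cA^d) \to S(\cB)$ to an almost periodic point $q$ (your minimal-subflow argument is exactly the paper's cited Fact~\ref{fact:per}(v)), check $d_q(B)=B$ for $B \in \cB$, and conclude that $d_q[\cA^d]$ is an image algebra containing $\cB$. The only difference is that you inline the proofs of the cited facts (Fact~\ref{fact:per}(v) and the computation $d_q(B)=d_{\pi(q)}(B)=B$) rather than quoting them.
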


\begin{proof} By Fact~\ref{fact:imalg}, the flow $S(\cB)$ is minimal, so the point
\[
q = \hat{e} = \{ B \in \cB : e \in B \} \in S(\cB)
\]
is almost periodic. Applying Fact~\ref{fact:per} (v) to the restriction $r : S(\cA^d) \to S(\cB)$, we can find an almost periodic point $p \in S(\cA^d)$ such that $r(p) = q$. Given $B \in \cB$, Remark~\ref{rem:dlim} gives
\[
d_p(B) = d_q(B) = d_{\hat{e}}(B) = Be^{-1} = B.
\]
Therefore $\cB \subseteq d_p[\cA^d]$ and $R := d_p[\cA^d]$ is an image algebra, since $p \in S(\cA^d)$ is almost periodic.
\end{proof}

\begin{corollary} \label{cor:im-maxgen} Image algebras are precisely maximal generic $G$-subalgebras of $\cA^d$.
\end{corollary}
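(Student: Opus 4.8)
The plan is to assemble this from the three preceding results: Corollary~\ref{cor:im-gen} (image algebras are generic), Lemma~\ref{lem:imalg-eq} (image algebras are incomparable under inclusion), and Lemma~\ref{lem:gen-imalg} (every generic $G$-subalgebra of $\cA^d$ extends to an image algebra). Nothing new needs to be constructed; the work is just checking both inclusions of ``precisely''.

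First I would show that every image algebra $R \in \mathcal{R}$ is a maximal generic $G$-subalgebra of $\cA^d$. By Corollary~\ref{cor:im-gen}, $R$ is generic. Suppose $\cB$ is a generic $G$-subalgebra with $R \subseteq \cB \le \cA^d$. By Lemma~\ref{lem:gen-imalg}, $\cB$ extends to some image algebra $R' \in \mathcal{R}$, so $R \subseteq \cB \subseteq R'$, and by Lemma~\ref{lem:imalg-eq} this forces $R = R'$, hence $\cB = R$. Thus $R$ admits no proper generic $G$-subalgebra extension, i.e. it is maximal generic.

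Conversely, I would show that a maximal generic $G$-subalgebra $\cB \le \cA^d$ is an image algebra. Again by Lemma~\ref{lem:gen-imalg}, $\cB$ extends to an image algebra $R \in \mathcal{R}$ with $\cB \subseteq R$; since $R$ is generic by Corollary~\ref{cor:im-gen}, maximality of $\cB$ gives $\cB = R$, so $\cB \in \mathcal{R}$.

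There is no real obstacle here — the corollary is a bookkeeping consequence of the earlier lemmas, and the only thing to be careful about is that Lemma~\ref{lem:gen-imalg} is stated for generic $G$-subalgebras of $\cA^d$, so both directions of the argument stay inside $\cA^d$, which is exactly the ambient algebra in the statement.
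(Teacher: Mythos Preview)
Your proof is correct and follows essentially the same route as the paper: both directions are obtained by combining Corollary~\ref{cor:im-gen}, Lemma~\ref{lem:imalg-eq}, and Lemma~\ref{lem:gen-imalg} in exactly the way you describe. The only cosmetic difference is that in the first direction the paper first extends $R$ to a maximal generic algebra before applying Lemma~\ref{lem:gen-imalg}, whereas you directly show any generic $\cB \supseteq R$ must equal $R$; your version is arguably slightly more direct.
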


\begin{proof} First assume $R$ is an image algebra. By Corollary~\ref{cor:im-gen}, $R$ is generic, so it extends to a maximal generic $G$-algebra $R \subseteq \cB \subseteq \cA^d$. By Lemma~\ref{lem:gen-imalg}, $\cB$ further extends to an image algebra $R_2 \supseteq \cB$. It follows that $R \subseteq R_2$, so by Lemma~\ref{lem:imalg-eq}, $R = R_2 = \cB$. Thus $R$ is maximal generic.

Now assume $\cB \subseteq \cA^d$ is a maximal generic $G$-algebra. By Lemma~\ref{lem:gen-imalg}, $\cB$ can be extended to an image algebra $R \supseteq \cB$, which is generic by Corollary~\ref{cor:im-gen}. Hence $\cB = R$ and so $\cB$ is an image algebra.
\end{proof}

The next proposition is a joint result of Newelski and the author. It shows how to recover the Ellis group of $S(\cA)$ from any image algebra.

\begin{proposition} \label{prop:im-ell} Assume $\cB \le \cA^d$ is an image algebra and define
\[
\cE = \{ q \in S(\cB) : d_q[ \cB ] \subseteq \cB \}.
\]
\begin{enumerate}[label=(\roman{*})]
\item The operation 
\[
p \ast q = \{ B \in \cB : d_q B \in p \}
\]
is well defined as an operation $\ast : \cE \times \cE \to S(\cB)$.
\item $\cE$ is closed under $\ast$.
\item $(\cE, \ast)$ is isomorphic to the Ellis group of $S(\cA)$.
\end{enumerate}
\end{proposition}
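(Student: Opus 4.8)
The plan is to realize $(\cE,\ast)$ as a faithful copy, sitting inside $S(\cB)$, of an ideal subgroup of a minimal ideal of $S(\cA^d)$, which by Theorem~\ref{thm:iso} is isomorphic to $E(S(\cA))$. First I would fix a minimal ideal $I \trianglelefteqslant_m S(\cA^d)$ and, using that $\cB$ is an image algebra, choose an idempotent $u \in J(I)$ with $\Im d_u = \cB$ (such a $u$ exists because $\cB \in \mathcal{R} = \mathcal{R}_I$, and $\Im d_p$ is constant on each ideal subgroup of $I$). By Fact~\ref{fact:imalg} the restriction $\pi \colon S(\cA^d) \to S(\cB)$ maps $I$ bijectively and $G$-equivariantly onto $S(\cB)$; I will write $q \mapsto q'$ for the inverse bijection $S(\cB) \to I$, so that $\pi(q') = q$ for every $q \in S(\cB)$.

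Items (i) and (ii) are quick. For (i), note that $q \in \cE$ says precisely that $d_q$ restricts to a $G$-algebra endomorphism of $\cB$, so $p \ast q$ is simply the ultrafilter $(d_q \restriction \cB)^*(p)$ on $\cB$, well defined already for any $p \in S(\cB)$. For (ii), I would first record the identity $d_{p \ast q} = d_p \circ d_q$ on $\cB$, which holds because $d_q$ is a $G$-algebra homomorphism and so commutes with left translation; then for $p, q \in \cE$ the map $d_q$ sends $\cB$ into $\cB$ and so does $d_p$, whence $d_{p \ast q}[\cB] \subseteq \cB$, i.e. $p \ast q \in \cE$. (Alternatively (ii) drops out of (iii) once $\cE$ is identified with a group.)

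The substance is (iii), where I would prove two things: $\cE = \pi[uI]$, and $\pi \restriction uI$ carries the ambient $\ast$ on $I$ to the $\ast$ of the statement. The inclusion $\pi[uI] \subseteq \cE$ is immediate: for $p' \in uI$ and $B \in \cB$ one has $d_{\pi(p')}(B) = d_{p'}(B) \in \Im d_{p'} = \Im d_u = \cB$, the middle equality being the ``same ideal subgroup, same image'' clause of the structure theorem for ideal subgroups (via the identification of $S(\cA^d)$ with $\End_G(\cA^d)$ from Theorem~\ref{thm:iso}). For $\cE \subseteq \pi[uI]$, take $q \in \cE$ and its preimage $q' \in I$, and compute via Fact~\ref{fact:dual} that $d_{q'}[\cB] = d_{q'}[\Im d_u] = \Im d_{q' \ast u}$. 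Since $q' \in I$, the point $q' \ast u$ is almost periodic, so $\Im d_{q' \ast u}$ is an image algebra, and the hypothesis $q \in \cE$ gives $\Im d_{q' \ast u} \subseteq \cB = \Im d_u$; by Lemma~\ref{lem:imalg-eq} this is an equality, and the structure theorem then forces $q' \ast u$ into the ideal subgroup containing $u$, namely $uI$. A short computation inside $I$ --- writing $q' \in vI$ with $v \in J(I)$, so that $q' \ast u = v \ast (q' \ast u) \in vI$, and using disjointness of distinct ideal subgroups --- yields $v = u$, hence $q' \in uI$ and $q = \pi(q') \in \pi[uI]$. Finally, for $p', q' \in uI$ and $B \in \cB$, unwinding the definitions (and using once more that $d_{q'}(B) = d_{\pi(q')}(B)$ lies in $\cB$) gives $\pi(p' \ast q') = \pi(p') \ast \pi(q')$. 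Thus $\pi \restriction uI$ is a bijective semigroup homomorphism from the ideal subgroup $uI$ onto $(\cE, \ast)$; since, via Theorem~\ref{thm:iso}, $uI$ corresponds to an ideal subgroup of $E(S(\cA))$ and so is isomorphic to the Ellis group of $S(\cA)$, the same holds for $(\cE, \ast)$ --- and in particular $(\cE, \ast)$ is a group, which re-proves (ii).

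I expect the only genuine obstacle to be the inclusion $\cE \subseteq \pi[uI]$: this is where the hypothesis that $\cB$ is an image algebra is actually used (through Lemma~\ref{lem:imalg-eq} and the structure theorem), and it turns on the slightly delicate point that $q' \ast u$ must land in the particular ideal subgroup $uI$ rather than merely somewhere in $I$.
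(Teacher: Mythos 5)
Your proposal is correct and takes essentially the same approach as the paper: identify $\cE$ with $\pi[uI]$ via Fact~\ref{fact:imalg}, use Fact~\ref{fact:dual}, Lemma~\ref{lem:imalg-eq} and the structure theorem for ideal subgroups, and check that $\pi \restriction uI$ is a semigroup isomorphism. The only difference is cosmetic: for the inclusion $\cE \subseteq \pi[uI]$ the paper applies Lemma~\ref{lem:im}~(ii) to get $q' \ast u = q'$ at once, which shortcuts your extra step of first placing $q' \ast u$ in $uI$ and then pulling $q'$ in via disjointness of the ideal subgroups.
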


\begin{proof} 
(i) If $q \in \cE$, then by definition $d_q : \cB \to \cB$, so we have the dual map $r_q : S(\cB) \to S(\cB)$. As usual, for any $p \in S(\cB)$ we have that $p \ast q = r_q(p)$.

\vspace{2mm} \noindent 
(ii) Fix $p, q \in \cE$ and $B \in \cB$. Applying Fact~\ref{fact:dual} to $d_q : \cB \to \cB$, we get
\[
d_{p \ast q}(B) = d_{r_q(p)}(B) = d_p( d_q(B) ) \in \cB.
\]
Hence $p \ast q \in \cE$.

\vspace{2mm} \noindent
(iii) Take $I \trianglelefteqslant_m S(\cA^d)$ and pick $u \in J(I)$ such that $\cB = d_u[\cA^d]$. By Fact~\ref{fact:imalg}, the restriction $\pi : I \to S(\cB)$ is a $G$-flow isomorphism. We first show that $\pi[uI] = \cE$. If $q \in uI$, then $d_q[\cA^d] = d_u[\cA^d] = \cB$, so in particular $d_{\pi(q)}[\cB] = d_q[\cB] \subseteq \cB$, hence $\pi(q) \in \cE$. On the other hand, assume that $q_0 \in \cE$ and take $q \in I$ such that $\pi(q) = q_0$. Then $q \ast u = q$ by Lemma~\ref{lem:im}, so
\[
d_q[\cA^d] = d_{q \ast u}[\cA^d] = d_q[d_u[\cA^d]] = d_q[\cB] = d_{q_0}[\cB] \subseteq \cB = d_u[\cA^d].
\]
By Lemma~\ref{lem:imalg-eq}, these algebras are equal. Hence $q \in uI$ and so $q_0 \in \pi[uI]$.

It follows that $\pi$ is a bijection between $uI$ and $\cE$. It is also a homomorphism since for $p, q \in uI$ and $B \in \cB$,
\begin{align*}
B \in \pi(p \ast q) & \iff B \in p \ast q \iff d_q B \in p \iff d_{\pi(q)} B \in p \\
& \iff d_{\pi(q)} B \in \pi(p) \iff B \in \pi(p) \ast \pi(q).
\end{align*}
Hence $\cE$ is isomorphic to the Ellis group $uI$.
\end{proof}

The results from this section lead to an interesting way of studying the Ellis group of $S(\cA)$ that only refers to the notion of a strongly generic set. Take any strongly generic set $B \in \cA^d$ (e.g. the empty set) so that the $G$-algebra $\cB \le \cA^d$ generated by $B$ is generic. Extend $\cB$ to a maximal generic $G$-subalgebra $R$ of $\cA^d$, which is also an image algebra. Every minimal ideal $I \trianglelefteqslant_m E(S(\cA))$ is isomorphic to $S(R)$ as a $G$-flow and the Ellis group of $S(\cA)$ is isomorphic to $\cE$ as defined in Proposition~\ref{prop:im-ell}. Thus if the strongly generic subsets of $G$ can be understood to the point of characterizing the maximal generic $G$-subalgebras of $\cA^d$, a complete description of the Ellis group of $S(\cA)$ will follow.

\begin{corollary} \label{cor:im-ell}
Assume $R \in \mathcal{R}$ and $R \le \cB \le \cA^d$. Then the Ellis groups of $S(\cA)$ and $S(\cB)$ are isomorphic. 
\end{corollary}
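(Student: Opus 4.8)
The plan is to reduce the statement to what we already know about image algebras and Proposition~\ref{prop:im-ell}. The key observation is that $R$ is an image algebra not only inside $\cA^d$ but, if we can show it, also inside $\cB^d$; once we have this, Proposition~\ref{prop:im-ell} computes the Ellis group of $S(\cB)$ from $R$ by exactly the same formula as it computes the Ellis group of $S(\cA)$, and we are done.

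First I would argue that $R$ is $d$-closed as a $G$-algebra. Indeed, $R$ is an image algebra of $\cA^d$, so by Corollary~\ref{cor:im-gen} it is generic, hence by Lemma~\ref{lem:gen-min} the flow $S(R)$ is minimal, and by Fact~\ref{fact:imalg} any minimal ideal $I \trianglelefteqslant_m S(\cA^d)$ is $G$-flow isomorphic to $S(R)$. But a minimal ideal of the left topological semigroup $S(\cA^d)$ carries a semigroup structure as well, and $\pi \restriction I$ transports it to $S(R)$; concretely, writing $R = d_u[\cA^d]$ for an idempotent $u \in J(I)$, one checks that $d_q[R] \subseteq R$ for every $q \in I$ (since $d_q[\cA^d] = d_u[\cA^d] = R$ when $q \in I = I \ast u$, using $q \ast u = q$ from Lemma~\ref{lem:im}), and $\pi(q)$ ranges over all of $S(R)$. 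This shows that $\ast$ restricts to a well-defined semigroup operation on $S(R)$, i.e. $R$ is $d$-closed. In particular $R^d = R$, so $R$ is an image algebra sitting inside $\cB^d$ as well — here I use that $R \le \cB \le \cA^d$ forces $R \le \cB^d \le \cA^d$, and $R$ being a maximal generic $G$-subalgebra of $\cA^d$ (Corollary~\ref{cor:im-maxgen}) is a fortiori a maximal generic $G$-subalgebra of $\cB^d$ (any generic $G$-algebra between $R$ and $\cB^d$ also lies between $R$ and $\cA^d$). Hence $R \in \mathcal{R}_{\cB}$, the family of image algebras for $S(\cB)$.

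Now apply Proposition~\ref{prop:im-ell} twice. For $S(\cA)$: its Ellis group is isomorphic to $(\cE, \ast)$ where $\cE = \{ q \in S(R) : d_q[R] \subseteq R \}$. For $S(\cB)$: since $R$ is an image algebra for $S(\cB)$ too, its Ellis group is isomorphic to $(\cE', \ast)$ where $\cE' = \{ q \in S(R) : d_q[R] \subseteq R \}$. But $\cE$ and $\cE'$ are defined by literally the same formula — it only refers to $R$, not to $\cA$ or $\cB$ — and the operation $\ast$ on each is $\{ B \in R : d_q B \in p \}$, again referring only to $R$. Therefore $(\cE, \ast) = (\cE', \ast)$ as semigroups, and the Ellis groups of $S(\cA)$ and $S(\cB)$ are both isomorphic to it, hence to each other.

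The main obstacle is the first step: verifying that an image algebra $R$ of $\cA^d$ is genuinely $d$-closed, so that "image algebra of $\cA^d$ contained in $\cB$" upgrades to "image algebra of $\cB^d$". The rest is bookkeeping with the already-established machinery. An alternative, perhaps cleaner route to the same step is to observe directly that $R = d_p[\cA^d]$ for an almost periodic $p$, and that $d_p$ is idempotent up to the semigroup structure, so $d_p[d_p[\cA^d]] = d_p[\cA^d]$, and then to invoke Fact~\ref{fact:dcl-gen} (the explicit description of $d$-closure via the operators $d_q$, $q \in \beta G$) to conclude closure of $R$ under all $d_q$; I would pick whichever of these two phrasings reads more smoothly once the surrounding notation is fixed.
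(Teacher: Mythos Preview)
Your core argument --- the part starting ``here I use that $R \le \cB \le \cA^d$ forces $R \le \cB^d \le \cA^d$, and $R$ being a maximal generic $G$-subalgebra of $\cA^d$ is a fortiori a maximal generic $G$-subalgebra of $\cB^d$'' together with the double application of Proposition~\ref{prop:im-ell} --- is correct and is exactly the paper's proof.

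However, the detour you flag as ``the main obstacle'', namely showing that $R$ is $d$-closed, is both unnecessary and false. It is unnecessary because $R \le \cB^d$ is immediate from $R \le \cB \le \cB^d$; you never actually use $R^d = R$ anywhere afterwards. It is false because for $q \in I$ one has $d_q \circ d_u = d_{q \ast u} = d_q$ (this is what $q \ast u = q$ buys), hence $d_q[R] = d_q[\cA^d]$, but $d_q[\cA^d]$ is the image algebra attached to $q$, which equals $R = d_u[\cA^d]$ only when $q \in uI$. For $q \in vI$ with $v \in J(I) \setminus \{u\}$ you get a different image algebra, not contained in $R$ by Lemma~\ref{lem:imalg-eq}. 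Indeed, if $R$ were $d$-closed then $\cE$ in Proposition~\ref{prop:im-ell} would be all of $S(R) \cong I$ rather than $uI$. Your proposed ``alternative, cleaner route'' via Fact~\ref{fact:dcl-gen} would fail for the same reason. So drop the first step entirely: the part you called ``bookkeeping'' is the whole proof.
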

\begin{proof}
By Corollary \ref{cor:im-maxgen}, $R$ is a maximal generic $G$-subalgebra of $\cA^d$. Since $R \le \cB^d \le \cA^d$, $R$ is also a maximal generic $G$-subalgebra of $\cB^d$, hence it is an image algebra in $\cB^d$. By Proposition \ref{prop:im-ell}, the Ellis groups of $S(\cA)$ and $S(\cB)$ can be computed directly from $R$, giving the same result up to isomorphism.
\end{proof}

\section{Strongly generic sets}

In this section we study abstract properties of strongly generic sets. We also provide a way to construct non-trivial examples of such sets in a wide class of groups, namely those that do not have the smallest subgroup of finite index.

\subsection{Basic properties}

Throughout the section assume that $G$ is a group. We regard $2^G$ as a $G$-flow, where for $g \in G, f \in 2^G$, the action is defined as
\[
(g \odot f)(x) = f(xg) \text{ for } x \in G.
\]
Equivalently, $g \odot \chi_A = \chi_{Ag^{-1}}$ for $A \subseteq G$, where $\chi_A$ denotes the characteristic function of $A$. 

The simplest examples of strongly generic sets are periodic sets, defined below:

\begin{definition} Assume $f \in 2^G$.
\begin{enumerate}[label=(\roman{*})]
\item $t \in G$ is a \emph{period} of $f$ if $(\forall x \in G) \, f(x \cdot t) = f(x)$.
\item $\Per(f)$ is the set of periods of $f$:
\[
\Per(f) = \{ t \in G : (\forall x \in G) \, f(x \cdot t) = f(x) \}.
\]
By the next remark, $\Per(f)$ is a subgroup of $G$.
\item $f$ is \emph{periodic} if $\Per(f)$ has finite index in $G$.
\item A subset $A \subseteq G$ is \emph{periodic} if $\chi_A \in 2^G$ is periodic.
\end{enumerate}
\end{definition}

\begin{remark} $\Per(f)$ is the stabilizer of $f$ in the $G$-flow $2^G$. In particular, it is a subgroup of $G$.
\end{remark}

$\Per(f)$ can also be understood as follows: assume $A \subseteq G$ and $H \le G$. Clearly the following are equivalent:
\begin{align*}
& H \subseteq \Per(\chi_A) \iff (\forall h \in H) \, Ah = A \iff AH = A \\
& \iff A \text{ is a union of some left cosets of } H.
\end{align*}
As a consequence:
\begin{remark} \label{rem:per-max}
$\Per(\chi_A)$ is the greatest subgroup $H \le G$ such that $A$ can be written as a union of left cosets of $H$. \noproof
\end{remark}

The following remark is a useful characterization of periodic sets.
\begin{remark} \label{rem:per} For $A \subseteq G$ the following are equivalent:
\begin{enumerate}[label=(\roman{*})]
\item $A$ is periodic;
\item $A$ is a union of left cosets of some subgroup $H \le G$ of finite index;
\item The $G$-algebra $\< A \> \subseteq \cP(G)$ generated by $A$ is finite;
\item $A^{-1} := \{ a^{-1} : a \in A \}$ is periodic.
\end{enumerate}
\end{remark}

\begin{proof}
(i)$\implies$(ii) Follows directly from Remark~\ref{rem:per-max}.

\vspace{2mm} \noindent
(ii)$\implies$(iii) Assume $A$ is a union of left cosets of a subgroup $H \le G$ of finite index. All such unions form a finite $G$-algebra of size $2^{[G:H]}$ in which $\< A \>$ is contained, hence it is also finite.

\vspace{2mm} \noindent
(iii) $\implies$ (iv) From the assumption the set $\{ gA : g \in G \}$ is finite, hence also finite is the set 
\[
\{ \chi_{A^{-1} g^{-1}} : g \in G \} = \{ g \cdot \chi_{A^{-1}}  : g \in G  \},
\]
which is the orbit of $\chi_{A^{-1}}$ in $2^G$. It follows that the stabilizer of $\chi_{A^{-1}}$ has finite index in $G$, so $A^{-1}$ is periodic. 

\vspace{2mm} \noindent
(iv)$\implies$(i) Follows from the converse implication by symmetry.
\end{proof}

\begin{corollary}  \label{cor:per-bc}
Assume $A \subseteq G$ is periodic. Then $\Per(\chi_A)$ can be expressed as an intersection of finitely many left translates of $A$ and $A^c$.
\end{corollary}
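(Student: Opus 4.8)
The plan is to work entirely with the finitely many left cosets of $H := \Per(\chi_A)$. By Remark~\ref{rem:per}, saying that $A$ is periodic means precisely that $H$ has finite index, say $n = [G:H]$, and (by the equivalences preceding Remark~\ref{rem:per-max}) $A$ is a union of left cosets of $H$; in particular $\Per(\chi_A) = \{ t \in G : At = A \}$, and every left coset of $H$ is either contained in $A$ or disjoint from it. Since left translation by any $g \in G$ permutes the left cosets of $H$, the same dichotomy holds for every left translate $gA$ and every translate $gA^c = (gA)^c$; hence any intersection of finitely many left translates of $A$ and $A^c$ is again a union of left cosets of $H$, and the goal is to choose translates whose intersection is exactly the coset $H = eH$.

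Enumerate the left cosets of $H$ as $g_0 H, g_1 H, \dots, g_{n-1} H$ with $g_0 = e$. The crux is to show that for each $k \in \{ 1, \dots, n-1 \}$ there is a left translate $T_k$ of $A$ or of $A^c$ with $H \subseteq T_k$ and $g_k H \cap T_k = \varnothing$. Here one uses $g_k \notin H$: since $H = \Per(\chi_A) = \{ t : At = A \}$, we get $A g_k \neq A$, so there is $g \in G$ with $\chi_A(g) \neq \chi_A(g g_k)$. If $g \in A$ (so $g g_k \notin A$), set $T_k = g^{-1} A$: then $gH \subseteq A$ yields $H = g^{-1}(gH) \subseteq T_k$, while $g g_k H \cap A = \varnothing$ yields $g_k H \cap T_k = \varnothing$. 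If $g \notin A$ (so $g g_k \in A$), set $T_k = g^{-1} A^c$ and argue symmetrically from $gH \subseteq A^c$ and $g g_k H \subseteq A$.

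Finally, $T := \bigcap_{k=1}^{n-1} T_k$ is a union of left cosets of $H$ that contains $H$ and, for each $k \geq 1$, is disjoint from $g_k H$ (as $T \subseteq T_k$); hence $T = H$, exhibiting $\Per(\chi_A)$ as the intersection of the $n-1$ left translates $T_1, \dots, T_{n-1}$ of $A$ and $A^c$. The degenerate case $n = 1$, i.e. $A \in \{ \varnothing, G \}$, is trivial, since then $\Per(\chi_A) = G$ is already a single left translate of $A$ or of $A^c$ (or the empty intersection). I do not anticipate a genuine obstacle; the only point requiring a little care is the two-case analysis in the middle step, where one must choose between $A$ and $A^c$ so that the coset $H$ lands inside the selected translate rather than outside it.
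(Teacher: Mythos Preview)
Your proof is correct but takes a different route from the paper. The paper simply observes that the very definition of $\Per(\chi_A)$ already expresses it as an intersection of left translates of $A$ and $A^c$:
\[
\Per(\chi_A) = \bigcap_{a \in A} a^{-1} A \ \cap \ \bigcap_{b \in G \setminus A} b^{-1} A^c,
\]
and then invokes Remark~\ref{rem:per} (finiteness of $\{gA : g \in G\}$) to see that only finitely many of these translates are distinct. Your argument instead fixes the coset decomposition of $G$ modulo $H = \Per(\chi_A)$ and, for each non-identity coset $g_k H$, manufactures a single translate $T_k$ separating $H$ from $g_k H$. The paper's approach is a one-line reduction to an existing remark; yours is more hands-on and yields the explicit bound of $[G:H]-1$ translates, at the cost of the extra case analysis you flagged. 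Both rely on the same underlying fact that $A$ is a union of left cosets of $H$, and indeed the translates you select are among those appearing in the paper's big intersection.
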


\begin{proof} By definition
\[
\Per(\chi_A) = \bigcap_{a \in A} a^{-1} A \cap \bigcap_{b \in G \setminus A} b^{-1} A^c.
\]
By Remark~\ref{rem:per} there are finitely many sets of the form $gA$ where $g \in G$, so the conclusion follows.
\end{proof}

\begin{corollary} \label{cor:per-nbc} Assume $A \subseteq G$ is periodic. Then there is a normal subgroup $N \trianglelefteqslant G$ of finite index such that $N \in \< A \>^d$ and $A$ is a union of cosets of $N$.
\end{corollary}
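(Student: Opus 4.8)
The plan is to pass from $H := \Per(\chi_A)$ to its normal core. By the definition of a periodic set $H$ has finite index in $G$, and by Remark~\ref{rem:per-max} (applied with $H = \Per(\chi_A)$ itself) the set $A$ is a union of left cosets of $H$. I would then put
\[
N = \bigcap_{g \in G} g H g^{-1},
\]
the kernel of the left-translation action $G \to \Sym(G/H)$ on the finite coset space $G/H$. Since $\Sym(G/H)$ is finite, $N$ is a normal subgroup of $G$ of finite index; and since $N \le H$ while $A$ is a union of $H$-cosets, $A$ is a fortiori a union of $N$-cosets. So the only real content is the assertion $N \in \< A \>^d$.

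To get that, first observe $H \in \< A \>$: by Corollary~\ref{cor:per-bc}, $H$ is an intersection of finitely many left translates of $A$ and of $A^c$, and all of those lie in the $G$-algebra $\< A \>$ generated by $A$. Now $\< A \>^d$ is $d$-closed, hence a $G$-algebra that is moreover closed under right translation by Corollary~\ref{cor:dcl-rt}; therefore, for each $g \in G$, the conjugate $g H g^{-1} = g \cdot (H g^{-1})$ is obtained from $H$ by one right and one left translation and so belongs to $\< A \>^d$. Finally, because $[G:H] < \infty$ the subgroup $H$ has only finitely many distinct conjugates in $G$ (they correspond to the cosets of the normalizer $N_G(H)$, and $H \le N_G(H)$), so $N$ is a \emph{finite} intersection of members of the Boolean algebra $\< A \>^d$, whence $N \in \< A \>^d$.

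The step I would single out as the crux is this last reduction: $\< A \>^d$ is only closed under finite Boolean operations, so the a priori infinite intersection defining the normal core must be recognized as a finite one — which is exactly what the finiteness of $[G:H]$ delivers. Beyond that, the argument is elementary group theory combined with the closure properties of $d$-closures recorded in Section~2.
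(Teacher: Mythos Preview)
Your proof is correct and is essentially the same as the paper's: both take the normal core $N = \bigcap_{g} g\Per(\chi_A)g^{-1}$, invoke Corollary~\ref{cor:per-bc} to get $\Per(\chi_A) \in \<A\>$, use Corollary~\ref{cor:dcl-rt} to place each conjugate in $\<A\>^d$, and finish by noting the intersection is finite because $[G:\Per(\chi_A)] < \infty$.
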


\begin{proof} Let 
\[
N := \bigcap_{g \in G} g \Per(\chi_A) g^{-1} \trianglelefteqslant G.
\]
Since $\Per(\chi_A)$ has finite index, there are only finitely many distinct sets in this intersection. Each of them belongs to $\< A \>^d$, since by Corollary~\ref{cor:per-bc} we have $\Per(\chi_A) \in \< A \>$ and so $g \Per(\chi_A) g^{-1} \in \< A \>^d$ by Corollary~\ref{cor:dcl-rt}. Hence $N \in \< A \>^d$ and $N$ has finite index. Also $N \subseteq \Per(\chi_A)$, so by Remark~\ref{rem:per-max}, $A$ is a union of cosets of $N$.
\end{proof}

It is now easy to see that every periodic subset $A \subseteq G$ is strongly generic. Indeed, if $A$ is periodic, then by Remark~\ref{rem:per} any non-empty element of $\< A \>$ contains at least one coset of some subgroup $H \le G$ of finite index (e.g. $H = \Per(\chi_A)$), hence it is generic. It was shown in \cite[Proposition 2.8]{New12Feb} that in the context of stable groups these are the only definable strongly generic subsets of $G$. Below is a known natural generalization of that result.

\begin{theorem} \label{thm:stab-sg-per} If $A \subseteq G$ is strongly generic and the formula $\varphi(x; y) \equiv x \in yA$ is stable in $(G, \cdot, A)$, then $A$ is periodic.
\end{theorem}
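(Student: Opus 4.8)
The plan is to exploit stability of $\varphi(x;y) \equiv x \in yA$ to produce a definable (in fact $\varphi$-definable) finite-index subgroup of $G$ that witnesses periodicity of $A$. The key dynamical input is that $A$ strongly generic means the $G$-algebra $\langle A \rangle$ is generic, so by Lemma~\ref{lem:gen-min} the flow $S(\langle A \rangle)$ is a minimal subflow of itself; equivalently, every non-empty Boolean combination of translates of $A$ is generic. On the other hand, stability of $\varphi$ gives us the usual local machinery: a well-behaved notion of $\varphi$-forking, $\varphi$-generic types, and the stabilizer subgroup. I would first observe that, because $\langle A \rangle$ is generic, the (unique, by stability) minimal subflow of $S(\langle A \rangle)$ is the set of $\varphi$-generic types, and this flow is a group isomorphic to $G/G^0_\varphi$, where $G^0_\varphi$ is the connected component for the local theory — the intersection of all $\varphi$-definable finite-index subgroups.

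Next I would argue that $G^0_\varphi$ has finite index. In a general stable group one only knows $G^0_\varphi$ is a $\bigwedge$-definable subgroup of bounded index; but here genericity of $\langle A \rangle$ forces more. Concretely: the action of $G$ on the (finite or profinite) space $S(\langle A \rangle) / {\sim}$, where $\sim$ identifies types in the same minimal-subflow coset, factors through $G/G^0_\varphi$, and minimality plus genericity should pin down that this quotient is a genuine group flow on which $G$ acts transitively with clopen point-stabilizer. The cleanest route, I think, is: the generic types form a single minimal subflow which is a compact group; the stabilizer of $\hat e$ under the induced map $G \to \mathrm{(that\ group)}$ is $G^0_\varphi$; and $A$ itself, being in $\langle A\rangle$ and generic, must be a union of cosets of $G^0_\varphi$ by the defining property of the connected component (every $\varphi$-definable generic set is, up to the Boolean algebra it generates, a union of $G^0_\varphi$-cosets). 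So it remains to see $[G : G^0_\varphi] < \infty$. Here I would use that $\langle A \rangle$ is a \emph{finite} Boolean algebra: by stability the orbit of $A$ under translation has bounded cardinality, but genericity of all Boolean combinations, combined with the descending chain condition on $\varphi$-definable subgroups (a standard consequence of stability of $\varphi$ — the Baldwin–Saxl / local chain condition), forces the algebra $\langle A \rangle$ to be finite. Once $\langle A \rangle$ is finite, Remark~\ref{rem:per} (iii) immediately gives that $A$ is periodic.

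So the heart of the argument reduces to: \emph{$\varphi$ stable and $A$ strongly generic imply $\langle A \rangle$ is finite.} I would prove this directly. Suppose not; then $\langle A \rangle$ is infinite, so $S(\langle A \rangle)$ is infinite, and since it equals its own unique minimal subflow, $G$ acts on it with all orbits dense — in particular $\hat e$ has infinite orbit, i.e. infinitely many distinct translates $gA$. By stability of $\varphi$, form the $\varphi$-type ranks: each non-empty $B \in \langle A \rangle$ is generic, hence non-$\varphi$-forking over $\emptyset$, hence has full $\varphi$-rank. But an infinite Boolean algebra in which every non-zero element has the same (maximal) local rank contradicts the fact that $R_\varphi$ is a rank — one can split a maximal-rank set into two pieces, and genericity of both pieces together with the additivity/definability of $\varphi$-rank and the local chain condition yields an infinite strictly descending chain of $\varphi$-definable finite-index subgroups (taking stabilizers of finer and finer generic pieces), contradicting stability. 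I expect this step — extracting the descending chain of subgroups, or equivalently showing the local connected component has finite index — to be the main obstacle, since it is where stability is genuinely used and where one must be careful that "strongly generic" (all Boolean combinations generic) rather than merely "generic" is what does the work.

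Once finiteness of $\langle A \rangle$ is established, the conclusion is immediate from Remark~\ref{rem:per}, (iii)$\Rightarrow$(i): $A$ is periodic.
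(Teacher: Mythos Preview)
You correctly identify the reduction: the whole point is to show that $\langle A\rangle$ is finite, after which Remark~\ref{rem:per} finishes the job. The paper's proof of this finiteness is far more elementary than what you propose. It argues directly that $\langle A\rangle$ has an atom: if not, one builds a binary tree $\langle b_\eta:\eta\in 2^{<\omega}\rangle$ so that for each branch $\alpha\in 2^\omega$ the family $\{b_{\alpha\restriction n}A^{\alpha(n)}:n\in\omega\}$ is finitely consistent, producing $2^{\aleph_0}$ many $\varphi$-types over a countable parameter set and contradicting stability via the crudest possible type-counting criterion. Once an atom $B$ exists, strong genericity makes $B$ generic, so finitely many translates of $B$ (which are again atoms) cover $G$, and $\langle A\rangle$ is finite. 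No local rank, no connected component, no Baldwin--Saxl.

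By contrast, your argument has a genuine gap at the step you yourself flag as the ``main obstacle.'' The claim that splitting a maximal-rank set into two generic pieces and taking ``stabilizers of finer and finer generic pieces'' produces a strictly descending chain of $\varphi$-definable finite-index subgroups is not justified: stabilizers of generic sets need not be comparable, nor need they strictly shrink just because the sets they stabilize do. If you want a rank-theoretic argument, the clean one is via \emph{multiplicity}: stability of $\varphi$ bounds the number of pairwise disjoint $\varphi$-definable sets of maximal $R_\varphi$-rank, while an infinite generic $G$-algebra furnishes arbitrarily many such sets. That works, but it is still heavier than the paper's two-line tree argument, and your lengthy detour through $G^0_\varphi$ is not needed at all.
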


\begin{proof} Note that the $G$-algebra $\cA \le \cP(G)$ generated by $A$ contains an atom. Indeed, otherwise we can find $\< b_{\eta} : \eta \in 2^{<\omega} \>$ such that for each $\alpha \in 2^{\omega}$ the family $\{ b_{\alpha \restriction n} A^{\alpha(n)} : n \in \omega \}$ has the finite intersection property. It follows that there are $2^{\aleph_0}$-many $\varphi$-types over the countable parameter set $\{ b_{\eta} : \eta \in 2^{<\omega} \}$, which contradicts the stability of $\varphi$.

Choose an atom $B \in \cA$. Since $A$ is strongly generic, $B$ is generic, so $G$ is a union of finitely many left translates of $B$, which are also atoms. Therefore $\cA$ is finite and the conclusion follows from Remark~\ref{rem:per}.
\end{proof}

The stability assumption is essential -- as we will show in the next subsection, strongly generic non-periodic sets exist in general. However, the property of being a strongly generic set turns out to be equivalent to a specific weaker version of periodicity.

\begin{definition} Assume $f \in 2^G$.
\begin{enumerate}[label=(\roman{*})]
\item For any finite $U \subseteq G$, an element $t \in G$ is a $U$\emph{-period} of $f$ if 
\[
(\forall x \in U) \, f(xt) = f(x).
\]
\item $\Per_U(f)$ is the set of $U$-periods of $f$.
\item $f$ is \emph{locally periodic} if for each finite $U \subseteq G$ the set $\Per_U(f)$ is generic.
\item A subset $A \subseteq G$ is \emph{locally periodic} if $\chi_A$ is locally periodic.
\end{enumerate}
\end{definition}

Note that $\Per_U(f)$ is typically not a subgroup of $G$, so in (iii) we can no longer expect it to be a subgroup of finite index; the correct condition here is for it to be a generic set.

We will also utilize the notion of a \emph{self-replicating function}. Although it was originally defined for functions $f \in 2^N$ where $N$ is a $\QQ$-vector space (Definition 3.12 in \cite{New09}), it can be naturally interpreted when $N = G$ is an arbitrary group:

\begin{definition} \label{def:rep} We say that $f \in 2^G$ is self-replicating if
\[
(\forall \underset{\text{finite}}{U \subseteq G})(\exists \underset{\text{finite}}{V \subseteq G})(\forall g \in G)(\exists h \in G) \, h \odot (f \restriction U) \subseteq f \restriction Vg.
\]
The expression $h \odot (f \restriction U)$ here means the partial function defined by the same formula as in the beginning of the section: 
\[
\big(h \odot (f \restriction U) \big)(x) = (f \restriction U)(xh) \quad \text{for } x \in Uh^{-1}.
\]
\end{definition}

In \cite{New09} it was proved that self-replication is a sufficient condition for almost periodicity. The following proposition states that in fact these conditions are equivalent to each other, and also to the property of being locally periodic.

\begin{proposition} \label{prop:ap-char} For $f \in 2^G$ the following are equivalent:
\begin{enumerate}[label=(\roman{*})]
\item $f$ is self-replicating;
\item $f$ is an almost periodic point of $2^G$;
\item $f$ is locally periodic.
\end{enumerate}
\end{proposition}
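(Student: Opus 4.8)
I would prove the equivalences cyclically: (i) $\Rightarrow$ (ii), (ii) $\Rightarrow$ (iii), (iii) $\Rightarrow$ (i). The implication (i) $\Rightarrow$ (ii) is the one already attributed to \cite{New09}, so I would only recall its idea: given self-replication, the closure $\cl(G \odot f)$ in $2^G$ can be shown to be a minimal subflow, because self-replication guarantees that every finite ``pattern'' of $f$ reappears ``syndetically'' along every translate, which is exactly what is needed to verify the criterion of Remark~\ref{rem:min-gen} for $\cl(G \odot f)$.

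\textbf{(ii) $\Rightarrow$ (iii).} Suppose $f$ is almost periodic, and fix a finite $U \subseteq G$. The basic clopen neighbourhood $W = \{ h \in 2^G : h \restriction U = f \restriction U \}$ of $f$ meets the minimal subflow $\cl(G \odot f)$, so by Remark~\ref{rem:min-gen} there are $g_1, \ldots, g_n \in G$ with $\cl(G \odot f) \subseteq \bigcup_i g_i \odot W$; in particular $f \in g_i \odot W$ for suitable translates, i.e. for every $g \in G$ there is $i$ with $g \odot f \in g_i \odot W$, equivalently $g_i^{-1}g \odot f \in W$. Unwinding the definition of $\odot$ on $2^G$, membership of $g \odot f$ in the $G$-orbit translate of $W$ says precisely that $f$ agrees with $g \odot f$ on $U$ up to a bounded shift; concretely one extracts that a fixed finite union of translates of $\Per_U(f)$ covers $G$, so $\Per_U(f)$ is generic. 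I would be a little careful here to match the left/right conventions of the $\odot$-action (the action twists by $Ag^{-1}$), but this is bookkeeping.

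\textbf{(iii) $\Rightarrow$ (i).} Assume $f$ is locally periodic and fix finite $U \subseteq G$. Since $\Per_U(f)$ is generic, pick $c_1, \ldots, c_m \in G$ with $G = \bigcup_j c_j \Per_U(f)$, and set $V = \{ c_1, \ldots, c_m \}^{-1} U$ or a similarly small finite set (the exact choice has to be reverse-engineered from the shift conventions). Given any $g \in G$, write $g = c_j t$ with $t \in \Per_U(f)$; then for $x \in U$ we have $f(xt) = f(x)$, which, after translating by $c_j^{-1}$, exhibits a copy of $h \odot (f \restriction U)$ sitting inside $f \restriction Vg$ for an appropriate $h \in G$. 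Thus the self-replication condition holds with this $V$.

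\textbf{Main obstacle.} None of the three steps is conceptually hard once the criterion of Remark~\ref{rem:min-gen} is in hand; the real work — and the place where a careless argument breaks — is keeping the three different but cognate ``shift'' notions straight: the $\odot$-action on $2^G$ (which twists on the right by $g^{-1}$), the notion of a $U$-period (a right multiplier $t$ with $f(xt) = f(x)$ on $U$), and the partial-function inclusion $h \odot (f\restriction U) \subseteq f \restriction Vg$ in Definition~\ref{def:rep}. I expect the bulk of the honest proof to consist of carefully translating each of these into a single common language (say, statements about which finite unions of translates of $\Per_U(f)$ cover $G$) and checking the bounded finite sets $V$ produced in (iii)$\Rightarrow$(i) really are finite and work uniformly in $g$.
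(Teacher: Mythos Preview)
Your proposal is correct and follows essentially the same cyclic scheme (i)$\Rightarrow$(ii)$\Rightarrow$(iii)$\Rightarrow$(i) as the paper, with the same use of Remark~\ref{rem:min-gen} for (ii)$\Rightarrow$(iii) and the same covering-by-translates idea for (iii)$\Rightarrow$(i). The only minor difference is that the paper spells out (i)$\Rightarrow$(ii) in full rather than deferring to \cite{New09}, and in (iii)$\Rightarrow$(i) takes $V = U \cdot W$ (where $W^{-1}\cdot\Per_U(f)=G$) rather than your $\{c_j\}^{-1}U$ --- exactly the left/right bookkeeping you flagged as the main obstacle.
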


\begin{proof} (i)$\implies$(ii) Take any basic open neighbourhood of $f$. We can write it as
\[
[\sigma] = \{ f' \in 2^G : \sigma \subseteq f' \},
\]
where $\sigma = f \restriction U$ for some finite $U \subseteq G$. By Remark~\ref{rem:min-gen}, it suffices to show that $\cl(G \odot f) \subseteq W^{-1} \odot [\sigma]$ for some finite $W \subseteq G$. From the assumption there is a finite $V \subseteq G$ such that
\[
(\forall g \in G)(\exists h \in G) \, h \odot (f \restriction U) \subseteq f \restriction Vg.
\]
Let $W = \{ w \in G : Uw \subseteq V \}$ and take any $f' \in \cl(G \odot f)$. We have that $f' \in [f' \restriction V] \cap \cl(G \odot f)$, so there is $g \in G$ such that $g \odot f \in [f' \restriction V]$, or equivalently $g \odot (f \restriction Vg) = f' \restriction V$. Take $h \in H$ such that $h \odot (f \restriction U) \subseteq f \restriction Vg$. It follows that
\[
gh \odot (f \restriction U) \subseteq g \odot (f \restriction Vg) = f' \restriction V.
\]
Let $w := (gh)^{-1}$. By the above, $Uw \subseteq V$, hence $w \in W$ and $w \odot f' \in [\sigma]$, as needed.

\vspace{2mm} \noindent
(ii)$\implies$(iii) Take any finite $U \subseteq G$. By Remark~\ref{rem:min-gen}, we can find a finite set $W \subseteq G$ such that $\cl(G \odot f) \subseteq W^{-1} \odot [f \restriction U]$. It suffices to show that $W^{-1} \cdot \Per_U(f) = G$. For any $g \in G$ we can find $w \in W$ such that $g \odot f \in w^{-1} \odot [f \restriction U]$, i.e. $f \restriction U \subseteq w g \odot f$. It follows that $w g \in \Per_U(f)$. 

\vspace{2mm} \noindent
(iii)$\implies$(i) Fix any finite $U \subseteq G$. By assumption $\Per_U(f)$ is generic, so there is a finite $W \subseteq G$ such that $W^{-1} \cdot \Per_U(f) = G$. We will show that the set $V = U \cdot W$ has the required property. Take any $g \in G$ and write $g = w^{-1} h$, where $w \in W, h \in \Per_U(f)$. Then $Uh = Uwg \subseteq Vg$ and $h^{-1} \odot (f \restriction U) \subseteq f$, so $h^{-1} \odot (f \restriction U) \subseteq f \restriction Vg$, as required.
\end{proof}

\begin{proposition} \label{prop:sgen-lper} A subset $A \subseteq G$ is strongly generic if and only if it is locally periodic.
\end{proposition}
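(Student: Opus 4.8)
The plan is to reduce both conditions to the same combinatorial statement about ``cells''. Call a subset of $G$ of the form $C = \bigcap_{i=1}^{n} g_i A^{\varepsilon_i}$ (with $n \in \NN$, $g_i \in G$, $\varepsilon_i \in \{0,1\}$, and the convention $A^1 = A$, $A^0 = A^c$) a \emph{cell}. Since $\< A \> \le \cP(G)$ is exactly the Boolean algebra generated by the left translates of $A$, every non-empty $D \in \< A \>$ is a finite union of cells and hence contains a non-empty cell, while conversely each cell lies in $\< A \>$. Because a superset of a generic set is generic, it follows that $A$ is strongly generic if and only if every non-empty cell is generic.

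Next, genericity of a subset of $G$ is invariant under left translation (if $G = \bigcup_j g_j C$, then $G = \bigcup_j (g_j h^{-1})(hC)$ for any $h \in G$), and a left translate of a cell is again a cell, since $s \cdot \bigcap_i g_i A^{\varepsilon_i} = \bigcap_i (s g_i) A^{\varepsilon_i}$. So it is enough to test genericity on cells containing $e$, and I will identify these with the sets $\Per_U(\chi_A)$ for finite $U \subseteq G$. Indeed, a cell $\bigcap_i g_i A^{\varepsilon_i}$ contains $e$ iff $g_i^{-1} \in A^{\varepsilon_i}$ for every $i$, i.e. $\varepsilon_i = \chi_A(g_i^{-1})$; writing $u_i = g_i^{-1}$, this cell equals $\bigcap_i u_i^{-1} A^{\chi_A(u_i)} = \{\, t \in G : \forall i\ (u_i t \in A \iff u_i \in A) \,\} = \Per_{\{u_1, \ldots, u_n\}}(\chi_A)$. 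Conversely, for any finite $U \subseteq G$ one has $\Per_U(\chi_A) = \bigcap_{u \in U} u^{-1} A^{\chi_A(u)}$, which is a cell and contains $e$. Finally, any non-empty cell $C$ is a left translate of such a cell: choosing $s \in C$, the set $s^{-1} C$ is a cell containing $e$, and $C = s \cdot (s^{-1} C)$.

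Combining these observations: $A$ is strongly generic $\iff$ every non-empty cell is generic $\iff$ (using invariance under left translation and the identification above) $\Per_U(\chi_A)$ is generic for every finite $U \subseteq G$ $\iff$ $A$ is locally periodic.

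The argument is mostly bookkeeping; the one point requiring care is the translation computation realizing every non-empty cell as a translate of some $\Per_U(\chi_A)$, together with the remark that $e \in \Per_U(\chi_A)$ always holds (so the cells being tested are genuinely non-empty). No deeper input is needed here — not even Proposition~\ref{prop:ap-char} — although one could alternatively route the proof through Lemma~\ref{lem:gen-min} applied to $S(\< A \>)$.
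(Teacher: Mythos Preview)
Your proof is correct and follows essentially the same route as the paper: reduce strong genericity to genericity of the atomic Boolean combinations $\bigcap_i g_i A^{\varepsilon_i}$, translate so that $e$ lies in the set, and then identify the resulting sets with $\Per_U(\chi_A)$ via the computation $\bigcap_{u\in U} u^{-1}A^{\chi_A(u)} = \Per_U(\chi_A)$. The paper's proof is the same argument written as a chain of equivalent bullet points rather than via the ``cell'' terminology.
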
 

\begin{proof} Write $A^0 := G \setminus A$ and $A^1 := A$. The following are equivalent:
\begin{itemize}[label=--]
\item $A$ is strongly generic;
\item For any Boolean term $\tau(x_1, \ldots, x_n)$ and any $u_1, \ldots, u_n \in G$, if the set $\tau(u_1^{-1} A, \ldots, u_n^{-1} A)$ is non-empty, then it is generic;
\item For any $\varepsilon_1, \ldots, \varepsilon_n \in \{ 0, 1 \}$ and $u_1, \ldots, u_n \in G$, if the set $\bigcap_{j=1}^n u_j^{-1} A^{\varepsilon_j}$ is non-empty, then it is generic;
\item For any $\varepsilon_1, \ldots, \varepsilon_n \in \{ 0, 1 \}$ and $u_1, \ldots, u_n \in G$, if $e \in \bigcap_{j=1}^n u_j^{-1} A^{\varepsilon_j}$, then this set is generic;
\item For any $u_1, \ldots, u_n \in G$ the set $\bigcap_{j=1}^n u_j^{-1} A^{\chi_A(u_j)}$ is generic.
\end{itemize}

Let $U = \{ u_1, \ldots, u_n \} \subseteq G$ and $t \in G$. We have that
\begin{align*}
t \in \bigcap_{j=1}^n u_j^{-1} A^{\chi_A(u_j)} & \iff \bigwedge_{j=1}^n u_j t \in A^{\chi_A(u_j)} \\ 
& \iff \bigwedge_{j=1}^n \chi_A(u_j t) = \chi_A(u_j) \\ 
& \iff (\forall u \in U) \, \chi_A(u t) = \chi_A(u) \\[1ex]
& \iff t \in \Per_U(\chi_A),
\end{align*}
hence
\[
\bigcap_{j=1}^n u_j^{-1} A^{\chi_A(u_j)} = \Per_U(\chi_A).
\]
By the previous equivalences, the set $A$ is strongly generic if and only if for every finite $U \subseteq G$, the set $\Per_U(\chi_A)$ is generic, i.e. $A$ is locally periodic.
\end{proof}

\begin{theorem} \label{thm:sgen-aper} For $A \subseteq G$,
\[
A \text{ is strongly generic} \iff \chi_A \text{ is an almost periodic point of } 2^G.
\]
\end{theorem}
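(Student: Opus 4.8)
The plan is to observe that this theorem is an immediate consequence of the two preceding propositions, so essentially no new work is required. First I would invoke Proposition~\ref{prop:sgen-lper}, which states that a subset $A \subseteq G$ is strongly generic if and only if it is locally periodic; and by definition, $A$ is locally periodic precisely when the function $\chi_A \in 2^G$ is locally periodic.

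Next I would apply the equivalence (ii)$\iff$(iii) of Proposition~\ref{prop:ap-char}: a function $f \in 2^G$ is locally periodic if and only if it is an almost periodic point of the $G$-flow $2^G$, where the action $g \odot f$ is the one fixed at the beginning of the section, namely $(g \odot f)(x) = f(xg)$. Chaining the two equivalences with $f = \chi_A$ gives exactly the claimed biconditional $A \text{ strongly generic} \iff \chi_A \text{ almost periodic in } 2^G$.

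The only point worth a remark is that the $G$-flow structure on $2^G$ and the left-translation conventions underlying the definitions of generic and strongly generic sets (Definition~\ref{def:sgen}) are mutually compatible — but this compatibility has already been checked and used inside the proof of Proposition~\ref{prop:sgen-lper}, in the identity $\bigcap_{j=1}^n u_j^{-1} A^{\chi_A(u_j)} = \Per_U(\chi_A)$. Consequently there is no genuine obstacle here; the substance of the result lies entirely in the earlier statements — in particular in the self-replication $\Rightarrow$ almost periodicity implication of Proposition~\ref{prop:ap-char} and in the stepwise translation carried out in Proposition~\ref{prop:sgen-lper} — and the proof of the theorem itself reduces to a single line combining \ref{prop:sgen-lper} and \ref{prop:ap-char}.
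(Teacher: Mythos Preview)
Your proposal is correct and matches the paper's own proof, which simply states that the theorem follows directly from Propositions~\ref{prop:ap-char} and~\ref{prop:sgen-lper}. The additional remark about compatibility of conventions is accurate but unnecessary, since, as you note, it has already been absorbed into the proof of Proposition~\ref{prop:sgen-lper}.
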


\begin{proof} Follows directly from the last two propositions.
\end{proof}

Using these results we give an alternative proof of Corollary~\ref{cor:im-gen}.

\begin{proof}[Proof of Corollary~\ref{cor:im-gen}]
Take any almost periodic point $q \in S(\cA^d)$ and let $B = d_q(A) \in \Im d_q$. By Fact~\ref{fact:per} (iv) and Remark~\ref{rem:dlim} (ii), the point $\chi_B$ is almost periodic in $2^G$. Therefore by Theorem~\ref{thm:sgen-aper}, the set $B$ is strongly generic.
\end{proof}

Consider the additive group of integers, $G = (\ZZ, +)$. It is well known that there are almost periodic points in $2^{\ZZ}$ that are not periodic functions. It follows from Theorem~\ref{thm:sgen-aper} that there is a strongly generic subset $A \subseteq \ZZ$ that is not periodic. In the next subsection we explicitly describe a whole class of such subsets. 

\subsection{Concrete examples}

Consider a finitely branching tree $T \subseteq \omega^{< \omega}$ with leaves $T_L \subseteq T$. Pick indexed families $\mathcal{H} = \< H_{\eta} : \eta \in T \>$ of subgroups of $G$ and $d = \< d_{\eta} : \eta \in T \>$ of elements of $G$, satisfying for any $\eta \in T$:
\begin{itemize}
\item $H_{\varnothing} = G$;
\item $H_{\eta^\frown i} = H_{\eta^\frown j}$ whenever $\eta^\frown i, \eta^\frown j \in T$;
\item $H_{\eta^\frown i}$ is a proper subgroup of $H_{\eta}$ of finite index whenever $\eta^\frown i \in T$;
\item $d_{\eta} H_{\eta}$ is a disjoint union of $\{ d_{\eta^\frown i} H_{\eta^\frown i} : \eta^\frown i \in T \}$ if $\eta \notin T_L$.
\end{itemize}
Assume that each $g \in G$ belongs to some (necessarily unique) coset $d_{\eta} H_{\eta}$, where $\eta \in T_L$. Equivalently, the intersection of cosets $d_{\eta} H_{\eta}$ along any infinite branch of $T$ is empty. Thus we can define $f_T : G \to T_L$ so that $f_T(g) = \eta$ for the unique $\eta \in T_L$ such that $g \in d_{\eta} H_{\eta}$.

\begin{definition} \label{def:tree} \leavevmode
\begin{enumerate}[label=(\roman{*})]
\item A \emph{tree of cosets} of $G$ is a tuple $(T, \mathcal{H}, d)$ satisfying the above assumptions.
\item A \emph{valued tree of cosets} of $G$ is a tuple $(T, \mathcal{H}, d, v)$, where $(T, \mathcal{H}, d)$ is a tree of cosets of $G$ and $v : T_L \to \{ 0, 1 \}$.
\item A function $f : G \to \{ 0, 1 \}$ is \emph{founded} on the valued tree of cosets $(T, \mathcal{H}, d, v)$ if $f = v \circ f_T$. It is \emph{founded} on the tree of cosets $(T, \mathcal{H}, d)$ if it is founded on $(T, \mathcal{H}, d, v)$ for some $v$.
\item A function $f : G \to \{ 0, 1 \}$ is \emph{tree-founded} if it is founded on some tree of cosets of $G$.
\item A subset $A \subseteq G$ is \emph{tree-founded} if $\chi_A : G \to \{ 0, 1 \}$ is tree-founded.
\end{enumerate}
\end{definition}

The generality of the definition is convenient, but it is possible to put a simplifying restriction without narrowing down the class of tree-founded functions. Namely, assume $f$ is a function founded on the tree of cosets $(T, \mathcal{H}, d)$. Then without loss of generality we can assume that each $H_{\eta}$ depends only on $|\eta|$, i.e. $H_{\eta} = A_{|\eta|}$ for some decreasing sequence of subgroups of finite index $G = A_0 \ge A_1 \ge A_2 \ge \ldots$. Moreover, we can assume that each $A_n$ is a normal subgroup of $G$ so that the left and right cosets coincide. When $(T, \mathcal{H}, d)$ satisfies these additional assumptions, we shall call it  a \emph{linear tree of cosets}.

To see this, let $A_n$ be the intersection of all conjugates of $H_{\eta}$ with $|\eta| \le n$. It is easy to prove that $A_n$ is a decreasing sequence of normal subgroups of finite index in $G$. The node of $T$ corresponding to each coset of $H_{\eta}$ can be split into finitely many nodes corresponding to all cosets of $A_{|\eta|}$ contained in it and the function $v$ can be modified accordingly.

\begin{example} \label{ex:tf} Take any $2$-adic integer $\alpha \in \ZZ_2 \setminus \ZZ$. Let $H_n = 2^n \ZZ \le \ZZ$ and $d_n = \alpha \bmod{2^n}$ so that $\< d_n + H_n : n \in \NN \>$ is a descending sequence of cosets in $\ZZ$ with empty intersection. Define $f : \ZZ \to \{ 0, 1 \}$ by
\[
\begin{cases}
f \restriction \ZZ \setminus (d_1 + H_1) = 0, \\
f \restriction (d_1 + H_1) \setminus (d_2 + H_2) = 1, \\
f \restriction (d_2 + H_2) \setminus (d_3 + H_3) = 0, \\
f \restriction (d_3 + H_3) \setminus (d_4 + H_4) = 1, \\
\vdots
\end{cases}
\]
and so on. Then $f$ is tree-founded. Figure~\ref{fig:vt-ex} illustrates the construction for
\[
\alpha = 1 \cdot 2^0 + 0 \cdot 2^1 + 1 \cdot 2^2 + 1 \cdot 2^3 + \ldots = 1 + 0 + 4 + 8 + \ldots \in \ZZ_2 \setminus \ZZ.
\]

\begin{figure}
\centering
\begin{tikzpicture}[scale=1.4, coset/.style = {draw, rectangle}, value/.style= { draw, circle } ]

\node[coset] (1) at (0, 0) {$\ZZ$};

\node[coset] (2) at (-1, -1) {$2\ZZ$};
\node[value, below=0 of 2] {$0$};
\draw (1) -- (2);

\node[coset] (3) at (1, -1) {$2\ZZ+1$};
\draw (1) -- (3);

\node[coset] (4) at (0, -2) {$4\ZZ+3$};
\node[value, below=0 of 4] {$1$};
\draw (3) -- (4);

\node[coset] (5) at (2, -2) {$4\ZZ+1$};
\draw (3) -- (5);

\node[coset] (6) at (1, -3) {$8\ZZ+1$};
\node[value, below=0 of 6] {$0$};
\draw (5) -- (6);

\node[coset] (7) at (3, -3) {$8\ZZ+5$};
\draw (5) -- (7);

\node[coset] (8) at (2, -4) {$16\ZZ+5$};
\node[value, below=0 of 8] {$1$};
\draw (7) -- (8);

\node[coset] (9) at (4, -4) {$16\ZZ+13$};
\draw (7) -- (9);

\node [below right] at (9) {$\ddots$};
\end{tikzpicture}
\caption{A valued tree of cosets defining a tree-founded function. }
\label{fig:vt-ex}
\end{figure}
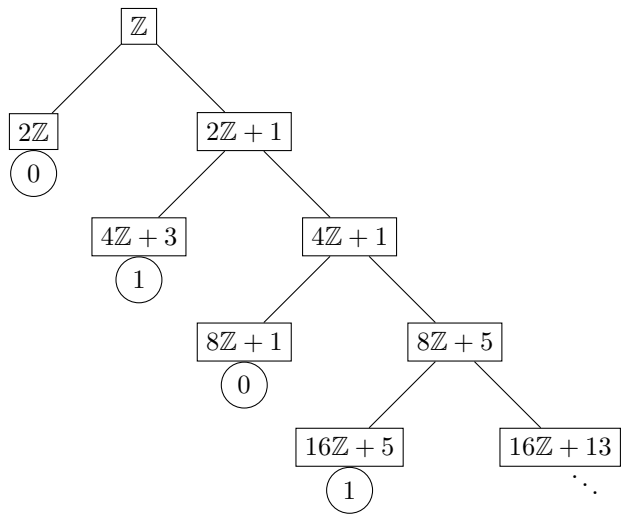
\end{example}

Now we set to prove that every tree-founded set is strongly generic. We will also give a natural sufficient condition for a tree-founded set to be non-periodic. Assuming $G$ has sufficiently many subgroups of finite index, this describes a large class of explicitly definable strongly generic, non-periodic subsets of $G$.

Consider the topology on $G$ generated by cosets of subgroups of $G$ of finite index, which is called \emph{the profinite topology} on $G$. Contrary to what the name can suggest, it need be neither compact nor Hausdorff. For instance, when $G = (\QQ, +)$, the topology consists of just two sets, $\varnothing$ and $\QQ$. On the other hand, when $G = (\ZZ, +)$, the topology is Hausdorff but not compact -- as demonstrated in the example above, where infinitely many disjoint cosets cover $\ZZ$. Finally, let $G = (\hat{\ZZ}, +)$ be the profinite completion of $(\ZZ, +)$. Then the profinite topology on $G$ is the usual topology on $\hat{\ZZ}$ treated as a profinite group, hence it is both Hausdorff and compact. In this case every tree-founded subset of $G$ is founded on a finite tree, as an infinite tree would have an infinite branch, which by compactness would have a non-empty intersection. Therefore in $\hat{\ZZ}$, tree-founded sets are precisely periodic sets.

We will say that a subset $A \subseteq G$ is \emph{pf-clopen} if it is clopen in the profinite topology. We will also say that a function $f : G \to \{ 0, 1 \}$ is \emph{pf-continuous} if it is continuous with respect to the profinite topology on $G$. Clearly $A \subseteq G$ is pf-clopen if and only if $\chi_A : G \to \{ 0, 1 \}$ is pf-continuous.

\begin{remark} \label{rem:tree-clop} Every tree-founded set $A \subseteq G$ is pf-clopen. 
\end{remark}

\begin{proof} Take the tree of cosets $(T, \mathcal{H}, d)$ on which $\chi_A$ is founded. Let $a \in A$ and take $\eta = f_T(a)$ so that $\eta$ is a leaf in $T$ and $a \in d_{\eta} H_{\eta}$. Then $\chi_A$ is constant on $d_{\eta} H_{\eta}$, so $d_{\eta} H_{\eta} \subseteq A$, hence $A$ is open. By the same reasoning if $b \in G \setminus A$, there is a coset $d_{\eta} H_{\eta}$ such that $b \in d_{\eta} H_{\eta} \subseteq G \setminus A$, so $G \setminus A$ is open as well.
\end{proof}

The converse of Remark~\ref{rem:tree-clop} is also true provided that the family of finite index subgroups of $G$ ordered by inverse inclusion has countable cofinality, i.e. if there is a decreasing sequence $(H_n)$ of finite index subgroups of $G$ such that each finite index subgroup of $G$ has some $H_n$ as a subgroup.

\begin{proposition} Every pf-clopen set is strongly generic. 
\end{proposition}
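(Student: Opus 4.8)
The plan is to invoke Proposition~\ref{prop:sgen-lper}, which says that a subset of $G$ is strongly generic if and only if it is locally periodic. So it suffices to fix a pf-clopen set $A \subseteq G$ and an arbitrary finite $U = \{ u_1, \ldots, u_n \} \subseteq G$, and then show that $\Per_U(\chi_A)$ is generic. Throughout I would write $A^1 = A$ and $A^0 = G \setminus A$, so that the condition ``$t \in \Per_U(\chi_A)$'' unwinds to $u_i t \in A^{\chi_A(u_i)}$ for every $i$.

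The key observation is that, since $A$ is clopen in the profinite topology, both $A$ and $G \setminus A$ are open, so every $g \in G$ has a basic profinite-open neighbourhood contained in $A^{\chi_A(g)}$. A basic open set is a coset of a finite index subgroup, and the one through $g$ has the form $g H_g$ for some finite index $H_g \le G$; thus $g H_g \subseteq A^{\chi_A(g)}$. First I would apply this to each $u_i$ in turn, obtaining finite index subgroups $H_1, \ldots, H_n \le G$ with $u_i H_i \subseteq A^{\chi_A(u_i)}$.

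Next I would put $H = \bigcap_{i=1}^n H_i$, still a finite index subgroup of $G$. For any $h \in H$ and any $i$ we get $u_i h \in u_i H_i \subseteq A^{\chi_A(u_i)}$, i.e. $\chi_A(u_i h) = \chi_A(u_i)$; as this holds for all $i$, we conclude $h \in \Per_U(\chi_A)$, so $H \subseteq \Per_U(\chi_A)$. Finally, a finite index subgroup is generic, being covered by its finitely many left cosets, and any superset of a generic set is generic; hence $\Per_U(\chi_A)$ is generic. Since $U$ was an arbitrary finite subset of $G$, the set $A$ is locally periodic, and therefore strongly generic by Proposition~\ref{prop:sgen-lper}.

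There is no serious obstacle here; the only points requiring a little care are the bookkeeping with the notation $A^\varepsilon$ and the remark that a profinite neighbourhood of a point $g$ can always be taken to be a coset $gH$ through $g$ with $[G:H] < \infty$, together with the stability of the finite index property under finite intersections.
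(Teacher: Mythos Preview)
Your argument is correct, but it takes a different route from the paper's. The paper proceeds directly from Definition~\ref{def:sgen}: it observes that the family of all pf-clopen subsets of $G$ is itself a $G$-algebra (being closed under Boolean operations and left translation), and that every non-empty pf-clopen set, being open in the profinite topology, contains a coset of a finite index subgroup and is therefore generic. Hence this $G$-algebra is generic, and every element of it is strongly generic by definition.

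Your proof instead passes through the equivalence of Proposition~\ref{prop:sgen-lper} (strongly generic $\Leftrightarrow$ locally periodic), constructing for each finite $U$ a finite index subgroup $H \subseteq \Per_U(\chi_A)$. This is perfectly valid, and the underlying combinatorial observation is the same --- each point of $G$ has a coset neighbourhood on which $\chi_A$ is constant. The paper's approach is a bit more economical: it avoids the detour through local periodicity by exploiting the fact that the pf-clopen sets already form a $G$-algebra, so there is no need to analyse $\Per_U(\chi_A)$ explicitly. Your approach, on the other hand, makes the periodicity structure more visible and would adapt readily to situations where one only knows $A$ to be locally well-behaved rather than globally pf-clopen.
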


\begin{proof} All pf-clopen sets form a $G$-algebra of subsets of $G$. The algebra is generic, since if a pf-clopen set is non-empty, then it contains a coset of a subgroup of finite index, hence it is generic. The conclusion follows.
\end{proof}

\begin{proposition} \label{prop:tf-sg} Every tree-founded subset of $G$ is strongly generic. \noproof
\end{proposition}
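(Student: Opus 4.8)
The statement to prove is Proposition~\ref{prop:tf-sg}: every tree-founded subset of $G$ is strongly generic. The plan is to reduce it to the previous proposition (every pf-clopen set is strongly generic) via Remark~\ref{rem:tree-clop}. Indeed, Remark~\ref{rem:tree-clop} already tells us that every tree-founded set $A \subseteq G$ is pf-clopen, and the proposition immediately preceding this one states that every pf-clopen set is strongly generic. So the proof is a one-line invocation: if $A$ is tree-founded, then $A$ is pf-clopen by Remark~\ref{rem:tree-clop}, hence strongly generic by the preceding proposition.

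If instead one wanted a self-contained argument not routing through the pf-clopen notion, I would argue directly as follows. Fix a tree of cosets $(T, \mathcal{H}, d)$ on which $\chi_A$ is founded. By Theorem~\ref{thm:sgen-aper} it suffices to show $\chi_A$ is an almost periodic point of $2^G$, and by Proposition~\ref{prop:ap-char} it is enough to show $\chi_A$ is self-replicating. Given a finite $U \subseteq G$, each $u \in U$ lies in a unique leaf-coset $d_{f_T(u)} H_{f_T(u)}$; let $H$ be the (finite-index) intersection $\bigcap_{u \in U} H_{f_T(u)}$, and let $V$ be a finite union of cosets of $H$ large enough to contain $Uh$ for the relevant shifts. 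The point is that for any $g \in G$, since $\chi_A$ is constant on each leaf-coset and these cosets are themselves cosets of finite-index subgroups (hence each is ``syndetic''), one can find $h \in G$ translating the pattern $\chi_A \restriction U$ into $\chi_A \restriction Vg$; concretely, one uses that the finitely many values $(\chi_A(u))_{u \in U}$ depend only on which leaf-cosets the $u$ fall into, and the configuration of finitely many leaf-cosets repeats periodically modulo $H$.

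The main obstacle — and the reason the paper presumably prefers the pf-clopen route — is the bookkeeping in the direct argument: verifying that the pattern on $U$ really does recur on every translate $Vg$ requires care about how the leaf-cosets sit inside each other and about choosing $V$ uniformly in $g$. The pf-clopen argument sidesteps this entirely by observing that pf-clopen sets form a $G$-algebra, every nonempty member of which contains a coset of a finite-index subgroup and is therefore generic; genericity of the whole generated $G$-algebra is then automatic. Since Remark~\ref{rem:tree-clop} has already done the work of placing tree-founded sets inside this algebra, the cleanest proof is simply:

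\begin{proof}
Immediate from Remark~\ref{rem:tree-clop} and the preceding proposition.
\end{proof}
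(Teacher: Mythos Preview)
Your proposal is correct and matches the paper's approach exactly: the proposition is stated with no proof (just a \qed), since it follows immediately from Remark~\ref{rem:tree-clop} and the preceding proposition that every pf-clopen set is strongly generic.
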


Some valued trees are needlessly complicated with respect to the tree-founded function they define. Figure~\ref{fig:red} shows tree simplifications that do not change the functions founded on them.

\begin{figure}
\centering
\begin{tikzpicture}[scale=0.8]
\def\r{0.4};

\node [above] at (0, 0) {$G$};

\draw (0, 0) -- (-1.5, -0.5);
\draw (0, 0) -- (1.5, -0.5);

\draw (-1.5, -0.5) -- (-2.5, -1.5);
\draw (-1.5, -0.5) -- (-1.5, -1.5);
\draw (-1.5, -0.5) -- (-0.5, -1.5);

\draw (-2.5, -1.5-\r) circle (\r) node {$1$};
\draw (-1.5, -1.5-\r) circle (\r) node {$1$};
\draw (-0.5, -1.5-\r) circle (\r) node {$1$};

\draw (1.5, -0.5) -- (1, -1.5);
\draw (1.5, -0.5) -- (2, -1.5);

\draw (1, -1.5-\r) circle (\r) node {$0$};
\draw (2, -1.5-\r) circle (\r) node {$0$};

\draw (6, -1.1) -- (6.6, -1.1) -- (6.6, -1.25) -- (7, -1) -- (6.6, -0.75) -- (6.6, -0.9) -- (6, -0.9) -- cycle;

\def\x{10};

\node [above] at (\x, 0) {$G$};

\draw (0+\x, 0) -- (-1+\x, -0.5);
\draw (0+\x, 0) -- (1+\x, -0.5);

\draw (-1+\x, -0.5-\r) circle (\r) node {$1$};
\draw (1+\x, -0.5-\r) circle (\r) node {$0$};

\def\y{-4};

\node [above] at (0, 0+\y) {$G$};

\draw (0, 0+\y) -- (-1, -1+\y);
\draw (0, 0+\y) -- (1, -1+\y);

\draw (-1, -1+\y-\r) circle (\r) node {$0$};

\draw (1, -1+\y) -- (0, -2+\y);
\draw (1, -1+\y) -- (1, -2+\y);
\draw (1, -1+\y) -- (2.5, -2+\y);

\draw (0, -2+\y-\r) circle (\r) node {$1$};
\draw (1, -2+\y-\r) circle (\r) node {$1$};

\draw (2.5, -2+\y) -- (1.5, -3+\y);
\draw (2.5, -2+\y) -- (3.5, -3+\y);

\draw (1.5, -3+\y-\r) circle (\r) node {$1$};

\draw (3.5, -3+\y) -- (2.5, -4+\y);
\draw (3.5, -3+\y) -- (3.5, -4+\y);
\draw (3.5, -3+\y) -- (4.5, -4+\y);

\draw (3.5, -4+\y-\r) circle (\r) node {$1$};
\draw (4.5, -4+\y-\r) circle (\r) node {$1$};

\draw (2.5, -4+\y) -- (2, -5+\y);
\draw (2.5, -4+\y) -- (2.5, -5+\y) node [below] {$\vdots$};
\draw (2.5, -4+\y) -- (3, -5+\y);

\draw (6, -1.1+\y) -- (6.6, -1.1+\y) -- (6.6, -1.25+\y) -- (7, -1+\y) -- (6.6, -0.75+\y) -- (6.6, -0.9+\y) -- (6, -0.9+\y) -- cycle;

\node [above] at (\x, \y) {$G$};

\draw (0+\x, 0+\y) -- (-1+\x, -0.5+\y);
\draw (0+\x, 0+\y) -- (1+\x, -0.5+\y);

\draw (-1+\x, -0.5+\y-\r) circle (\r) node {$0$};
\draw (1+\x, -0.5+\y-\r) circle (\r) node {$1$};
\end{tikzpicture}
\caption{Reducible trees and their reductions.}
\label{fig:red}
\end{figure}
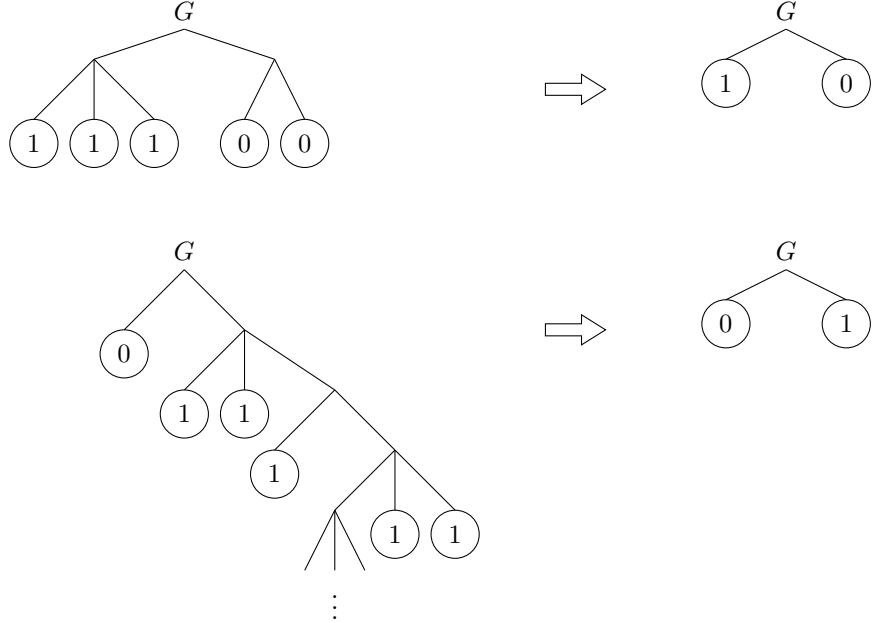

\begin{definition} Assume $(T, \mathcal{H}, d, v)$ is a valued tree of cosets of $G$.
\begin{enumerate}[label=(\roman{*})]
\item A node $\eta \in T$ is \emph{homogeneous} if $v$ is constant on the set of leaves extending $\eta$, or equivalently, if $v \circ f_T$ is constant on $d_{\eta} H_{\eta}$.
\item $(T, \mathcal{H}, d, v)$ is \emph{irreducible} if every homogeneous $\eta \in T$ is a leaf. Otherwise it is \emph{reducible}.
\end{enumerate}
\end{definition}

\begin{remark} Assume $f : G \to \{ 0, 1 \}$ is founded on a valued tree of cosets $(T, \mathcal{H}, d, v)$. Then $f$ is founded on some irreducible valued tree of cosets.
\end{remark}

\begin{proof} Let $T'$ be the tree obtained by removing from $T$ all proper extensions of $\eta$ for each minimal homogeneous $\eta \in T$. Thus each such $\eta$ will become a leaf in $T'$. Define $v'(\eta)$ for such $\eta$ as the common value of $v$ on the leaves extending $\eta$ in $T$ and $v' = v$ elsewhere. Then $(T', \mathcal{H} \restriction T', d \restriction T', v')$ is an irreducible valued tree of cosets of $G$ and $f = v' \circ f_{T'}$.
\end{proof}

For the purpose of stating the next lemma, we introduce some auxiliary notation.
\begin{notation*} Assume $f : G \to \{ 0, 1 \}$.
\begin{enumerate}[label=(\roman{*})]
\item Denote by $\mathcal{F}$ the family of subgroups of $G$ of finite index;
\item For $H \in \mathcal{F}$, let $\chi(f, H) = \bigcup \{ A \in G/H : f \text{ is constant on } A \}$;
\item For $H, H' \in \mathcal{F}$, write $H \sqsubsetneq_f H'$ if $H \le H'$ and $\chi(f, H) \supsetneq \chi(f, H')$.
\end{enumerate}
\end{notation*}

\begin{lemma} \label{lem:per-wf} Assume $f : G \to \{ 0, 1 \}$ is pf-continuous. The following are equivalent:
\begin{enumerate}[label=(\roman{*})]
\item $f$ is periodic;
\item There is $N \in \NN$ such that each sequence $G \sqsupsetneq_f H_1 \sqsupsetneq_f H_2 \sqsupsetneq_f \ldots \sqsupsetneq_f H_n$ has length $n \le N$;
\item There is no infinite sequence $G \sqsupsetneq_f H_1 \sqsupsetneq_f H_2 \sqsupsetneq_f \ldots$, i.e. the strict partial order $(\mathcal{F}, \sqsubsetneq_f)$ is well-founded.
\end{enumerate}
\end{lemma}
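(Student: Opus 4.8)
The plan is to establish the cycle (i) $\Rightarrow$ (ii) $\Rightarrow$ (iii) $\Rightarrow$ (i), using throughout two elementary facts about $\sqsubsetneq_f$. First, if $H \le H'$ then $\chi(f, H) \supseteq \chi(f, H')$ — a coset of $H'$ on which $f$ is constant breaks up into cosets of $H$ on which $f$ is still constant — so $H \sqsubsetneq_f H'$ just says that $H$ is a proper subgroup of $H'$ with $\chi(f, H) \supsetneq \chi(f, H')$, and along a $\sqsupsetneq_f$-chain the sets $\chi(f, \cdot)$ increase strictly. Second, $\chi(f, H) = G$ iff $H \le \Per(f)$ (by Remark~\ref{rem:per-max}), so $f$ is periodic iff $\chi(f, H) = G$ for some $H \in \mathcal{F}$, whereas pf-continuity of $f$ says exactly that $\bigcup_{H \in \mathcal{F}} \chi(f, H) = G$.

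For (i) $\Rightarrow$ (ii), I would first replace $\Per(f)$ by a normal finite-index subgroup $N_0 \trianglelefteqslant G$ with $N_0 \le \Per(f)$ — the intersection of the finitely many conjugates of $\Per(f)$, as in Corollary~\ref{cor:per-nbc} — so that $f$ is $N_0$-periodic. The key claim is that $\chi(f, H) = \chi(f, HN_0)$ for every $H \in \mathcal{F}$: ``$\subseteq$'' is immediate since $H \le HN_0$, and for ``$\supseteq$'' observe that if $f \equiv c$ on $gH$ then on $gHN_0 = \bigcup_{h \in H} ghN_0$ the function $f$ is constant on each piece $ghN_0$ with value $f(gh) = c$, hence $f \equiv c$ on $gHN_0$. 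Thus a chain $G \sqsupsetneq_f H_1 \sqsupsetneq_f \ldots \sqsupsetneq_f H_n$ produces a strictly increasing chain $\chi(f, G) \subsetneq \chi(f, H_1N_0) \subsetneq \ldots \subsetneq \chi(f, H_nN_0)$ of subsets of $G$, each a union of cosets of $N_0$; since there are only $[G : N_0]$ such cosets, $n \le [G : N_0]$, and (ii) holds with $N := [G:N_0]$.

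The implication (ii) $\Rightarrow$ (iii) is formal: an infinite chain $G \sqsupsetneq_f H_1 \sqsupsetneq_f H_2 \sqsupsetneq_f \ldots$ would have finite initial segments of unbounded length, contradicting (ii); and the equivalence with well-foundedness of $(\mathcal{F}, \sqsubsetneq_f)$ holds because, when $f$ is non-constant, any infinite descending sequence $H_1 \sqsupsetneq_f H_2 \sqsupsetneq_f \ldots$ can be prefixed by $G$ (as $\chi(f, G) = \varnothing \subsetneq \chi(f, H_2)$, giving $G \sqsupsetneq_f H_2 \sqsupsetneq_f H_3 \sqsupsetneq_f \ldots$), while for constant $f$ the relation $\sqsubsetneq_f$ is empty. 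For (iii) $\Rightarrow$ (i) I argue the contrapositive. If $f$ is not periodic it is non-constant, so $\chi(f, G) = \varnothing \subsetneq G$, and pf-continuity gives $\bigcup_{H \in \mathcal{F}} \chi(f, H) = G$. Starting from $H_0 := G$, I choose recursively, given $H_k \in \mathcal{F}$ with $\chi(f, H_k) \subsetneq G$, some $H' \in \mathcal{F}$ with $\chi(f, H') \not\subseteq \chi(f, H_k)$ and set $H_{k+1} := H_k \cap H' \in \mathcal{F}$; then $H_{k+1} \le H_k$ and $\chi(f, H_{k+1}) \supseteq \chi(f, H_k) \cup \chi(f, H') \supsetneq \chi(f, H_k)$, i.e. $H_k \sqsupsetneq_f H_{k+1}$, and $\chi(f, H_{k+1}) \ne G$, since otherwise $\Per(f) \supseteq H_{k+1}$ would have finite index. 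This yields an infinite $\sqsupsetneq_f$-chain, contradicting (iii).

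The only genuine content I expect is the identity $\chi(f, H) = \chi(f, HN_0)$ used in (i) $\Rightarrow$ (ii), which collapses an a priori unbounded chain of subsets of $G$ into a chain inside the finite Boolean algebra of unions of $N_0$-cosets, together with the role of pf-continuity in (iii) $\Rightarrow$ (i): without it a non-periodic $f$ can still leave $(\mathcal{F}, \sqsubsetneq_f)$ well-founded — for instance if no coset of a finite-index subgroup lies inside $f^{-1}(1)$ or $f^{-1}(0)$, so that $\chi(f, H) = \varnothing$ for all $H$ and $\sqsubsetneq_f$ is empty. The remaining steps are routine bookkeeping about the partial order $\sqsubsetneq_f$.
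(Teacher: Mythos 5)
Your proposal is correct and takes essentially the same route as the paper: (i)$\Rightarrow$(ii) via a normal finite-index subgroup $N_0 \le \Per(f)$ forcing each $\chi(f,H)$ to be a union of $N_0$-cosets (the paper's $T \le \Per(f)$ argument), and (iii)$\Rightarrow$(i) by contraposition, using pf-continuity to strictly enlarge $\chi(f,\cdot)$ by intersecting with a further finite-index subgroup, exactly as the paper does when it shows no $\sqsubsetneq_f$-minimal element exists. One cosmetic slip: in the claim $\chi(f,H)=\chi(f,HN_0)$ your labels ``$\subseteq$'' and ``$\supseteq$'' are attached to the wrong arguments (monotonicity from $H \le HN_0$ gives $\chi(f,HN_0)\subseteq\chi(f,H)$, while the coset computation gives the reverse inclusion), but both inclusions are in fact correctly established.
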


\begin{proof}
(i)$\implies$(ii) Take a normal subgroup $T \trianglelefteqslant G$ of finite findex such that $T \le \Per(f)$. Consider any subgroup $H \le G$ such that $f$ is constant on some coset $aH$, $a \in G$. Then $HT \le G$ and $f$ is constant on $aHT$ since for any $h, h' \in H, t, t' \in T$: $f(aht) = f(ah) = f(ah') = f(ah't')$. Since $aH \subseteq aHT$ and $aHT$ is a union of left cosets of $T$, it follows that $\chi(f, H) = \bigcup \mathcal{T}$ for some family $\mathcal{T} \subseteq G/T$. Therefore any sequence $G \sqsupsetneq_f H_1 \sqsupsetneq_f H_2 \sqsupsetneq_f \ldots \sqsupsetneq_f H_n$ has length $n \le N$, where $N = [G:T]$.

\vspace{2mm} \noindent
(ii)$\implies$(iii) is obvious.

\vspace{2mm} \noindent
(iii)$\implies$(i) Assume for contradiction that $f$ is not periodic. It suffices to show that there is no $\sqsubsetneq_f$-minimal element in $\mathcal{F}$. Take any $H \in \mathcal{F}$. Since $f$ is not periodic, we have that $\chi(f, H) \neq G$, so we can find $a, b \in G \setminus \chi(f, H)$ such that $f(a) = 0$ and $f(b) = 1$. By pf-continuity of $f$ there are subgroups $A, B \in \mathcal{F}$ satisfying $f \restriction aA \equiv 0$ and $f \restriction bB \equiv 1$. It follows that $H_0 := H \cap A \cap B \sqsubsetneq_f H$, as required.
\end{proof}

\begin{proposition} \label{prop:tree-nper} Assume $f : G \to \{ 0, 1 \}$ is founded on a valued tree of cosets $(T, \mathcal{H}, d, v)$ which is linear, irreducible and infinite. Then $f$ is not periodic.
\end{proposition}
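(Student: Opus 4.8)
The plan is to invoke Lemma~\ref{lem:per-wf} in contrapositive form. Since $f$ is tree-founded it is pf-continuous by Remark~\ref{rem:tree-clop}, so by the equivalence (i)$\Leftrightarrow$(iii) of that lemma it suffices to exhibit an infinite descending chain in the partial order $(\mathcal{F}, \sqsubsetneq_f)$. As the tree is \emph{linear} we may write $H_\eta = A_{|\eta|}$ for a descending sequence $G = A_0 \ge A_1 \ge A_2 \ge \cdots$ of finite-index (normal) subgroups of $G$, so the natural candidate is a subsequence of $A_0 \sqsupseteq_f A_1 \sqsupseteq_f A_2 \sqsupseteq_f \cdots$. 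That this is at least a $\sqsubseteq_f$-chain, i.e. $\chi(f, A_n) \subseteq \chi(f, A_{n+1})$ for every $n$, is immediate: a coset of $A_n$ on which $f$ is constant is a finite disjoint union of cosets of $A_{n+1}$ on each of which $f$ is constant, hence it is contained in $\chi(f, A_{n+1})$.

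Two observations turn this weak chain into an infinite strict one. First, $\chi(f, A_n) \ne G$ for every $n$: since $T$ is infinite and finitely branching, König's lemma yields an infinite branch $\langle \eta_n : n \in \NN \rangle$; each $\eta_n$ is not a leaf, so by \emph{irreducibility} it is not homogeneous, which means $f$ is non-constant on the coset $d_{\eta_n} H_{\eta_n} = d_{\eta_n} A_n$; thus this coset of $A_n$ is disjoint from $\chi(f, A_n)$, so $\chi(f, A_n) \subsetneq G$. Second, $\bigcup_{n \in \NN} \chi(f, A_n) = G$: any $g \in G$ lies in a unique leaf coset $d_\xi H_\xi = d_\xi A_{|\xi|}$, on which $f \equiv v(\xi)$ is constant, so $g \in \chi(f, A_{|\xi|})$.

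Combining these, $\langle \chi(f, A_n) \rangle_{n}$ is a weakly increasing sequence of subsets of $G$ whose union is $G$ but no term of which equals $G$; hence it cannot be eventually constant and must increase strictly infinitely often. Picking indices $n_1 < n_2 < \cdots$ at which it increases strictly gives $A_{n_1} \sqsupsetneq_f A_{n_2} \sqsupsetneq_f \cdots$, an infinite $\sqsubsetneq_f$-descending chain (one may prepend $G = A_0$), so $(\mathcal{F}, \sqsubsetneq_f)$ is not well-founded and $f$ is not periodic by Lemma~\ref{lem:per-wf}. The one point requiring care is not to overclaim: consecutive terms $\chi(f, A_n)$ and $\chi(f, A_{n+1})$ may coincide, since a level of the tree need not spawn a new leaf, so strictness is only available after passing to a subsequence — which is exactly what the well-foundedness criterion accommodates. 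The main (mild) obstacle is the verification that $\chi(f, A_n) \ne G$; this is where \emph{infinite} is used (to produce a branch) together with \emph{irreducible} (to pass from ``non-leaf'' to ``$f$ non-constant on the corresponding coset''), while \emph{linear} serves only to index the relevant subgroups as a single descending $\omega$-sequence.
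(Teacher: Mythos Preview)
Your proof is correct and follows essentially the same approach as the paper: both invoke Lemma~\ref{lem:per-wf}, use an infinite branch together with irreducibility to show $\chi(f, A_n) \neq G$, and use the leaf covering property to witness strict increases along the sequence $(A_n)$. Your packaging via $\bigcup_n \chi(f, A_n) = G$ is a slightly cleaner way to phrase the paper's step of picking $a \in d_{t\restriction n}H_{t\restriction n}$ and locating its leaf, but the content is the same.
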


\begin{proof} Recall that linearity means that $H_{\theta} = H_{\eta}$ whenever $|\theta| = |\eta|$. Take an infinite branch $t \in \omega^{\omega}$ of $T$. For each $n \in \NN$ we have that $t \restriction n$ is not a leaf in $T$, so by irreducibility it is not homogeneous. Hence $f$ is not constant on $d_{t \restriction n} H_{t \restriction n}$ and $\chi(f, H_{t \restriction n}) \cap d_{t \restriction n} H_{t \restriction n} = \varnothing$. Take any $a \in d_{t \restriction n} H_{t \restriction n}$ and find a leaf $\eta$ extending $t \restriction n$ such that $a \in d_{\eta} H_{\eta}$. Then $f$ is constant on $d_{\eta} H_{\eta}$, so $d_{\eta} H_{\eta} \subseteq \chi(f, H_{\eta}) \setminus \chi(f, H_{t \restriction n})$ and therefore $H_{t \restriction |\eta|} = H_{\eta} \sqsubsetneq_f H_{t \restriction n}$. Thus for any $n \in \NN$ there is $m \ge n$ such that $H_{t \restriction m} \sqsubsetneq_f H_{t \restriction n}$, so by Lemma~\ref{lem:per-wf}, $f$ is not periodic.
\end{proof}

\begin{figure}
\centering
\begin{tikzpicture}[scale=1, coset/.style = {draw, rectangle}, value/.style= { draw, circle }]
\def\r{0.4};

\node at (0, 1) {$\alpha = 1 + 3 + 9 + 27 + \ldots = -\frac{1}{2} \in \ZZ_3 \setminus \ZZ$};

\node[coset] (1) at (0, 0) {$\ZZ$};

\node[coset] (2) at (-2, -0.5) {$3\ZZ$};
\node[coset] (3) at (2, -0.5) {$3\ZZ+2$};
\draw (1) -- (2);
\draw (1) -- (3);

\node[coset] (4) at (-3.5, -1.5) {$6\ZZ$};
\node[value, below=0 of 4] {$0$};
\draw (2) -- (4);

\node[coset] (5) at (-2, -1.5) {$6\ZZ+3$};
\node[value, below=0 of 5] {$1$};
\draw (2) -- (5);

\node[coset] (6) at (2, -1.5) {$6\ZZ+2$};
\node[value, below=0 of 6] {$0$};
\draw (3) -- (6);

\node[coset] (7) at (4, -1.5) {$6\ZZ+5$};
\node[value, below=0 of 7] {$1$};
\draw (3) -- (7);

\node[coset] (8) at (0, -2.5) {$3\ZZ+1$};
\draw (1) -- (8);

\node[coset] (9) at (-2, -3.5) {$9\ZZ+1$};
\draw (8) -- (9);

\node[coset] (10) at (2, -3.5) {$9\ZZ+7$};
\draw (8) -- (10);

\node[coset] (11) at (-4, -4.5) {$18\ZZ+10$};
\node[value, below=0 of 11] {$0$};
\draw (9) -- (11);

\node[coset] (12) at (-2, -4.5) {$18\ZZ+1$};
\node[value, below=0 of 12] {$1$};
\draw (9) -- (12);

\node[coset] (13) at (2, -4.5) {$18\ZZ+16$};
\node[value, below=0 of 13] {$0$};
\draw (10) -- (13);

\node[coset] (14) at (4, -4.5) {$18\ZZ+7$};
\node[value, below=0 of 14] {$1$};
\draw (10) -- (14);

\node[coset] (15) at (0, -5.5) {$9\ZZ+4$};
\draw (8) -- (15);

\draw (15) -- (0, -6);
\draw (15) -- (-1.2, -5.8);
\draw (15) -- (1.2, -5.8);

\node at (0, -5.8) [below] {$\substack{\vdots \\[0.5ex] \displaystyle \downarrow \\[0.5ex] \displaystyle \alpha}$};

\end{tikzpicture}
\caption{A periodic function founded on a non-linear infinite irreducible tree.}
\label{fig:irr-per}
\end{figure}
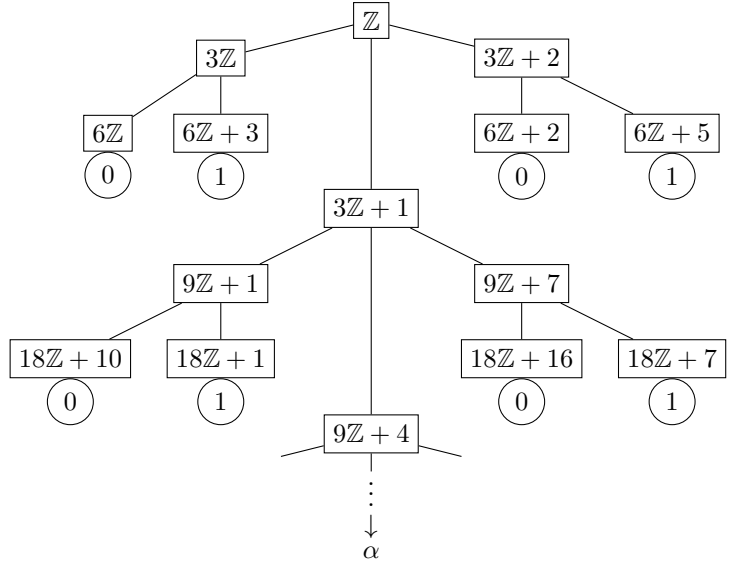

The linearity assumption above is essential. Indeed, consider the function $f : \ZZ \to \{ 0, 1 \}$ founded on the tree shown in Figure~\ref{fig:irr-per}. Then $f$ is periodic as $f \restriction 2\ZZ \equiv 0$ and $f \restriction (2\ZZ+1) \equiv 1$, even though the tree is infinite and irreducible.

This concludes our description of a class of strongly generic, non-periodic subsets of an arbitrary group $G$. However, this is not a complete characterization even in $(\ZZ, +)$, as there are strongly generic subsets which are not even pf-clopen.

\begin{example} \label{ex:val} Let $v_2 : \ZZ \to \NN \cup \{ \infty \}$ denote the $2$-adic valuation, i.e. $v_2(0) = \infty$ and $v_2(k)$ is the highest $n \in \NN$ such that $2^n \mid k$ for $k \neq 0$. Define $A_{\varepsilon} \subseteq \ZZ$, $\varepsilon \in \{ 0, 1 \}$, as follows:
\[
\chi_{A_{\varepsilon}}(k) = \begin{cases} \varepsilon & \text{if } \ k = 0, \\ v_2(k) \bmod 2 & \text{if } \ k \neq 0. \end{cases}
\]
Clearly neither $A_{\varepsilon}$ is pf-clopen, because $\chi_{A_0}$ and $\chi_{A_1}$ are not pf-continuous at $k=0$. However, both $A_0$ and $A_1$ are strongly generic. To see this for e.g. $A_0$, by Proposition~\ref{prop:sgen-lper} it suffices to check it is locally periodic. Fix any finite $U \subseteq \ZZ$ and take an even natural number $N > v_2(u)$ for all $u \in U \setminus \{ 0 \}$. We claim that 
\[
2^N + 2^{N+1} \ZZ = \{ (2k+1) \cdot 2^N : k \in \ZZ \} \subseteq \Per_U(\chi_{A_0}).
\]
Indeed, for any $k \in \ZZ$ and $u \in U \setminus \{ 0 \}$ we have that $v_2 \big( (2k+1) \cdot 2^N \big) = N > v_2(u)$, so $v_2 \big( u + (2k+1) \cdot 2^N \big) = v_2(u)$, hence $\chi_{A_0} \big( u + (2k+1) \cdot 2^N \big) = \chi_{A_0}(u)$. Also $\chi_{A_0} \big( (2k+1) \cdot 2^N \big) = N \bmod 2 = 0 = \chi_{A_0}(0)$. Hence $\Per_U(\chi_{A_0})$ contains the coset $2^N + 2^{N+1} \ZZ$ and therefore is generic.
\end{example}

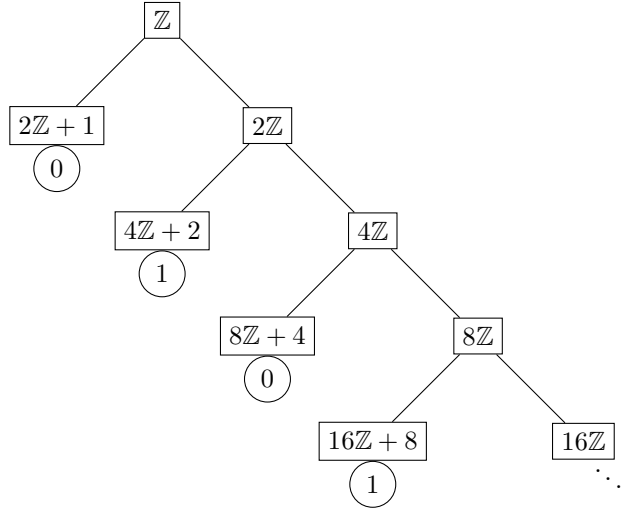
\begin{figure}
\centering
\begin{tikzpicture}[scale=1.4, coset/.style = {draw, rectangle}, value/.style= { draw, circle } ]

\node[coset] (1) at (0, 0) {$\ZZ$};

\node[coset] (2) at (-1, -1) {$2\ZZ+1$};
\node[value, below=0 of 2] {$0$};
\draw (1) -- (2);

\node[coset] (3) at (1, -1) {$2\ZZ$};
\draw (1) -- (3);

\node[coset] (4) at (0, -2) {$4\ZZ+2$};
\node[value, below=0 of 4] {$1$};
\draw (3) -- (4);

\node[coset] (5) at (2, -2) {$4\ZZ$};
\draw (3) -- (5);

\node[coset] (6) at (1, -3) {$8\ZZ+4$};
\node[value, below=0 of 6] {$0$};
\draw (5) -- (6);

\node[coset] (7) at (3, -3) {$8\ZZ$};
\draw (5) -- (7);

\node[coset] (8) at (2, -4) {$16\ZZ+8$};
\node[value, below=0 of 8] {$1$};
\draw (7) -- (8);

\node[coset] (9) at (4, -4) {$16\ZZ$};
\draw (7) -- (9);

\node [below right] at (9) {$\ddots$};
\end{tikzpicture}
\caption{A tree that leaves the value at $k=0$ undefined. }
\label{fig:val-tree}
\end{figure}

Note that both sets considered in the example are ``almost tree-founded'', meaning they are defined in the same way as before by the tree shown in Figure~\ref{fig:val-tree}, except for the value at $k=0$ which must be assigned manually as either $0$ or $1$. We can try to generalize and ask whether the property of being strongly generic will be preserved if instead of one point we allow such arbitrary assignment on some small subset of $G$, e.g. a set of elements corresponding to a nowhere dense set of infinite branches. We will demonstrate that the answer is quite simply negative after we formalize the necessary notions. However, the sets with the described property will be of interest in the next section.

\begin{definition} Let $(T, \mathcal{H}, d, v)$ be as in Definition~\ref{def:tree} (ii), except we no longer assume that each $g \in G$ belongs to some coset corresponding to a leaf of $T$. Thus the set $G_L = \bigcup_{\eta \in T_L} d_{\eta} H_{\eta}$ is not necessarily equal to $G$, but we still define $f_T : G_L \to T_L$ as before. Assume that the set of infinite branches of $T$ is nowhere dense, i.e. every $\eta \in T$ extends to a leaf.
\begin{enumerate}[label=(\roman{*})]
\item A function $f : G \to \{ 0, 1 \}$ is \emph{almost founded} on $(T, \mathcal{H}, d, v)$ if $f \restriction G_L = v \circ f_T$.
\item A function $f : G \to \{ 0, 1 \}$ is \emph{almost tree-founded} if it is almost founded on some $(T, \mathcal{H}, d, v)$ as above.
\item A subset $A \subseteq G$ is \emph{almost tree-founded} if $\chi_A$ is almost tree-founded.
\end{enumerate}
\end{definition}

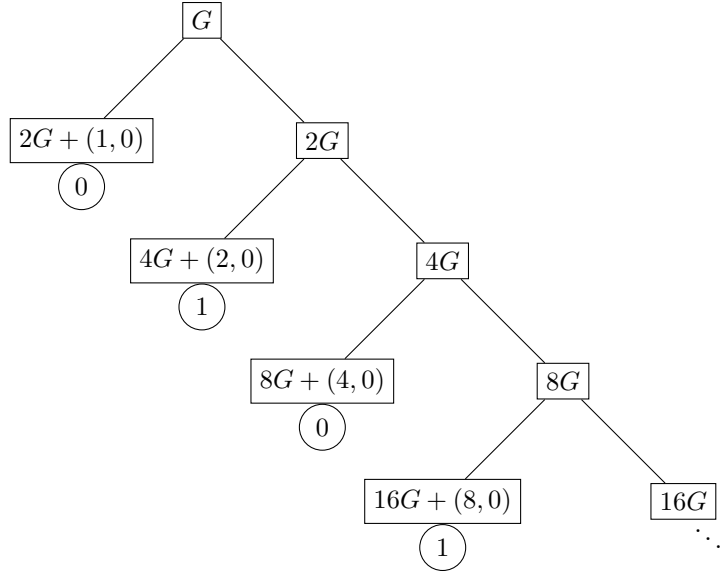
\begin{figure} 
\centering
\begin{tikzpicture}[scale=1.6, coset/.style = {draw, rectangle}, value/.style= { draw, circle } ]

\node[coset] (1) at (0, 0) {$G$};

\node[coset] (2) at (-1, -1) {$2G+(1, 0)$};
\node[value, below=0 of 2] {$0$};
\draw (1) -- (2);

\node[coset] (3) at (1, -1) {$2G$};
\draw (1) -- (3);

\node[coset] (4) at (0, -2) {$4G + (2, 0)$};
\node[value, below=0 of 4] {$1$};
\draw (3) -- (4);

\node[coset] (5) at (2, -2) {$4G$};
\draw (3) -- (5);

\node[coset] (6) at (1, -3) {$8G + (4, 0)$};
\node[value, below=0 of 6] {$0$};
\draw (5) -- (6);

\node[coset] (7) at (3, -3) {$8G$};
\draw (5) -- (7);

\node[coset] (8) at (2, -4) {$16G + (8, 0)$};
\node[value, below=0 of 8] {$1$};
\draw (7) -- (8);

\node[coset] (9) at (4, -4) {$16G$};
\draw (7) -- (9);

\node [below right] at (9) {$\ddots$};
\end{tikzpicture}
\caption[]{An almost tree-founded set that need not be strongly generic.}
\label{fig:padxq}
\end{figure}

\begin{example} Assume $A \subseteq G = \ZZ \times \QQ$ is a subset such that $\chi_A$ is almost founded on the tree in Figure~\ref{fig:padxq}. Then $A$ is strongly generic if and only if $\chi_A$ is constant on $\{ 0 \} \times \QQ$. Indeed: if $\chi_A$ is constant on $\{ 0 \} \times \QQ$, then the argument is the same as in Example~\ref{ex:val} with the additional remark that $\{ 0 \} \times \QQ \subseteq \Per(\chi_A)$. On the other hand, if $\chi_A$ is not constant on $\{ 0 \} \times \QQ$, take $a, b \in \QQ$ such that $(0, a) \in A$ and $(0, b) \notin A$. Then the set $A \setminus \big( (0, a-b) + A \big)$ is non-empty and contained in $\{ 0 \} \times \QQ$, hence not generic.
\end{example}

The following variation on Example~\ref{ex:val} will be used later:
\begin{example} \label{ex:gval} Assume $G$ is an infinite profinite group. Take a sequence
\[
G = F_0 \gne F_1 \gne F_2 \gne \ldots
\]
of clopen subgroups of $G$, thus of finite index, and let $F_{\infty} := \bigcap_{n=0}^{\infty} F_n$. We define $v : G \to \NN \cup \{ \infty \}$ so that $v(x)$ is the highest $n \in \NN \cup \{ \infty \}$ such that $x \in F_n$. Similarly as for valuations, we have that $v(xy) \ge \min \{ v(x), v(y) \}$ and these values are equal if $v(x) \neq v(y)$. 

Consider $A = \bigcup_{n=1}^{\infty} (F_{2n-1} \setminus F_{2n}) \subseteq G$, so that
\[
\chi_{A}(x) = \begin{cases} 0 & \text{if } \ x \in F_{\infty}, \\ v(x) \bmod 2 & \text{if } \ x \in G \setminus F_{\infty}. \end{cases}
\]
We will prove that $A$ is strongly generic. By Proposition~\ref{prop:sgen-lper}, it suffices to check that $A$ is locally periodic. Fix a finite $U \subseteq G$ and take an even natural number $N > v(u)$ for all $u \in U \setminus F_{\infty}$. We claim that $F_N \setminus F_{N+1} \subseteq \Per_U(\chi_A)$. Indeed, fix $t \in F_N \setminus F_{N+1}$ and $u \in U$. If $u \in U \setminus F_{\infty}$, then $v(t) = N > v(u)$, so $v(ut) = v(u)$ and so $\chi_A(ut) = \chi_A(u)$. Furthermore, if $u \in F_{\infty}$, then $\chi_A(ut) = N \bmod 2 = 0 = \chi_A(u)$. Hence $\Per_U(\chi_A)$ contains $F_N \setminus F_{N+1}$, so it is generic.

We also prove that $A$ is not periodic. In fact, we claim that $\Per(\chi_A) = F_{\infty}$, which clearly has infinite index. Note that
\[
\Per(\chi_A) = \{ t \in G : At^{-1} = A \}
\]
and take any $t \in F_{\infty}$. Then $F_n t^{-1} = F_n$ for any $n \in \NN$, thus
\[
At^{-1} = \bigcup_{n=1}^{\infty} (F_{2n-1} \setminus F_{2n}) t^{-1} = \bigcup_{n=1}^{\infty} (F_{2n-1} \setminus F_{2n}) = A
\]
and so $t \in \Per(\chi_A)$. Now take any $t \in G \setminus F_{\infty}$ and let $n = v(t) \in \NN$ so that $t \in F_n \setminus F_{n+1}$. If $n$ is even, then 
\begin{align*}
A & = (F_1 \setminus F_2) \cup \ldots \cup (F_{n-1} \setminus F_n) \cup (F_{n+1}\phantom{t^{-1}} \setminus F_{n+2}\phantom{t^{-1}}) \cup \ldots \\
At^{-1} &= (F_1 \setminus F_2) \cup \ldots \cup (F_{n-1} \setminus F_n) \cup (F_{n+1}t^{-1} \setminus F_{n+2}t^{-1}) \cup \ldots
\end{align*}
It follows that $A \cap F_n \subseteq F_{n+1}$ and $At^{-1} \cap F_n \subseteq F_{n+1}t^{-1}$. In particular, the sets on the left side of the inclusions are disjoint. But they are also non-empty, hence $A \neq At^{-1}$. If $n$ is odd, we show in a similar manner that $A \neq At^{-1}$. In either case $t \notin \Per(\chi_A)$, as desired.
\end{example}

\subsection{Uniformly strongly generic sets}

In this subsection we investigate a particular strengthening of strong genericity. Assume $G$ is a group definable in a first-order structure $M$ and let $M \preccurlyeq N$. Recall that one of the central ideas of the thesis is to relate the Ellis groups of $S_{\ext, G}(M)$ and $S_{\ext, G}(N)$ using image algebras. This could be particularly effective in the following case: assume some image algebra consists of definable sets. Then it can be naturally lifted to $N$. If the lifted sets still generate a generic $G^N$-algebra, we can extend it to an image algebra and possibly conclude some relation between the corresponding Ellis groups.

However, given a strongly generic definable subset $A \subseteq G$, it may not be the case that $A^N \subseteq G^N$ is also strongly generic. For instance, assume $N$ is $\aleph_0$-saturated. Then $A^N$ is strongly generic if and only if $A$ satisfies the following property:
\begin{equation} \label{eq:usg}
\parbox{11cm}{\emph{For each Boolean term $\tau(X_1, \ldots, X_n)$ there is $m_{\tau} \in \NN$ such that for any $g_1, \ldots, g_n \in G$ the set $\tau(g_1A, \ldots, g_nA)$ is either empty or $m_{\tau}$-generic,}}
\end{equation}
where a set $B \subseteq G$ is said to be $m$-generic if $G = h_1 B \cup \ldots \cup h_m B$ for some $h_1, \ldots, h_m \in G$. In fact, (\ref{eq:usg}) holds if and only if $A^N$ is strongly generic in $G^N$ for every $N \succcurlyeq M$. For this reason we devote some attention to investigating the property.

\begin{definition} Assume $G$ is an arbitrary group and $\cA$ is a $G$-algebra.
\begin{enumerate}[label=(\alph{*})]
\item When $m \in \NN$, an element $B \in \cA$ is said to be \emph{$m$-generic} provided that $\one = h_1 B \vee \ldots \vee h_m B$ for some $h_1, \ldots, h_m \in G$.
\item An element $A \in \cA$ is called \emph{uniformly strongly generic}, abbreviated USG, if it satisfies the property (\ref{eq:usg}).
\end{enumerate}
\end{definition}

Although the generality of the definition will be useful at times, we mainly focus on uniformly strongly generic subsets of $G$, that is, sets $A \subseteq G$ that are USG as elements of the $G$-algebra $\cP(G)$. The next two remarks are straightforward to prove:

\begin{remark} An element $A \in \cA$ is uniformly strongly generic if and only if it satisfies (\ref{eq:usg}) restricted to terms $\tau$ of the form
\[
\tau(X_1, \ldots, X_I, Y_1, \ldots, Y_J) = X_1 \wedge \ldots X_I \wedge (Y_1)^c \wedge \ldots \wedge (Y_J)^c.
\]
\end{remark}

\begin{remark} \label{rem:hom-usg} Assume $\varphi: \cA \to \cB$ is a homomorphism of $G$-algebras and $A \in \cA$ is USG. Then $\varphi(A) \in \cB$ is USG.
\end{remark}

Propositions~\ref{prop:ap-char} and \ref{prop:sgen-lper} and the corresponding notions have a natural counterpart in the uniform setting.

\begin{definition} \label{def:uni} Assume $f \in 2^G$.
\begin{enumerate}[label=(\alph{*})]
\item $f$ is \emph{uniformly self-replicating} if it satisfies the condition
\[
(\forall \underset{\text{finite}}{U \subseteq G})(\exists \underset{\text{finite}}{V \subseteq G})(\forall g \in G)(\exists h \in G) \, h \odot (f \restriction U) \subseteq f \restriction Vg,
\]
where additionally $|V|$ depends only on $|U|$.
\item $f$ is \emph{uniformly almost periodic} in $2^G$ if for each finite $U \subseteq G$ there is a finite $W \subseteq G$ such that $\cl(G \odot f) \subseteq W^{-1} \odot [f \restriction U]$, and $|W|$ depends only on $|U|$.
\item $f$ is \emph{uniformly locally periodic} if for each finite $U \subseteq G$ there is $m \in \NN$ such that $\Per_U(f)$ is $m$-generic, where $m$ depends only on $|U|$.
\end{enumerate}
\end{definition}

\begin{remark} \label{rem:usg-char} For any $A \subseteq G$, the following are equivalent:
\begin{enumerate}[label=(\roman{*})]
\item $A$ is uniformly strongly generic;
\item $\chi_A$ is uniformly self-replicating;
\item $\chi_A$ is uniformly almost periodic in $2^G$;
\item $\chi_A$ is uniformly locally periodic.
\end{enumerate}
\end{remark}

\begin{proof} Essentially repeat the reasoning from Propositions~\ref{prop:ap-char} and \ref{prop:sgen-lper}.
\end{proof}

If $A \subseteq G$, we have the following implications:
\[
A \text{ is periodic} \implies A \text{ is USG} \implies A \text{ is strongly generic}.
\]
None of these can be reversed without extra assumptions. We later show (see Corollary~\ref{cor:tf-sg-nusg} and Theorem~\ref{thm:comp-nusg}) that in many groups there are strongly generic sets that are not USG. Now let us show examples of uniformly strongly generic sets that are not periodic.

\begin{example} \label{ex:usg-nper1} For any finite group $\cG \neq \{ e \}$ let $G = \cG^{\omega} \rtimes_{\varphi} \Sym(\omega)$, where the underlying action of the semidirect product is $\varphi_{\sigma}(s) = s \circ \sigma^{-1}$ for $\sigma \in \Sym(\omega)$, $s \in \cG^{\omega}$. Consider the subset $A \subseteq G$ defined as
\[
A = \{ s \in \cG^{\omega} : s(0) = e \} \times \Sym(\omega).
\]
We will show that $A$ is uniformly strongly generic but not periodic. Given $n \in \omega$, $g \in \cG$, let
\[
A_n^g = \{ s \in \cG^{\omega} : s(n) = g \} \times \Sym(\omega),
\]
so that $A = A_0^e$. Then for any $\< s, \sigma \> \in G$ we have that
\begin{align*}
\< s, \sigma \> A & = \{ \< s, \sigma \> \< t, \tau \> : t(0) = e \} \\
& = \{ \< s \varphi_{\sigma}(t), \sigma \tau \> : t(0) = e \} \\
& = \{ \< s \varphi_{\sigma}(t), \sigma \tau \> : ( s \varphi_{\sigma}(t) ) \big( \sigma(0) \big) = s(\sigma(0)) \} \\
& = A_{\sigma(0)}^{s(\sigma(0))}.
\end{align*}
It follows that the $G$-algebra $\cA \le \cP(G)$ generated by $A$ is 
\[
\cA = \{ C \times \Sym(\omega) : C \subseteq \cG^{\omega} \text{ is clopen} \},
\]
where the topology on $\cG^{\omega}$ is that of a product of finite discrete groups. In particular, $\cA$ is infinite, so $A$ is not periodic. On the other hand, consider any non-empty set $B \in \cA$ of the form
\[
B = a_1 A \cap \ldots \cap a_I A \cap b_1 (G \setminus A) \cap \ldots \cap b_J (G \setminus A),
\]
where $a_i, b_j \in G$, and write it as
\[
B = A_{n_1}^{g_1} \cap \ldots \cap A_{n_I}^{g_I} \cap (G \setminus A_{m_1}^{h_1}) \cap \ldots \cap (G \setminus A_{m_J}^{h_J}).
\]
Then $B$ is $|\cG|^I \cdot 2^J$-generic. Therefore $A$ is uniformly strongly generic.

We show in Appendix~\ref{app:semi} that the Ellis group of $S(\cA)$ is isomorphic to $\cG$. 
\end{example}

\begin{example} \label{ex:usg-nper2}
Let $G = \mathbb{F}_{x, y}$ denote the free group on generators $x$, $y$. Write $S = \{ x, y, x^{-1}, y^{-1} \}$ and define $S^*$ to be the set of all words over $S$, i.e. finite sequences of elements of $S$. For any non-empty irreducible word $w \in S^*$ let $T_w \subseteq G$ denote the set of elements represented by irreducible words beginning with $w$. Consider the subset $A := T_x \subseteq G$. Clearly it is not periodic because 
\[
\{ x^n A : n \in \omega \} = \{ T_{x^{n+1}} : n \in \omega \}
\]
is infinite. We will check that $A$ is uniformly strongly generic.

We claim that for any $a \in A$, the set $a^{-1} A$ contains $T_{y^n} \cup T_{y^{-n}}$ for sufficiently large $n \in \omega$. Indeed, let $xuy^k$ be the irreducible word representing $a$, where $k \in \ZZ$ and $u \in S^*$ does not end with $y$ nor $y^{-1}$. Now let $n > |k|$ and take any $g \in T_{y^n} \cup T_{y^{-n}}$. If $g \in T_{y^n}$, we can write it as an irreducible word $y^n v$, where $v \in S^*$ does not begin with $y^{-1}$. Then $ag = x u y^{n+k} v$ is an irreducible representation, hence $ag \in A$. If $g \in T_{y^{-n}}$, we show in a similar way that $ag \in A$. Hence $T_{y^n} \cup T_{y^{-n}} \subseteq a^{-1} A$ whenever $n > |k|$.

Also for any $b \notin A$, the set $b^{-1} (G \setminus A)$ contains $T_{y^n} \cup T_{y^{-n}}$ for sufficiently large $n \in \omega$. To see this, let $u \in S^*$ be an irreducible word representing $b$. Then $u$ does not begin with $x$, hence $u^{-1} x$ is irreducible and $b^{-1} A = T_{u^{-1} x}$. Clearly $T_{y^n} \cup T_{y^{-n}}$ is disjoint from this set when $n \in \omega$ is sufficiently large.

It suffices to show that any non-empty set $B$ in the $G$-algebra generated by $A$ is $2$-generic. Take any such $B$ and assume without loss of generality that $e \in B$ and $B$ is of the form
\[
B = a_1^{-1} A \cap \ldots \cap a_I^{-1} A \cap b_1^{-1} (G \setminus A) \cap \ldots \cap b_J^{-1} (G \setminus A),
\]
so that $a_i \in A$ and $b_j \notin A$. It then follows from the previous observations that $T_{y^n} \cup T_{y^{-n}} \subseteq B$ for some $n \in \omega$. But the last set is $2$-generic because 
\[
G = T_{y^n} \cup y^{2n-1} T_{y^{-n}},
\]
thus also $B$ is $2$-generic. Therefore $A$ is uniformly strongly generic.
\end{example}

For the sake of the next example we introduce some notation. When a group $G$ acts on a set $X$ and $x \in X$, let
\[
d_x A = \{ g \in G : gx \in A \}.
\]
The set $\cP(X)$ carries a natural structure of a $G$-algebra such that the map $d_x : \cP(X) \to \cP(G)$ is a $G$-algebra homomorphism.

\begin{remark} \label{rem:tran-usg} Assume $G$ acts transitively on $X$. Assume $A \subseteq X$ is uniformly strongly generic and the set $\{ gA : g \in G \}$ is infinite. Then for any $x \in X$ the set $d_x A$ is uniformly strongly generic and not periodic.
\end{remark}

\begin{proof} By Remark~\ref{rem:hom-usg}, $d_x A$ is USG. Furthermore, because of the transitivity of the action, $d_x$ is a monomorphism, so the family $\{ g \cdot d_x A : g \in G \}$ is infinite. Hence $d_x A$ is not periodic.
\end{proof}

\begin{example} \label{ex:usg-nper3}
Let $G = \text{PSL}(2, \RR)$ and consider the usual action of $G$ on the projective line $\mathbf{P}(\RR) = \RR \cup \{ \infty \}$. Let $A = [0, 1) \subseteq \mathbf{P}(\RR)$ and
\[
B = d_0 A = \{ M \in G : M \cdot 0 \in [0, 1) \}.
\]
We will check that $B$ is uniformly strongly generic and not periodic. Using Remark~\ref{rem:tran-usg}, it suffices to show that $A$ is uniformly strongly generic and has infinite orbit. Given distinct $y, z \in \mathbf{P}(\RR)$, let $[y, z)$ denote the usual interval if $y < z$ and $\mathbf{P}(\RR) \setminus [z, y)$ otherwise, where $\RR < \infty$. Note that the action of $G$ on $\mathbf{P}(\RR)$ is $2$-transitive, thus
\[
\{ M \cdot [0, 1) : M \in G \} = \{ [y, z) : y, z \in \mathbf{P}(\RR), y \neq z \}.
\]
In particular, $A = [0, 1)$ has infinite orbit. 

Furthermore, take any non-empty set $C$ of the form 
\[
C = a_1 A \cap \ldots a_I A \cap b_1 (X \setminus A) \cap \ldots \cap b_J (X \setminus A),
\]
where $a_1, \ldots, a_I, b_1, \ldots, b_J \in G$. Then $C$ contains an interval of the form $[y, z)$ for some $y < z$. We can find $M \in G$ such that $M \cdot [y, z) = [z, y)$, so $C$ is $2$-generic. Therefore $A$ is uniformly strongly generic.
\end{example}

The last example is based on \cite[Remark 5.2 (iv)]{HPP06} where $G$ was an example of a group definable in an o-minimal structure that is not definably amenable. The lack of definable amenability is not a coincidence, as made clear by the next theorem due to Newelski.

\begin{theorem} \label{thm:nip-usg-per} Assume $A \subseteq G$ is uniformly strongly generic and the formula $\varphi(x; y) \equiv x \in yA$ does not have the independence property. Moreover, assume that the structure $(G, \cdot, A)$ is definably amenable. Then $A$ is periodic.
\end{theorem}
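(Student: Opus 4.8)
The plan is to follow the template of Theorem~\ref{thm:stab-sg-per}, but with the stability-based counting of types replaced by a measure-theoretic total-boundedness argument. Fix a left-invariant Keisler measure $\mu$ on the definable subsets of $G$ (it exists because $(G,\cdot,A)$ is definably amenable), and recall that, $\varphi(x;y)\equiv x\in yA$ being NIP, every Boolean combination of finitely many instances of $\varphi$ has finite VC dimension as well. The goal is to show that the family $\mathcal{F}=\{gA:g\in G\}$ is finite: since $\mathcal{F}$ generates the $G$-algebra $\< A \>$, Remark~\ref{rem:per} then tells us that $A$ is periodic.

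First I would observe that $\mu$ separates the translates of $A$ \emph{uniformly}. If $gA\neq g'A$ then $gA\symdif g'A\neq\varnothing$, so at least one of $gA\setminus g'A$, $g'A\setminus gA$ is non-empty; such a set equals $\tau(gA,g'A)$ (resp.\ $\tau(g'A,gA)$) for the Boolean term $\tau(X_1,X_2)=X_1\wedge X_2^c$, so by (\ref{eq:usg}) it is $m_{\tau}$-generic, and left-invariance of $\mu$ forces $\mu(gA\symdif g'A)\ge 1/m_{\tau}$. Setting $\delta:=1/m_{\tau}$, the assignment $\rho(gA,g'A):=\mu(gA\symdif g'A)$ is thus a genuine metric on $\mathcal{F}$ in which distinct points lie at distance at least $\delta$.

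Next I would show that $(\mathcal{F},\rho)$ is totally bounded; this is the step that genuinely uses NIP. The family $\{gA\symdif g'A:g,g'\in G\}$ has finite VC dimension, so the VC theorem for Keisler measures applies: for every $\varepsilon>0$ there are finitely many $c_1,\dots,c_k\in G$ such that any two translates $gA,g'A$ that agree on $\{c_1,\dots,c_k\}$ satisfy $\mu(gA\symdif g'A)<\varepsilon$. Partitioning $\mathcal{F}$ according to the trace on $\{c_1,\dots,c_k\}$ yields finitely many pieces of $\rho$-diameter $<\varepsilon$, which is total boundedness. Finally, a totally bounded metric space in which distinct points are $\ge\delta$ apart is finite (cover it by finitely many balls of radius $\delta/2$; each contains at most one point). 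Hence $\mathcal{F}$ is finite, $\< A \>$ is finite, and $A$ is periodic.

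The main obstacle is making the appeal to the VC theorem rigorous with a merely finitely additive $\mu$; the clean route is to pass at the outset to a sufficiently saturated elementary extension $(G^*,\cdot,A^*)$ of $(G,\cdot,A)$ — property (\ref{eq:usg}) is first order, so $A^*$ is again uniformly strongly generic, while NIP and definable amenability persist — take $\mu$ there as a global left-invariant Keisler measure, run the above to see that $\{gA^*:g\in G^*\}$ is finite, and transfer this back down to $G$. The remaining ingredients (the reduction to finiteness of $\mathcal{F}$ via Remark~\ref{rem:per}, the $\delta$-separation estimate, and the elementary fact about totally bounded metric spaces) are routine.
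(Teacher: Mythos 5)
Your argument is correct, but it takes a genuinely different route from the paper's. The paper argues by contradiction: if $A$ is not periodic then, by Theorem~\ref{thm:stab-sg-per}, $\varphi$ is unstable, and Shelah's theorem (NIP $+$ unstable $\Rightarrow$ strict order property for a Boolean combination $\psi$ of instances of $\varphi$) yields arbitrarily long strictly decreasing chains $\psi(G;\bar b_0)\supsetneq\ldots\supsetneq\psi(G;\bar b_K)$; uniform strong genericity makes each successive difference $m$-generic, hence of $\mu$-measure at least $1/m$, which is absurd for $K>m$. You instead prove directly that the orbit $\{gA : g\in G\}$ is finite, combining the same $\delta=1/m_\tau$ separation coming from (\ref{eq:usg}) with total boundedness of the orbit in the $\mu$-pseudometric, the latter via the VC-theorem approximation of Keisler measures on NIP formulas; Remark~\ref{rem:per} then finishes. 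Your route avoids the stability/SOP machinery and even yields an explicit bound on the number of translates (hence on $[G:\Per(\chi_A)]$) in terms of the VC dimension of $\varphi$ and the USG constant, at the price of invoking the measure-approximation result. One repair: state that result in its standard form, with types $p_1,\ldots,p_n\in S_x(M)$ over the model $M=(G,\cdot,A)$ rather than points $c_i\in G$ (view $\mu$ as a regular Borel measure on $S_x(M)$, apply the finite VC theorem to the clopen family of instances of $\varphi(x,y)\symdif\varphi(x,y')$ over finite parameter sets, and use compactness of $S_x(M)^n$ to get one tuple working uniformly); the trace-partition argument goes through verbatim with types. This also makes your proposed detour through a saturated extension unnecessary, which is preferable since it sidesteps the (true but not obvious) issue of lifting the invariant measure there, whereas the hypothesis as used in the paper only provides a translation-invariant Keisler measure over $(G,\cdot,A)$ itself.
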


\begin{proof}
Assume for contradiction that $A$ is not periodic and take a translation invariant Keisler measure $\mu$ on $G$. By Theorem~\ref{thm:stab-sg-per}, $\varphi(x; y)$ is unstable, so by \cite[Theorem 4.7 (2)]{She90}, for some $n \in \NN$ and $\eta \in \{ 0, 1 \}^n$ the formula
\[
\psi(x; y_1, \ldots, y_n) := \bigwedge_{i=1}^n \varphi(x; y_i)^{\eta_i}
\]
has the strict order property. Thus for any $K < \omega$ we can find $\bar{b_0}, \ldots, \bar{b_K} \subseteq G$ such that 
\[
\psi(G; \bar{b}_0) \supsetneq \psi(G; \bar{b}_1) \supsetneq \ldots \supsetneq \psi(G; \bar{b}_K).
\]
Since $A$ is USG, we can find $m \in \NN$ such that $\psi(G; \bar{b}) \setminus \psi(G; \bar{c})$ is either empty or $m$-generic for any $\bar{b}, \bar{c} \subseteq G$. It follows that $\mu \big( \psi(G; \bar{b}_k) \setminus \psi(G; \bar{b}_{k+1}) \big) \ge \frac{1}{m}$ for $k < K$, which is a contradiction when $K > m$. 
\end{proof}

We have shown three examples of a group with a subset that is uniformly strongly generic but not periodic. However, given a concrete group $G$, it is not always easy to verify whether it has a subset with this property. In particular, we were unable to answer the following question:

\begin{question} \label{q:usg} Does the group $(\ZZ, +)$ have a uniformly strongly generic subset which is not periodic?
\end{question}

\noindent
Note that any such subset would have to be quite complicated; for instance, when $A \subseteq \ZZ$ is such a set, by the last theorem the formula $\varphi(x; y) \equiv x \in y+A$ must have the independence property. We share some ideas related to the problem in Appendix~\ref{app:usg}.

\section{The Ellis group across models}

Consider a group $G$ definable in a first-order structure $M$. We wish to investigate the relation between Ellis groups computed in $M$ and its extension $M^*$ in various scenarios. Although the ultimate goal is to relate the Ellis groups of the ``full'' flows $S_{\ext, G}(M)$ and $S_{\ext, G}(M^*)$, the task seems very difficult in general. Therefore we focus on some variants of the problem which we find more tractable.

Our approach is based on the following well known observation, which resembles the idea of localization from stability theory. Consider any $G$-algebra $\cA \le \cP(G)$. By Theorem~\ref{thm:iso}, $E(S(\cA))$ is isomorphic to $S(\cA^d)$ as a $G$-flow and semigroup, so we may compute the Ellis group of $S(\cA)$ directly from $S(\cA^d)$. Consider any direct system $\< \cA_i : i \in I \>$ of $G$-subalgebras $\cA_i \le \cA$ with inclusions such that $\cA = \bigcup_{i \in I} \cA_i$, e.g. the system of all finitely generated $G$-subalgebras. Then $\cA^d$ is the union and the direct limit of the system $\< \cA_i^d : i \in I \>$. By Stone duality, the $G$-flow $S(\cA^d)$ is the projective limit of the inverse system $\< S(\cA_i^d) : i \in I \>$ with restrictions. By Remark~\ref{rem:ast-hom}, these restrictions are also semigroup epimorphisms with respect to $\ast$. 

Let $\pi_j : S(\cA^d) \to S(\cA_j^d)$ denote the restriction. Choose a minimal ideal $I \trianglelefteqslant_m S(\cA^d)$ with an idempotent $u \in J(I)$. Then $I_j := \pi_j[I] \trianglelefteqslant_m S(\cA_j^d)$ and $u_j := \pi_j(u) \in J(I_j)$ and $\pi_j[uI] = u_j I_j$ for each $j \in I$. From here it is not hard to show that the group $uI$ is isomorphic to the projective limit of the system $\< u_j I_j : j \in I \>$. 

It follows that the Ellis group of $S(\cA)$ can be retrieved from the Ellis groups of $S(\cA_j)$. Thus a good approach to understanding the ``full'' Ellis group may be to identify and study some tractable $G$-subalgebras of the target algebra together with their ``partial'' Ellis groups. In this section we focus on a particular algebra $\cSBP \le \cP(G)$, introduced below. This algebra is strictly related to the algebra of all externally definable subsets of $G$ at least in the case when $G$ is definable in a densely ordered o-minimal structure, as shown in Corollary~\ref{cor:ext-sbp}.

\begin{definition} Let $X$ be a topological space. We say that $A \subseteq X$ has the \emph{strong Baire property} (abbreviated SBP) if it satisfies any of the following equivalent conditions:
\begin{itemize}
\setlength\itemsep{0.2em}
\item $A = U \Delta M$ for some open $U \subseteq X$ and nowhere dense $M \subseteq X$;
\item $\bd(A)$ has empty interior;
\item $\int(A) \cup \int(X \setminus A)$ is dense.
\end{itemize}
\end{definition}

Throughout the section $G$ is a topological group (meaning that inversion and multiplication are continuous) satisfying various assumptions specified individually in subsections. The family of all subsets of $G$ having the strong Baire property is a $G$-algebra, which we denote as $\cSBP$. The plan is as follows: given an arbitrary $\cA \le \cSBP$, we first describe the Ellis group of $S(\cA)$. Next we set up $G$ as a group definable in a model $M$, take an elementary extension $M \preccurlyeq M^*$ and find a natural $G^*$-subalgebra $\cA^* \le \cSBP^*$ corresponding to $\cA$. Finally, we apply our description to relate the Ellis groups of $S(\cA)$ and $S(\cA^*)$.

\subsection{Groups with profinite topology}

In this subsection we consider an arbitrary group $G$ equipped with the profinite topology, which makes it a topological group. We first show that $\cSBP$ includes virtually all sets considered in the previous section, namely:

\begin{proposition} \label{prop:atf-sbp} If $A \subseteq G$ is almost tree-founded, then $A$ has SBP with respect to the profinite topology.
\end{proposition}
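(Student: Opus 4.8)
The idea is to single out the set $G_L=\bigcup_{\zeta\in T_L}d_\zeta H_\zeta$ on which $\chi_A$ is forced to be locally constant, show it is dense in the profinite topology, and deduce SBP from density. For the easy inclusion: if $\zeta\in T_L$, then $d_\zeta H_\zeta$ is a coset of a finite-index subgroup, hence pf-clopen, and since $A$ is almost founded on $(T,\mathcal H,d,v)$ we have $\chi_A\equiv v(\zeta)$ on $d_\zeta H_\zeta$; thus $d_\zeta H_\zeta\subseteq\int(A)$ when $v(\zeta)=1$ and $d_\zeta H_\zeta\subseteq\int(G\setminus A)$ when $v(\zeta)=0$, so $G_L\subseteq\int(A)\cup\int(G\setminus A)$. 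Consequently, once $G_L$ is known to be pf-dense, so is $\int(A)\cup\int(G\setminus A)$, which by definition means $A$ has SBP. So it suffices to prove that $G_L$ is pf-dense.

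We may assume $G_L\neq G$, so the root of $T$ is not a leaf. By the reduction to linear trees of cosets carried out for tree-founded functions just after Definition~\ref{def:tree} (the same argument applies to an almost tree-founded set), we may assume $H_\eta=A_{|\eta|}$, where $G=A_0\supsetneq A_1\supsetneq\ldots$ is a descending chain of finite-index \emph{normal} subgroups of $G$. Since the cosets of finite-index subgroups form a basis of the profinite topology, it is enough to show that every coset $V=yK$ of a finite-index subgroup $K\le G$ meets $G_L$. Suppose towards a contradiction that $V\cap G_L=\varnothing$.

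The sets $A_nK$ ($n\in\NN$) are subgroups of $G$ (as $A_n\trianglelefteqslant G$), they form a descending chain, and they all contain $K$, hence have index at most $[G:K]$; the chain therefore stabilises, say $A_nK=L$ for all $n\ge n_0$. Now $V=yK\subseteq yL$, and since $L\supseteq A_{n_0}$ the coset $yL$ is a union of cosets of $A_{n_0}$; fix one such coset $C$ with $C\cap V\neq\varnothing$. If $C$ met a leaf coset $d_\zeta H_\zeta$ with $|\zeta|<n_0$, then $A_{|\zeta|}\supseteq A_{n_0}$ would force $C\subseteq d_\zeta H_\zeta$ and hence $d_\zeta H_\zeta\cap V\neq\varnothing$, contrary to $V\cap G_L=\varnothing$. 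So $C$ is disjoint from every leaf coset of depth $<n_0$; following the branching of $T$ downwards (at each level $<n_0$ the set $C$ lies in exactly one node coset, necessarily that of a non-leaf) we reach a unique node $\eta_0\in T$ with $|\eta_0|=n_0$ whose coset contains $C$, and since both are cosets of $A_{n_0}$ we get $C=d_{\eta_0}H_{\eta_0}$. As $\eta_0\in T$, the nowhere-dense-branches hypothesis gives a leaf $\zeta_0\in T_L$ extending $\eta_0$; put $m=|\zeta_0|\ge n_0$. Then $d_{\zeta_0}H_{\zeta_0}\subseteq C\subseteq yL=yA_mK$, so $d_{\zeta_0}H_{\zeta_0}=wA_m$ for some $w=y\alpha k$ with $\alpha\in A_m$, $k\in K$; normality of $A_m$ gives $d_{\zeta_0}H_{\zeta_0}=y\alpha kA_m=ykA_m$, which contains $yk\in yK=V$. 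Thus $d_{\zeta_0}H_{\zeta_0}\cap V\neq\varnothing$, a contradiction. Hence $G_L$ is pf-dense, and by the first paragraph $A$ has SBP.

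The step I expect to require the most care is the reduction to linear trees of cosets: one must check that linearising $(T,\mathcal H,d)$ preserves the assumption that every node extends to a leaf — equivalently, that after each node coset is split into cosets of the normal subgroup $A_{|\eta|}$, every resulting node coset still contains a leaf coset. This normality is precisely what makes the closing identity $y\alpha kA_m=ykA_m$ available; an argument phrased directly for the original, possibly non-linear, tree would have to cope with non-normal finite-index subgroups $K$ and $H_\eta$, for which that simplification fails.
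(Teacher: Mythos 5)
Your first reduction is fine: $G_L\subseteq\int(A)\cup\int(G\setminus A)$, so it suffices to show $G_L$ is pf-dense, and this is also what the paper's proof implicitly establishes (there, a coset $aF$ disjoint from $\int A\cup\int A^c$ is shown to meet no leaf coset, and the contradiction is then derived purely from that). The concluding computation with the stabilised chain $A_nK=L$ and normality of $A_m$ is also correct \emph{given} a linear tree all of whose nodes extend to leaves.

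The genuine gap is exactly the step you flagged and then treated as a routine reduction: for an almost tree-founded set, linearisation need not preserve the hypothesis that every node extends to a leaf, and proving that it does is not a technical check --- it is essentially the proposition itself. After splitting a non-leaf node coset $d_\eta H_\eta$ (with $|\eta|=n$) into cosets of the normal core $A_n$, a new node is a coset $C=cA_n\subseteq d_\eta H_\eta$, and $C$ extends to a leaf of the new tree if and only if $C$ meets $G_L$. The original hypothesis only guarantees that each \emph{node coset} meets $G_L$; it says nothing about the much smaller cosets of $A_n$, which could a priori lie entirely inside $d_\eta H_\eta\setminus G_L$ (the union of infinite branches). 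Ruling this out amounts to showing that no coset of a finite-index subgroup avoids $G_L$, i.e.\ precisely the pf-density you are trying to prove --- so as written the argument is circular at this point. The linearisation remark after Definition~\ref{def:tree} cannot be quoted here, since it concerns tree-founded functions, where $G_L=G$ and the issue is vacuous. Note also that your closing worry about the non-linear case is misplaced: the paper's proof works directly with the original tree and needs no normality, using Lemma~\ref{lem:ind} ($[H_0:H_0\cap F_0]\le[H:F_0]$, strict under the disjoint-coset hypothesis) and a minimality argument on $[H_\eta:H_\eta\cap F]$ over the nodes whose cosets meet the offending coset $aF$; the fact that siblings share the same subgroup $H_{\eta^\frown i}=H_{\eta^\frown j}$ is what replaces normality. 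To repair your proof you would have to prove the preservation statement by some such index argument, at which point the detour through linear trees buys nothing.
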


Although for an almost tree-founded $A \subseteq G$ the set $\int(A) \cup \int(A^c)$ clearly contains the ``tree-founded part'' of $A$, which is dense in the tree, it does not directly follow that it is dense in the profinite topology, so the proposition needs a proper proof. We first state a useful lemma:

\begin{lemma} \label{lem:ind} Assume $H$ is a group and $H_0, F_0 \le H$ are its subgroups. Then $[H_0 : H_0 \cap F_0] \le [H : F_0]$. If additionally $F_0$ has finite index and some coset of $H_0$ is disjoint from some coset of $F_0$, then the inequality is strict.
\end{lemma}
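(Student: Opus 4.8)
The plan is to establish the index inequality $[H_0 : H_0 \cap F_0] \le [H : F_0]$ by exhibiting an injection from the left cosets of $H_0 \cap F_0$ in $H_0$ into the left cosets of $F_0$ in $H$. The natural map to try is $h(H_0 \cap F_0) \mapsto hF_0$ for $h \in H_0$. First I would check this is well defined: if $h(H_0\cap F_0) = h'(H_0 \cap F_0)$ with $h, h' \in H_0$, then $h^{-1}h' \in H_0 \cap F_0 \subseteq F_0$, so $hF_0 = h'F_0$. For injectivity, suppose $h, h' \in H_0$ with $hF_0 = h'F_0$; then $h^{-1}h' \in F_0$, and since $h^{-1}h' \in H_0$ as well, we get $h^{-1}h' \in H_0 \cap F_0$, hence $h(H_0\cap F_0) = h'(H_0\cap F_0)$. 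This gives the inequality (in the sense of cardinals, which is all we need; when $[H:F_0]$ is infinite there is nothing more to prove).

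For the strict inequality, assume $[H:F_0] = n < \infty$ and that some coset $aH_0$ is disjoint from some coset $bF_0$, with $a, b \in H$. The injection above only uses cosets $hF_0$ with $h \in H_0$, i.e.\ cosets meeting $H_0$; it suffices to produce one coset of $F_0$ in $H$ that does \emph{not} meet $H_0$, for then the injection is not surjective and the inequality is strict. I would argue as follows: the hypothesis says $aH_0 \cap bF_0 = \varnothing$. Translating on the left by $a^{-1}$, this reads $H_0 \cap a^{-1}bF_0 = \varnothing$, so the coset $cF_0$ with $c = a^{-1}b$ is disjoint from $H_0$. Thus $cF_0$ is a coset of $F_0$ in $H$ not in the image of our injection, which therefore cannot be a bijection, and since $[H:F_0]$ is finite this forces $[H_0 : H_0 \cap F_0] < [H : F_0]$.

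I do not anticipate a serious obstacle here; the only point requiring a little care is making sure the strictness argument is valid when $[H_0 : H_0\cap F_0]$ is itself finite (which it is, being bounded by $n$): a finite injection between finite sets that is not onto is strictly smaller in cardinality, which is exactly what is used. The disjointness hypothesis is used in precisely one place — to guarantee the existence of a coset of $F_0$ avoiding $H_0$ — and I would make sure the left-translation bookkeeping ($aH_0 \cap bF_0 = \varnothing \iff H_0 \cap a^{-1}bF_0 = \varnothing$) is spelled out so the reader sees why a coset of $F_0$, rather than a coset of $H_0$, is the one that ends up missing $H_0$.
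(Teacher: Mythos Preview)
Your proof is correct and essentially identical to the paper's: both define the injection $h(H_0\cap F_0)\mapsto hF_0$ from $H_0/(H_0\cap F_0)$ into $H/F_0$, verify injectivity, and for strictness translate the disjoint-coset hypothesis $aH_0\cap bF_0=\varnothing$ to $H_0\cap a^{-1}bF_0=\varnothing$ to exhibit a coset of $F_0$ outside the image. The only difference is cosmetic—your write-up spells out well-definedness and the finiteness bookkeeping a bit more explicitly.
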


\begin{proof} Consider the map $\tau : H_0/(H_0 \cap F_0) \to H/F_0$ given by $\tau \big( x(H_0 \cap F_0) \big) = xF_0$. It is injective since if $x, y \in H_0$ and $xF_0 = yF_0$, then $y^{-1} x \in H_0 \cap F_0$ and $x(H_0 \cap F_0) = y(H_0 \cap F_0)$. It follows that $|H_0/(H_0 \cap F_0)| \le |H/F_0|$, which proves the first part. Under the additional assumption, $\tau$ is not onto: take $y, z \in H$ with $yH_0 \cap zF_0 = \varnothing$, equivalently $H_0 \cap y^{-1}zF_0 = \varnothing$. Then clearly $y^{-1}zF_0$ is not in the image of $\tau$. Consequently, the inequality is strict.
\end{proof}

\begin{proof}[Proof of Proposition~\ref{prop:atf-sbp}.] Take a tree $(T, \mathcal{H}, d, v)$ on which $\chi_A$ is almost founded. Assume for contradiction that there is a subgroup $F \le G$ of finite index with a coset $aF$ disjoint from $\int A \cup \int A^c$. Let $Q$ denote the set of all $\eta \in T$ satisfying $d_{\eta} H_{\eta} \cap aF \neq \varnothing$. In particular, $T_L \cap Q = \varnothing$.

We claim that there are $\eta \in Q$ and $i, j \in \omega$ such that $\eta^\frown i \notin Q$, $\eta^\frown j \in Q$ and 
\begin{equation} \tag{$\dagger$}
[H_{\eta} : H_{\eta} \cap F] \text{ is the smallest among all } \eta \in Q.
\end{equation}
Indeed, take any $\eta_0 \in Q$ satisfying $(\dagger)$ and note that by Lemma~\ref{lem:ind}, passing to any extension $\eta_0 \subseteq \eta \in Q$ preserves the minimality. By assumption, there is a leaf $\theta \in T_L$ extending $\eta_0$ and then $\theta \notin Q$. Pick $\eta \in T$ and $i \in \omega$ such that $\eta_0 \subseteq \eta \subseteq \eta^\frown i \subseteq \theta$ and $\eta \in Q$, $\eta^\frown i \notin Q$. On the other hand, $\eta^\frown j \in Q$ for some $j \in \omega$. Then $\eta, i, j$ satisfy the desired properties.

Since $\eta \in Q$, we can assume that $a \in d_{\eta} H_{\eta}$, so that $a^{-1} d_{\eta^\frown i} \in H_{\eta}$. It follows that the group $H := H_{\eta}$, its subgroup $H_0 := H_{\eta^\frown i} = H_{\eta^\frown j} \le H$ and the subgroup $F_0 := H \cap F \le H$ of finite index satisfy all assumptions of Lemma~\ref{lem:ind}, since $a^{-1} d_{\eta^\frown i} H_0 \cap F_0 = \varnothing$. Therefore $[H_{\eta^\frown j} : H_{\eta^\frown j} \cap F] < [H_{\eta} : H_{\eta} \cap F]$, contradicting $(\dagger)$.
\end{proof}

Given $\cA \le \cSBP$, computing the Ellis group of $S(\cA)$ is most convenient when $\cA$ is d-closed because of Theorem \ref{thm:iso}. This situation is not uncommon, since the next proposition shows that $\cSBP^d = \cSBP$. It follows that when $\cA \le \cSBP$ is arbitrary, the d-closure of $\cA$ is still contained in $\cSBP$.

\begin{proposition} \label{prop:pf-sbp-dcl} $\cSBP$ is a d-closed $G$-algebra.
\end{proposition}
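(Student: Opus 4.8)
The plan is to reduce the claim to a single local statement and then push it through the profinite topology. Since $\cSBP$ is already a $G$-algebra (in particular a Boolean algebra closed under left translation), Fact~\ref{fact:dcl-gen} shows that $\cSBP^d$ is the Boolean algebra generated by $\bigcup_{q \in \beta G} d_q[\cSBP]$, so it suffices to prove that $d_q(A) \in \cSBP$ whenever $A \in \cSBP$ and $q \in \beta G$. For this I would use the following restatement of SBP in the profinite topology: $B \in \cSBP$ if and only if for every finite-index subgroup $H \le G$ and every coset $aH$ there are a finite-index subgroup $K \le H$ and a coset $bK \subseteq aH$ on which $\chi_B$ is constant (equivalently, $\int B \cup \int B^c$ is dense). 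Thus, fixing $A \in \cSBP$, $q \in \beta G$, a finite-index subgroup $H_0$, and a coset $aH_0$, the goal is to produce a sub-coset $V \subseteq aH_0$ of a finite-index subgroup on which $\chi_{d_q(A)}$ is constant.

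The key idea is to localize $q$ through normal finite-index subgroups. First choose $N_1 \trianglelefteqslant G$ normal of finite index with $N_1 \le H_0$, and let $c_1 N_1$ be the unique coset of $N_1$ lying in $q$; set $Z := a c_1 N_1$. A short computation using normality of $N_1$ shows that for every $h \in a N_1$ the coset of $N_1$ belonging to the translated ultrafilter $hq$ is exactly $Z$; in particular $Z \in hq$, so for such $h$ we have $h \in d_q(A)$ iff $h^{-1}(A \cap Z) \in q$. Now apply the SBP of $A$ to the nonempty open set $Z$: there are a point $w \in Z$ and a normal finite-index subgroup $N_3 \trianglelefteqslant G$ with $N_3 \le N_1$ such that $\chi_A$ is constant, say equal to $\varepsilon$, on $W := w N_3 \subseteq Z$. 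Let $c_3 N_3 \in q$ be the coset of $N_3$ in $q$, chosen with $c_3 \in c_1 N_1$ (possible since $c_3 N_3 \subseteq c_1 N_1$). Put $V := w c_3^{-1} N_3$. Then one checks $V \subseteq a N_1 \subseteq a H_0$, and that for $h \in V$ and $g \in c_3 N_3$ one has $hg \in W$ (again by normality of $N_3$). Since $c_3 N_3 \in q$ and $\chi_A \equiv \varepsilon$ on $W \subseteq Z$, it follows that $h \in d_q(A) \iff \varepsilon = 1$, i.e.\ $\chi_{d_q(A)} \equiv \varepsilon$ on $V$. This yields the required sub-coset, and running over all $a, H_0$ shows $d_q(A) \in \cSBP$; combined with the reduction above, $\cSBP^d \subseteq \cSBP$, hence $\cSBP^d = \cSBP$.

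The step I expect to be the main obstacle is the coset bookkeeping in the second paragraph: one must arrange that the coset on which $A$ is found to be constant (produced by applying SBP to $Z$) gets pulled back by the elements $h$ of a genuine sub-coset of $a H_0$, and that this pulled-back region still meets $q$ — i.e.\ that the choices of the representatives $c_1$, $c_3$ and the point $w$ are mutually compatible. Splitting the argument into (a) localizing $hq$ to the fixed coset $Z$ uniformly over $h \in a N_1$, and then (b) refining with a second, smaller normal subgroup $N_3$ adapted to $A$, is precisely what keeps this bookkeeping under control; by contrast, the SBP reformulation and the ultrafilter manipulations are routine. (Incidentally, the same argument, applied to a decomposition $A = U \symdif M$ with $U$ open and $M$ nowhere dense, also makes transparent why $\cSBP$ will be closed under right translation, as Corollary~\ref{cor:dcl-rt} requires once $d$-closedness is known.)
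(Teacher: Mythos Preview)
Your argument is correct, but it differs from the paper's in both strategy and machinery. The paper does not work directly with the ultrafilter $q$; instead it invokes Remark~\ref{rem:dlim}(iv), which says that $\{d_q A : q \in S(\cA)\}$ equals the pointwise closure of $\{Ag^{-1} : g \in G\}$ in $2^G$. It then argues by contradiction: assuming some $B$ in this closure fails SBP, a coset $F$ is found in which both $B$ and $B^c$ are dense; after refining once so that $\chi_A$ is constant on a small sub-coset inside each right coset of $F$, a single approximating translate $Ag^{-1}$ is forced to take two different values on one such sub-coset, a contradiction.

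Your approach is a direct construction: you localize $q$ through a normal finite-index subgroup to pin down the coset $Z$ that $hq$ sees uniformly over $h \in aN_1$, then apply SBP of $A$ inside $Z$ and pull back to obtain the desired sub-coset $V$. The bookkeeping you flagged as the delicate point is indeed the whole content, and your two-step refinement with normal $N_1$ and $N_3$ handles it cleanly. What your route buys is transparency about \emph{why} $d_q$ preserves SBP --- it is literally reading off the value of $A$ near the coset singled out by $q$ --- and it avoids the detour through pointwise limits. What the paper's route buys is uniformity of technique: the same limit-point viewpoint is reused in the compact case (Lemma~\ref{lem:dcl-nwd}), and it makes the closure under right translation (your parenthetical remark) immediate from Remark~\ref{rem:dlim}(i) rather than a separate observation.
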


\begin{proof} By Remark~\ref{rem:dlim} (iv), it suffices the check the following: if $A \in \cSBP$ and $B \subseteq G$ is in the pointwise closure of $\{ Ag^{-1} : g \in G \}$, then $B \in \cSBP$. Write $H \lef G$ when $H$ is a subgroup of $G$ of finite index. Assume for contradiction that $B \notin \cSBP$, so there is a coset $Fc$ of a subgroup $F \lef G$ in which both $B$ and $B^c$ are dense. Shifting $B$ if necessary, we can assume that $c = e$. 

Let $Fc_1, \ldots, Fc_n$ be a list of all right cosets of $F$. Since $A \in \cSBP$, for each $1 \le i \le n$ we can find a coset $H_i d_i \subseteq F c_i$ of some $H_i \lef G$ on which $\chi_A$ is constant. By taking the intersection, we can assume that all $H_i$ equal some $H \lef G$ and clearly $H \subseteq F$. Let $Hf_1, \ldots, Hf_m$ denote all right cosets of $H$ in $F$. By assumption, we can find $a_j, b_j \in Hf_j$ such that $a_j \in B, b_j \notin B$.

Since $B$ is in the pointwise closure of $\{ Ag^{-1} : g \in G \}$, we can find $g \in G$ such that $a_j \in Ag^{-1}, b_j \notin Ag^{-1}$ for $j = 1, \ldots, m$. Pick $i \in \{ 1, \ldots, n \}$ so that $g \in F c_i$. Then $d_i g^{-1} \in F c_i g^{-1} = F$, so we can find $j \in \{ 1, \ldots, n \}$ such that $d_i g^{-1} \in Hf_j$. It follows that $a_j g, b_j g \in H f_j g = H d_i$ and $a_j g \in A, b_j g \notin A$, contradicting the fact that $\chi_A$ is constant on $H d_i$.
\end{proof}

The same proof yields the following generalization of Proposition~\ref{prop:pf-sbp-dcl}:
\begin{remark} \label{rem:pf-sbp-dcl} Assume $\cN_0$ is a family of finite index normal subgroups of $G$ closed under finite intersection. Let $\cSBP(\cN_0)$ denote the $G$-algebra of subsets of $G$ having SBP with respect to the topology generated by cosets of subgroups from $\cN_0$. Then $\cSBP(\cN_0)$ is d-closed.
\end{remark}

Take any $G$-subalgebra $\cA \le \cSBP$. We are going to explicitly compute the Ellis group of $S(\cA)$. Let
\[
\cN := \{ N \le G : N \text{ is a normal subgroup of finite index} \}.
\]
The set $\cN_{\cA} = \cN \cap \cA^d$ with reverse inclusion is a directed set. Consider the projective system of groups $(G/N)_{N \in \cN_{\cA}}$, where for $N, N' \in \cN_{\cA}$, $N \subseteq N'$ we have the natural quotient map $\pi_{N, N'} : G/N \to G/N'$.

\begin{theorem} \label{thm:pf-ell} Assume $\cA \le \cSBP$. The Ellis group of $S(\cA)$ is isomorphic to the projective limit 
\[
\cG := \projlim_{N \in \cN_{\cA}} G/N.
\]
\end{theorem}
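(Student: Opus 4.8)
The plan is to realize the Ellis group through an explicit epimorphism and Corollary~\ref{cor:ell-epi}. By Theorem~\ref{thm:iso} I identify $E(S(\cA))$ with $(S(\cA^d),\ast)$, and by Proposition~\ref{prop:pf-sbp-dcl} the algebra $\cA^d$ is again contained in $\cSBP$. For each $N\in\cN_{\cA}$ the clopen set $[N]\subseteq S(\cA^d)$ together with its left translates partitions $S(\cA^d)$ into $[G:N]$ pieces, so every $p\in S(\cA^d)$ contains a unique coset of $N$; call it $\varphi_N(p)\in G/N$. The maps $\varphi_N$ are continuous and compatible with the bonding maps $\pi_{N,N'}$, hence assemble into a continuous map $\varphi=(\varphi_N)_N:S(\cA^d)\to\cG$. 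I would then show $\varphi$ is a semigroup epimorphism and invoke Corollary~\ref{cor:ell-epi}.

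First, the routine verifications. A direct coset computation, using normality of each $N$ (so that $t^{-1}cN\in q$ reduces to a condition on $\varphi_N(q)$), gives $d_q(cN)=(cN)\cdot\varphi_N(q)^{-1}$ for every coset $cN$, whence $\varphi_N(p\ast q)=\varphi_N(p)\varphi_N(q)$; so $\varphi$ is a semigroup homomorphism. For surjectivity, note $\varphi[S(\cA^d)]$ is compact, hence closed in $\cG$, and contains the image of the diagonal map $G\to\cG$, $g\mapsto(gN)_N$; this image is dense because $\cN_{\cA}$ is closed under finite intersections (an intersection of finite-index normal subgroups is again such, and $\cA^d$ is a Boolean algebra), so every basic open subset of $\cG$ already contains some $\varphi(\hat g)$. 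Thus $\varphi$ is onto.

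The substantial point is to check $I\cap\varphi^{-1}[\{e\}]=J(I)$ for a minimal ideal $I\trianglelefteqslant_m S(\cA^d)$. The inclusion $\supseteq$ is automatic: each $u\in J(I)$ satisfies $\varphi(u)=\varphi(u)^2$, forcing $\varphi(u)=e$ in the group $\cG$. For $\subseteq$, take $p\in I$ with $\varphi(p)=e$, i.e.\ $N\in p$ for all $N\in\cN_{\cA}$; I must show $p$ is idempotent. Let $u\in J(I)$ be the idempotent with $p\in uI$ and set $R:=d_p[\cA^d]=d_u[\cA^d]$, which by Corollary~\ref{cor:im-maxgen} is an image (maximal generic) subalgebra of $\cA^d$; since $p\in uI$, the assignment $q\mapsto d_q\restriction R$ is a faithful action of the group $uI$ on $R$ by $G$-algebra automorphisms, with $d_u\restriction R=\id$. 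From $\varphi(p)=e$ one gets $d_p(N)=N$ for each $N\in\cN_{\cA}$, so $d_p$ fixes the subalgebra $\langle\cN_{\cA}\rangle$ pointwise, and moreover $\langle\cN_{\cA}\rangle\le R$ (as $N=d_p(N)\in\Im d_p=R$). Hence it is enough to upgrade this to: $d_p$ fixes all of $R$ — then faithfulness yields $p=u\in J(I)$.

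I expect this last upgrade to be the main obstacle. The intended argument: for $B\in R$ the set $D:=B\symdif d_p(B)$ lies in $R$, hence, being empty or generic, is not a nonempty nowhere dense subset of $G$, so it suffices to prove $\int D=\varnothing$ in the profinite topology. Writing $d_p(B)=\lim_{\hat g\to p}Bg^{-1}$ in $2^G$ (Remark~\ref{rem:dlim}) and using that $\varphi(p)=e$ makes the image of $g$ in $\cG$ tend to the identity along $p$, one wants to conclude that $\chi_{d_p(B)}$ agrees with $\chi_B$ on $\int B\cup\int B^c$, so that $D\subseteq\bd B$, which is nowhere dense because $B\in\cSBP$. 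Concretely, for $x\in\int B$ one has $M\subseteq x^{-1}B$ for some finite-index subgroup $M$, and one needs $x^{-1}B\in p$; for this it would suffice that $x^{-1}B$ — an SBP member of the $d$-closed algebra $\cA^d$ containing a profinite neighbourhood of $e$ — actually contains some $N\in\cN_{\cA}$. Verifying this "cofinality" step — reconciling the finite-index subgroups that witness the strong Baire property with those genuinely present in $\cN_{\cA}$, presumably by exploiting $d$-closedness together with the strong genericity of $B\in R$ and the almost periodicity of $p$ (for instance by passing to the minimal flow $S(R)$ and showing that the only ultrafilter on $R$ containing every $N\in\cN_{\cA}$ is $\hat e_R$) — is the technical heart of the proof, and the step on which I would concentrate.
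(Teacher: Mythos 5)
Your scaffolding is essentially the paper's: identify $E(S(\cA))$ with $(S(\cA^d),\ast)$, map it onto $\cG$ by reading off the unique coset of each $N\in\cN_{\cA}$ (the paper does this via the restriction $\pi:S(\cA^d)\to S(\<\cN_{\cA}\>)$ and Lemma~\ref{lem:proj-iso}, which is your $\varphi$ in disguise), and invoke Corollary~\ref{cor:ell-epi}, so that everything reduces to showing that any $p\in I$ with $\cN_{\cA}\subseteq p$ is idempotent. Up to that point your verifications (the formula $d_q(cN)=cN\varphi_N(q)^{-1}$, the homomorphism property, surjectivity by compactness and density of the diagonal, $\supseteq$ being automatic, and the faithfulness argument showing $d_p\restriction R=\id$ forces $p=u$) are all correct.

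The genuine gap is exactly the step you flag and postpone: for $x\in\int B$ you need $x^{-1}B\in p$, i.e.\ that every set $C\in\cA^d$ whose profinite interior contains $e$ must belong to $p$. Knowing $\cN_{\cA}\subseteq p$ does not give this, because the finite-index subgroup $M$ witnessing $e\in\int C$ need not lie in $\cA^d$, hence need not be in $\cN_{\cA}$, and nothing so far rules out a generic ultrafilter containing all of $\cN_{\cA}$ together with $G\setminus C$; indeed this ``reconciliation'' of arbitrary finite-index witnesses with the subgroups actually present in $\cN_{\cA}$ is the entire content of the theorem, and deducing it from idempotency-type manipulations runs in a circle since idempotency is what is being proved. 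The paper closes this gap with a combinatorial Claim of a different flavour: starting from $F\subseteq B\setminus d_uB$ (with $F\in\cN$ arbitrary, not in $\cA$), it produces, by a maximality argument using only SBP, a \emph{nonconstant positive} Boolean combination $C=\tau(f_1B,\dots,f_mB)$ with $f_i\in F$ that is a union of $F$-cosets; positivity makes $C\notin u$ (each $f_iB\notin u$) while $F\subseteq C$, and periodicity plus Corollary~\ref{cor:per-nbc} converts the alien subgroup $F$ into an honest $N\in\cN_{\cA}$ with $N\subseteq C$, contradicting $\cN_{\cA}\subseteq u$. Note also that the paper only ever needs such a bridge for one carefully chosen $B$ with $F\subseteq B\setminus d_uB$, whereas your reduction asks for the stronger, uniform statement for all $B\in R$ and all points of $\int B\cup\int B^c$; without an argument of the Claim's type (or a substitute for it), your proof is incomplete at its central point.
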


Let $\< \cN_{\cA} \>$ denote the $G$-algebra generated by $\cN_{\cA}$. In order to prove the theorem, we will need two lemmas:

\begin{lemma} \label{lem:per-dcl} Any $G$-algebra $\cB \le \cP(G)$ consisting of periodic sets is d-closed.
\end{lemma}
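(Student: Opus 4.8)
The plan is to use the description of $d$-closure from Remark~\ref{rem:dlim}(iv): it suffices to show that for every $A \in \cB$, every set $B$ lying in the pointwise closure $\cl_{\cP(G)}\{Ag^{-1} : g \in G\}$ again belongs to $\cB$. So let me fix a periodic set $A \in \cB$ and such a $B$.

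First I would observe that, since $A$ is periodic, Remark~\ref{rem:per}(iii) tells us the $G$-algebra $\langle A\rangle$ is finite, so the orbit $\{Ag^{-1} : g\in G\} = \{g\cdot\chi_A : g\in G\}$ (written in $2^G$) is finite — indeed $A$ is a union of cosets of some finite index subgroup $H$, and each $Ag^{-1}$ is a union of cosets of $g H g^{-1}$, but it is cleaner to note directly that the map $g \mapsto Ag^{-1}$ factors through the finite set of right cosets of $\bigcap_{g} g\Per(\chi_A)g^{-1}$, which has finite index by Corollary~\ref{cor:per-nbc}. Either way, $\{Ag^{-1}:g\in G\}$ is a \emph{finite} subset of $\cP(G)$, hence already closed in the product topology. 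Therefore $B = Ag^{-1}$ for some $g\in G$, i.e. $B$ is a right translate of $A$.

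So the lemma reduces to: a $G$-algebra of periodic sets is closed under right translation. For this, let $A\in\cB$ be periodic and $g\in G$; I want $Ag^{-1}\in\cB$. By Corollary~\ref{cor:per-nbc} there is a normal subgroup $N\trianglelefteqslant G$ of finite index such that $A$ is a union of cosets of $N$. Since $N$ is normal, right translation by $g^{-1}$ permutes the cosets of $N$ among themselves, and in fact $Ag^{-1}$ is again a union of cosets of $N$; moreover $Ag^{-1} = g^{-1}(gAg^{-1})$, and $gAg^{-1}$ is a union of cosets of $N$ as well, so it suffices to realize $gAg^{-1}$ as a finite Boolean combination of left translates of $A$. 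Concretely: the cosets of $N$ contained in $A$ are $a_1N,\dots,a_kN$, and then $a_i^{-1}A$ is the union of those cosets of $N$ of the form $a_i^{-1}a_jN$; intersecting and complementing these finitely many sets $a_i^{-1}A$ lets one isolate individual cosets $a_iN$, hence express $A$ — and likewise any union of cosets of $N$, in particular $gAg^{-1}$ and $Ag^{-1}$ — as an element of $\langle A\rangle\subseteq\cB$. Thus $Ag^{-1}\in\cB$, completing the argument.

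The only mild subtlety — and the step I would be most careful about — is the passage from ``$A$ is a union of cosets of a finite index subgroup'' to ``\emph{every} union of cosets of that subgroup lies in $\langle A\rangle$.'' This is true precisely because, once $N$ is normal, the atoms of $\langle A\rangle$ are unions of $N$-cosets, and after translating one sees that the individual $N$-cosets comprising $A$ are themselves atoms of $\langle A\rangle$ (by genericity/finiteness, $G$ is covered by finitely many translates of any nonempty atom, so atoms are finite unions of $N$-cosets of bounded size, forcing them to be single cosets once $N$ is chosen small enough — which we may, replacing $N$ by a smaller finite index normal subgroup in $\langle A\rangle^d$, still using Corollary~\ref{cor:per-nbc}). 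Everything else is routine coset bookkeeping.
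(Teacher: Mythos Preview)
Your reduction of $d$-closure to closure under right translation is correct and is precisely what the paper does: once the orbit $\{Ag^{-1}:g\in G\}$ is seen to be finite, Remark~\ref{rem:dlim}(iv) gives $d_qA=Ab^{-1}$ for some $b$, and the paper then invokes Corollary~\ref{cor:per-nbc} to reduce to the case where the set is itself a normal finite-index subgroup $N$, whence $Nb^{-1}=b^{-1}N\in\cB$.

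The gap---present in your argument and, in the same form, in the paper's---is this reduction step. Corollary~\ref{cor:per-nbc} produces a normal $N$ with $A$ a union of $N$-cosets and $N\in\langle A\rangle^d$, \emph{not} $N\in\cB$; assuming $N\in\cB$ is circular since $\cB^d=\cB$ is exactly what is to be proved. Your attempted patch, that every union of $N$-cosets lies in $\langle A\rangle$, is false: take $G=S_3$, $H=\langle(12)\rangle$, $A=H$, so $\cB=\langle A\rangle$ consists of the eight unions of left $H$-cosets, while the normal core is $\{e\}$. In fact the lemma itself fails in this example: $d_{\widehat{(13)}}(H)=H\cdot(13)=\{(13),(132)\}$ meets two distinct left $H$-cosets in one point each, so it does not lie in $\cB$, and $\cB$ is not $d$-closed. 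Both proofs become correct under the additional hypothesis that for every $A\in\cB$ there is a \emph{normal} finite-index subgroup $N\in\cB$ with $A$ a union of its cosets---which is satisfied in the paper's applications, where $\cB=\langle\cN_{\cA}\rangle$ is generated by such subgroups.
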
 

\begin{proof} Given any $N \in \cB$ and $q \in \beta G$, we need to show that $d_q N \in \cB$. Using Corollary~\ref{cor:per-nbc}, we can assume that $N \in \cN$. Pick $a \in G$ such that $aN \in q$. We have that $Ng^{-1} = Na^{-1}$ for $g \in aN$. Thus $Ng^{-1} = Na^{-1}$ holds ultimately as $\hat{g} \to q$, so by Remark~\ref{rem:dlim} (iii),
\[
d_q N = \lim_{\hat{g} \to q} Ng^{-1} = Na^{-1} = a^{-1} N \in \cB. \qedhere
\] 
\end{proof}
In particular, $\< \cN_{\cA} \>$ is d-closed, so $S(\< \cN_{\cA} \>)$ equipped with $\ast$ is a semigroup.

\begin{lemma} \label{lem:proj-iso} The semigroup $\big( S( \< \cN_{\cA} \> ), \ast \big)$ is a group isomorphic to $\cG$.
\end{lemma}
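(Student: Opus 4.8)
The plan is to identify $S(\langle \cN_{\cA} \rangle)$ with $\cG$ one coordinate at a time. For each $N \in \cN_{\cA}$, let $\langle N \rangle \le \cP(G)$ be the $G$-subalgebra generated by the single set $N$; since $N$ is a normal subgroup of finite index, $\langle N \rangle$ is simply the finite $G$-algebra of unions of cosets of $N$, its atoms being precisely those cosets. Thus $\langle N \rangle$ consists of periodic sets and is d-closed by Lemma~\ref{lem:per-dcl}, and its Stone space is canonically homeomorphic --- as a $G$-flow --- to the finite discrete quotient $G/N$ via the map $\varepsilon_N$ that sends an ultrafilter to the unique coset of $N$ it contains. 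First I would observe that $\langle \cN_{\cA} \rangle = \bigcup_{N \in \cN_{\cA}} \langle N \rangle$, and that this is a \emph{directed} union: $\cN_{\cA}$ is closed under finite intersection (an intersection of two finite-index normal subgroups lying in $\cA^d$ is again of that form), and any $G$-algebra containing $\cN_{\cA}$ must contain every $\langle N \rangle$.

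Next I would pass through Stone duality. Since $\langle \cN_{\cA} \rangle = \varinjlim_N \langle N \rangle$, its Stone space equals $\varprojlim_N S(\langle N \rangle)$ with the restriction maps as bonding maps; and under the identifications $\varepsilon_N$ the restriction $S(\langle N \rangle) \to S(\langle N' \rangle)$ dual to the inclusion $\langle N' \rangle \hookrightarrow \langle N \rangle$ (for $N \le N'$) becomes exactly the natural quotient $\pi_{N,N'} : G/N \to G/N'$. Hence the map $\pi := (\varepsilon_N \circ \pi_N)_{N}$, where $\pi_N : S(\langle \cN_{\cA} \rangle) \to S(\langle N \rangle)$ is the restriction, is a $G$-flow isomorphism of $S(\langle \cN_{\cA} \rangle)$ onto $\cG = \projlim_N G/N$.

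It then remains to check that $\pi$ turns $\ast$ into coordinatewise multiplication on $\cG$. By Remark~\ref{rem:ast-hom} each $\pi_N$ is a semigroup epimorphism for $\ast$ (both $\langle N \rangle$ and $\langle \cN_{\cA} \rangle$ being d-closed), so it suffices to see that $\varepsilon_N : (S(\langle N \rangle), \ast) \to (G/N, \cdot)$ is a group isomorphism. This reduces to the single identity $d_q(aN) = ab^{-1}N$ for a coset $aN$, where $bN = \varepsilon_N(q)$, which follows from $d_q$ being a $G$-algebra homomorphism together with the short computation $d_q(N) = b^{-1}N$ (using normality of $N$). Granting it, $aN \in p \ast q$ iff $d_q(aN) = ab^{-1}N$ lies in $p$, i.e. iff $aN = cbN$ where $cN = \varepsilon_N(p)$, so $\varepsilon_N(p \ast q) = \varepsilon_N(p)\,\varepsilon_N(q)$. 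Therefore $\pi$ is a semigroup isomorphism onto $\cG$, and since $\cG$ is a projective limit of groups it is itself a group --- hence so is $(S(\langle \cN_{\cA} \rangle), \ast)$. The main obstacle is really just the bookkeeping of the second paragraph: verifying that the Stone-dual bonding maps coincide with the group quotients and that this identification is compatible with the $\ast$-structure; the coordinatewise computation in the final step is brief.
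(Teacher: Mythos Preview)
Your proof is correct and follows essentially the same strategy as the paper's: both construct the isomorphism $S(\langle \cN_{\cA} \rangle) \to \cG$ coordinatewise via the maps to $G/N$ and then verify the semigroup structure. The only cosmetic difference is that the paper checks the homomorphism property of each $\varphi_N$ in one line by appealing to Lemma~\ref{lem:ast-prod} (if $cN \in p$ and $bN \in q$ then $cbN = (cN)(bN) \in p\ast q$), whereas you factor it as $\varepsilon_N \circ \pi_N$, invoke Remark~\ref{rem:ast-hom} for $\pi_N$, and compute $d_q(aN) = ab^{-1}N$ directly for $\varepsilon_N$; these amount to the same calculation.
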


\begin{proof} For $N \in \cN_{\cA}$, define $\varphi_N : S( \< \cN_{\cA} \> ) \to G/N$ so that $\varphi_N(q)$ is the unique coset in $G/N$ that belongs to $q$. Each $\varphi_N$ is a homomorphism by Lemma~\ref{lem:ast-prod} and is clearly continuous, where $G/N$ is treated as a discrete group. Applying the universal property of $\cG$ to the family $\< \varphi_N : N \in \cN_{\cA} \>$, we get a continuous homomorphism $\varphi : S( \< \cN_{\cA} \> ) \to \cG$ satisfying the usual diagram commutativity. It is easy to see that $\varphi$ is injective. It is also surjective, because it has dense image and $S( \< \cN_{\cA} \> )$ is compact. Hence $\varphi$ is an isomorphism of semigroups, thus in particular, $S( \< \cN_{\cA} \> )$ is a group.
\end{proof}

\begin{proof}[Proof of Theorem~\ref{thm:pf-ell}.]
Without loss of generality we can assume that $\cA$ is d-closed, since in general by Theorem~\ref{thm:iso}, we have the semigroup isomorphism $E(S(\cA)) \cong S(\cA^d)$ and $E(S(\cA^d)) \cong S((\cA^d)^d) = S(\cA^d)$, hence the Ellis group of $S(\cA)$ is the same as that of $S(\cA^d)$. Also $\cN_{\cA^d} = \cN_{\cA}$.

We first note that for every $A \in \cA$:
\[
A \text{ is nowhere dense} \iff A \text{ is not generic} \iff A \text{ has empty interior}.
\]
Indeed, the left-to-right implications are easy and the third one to close the circle follows from SBP. In particular, the non-generic sets in $\cA$ are precisely the nowhere dense sets, so they form a proper $G$-ideal in $\cA$. Therefore we can find $p \in S(\cA)$ such that every $A \in p$ is generic. Such $p$ is a generic point of the flow $S(\cA)$, so by Fact~\ref{fact:gen-min}, there is a unique minimal subflow $I \trianglelefteqslant_m S(\cA)$ consisting of all such ultrafilters $p$.

By Remark~\ref{rem:ast-hom}, the restriction $\pi : S(\cA) \to S( \< \cN_{\cA} \> )$ is a semigroup epimorphism. Therefore by Lemma~\ref{lem:proj-iso} and Corollary~\ref{cor:ell-epi}, it remains to show that $I \cap \pi^{-1}[ \{ \hat{e} \} ] \subseteq J(I)$. So take any $u \in I$ such that $\pi(u) = \hat{e}$, which means that $\cN_{\cA} \subseteq u$. Assume for contradiction that $u \ast u \neq u$ and take $A \in \cA$ such that $A \in u$ and $A \notin u \ast u$, i.e. $A \setminus d_u A \in u$. Since $u \in I$, the set $A \setminus d_u A$ has non-empty interior, so $aF \subseteq A \setminus d_u A$ for some $a \in G$ and $F \in \cN$. Letting $B := a^{-1} A \in \cA$ we get $F \subseteq B \setminus d_u B$.

If $F \in \cA$, the rest of the proof is easy: we have that $F \in u$, so $B \in u$. But $e \in F$ so $e \notin d_u B$, which means $B \notin u$, a contradiction. In general we will replace $B$ with some $C \in \cA$ and $F$ with some $N \in \cN_{\cA}$ with similar properties, so that this argument will work. 

\begin{claim*} There is $C \in \cA$ of the form $\tau(f_1B, \ldots, f_mB)$, where $f_1, \ldots, f_m \in F$ and $\tau(X_1, \ldots, X_m)$ is a nonconstant positive Boolean term, such that $C$ is a union of cosets of $F$.
\end{claim*}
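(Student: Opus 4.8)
The plan is to build $C$ by controlling the trace of $\tau(f_1 B,\dots,f_m B)$ on each coset of $F$ separately. First I would observe that, as $F\trianglelefteqslant G$ is normal of finite index, left translation by any $f\in F$ fixes every coset $Fc$ setwise; hence for $f_1,\dots,f_m\in F$ and every $c$ we have $\tau(f_1 B,\dots,f_m B)\cap Fc=\tau(f_1 B_c,\dots,f_m B_c)$, where $B_c:=B\cap Fc$ and the $f_i$ act on $Fc$ by left translation. So $C$ will be a union of cosets of $F$ precisely when each such trace is $\varnothing$ or $Fc$. Two kinds of cosets are automatic for any nonconstant positive $\tau$ with parameters in $F$: if $Fc\subseteq B$ (in particular $Fc=F$, since $F\subseteq B$) the trace is $\tau(Fc,\dots,Fc)=Fc$, and if $Fc\cap B=\varnothing$ it is $\tau(\varnothing,\dots,\varnothing)=\varnothing$. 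The work concerns the finitely many \emph{mixed} cosets, those with $\varnothing\neq B_c\neq Fc$.

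For a mixed coset $Fc$, the strong Baire property of $B$ forces $\int(B_c)\cup\int(B_c^c)$ to be dense in $Fc$, so at least one of $B_c,B_c^c$ has nonempty interior. Call $Fc$ a \emph{kill coset} if $B_c^c$ has nonempty interior, and a \emph{fill coset} otherwise (so on a fill coset $B_c^c$ is nowhere dense). A subset of $Fc$ with nonempty interior contains a coset of a finite index subgroup of $G$, and any such coset is generic in $Fc$ (its $F$-translates run through all cosets of that subgroup lying in $Fc$); hence on a kill coset $B_c^c$ is generic in $Fc$, so finitely many $F$-translates of $B_c^c$ cover $Fc$, i.e.\ finitely many $F$-translates of $B_c$ have empty intersection. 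Taking the union $S\subseteq F$ of these finite translate-sets over all kill cosets (and throwing in $e$), the monomial $M_0:=\bigwedge_{f\in S}fB$ has empty trace on every kill coset, trace $Fc$ on every coset inside $B$, and empty trace on every coset disjoint from $B$. On a fill coset $Fc$ its trace is $M_0\cap Fc=Fc\setminus N_{Fc}$ with $N_{Fc}:=\bigcup_{f\in S}fB_c^c$ a finite union of $F$-translates of a nowhere dense set, hence nowhere dense in $Fc$.

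The key point is then a fact about the profinite topology: if $N$ is nowhere dense in a coset $Fc$, then finitely many $F$-translates of $N$ have empty intersection. I would prove this by passing to the profinite completion $\hat F$, which is nontrivial whenever a mixed coset exists (and if none exists we are already done with $C=B$): if the $F$-translates of $\cl(N)$ had the finite intersection property then, by compactness of $\hat F$, their total intersection there would be nonempty, and any point in it would have a dense $F$-orbit contained in the closure of $N$ in $\hat F$ — impossible for a nowhere dense set. Applying this to each $N_{Fc}$ over the finitely many fill cosets and unioning the resulting translate-parameters gives a finite $T\subseteq F$ (containing $e$) with $\bigcap_{g\in T}gN_{Fc}=\varnothing$ for every fill coset $Fc$.

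Finally I would set $C:=\bigcup_{g\in T}gM_0=\bigcup_{g\in T}\bigwedge_{f\in S}(gf)B$, which has the required form $\tau(f_1B,\dots,f_m B)$ for the nonconstant positive term $\tau=\bigvee_{g\in T}\bigwedge_{f\in S}X_{g,f}$ with parameters $gf\in F$. Its trace is $Fc$ on cosets inside $B$, $\varnothing$ on cosets disjoint from $B$, $\varnothing$ on kill cosets (since $gM_0\cap Fc=g(M_0\cap Fc)=\varnothing$), and on a fill coset $Fc$ it is $\bigcup_{g\in T}(Fc\setminus gN_{Fc})=Fc\setminus\bigcap_{g\in T}gN_{Fc}=Fc$; so $C$ is a union of cosets of $F$, nonempty since $F\subseteq C$, and $C\in\cA$. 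I expect the main obstacle to be exactly the tension between kill and fill cosets — killing a coset demands an intersection at the bottom of $\tau$ while filling one demands a union at the top — which is what the "union of translates of a single monomial" shape, together with the nowhere-density fact, is tailored to overcome.
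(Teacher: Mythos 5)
Your construction is correct, but it takes a genuinely different route from the paper. The paper argues by an extremal/maximality trick: assuming no such $C$ exists, it picks $C=\tau(f_1B,\ldots,f_mB)$ maximizing the number of $F$-cosets on which $\chi_C$ is constant, applies SBP to the combination $C$ to find a coset $Hb$ ($H\le F$ of finite index) inside a non-constant $F$-coset on which $\chi_C$ is constant, and then intersects (or unions) the translates $g_iC$ over coset representatives of $H$ in $F$ to make one more $F$-coset constant without spoiling the others, a contradiction. You instead build $C$ directly: you split the mixed $F$-cosets into ``kill'' cosets (where $Fc\setminus B$ has interior, hence finitely many $F$-translates of it cover $Fc$) and ``fill'' cosets (where $Fc\setminus B$ is nowhere dense, using that SBP together with empty interior forces nowhere density), empty the kill cosets with a single monomial $M_0=\bigcap_{f\in S}fB$, and then fill the remaining cosets by a finite union of $F$-translates of $M_0$, using that finitely many $F$-translates of a nowhere dense subset of $Fc$ have empty intersection; note you only ever apply SBP to $B$ itself. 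All the steps check out (the trace computation for positive terms, the covering argument on kill cosets, nowhere-density of the finite union $N_{Fc}$, and the final assembly giving a nonconstant positive term with parameters in $F$). The one place you overshoot is the ``key point'': the detour through the profinite completion $\hat F$ is unnecessary, and as written it silently relies on the very fact that makes it dispensable, namely that a nowhere dense $N\subseteq Fc$ misses an entire coset $bK$ of some finite-index subgroup $K\le F$ (this is what makes the closure of $N$ in $\hat F$ proper; one should also be slightly careful there when $F$ is not residually finite, since $F\to\hat F$ need not be injective). Once you have that coset you are done in one line: the $F$-translates of $bK$ cover $Fc$, so the corresponding finitely many translates of $N$ already have empty intersection. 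With that simplification your argument is arguably more informative than the paper's: it is constructive, yields $\tau$ in the explicit shape $\bigvee_{g\in T}\bigcap_{f\in S}X_{g,f}$ (a union of translates of a single monomial), and shows exactly where SBP enters; the paper's maximality argument is shorter but nonconstructive.
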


\noindent
\emph{Proof of Claim.}
Suppose not and take $C = \tau(f_1B, \ldots, f_mB) \in \cA$, where $f_1, \ldots, f_m \in F$ and $\tau(X_1, \ldots, X_m)$ is as above, such that the number of cosets of $F$ on which $\chi_C$ is constant is maximal.  Take a coset $Fb$ on which $\chi_C$ is not constant. Since $C$ has SBP, there is a coset $Hc \subseteq Fb$ of a subgroup $H \le G$ of finite index on which $\chi_C$ is constant. We can assume that $b = c$ and so $Hb \subseteq Fb$, hence $H \subseteq F$. Let $g_1H, \ldots, g_nH$, where $g_1, \ldots, g_n \in F$, be a list of all left cosets of $H$ in $F$. 

Two cases are possible: either $\chi_C \restriction Hb \equiv 0$ or $\chi_C \restriction Hb \equiv 1$. In the first case consider the set 
\[
C' := \bigcap_{i=1}^n g_i C \in \cA.
\] 
Any coset of $F$ contained in or disjoint from $C$ will remain so with respect to $C'$. Moreover, $\chi_{C'} \restriction Fb \equiv 0$, which contradicts maximality in the definition of $C$. In the second case we consider
\[
C' := \bigcup_{i=1}^n g_i C \in \cA
\]
and arrive at a contradiction in a similar manner. This proves the claim. \qed

Take $C \in \cA$ as in the claim. Since $F \cap d_u B = \varnothing$, for any $f \in F$ we have that $fB \notin u$. Also $F \subseteq B$, so since $C$ is a nontrivial positive Boolean combination of sets $fB$, where $f \in F$, it follows that $F \subseteq C$ and $C \notin u$. By definition $C$ is periodic, hence by Corollary~\ref{cor:per-nbc}, it can be expressed as a union of cosets of some $N \in \cN_{\cA}$. Since $e \in C$, we have that $N \subseteq C$ and therefore $N \notin u$, contradicting the fact that $\cN_{\cA} \subseteq u$.
\end{proof}

\begin{example} Let $G = (\ZZ, +)$ and consider the set $A_0 \subseteq \ZZ$ from Example~\ref{ex:val}, defined by
\[
\chi_{A_0}(k) = \begin{cases} 0 & \text{if } \ k = 0, \\ v_2(k) \bmod 2 & \text{if } \ k \neq 0. \end{cases}
\]
Let $\cA = \< A_0 \> \le \cP(\ZZ)$. We will show that the Ellis group of $S(\cA)$ is $\ZZ_2$, the additive group of the $2$-adic integers.

First note that for any $n \in \NN$ we have $2^n \ZZ \in \cA$. Indeed, the set 
\[
B := A_0 \cap (4^n + A_0) = \bigcup_{k=0}^{n-1} 2^{2k+1} (2\ZZ+1) \in \cA
\]
is a union of an odd number of cosets of $4^n \ZZ$. Therefore it is periodic and $\Per(B) = 4^n \ZZ$, so $4^n \ZZ \in \cA$ by Corollary~\ref{cor:per-bc} and therefore $2^n \ZZ \in \cA$ as well. 

So $\{ 2^n \ZZ : n \in \NN \} \subseteq \cN \cap \cA^d$. It remains to show the reverse inclusion and apply Theorem~\ref{thm:pf-ell}, because $\projlim \ZZ / 2^n \ZZ = \ZZ_2$. Assume for contradiction that there is some subgroup $N \in \cN \cap \cA^d$ of the form $N = (2^n s) \ZZ$ where $s > 1$ is odd. Using Fact~\ref{fact:dcl-gen}, it is easy to see that $\cA^d$ is generated as a Boolean algebra by $\{ d_q (k+A_0) : q \in S(\beta \ZZ), k \in \ZZ \}$, so $N$ can be expressed as
\[
N = \tau(d_{q_1}(k_1+A_0), \ldots, d_{q_m}(k_m+A_0)),
\]
where $k_i \in \ZZ$, $q_i \in \beta \ZZ$ and $\tau(X_1, \ldots, X_m)$ is a Boolean term. On the one hand, for any $j \in \NN$ we have that $A_0 \symdif (2^j + A_0) \subseteq 2^j \ZZ$, so
\begin{align*}
N \symdif (2^j + N) & \subseteq \bigcup_{i=1}^m d_{q_i}(k_i+A_0) \symdif \big( 2^j + d_{q_i}(k_i+A_0) \big) \\[1ex]
& = \bigcup_{i=1}^m \big( k_i + d_{q_i}(A_0) \big) \symdif \big( k_i + d_{q_i} ( 2^j + A_0 ) \big) \\[1ex]
& = \bigcup_{i=1}^m k_i + d_{q_i} \big( A_0 \symdif (2^j + A_0) \big) \\[1ex]
& \subseteq \bigcup_{i=1}^m k_i + d_{q_i}( 2^j \ZZ ).
\end{align*}
Since each $d_{q_i}( 2^j \ZZ )$ is a coset of $2^j \ZZ$ (see Lemma~\ref{lem:per-dcl}), the set $N \symdif (2^j + N)$ is contained in a union of $m$ cosets of $2^j \ZZ$. On the other hand,
\[
N \symdif (2^j + N) = (2^n s) \ZZ \symdif (2^j + (2^n s) \ZZ) \supseteq (2^n s) \ZZ.
\]
When $j \in \NN$ is sufficiently big, it is a contradiction.
\end{example}

\begin{example} Assume $(G, \cdot)$ is a group definable in a stable structure $M$ and let $\cA = \Def_G(M)$ denote the $G$-algebra of subsets of $G$ definable in $M$. By stability, $\cA$ is equal to $\Def_{\ext, G}(M)$, the $G$-algebra of subsets of $G$ externally definable in $M$. It follows that $\cA$ is d-closed. It is known that in this case the Ellis group of $S(\cA) = S_G(M)$ is isomorphic to $G/G^0$ computed in a monster model. We will reprove this fact using the results from this subsection. Note that it does not follow immediately from Theorem~\ref{thm:pf-ell} since $\cA$ need not be contained in $\cSBP$.

Observe that $\cB = \< \cN_{\cA} \>$ is the family of all strongly generic sets in $\cA$. Indeed, one inclusion follows from the fact that periodic sets are strongly generic and the other one from Theorem~\ref{thm:stab-sg-per} and Corollary~\ref{cor:per-nbc}. It follows that $\cB$ is a (unique) maximal generic subalgebra of $\cA$, so by Corollary~\ref{cor:im-maxgen}, it is an image algebra. 

By Corollary~\ref{cor:im-ell}, the Ellis group of $S(\cA)$ is isomorphic to the Ellis group of $S(\cB)$. Since $\cN_{\cB} = \cN_{\cA}$ and $\cB \le \cSBP$, we get from Theorem~\ref{thm:pf-ell} that the Ellis group of $S(\cB)$ is isomorphic to $\projlim_{N \in \cN_{\cA}} G/N$, which is clearly isomorphic to $G/G^0$ computed in a monster model.
\end{example}

The same reasoning proves that more generally:
\begin{corollary} \label{cor:ell-pf} Assume $\cA \le \cP(G)$ is d-closed and all strongly generic sets in $\cA$ are periodic. Then the Ellis group of $S(\cA)$ is profinite and isomorphic to $\projlim_{N \in \cN_{\cA}} G/N$.
\end{corollary}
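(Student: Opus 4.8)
The plan is to mirror the argument used in the two preceding examples: isolate the unique maximal generic $G$-subalgebra of $\cA$, recognize it as an image algebra, and read off its Ellis group from the earlier results.

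First I would show that $\cB := \< \cN_{\cA} \>$ is precisely the family of strongly generic sets in $\cA$. The inclusion $\subseteq$ is easy: every element of $\cB$ is a Boolean combination of finitely many finite-index normal subgroups, hence periodic, and periodic sets are strongly generic (such a set generates a finite $G$-algebra each of whose nonempty members contains a coset of a fixed finite-index subgroup, hence is generic). For $\supseteq$, let $A \in \cA$ be strongly generic; by hypothesis $A$ is periodic, so Corollary~\ref{cor:per-nbc} produces a normal finite-index subgroup $N \in \< A \>^d \subseteq \cA^d = \cA$, i.e. $N \in \cN_{\cA}$, with $A$ a union of cosets of $N$; thus $A \in \< \cN_{\cA} \> = \cB$. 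This is the only step where both hypotheses — $\cA$ being d-closed and all strongly generic sets in $\cA$ being periodic — are genuinely used.

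Next I would argue that $\cB$ is the unique maximal generic $G$-subalgebra of $\cA$. It is generic, since its members are strongly generic, hence generic. It is also the largest generic $G$-subalgebra: if $\cB' \le \cA$ is generic, then for any $A \in \cB'$ the $G$-algebra generated by $A$ lies in $\cB'$ and is therefore generic, so $A$ is strongly generic and $\cB' \subseteq \cB$ by the previous step. Since $\cA = \cA^d$, Corollary~\ref{cor:im-maxgen} identifies $\cB$ as an image algebra, and Corollary~\ref{cor:im-ell} then gives that the Ellis groups of $S(\cA)$ and $S(\cB)$ are isomorphic.

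Finally I would compute the Ellis group of $S(\cB)$. By Lemma~\ref{lem:per-dcl}, $\cB$ is d-closed, and since each of its members is a finite union of cosets of finite-index subgroups, it is clopen in the profinite topology, so $\cB \le \cSBP$. Also $\cN_{\cB} = \cN \cap \cB = \cN \cap \cA = \cN_{\cA}$, using $\cN_{\cA} \subseteq \cB \subseteq \cA = \cA^d$. Theorem~\ref{thm:pf-ell} applied to $\cB$ then yields that the Ellis group of $S(\cB)$, hence that of $S(\cA)$, is isomorphic to $\cG = \projlim_{N \in \cN_{\cA}} G/N$, which is profinite as a projective limit of finite groups. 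One could instead bypass Theorem~\ref{thm:pf-ell} here: since $\cB$ is d-closed and generic, $S(\cB)$ is its own minimal ideal, and by Lemma~\ref{lem:proj-iso} the semigroup $(S(\cB),\ast)$ is already a group isomorphic to $\cG$, hence equals its own Ellis group. Either way, every step after the identification of $\cB$ is a direct citation, so I expect the only real work to be the verification in the first step that $\cB$ exhausts the strongly generic sets of $\cA$.
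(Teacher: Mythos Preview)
Your proposal is correct and follows exactly the approach the paper intends: the corollary is stated immediately after the stable-group example with the phrase ``the same reasoning proves that more generally'', and your three steps (identify $\cB=\langle\cN_{\cA}\rangle$ as the set of strongly generic elements via Corollary~\ref{cor:per-nbc}, recognise it as the unique image algebra via Corollary~\ref{cor:im-maxgen}, then invoke Corollary~\ref{cor:im-ell} and Theorem~\ref{thm:pf-ell}) reproduce that reasoning faithfully. Your alternative shortcut through Lemma~\ref{lem:proj-iso} is also valid and slightly more direct than the paper's own route through Theorem~\ref{thm:pf-ell}.
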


Now we consider two scenarios of making $G$ a group definable in a model $M$ and lifting a $G$-algebra to an elementary extension $M^* \succcurlyeq M$. 

1. Fix a subfamily $\cN_0 \subseteq \cN$ closed under finite intersection and taking supergroups from $\cN$ and denote by $\cSBP(\cN_0)$ the $G$-algebra of subsets of $G$ having SBP with respect to the topology generated by cosets of subgroups from $\cN_0$. Take any $G$-algebra $\cN_0 \subseteq \cA \le \cSBP(\cN_0)$. By Remark~\ref{rem:pf-sbp-dcl}, we have that $\cSBP(\cN_0)$ is d-closed, so $\cA^d \le \cSBP(\cN_0)$. 

\begin{lemma} We have $\cSBP(\cN_0) \cap \cN = \cN_0$, so in particular $\cA^d \cap \cN = \cN_0$.
\end{lemma}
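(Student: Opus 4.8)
The plan is to make the $\tau_0$-interiors of a finite-index normal subgroup $N$ and of its complement completely explicit, where $\tau_0$ is the topology generated by cosets of subgroups from $\cN_0$, and then observe that the SBP condition on $N$ is incompatible with $N \notin \cN_0$. The inclusion $\cN_0 \subseteq \cSBP(\cN_0) \cap \cN$ is trivial: a subgroup $M \in \cN_0$ is a single coset of itself, hence $\tau_0$-clopen, hence has SBP, and $M \in \cN$ by definition. So the whole content is the reverse inclusion, and that is what the steps below address.

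\textbf{Step 1 (a base for $\tau_0$).} First I would record that the cosets $aM$ with $a \in G$ and $M \in \cN_0$ form a base for $\tau_0$: using closure of $\cN_0$ under finite intersection, $aM \cap bM'$ is either empty or a single coset of $M \cap M' \in \cN_0$. Consequently every nonempty $\tau_0$-open set is a union of such cosets.

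\textbf{Step 2 ($\int_{\tau_0}(N)$ and a free win).} For $N \in \cN$, one checks that $aM \subseteq N$ holds iff $a \in N$ and $M \subseteq N$. Hence $\int_{\tau_0}(N) = N$ if some $M \in \cN_0$ satisfies $M \subseteq N$, and $\int_{\tau_0}(N) = \varnothing$ otherwise. In the first case $N$ is already in $\cN_0$ by closure of $\cN_0$ under taking supergroups from $\cN$, and we are done; so from now on I assume $\int_{\tau_0}(N) = \varnothing$ and aim for a contradiction with $N \in \cSBP(\cN_0)$.

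\textbf{Step 3 ($\int_{\tau_0}(N^c)$ and the key computation).} Dually, $aM \cap N = \varnothing$ iff $a \notin MN$ (using normality of $N$), so $x \notin \int_{\tau_0}(N^c)$ iff $x \in MN$ for \emph{every} $M \in \cN_0$; that is, $G \setminus \int_{\tau_0}(N^c) = K$, where $K := \bigcap_{M \in \cN_0} MN$. The crux is that $K \in \cN_0$: since $N$ has finite index, $G/N$ is finite, so the subgroups $MN/N$ of $G/N$ (for $M \in \cN_0$) are finitely many, witnessed by $M_1, \dots, M_t \in \cN_0$; then $K = \bigcap_{i=1}^t M_i N$, which is a normal finite-index subgroup (it contains $N$) and contains $M_* := M_1 \cap \dots \cap M_t \in \cN_0$, so $K \in \cN$ and $M_* \subseteq K$ force $K \in \cN_0$ by the supergroup-closure hypothesis. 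Therefore $K$ is $\tau_0$-clopen and nonempty, so $\int_{\tau_0}(N^c) = G \setminus K$ is not $\tau_0$-dense. Together with $\int_{\tau_0}(N) = \varnothing$ this says $\int_{\tau_0}(N) \cup \int_{\tau_0}(N^c)$ is not dense, i.e. $N \notin \cSBP(\cN_0)$ — contradiction. Hence the case $\int_{\tau_0}(N) = \varnothing$ is impossible and $N \in \cN_0$, which completes the proof; the last sentence of the excerpt then follows since $\cA^d \le \cSBP(\cN_0)$.

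\textbf{Main obstacle.} The one non-formal point, and the only place both hypotheses on $\cN_0$ are genuinely used, is showing $K \in \cN_0$: one must see that the a priori infinite intersection $\bigcap_{M\in\cN_0} MN$ is realized by finitely many terms and traps some member of $\cN_0$ below it. Finiteness of $G/N$ is exactly what makes this go through.
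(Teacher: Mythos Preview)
Your proof is correct and takes a genuinely different route from the paper. You compute the $\tau_0$-interiors of $N$ and $N^c$ explicitly, identifying $G \setminus \int_{\tau_0}(N^c)$ as the subgroup $K = \bigcap_{M \in \cN_0} MN$; finiteness of $G/N$ then yields $K \in \cN_0$, so $K$ is a nonempty $\tau_0$-open set missed by $\int_{\tau_0}(N) \cup \int_{\tau_0}(N^c)$ whenever $\int_{\tau_0}(N) = \varnothing$, contradicting SBP. The paper instead works forward from the SBP hypothesis: it gathers into a clopen set $B \in \< \cN_0 \>$ all non-identity cosets of $N$ that can be separated from $N$ by some element of $\< \cN_0 \>$, uses density of $\int(N) \cup \int(G \setminus N)$ inside the nonempty open set $G \setminus B$ to locate a basic coset there lying entirely in $N$ or in $G \setminus N$, and rules out the second possibility by a maximality argument (translates of such a coset by elements of $N$ would cover a new coset of $N$, forcing it into $B$ after all). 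Your approach is more structural and makes both closure hypotheses on $\cN_0$ visible at a single point (the verification that $K \in \cN_0$); the paper's argument is more hands-on but never needs to name the intersection $\bigcap_{M} MN$. Both are short; yours is arguably the cleaner of the two.
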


\begin{proof} The right-to-left inclusion is obvious. In order to prove the other one, fix $N \in \cSBP(\cN_0) \cap \cN$ and let
\[
\cC = \{ C \in G/N : (\exists A \in \< \cN_0 \>) \, C \subseteq A \subseteq G \setminus N \}.
\]
Clearly $\cC$ is finite, so we can find $B \in \< \cN_0 \>$ containing the union of $\cC$ and disjoint from $N$. Since $B$ is clopen and $N$ has SBP, we can find a coset $K \subseteq G \setminus B$ of a subgroup in $\cN_0$ contained in $N$ or $G \setminus N$. We claim that the first possibility must hold.

Assume for contradiction that $K \subseteq G \setminus N$. Then $K \cap C \neq \varnothing$ for some $N \neq C \in G/N$. Let $A$ denote a union of finitely many translates of $K$ by elements of $N$ covering $C$. Then $A \in \< \cN_0 \>$ and $C \subseteq A \subseteq G \setminus N$, hence $C \in \cC$, contradicting the fact that $K \subseteq G \setminus B$.

It follows that $K \subseteq N$. Let $K$ be a coset of $F \in \cN_0$. Then $F \le N$ and the conclusion follows because $\cN_0$ is closed under taking supergroups.
\end{proof}

Consider the structure $M = (G, \cdot, A)_{A \in \cA}$ and take an elementary extension $M \preccurlyeq M^* = (G^*, \cdot, A^*)_{A \in \cA}$. The family $\{ A^* : A \in \cA \}$ is a $G$-algebra, but not necessarily a $G^*$-algebra, so let $\cA^*$ denote the $G^*$-algebra generated by that family. Also let $\cN_0^* = \{ N^* : N \in \cN_0 \}$, which makes sense as $\cN_0 \subseteq \cA$, and
\[
\cN^* := \{ N \le G^* : N \text{ is a normal subgroup of finite index } \}.
\]
Clearly $\cN_0^* \subseteq \cN^*$ is a family closed under finite intersection and taking supergroups from $\cN^*$ and $\cN_0^* \subseteq \cA^* \le \cSBP(\cN_0^*)$, hence as before $(\cA^*)^d \le \cSBP(\cN_0^*)$ and $(\cA^*)^d \cap \cN^* = \cN_0^*$. Let $\cE(M)$ and $\cE(M^*)$ denote the Ellis group of $S(\cA)$ and $S(\cA^*)$, respectively. By Theorem~\ref{thm:pf-ell}, we have that
\begin{align*}
\cE(M) \cong \projlim_{N \in \cN_0} G/N & & \text{and} & & \cE(M^*) \cong \projlim_{N^* \in \cN_0^*} G^*/N^*.
\end{align*}
The inverse systems $(G/N)_{N \in \cN_0}$ and $(G^*/N^*)_{N^* \in \cN_0^*}$ are isomorphic. Therefore $\cE(M) \cong \cE(M^*)$, so here the Ellis group is preserved between models.

In the next example we assume that the reader is familiar with the relation $\preccurlyeq^*$ between models, defined in the beginning of Section 2 in \cite{New12Aug}.
\begin{example} Consider a group $G$ definable in an arbitrary structure $M$. Let $\Def_{\ext, G}(M)$ denote the $G$-algebra of subsets of $G$ externally definable in $M$ and
\begin{align*}
\cN_0 := \Def_{\ext, G}(M) \cap \cN, && \cA := \Def_{\ext, G}(M) \cap \cSBP(\cN_0).
\end{align*}
Now let $M \preccurlyeq^* M^*$. Since $\cA \subseteq \Def_{\ext, G}(M)$, for each $A \in \cA$, the structure $M^*$ provides an interpretation $A^{M^*} \in \Def_{\ext, G}(M^*)$. Define $\cA^* \le \Def_{\ext, G}(M^*)$ as the $G^*$-algebra generated by the family $\{ A^{M^*} : A \in \cA \}$. We also define $\cN_0^*$ and $\cN^*$ in the same way as in the last scenario.

Since $\cN_0 \subseteq \cA \le \cSBP(\cN_0)$, the conclusions from the last scenario remain true here by the same reasoning. In particular, for the two corresponding algebras $\cA \le \Def_{\ext, G}(M)$ and $\cA^* \le \Def_{\ext, G}(M^*)$ we have that the Ellis groups of $S(\cA)$ and $S(\cA^*)$ are isomorphic.
\end{example}

2. Consider any structure $M = (G, \cdot, C, N)$, where $(G, \cdot)$ is a group, $C \subseteq G$ and $\cN_0 := \{ N(c, G) : c \in C \}$ is a family of normal subgroups of $G$ of uniformly bounded index. Take an elementary extension $M \preccurlyeq M^* = (G^*, \cdot, C^*, N^*)$ and let $\cN_0^* := \{ N^*(c, G^*) : c \in C^* \}$. Define $\overline{\cN_0}$ as the closure of $\cN_0$ under finite intersection and supergroups from $\cN$, and $\overline{\cN_0^*}$ in the same way. For any $G$-algebra $\cA \le \cSBP(\overline{\cN_0})$ containing $\cN_0$ and $G^*$-algebra $\cA^* \le \cSBP(\overline{\cN_0^*})$ containing $\cN_0^*$, denote by $\cE(M)$ the Ellis group of $S(\cA)$ and by $\cE(M^*)$ the Ellis group of $S(\cA^*)$. Reasoning as in the previous scenario, we get
\begin{align*}
\cE(M) \cong \projlim_{N \in \overline{\cN_0}} G/N & & \text{and} & & \cE(M^*) \cong \projlim_{N^* \in \overline{\cN_0^*}} G^*/N^*.
\end{align*}
This time the Ellis groups need not be isomorphic because the set $A^*$ may be essentially bigger than $A$. However, $\cE(M)$ is still a homomorphic image of $\cE(M^*)$ since the system $(G/N)_{N \in \overline{\cN_0}}$ is isomorphic to a subsystem of $(G^*/N^*)_{N^* \in \overline{\cN_0^*}}$.

Because of Corollary~\ref{cor:ell-pf}, given a $G$-algebra, it is reasonable to ask whether all of its strongly generic sets are periodic. Below we prove that a weaker variant of this property holds in $\cSBP$ (with respect to the profinite topology).

\begin{proposition} \label{prop:pf-usg-per} Assume $A \in \cSBP$ is USG. Then $A$ is periodic.
\end{proposition}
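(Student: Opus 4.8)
The plan is to work with the characterization from Remark~\ref{rem:usg-char}: $A$ is USG iff $\chi_A$ is uniformly locally periodic, i.e.\ there is a function $k \mapsto m_k$ such that $\Per_U(\chi_A) = \bigcap_{u \in U} u^{-1}A^{\chi_A(u)}$ is $m_k$-generic for every finite $U$ with $|U| = k$ (here $A^1 = A$, $A^0 = A^c$). We must show $\Per(\chi_A)$ has finite index; if $A \in \{\varnothing, G\}$ this is clear, so assume otherwise.

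The mechanism I would use is the following, which already shows how SBP and uniformity interact. Let $F \trianglelefteqslant G$ have finite index and call a coset of $F$ \emph{bichromatic} if $\chi_A$ is not constant on it; let $\cC_F$ be the (finite) set of bichromatic cosets of $F$. If $s,t$ lie in a common coset of $F$ with $\chi_A(s) \ne \chi_A(t)$, then for every $x \in P := s^{-1}A \cap t^{-1}A^c$ one has $sx \in A$, $tx \in A^c$ and $(sx)^{-1}(tx) = x^{-1}(s^{-1}t)x \in F$, so the coset $sxF$ is bichromatic; hence $sP \subseteq \bigcup \cC_F$, so $P$ is contained in a union of $|\cC_F|$ cosets of $F$, and therefore $P$ is not $k$-generic for $k < [G:F]/|\cC_F|$. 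Since $A$ fails to be periodic exactly when $\cC_F \ne \varnothing$ for every finite-index $F$, suitable $s,t$ always exist; so if $A$ were non-periodic with $\sup_F [G:F]/|\cC_F| = \infty$ we would get $2$-element windows needing arbitrarily many translates, contradicting uniformity. The argument is thus reduced to showing that for $A \in \cSBP$ the ratio $|\cC_F|/[G:F]$ can be made arbitrarily small by shrinking $F$ — intuitively, that SBP forces the bichromatic cosets to become a vanishing fraction.

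Carrying this out is the main obstacle, and $2$-element windows are not enough in general: when the (nowhere dense) boundary of $A$ has positive Haar measure in the profinite completion $\hat G$ one has $\inf_F |\cC_F|/[G:F] > 0$. The remedy is to use windows $U$ that are transversals of finite-index normal subgroups $F_0$: then $\Per_U(\chi_A)$ measures how many distinct ``$F_0$-patterns'' the translates of $\chi_A$ realise, and its $m_{|U|}$-genericity bounds that number; feeding in SBP — which says that off a nowhere dense set $\chi_A$ is constant on sufficiently fine cosets, so that these patterns stabilise as $F_0$ shrinks — one should conclude that only finitely many patterns occur, i.e.\ $\chi_A$ has finite orbit in $2^G$, i.e.\ $\Per(\chi_A)$ has finite index. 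Here one would also invoke that generic members of $\langle A\rangle^d\le\cSBP$ have nonempty interior (from the proof of Theorem~\ref{thm:pf-ell}). The delicate point, and where the real work lies, is making the implication ``uniformly bounded number of patterns $+$ SBP $\Rightarrow$ the patterns stabilise'' precise, in particular ensuring that the finite-index subgroup on whose cosets $\chi_A$ turns out to be constant does not recede to infinity as the window grows.
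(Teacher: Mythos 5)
Your write-up does not contain a proof: both mechanisms you describe stop exactly at the point where the real difficulty sits, and you say so yourself. The first mechanism (counting bichromatic cosets) is, as you note, insufficient, since SBP with respect to the profinite topology does not force $|\cC_F|/[G:F] \to 0$; a nowhere dense boundary can still meet a fixed positive proportion of the cosets at every level, so $2$-element windows give no contradiction. The proposed remedy is not carried out, and it has an additional unjustified step: from the fact that every non-empty pattern class $\{t : \chi_A(ut)=\epsilon(u)\ \forall u\in U\}$ is $m_{|U|}$-generic you cannot in general bound the number of non-empty classes (without an invariant measure, a group can contain arbitrarily many pairwise disjoint $2$-generic sets -- cf.\ the free group example in the paper), and even granting such a bound it would be $m_{[G:F_0]}$, which grows with the window, so no finiteness of the orbit of $\chi_A$ in $2^G$, i.e.\ no finite-index period group, follows. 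The implication you flag as ``the delicate point'' (uniformly bounded number of patterns $+$ SBP $\Rightarrow$ the patterns stabilise) is precisely the content of the proposition, so the argument is circular in effect.

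For comparison, the paper's proof avoids all quantitative counting by exploiting the model-theoretic meaning of USG directly: $A$ is USG exactly when $A^N$ remains strongly generic in $G^N$ for every $N \succcurlyeq M$, where $M=(G,\cdot,A,F)_{F\in\cN}$. If $A$ is not periodic, the type $q(x,y)=\{x\in A,\ y\notin A\}\cup\{xF=yF : F\in\cN\}$ is consistent; realizing it by $\alpha,\beta$ in some $N\succcurlyeq M$, one considers $B=\alpha^{-1}A^N\setminus\beta^{-1}A^N$, which contains $e$. SBP of $A$ in $G$ enters only locally: inside any basic coset $F^N c$ one finds a coset $H^N d$ (with $H\in\cN$, chosen inside $aFc$ where $aF^N=\alpha F^N$) on which $\chi_A$ is constant, and since $\alpha F=\beta F$ for all $F\in\cN$ one has $\alpha^{-1}H^N d=\beta^{-1}H^N d$, so this open set misses $B$. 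Hence $B$ is nowhere dense in the topology on $G^N$ generated by the cosets of the $F^N$, so it is non-empty and non-generic, contradicting strong genericity of $A^N$. If you want to salvage your combinatorial approach, you would need a genuinely new idea playing the role of this compactness step; as written, the proposal has a gap at its core.
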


\begin{proof} Consider the structure $M = (G, \cdot, A, F)_{F \in \cN}$, where $\cN$ denotes the family of all normal subgroups of $G$ of finite index. Assume for contradiction that $A$ is not periodic. Then the following is a consistent $2$-type in $M$:
\[
q(x, y) = \{ x \in A, y \notin A \} \cup \{ xF = yF : F \in \cN \}.
\]
Take $N \succcurlyeq M$ such that $N \models q(\alpha, \beta)$ for some $\alpha, \beta \in N$. On the one hand, $A^N$ is strongly generic in $G^N$. On the other hand, we will show that the set
\[
B := \alpha^{-1} A^N \setminus \beta^{-1} A^N
\]
is nowhere dense with respect to the topology on $G^N$ generated by cosets of subgroups $F^N$, where $F \in \cN$. Since $e \in B$, it follows that $B$ is neither empty nor generic, which is a contradiction.

Consider any basic open set $F^N c$, where $F \in \cN$ and $c \in G^N$. In fact, we may assume that $c \in G$. Take $a \in G$ such that $aF^N = \alpha F^N$. Since $A$ has SBP, $\chi_A$ is constant on some coset $Hd \subseteq aFc$, where $H \in \cN$ and $d \in G$. It follows that $H^N d \subseteq \alpha F^N c$, so $\alpha^{-1} H^N d \subseteq F^N c$. Since $N \models q(\alpha, \beta)$, we have the equality $\alpha^{-1} H^N d = \beta^{-1} H^N d$. This basic open set is either contained in $\alpha^{-1} A^N$ and $\beta^{-1} A^N$, or disjoint from both. Therefore it is disjoint from $B$, as needed.
\end{proof}

\begin{corollary} \label{cor:tf-sg-nusg} Assume $A \subseteq G$ is tree-founded where the tree is linear, irreducible and infinite. Then $A$ is strongly generic but not uniformly strongly generic.
\end{corollary}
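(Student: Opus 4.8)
The plan is simply to combine four facts already in hand. First I would invoke Proposition~\ref{prop:tf-sg}: since $A$ is tree-founded, it is strongly generic, which settles the positive half of the statement. Next, reading ``the tree is linear, irreducible and infinite'' as the assertion that $\chi_A$ is founded on a valued tree of cosets $(T,\mathcal{H},d,v)$ enjoying these three properties, I would apply Proposition~\ref{prop:tree-nper} to conclude that $\chi_A$, and hence $A$, is not periodic.

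It then remains to rule out that $A$ is uniformly strongly generic. The key observation is that throughout this subsection $G$ carries the profinite topology, and by Remark~\ref{rem:tree-clop} every tree-founded set is pf-clopen; in particular $A$ is clopen, so $\bd(A) = \varnothing$ has empty interior and therefore $A \in \cSBP$. Now suppose towards a contradiction that $A$ is USG. Then Proposition~\ref{prop:pf-usg-per} applies and yields that $A$ is periodic, contradicting the previous paragraph. Hence $A$ is not uniformly strongly generic, which completes the proof.

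There is no genuine obstacle here beyond making the hypotheses line up: Proposition~\ref{prop:tree-nper} requires exactly linearity, irreducibility and infiniteness of the witnessing valued tree, while Proposition~\ref{prop:pf-usg-per} requires $\cSBP$ to be understood with respect to the profinite topology; both are guaranteed by the conventions fixed at the start of this subsection. So the argument is entirely a matter of bookkeeping, and I would keep the written proof to a couple of lines that cite these four results in this order.
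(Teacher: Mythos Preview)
Your proposal is correct and follows essentially the same route as the paper, which simply cites Propositions~\ref{prop:tf-sg}, \ref{prop:tree-nper}, \ref{prop:atf-sbp} and \ref{prop:pf-usg-per}. The only cosmetic difference is that you obtain $A \in \cSBP$ via Remark~\ref{rem:tree-clop} (tree-founded $\Rightarrow$ pf-clopen $\Rightarrow$ SBP), whereas the paper invokes Proposition~\ref{prop:atf-sbp} (almost tree-founded $\Rightarrow$ SBP); either works and your choice is in fact the more direct one.
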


\begin{proof} Follows from Propositions~\ref{prop:tf-sg}, \ref{prop:tree-nper}, \ref{prop:atf-sbp} and \ref{prop:pf-usg-per}.
\end{proof}

\subsection{Compact groups} \label{sub:comp}

In this subsection we assume $G$ is a compact topological group. We begin by proving that just as in the previous subsection, the $G$-algebra $\cSBP$ is d-closed. This result is joint with Newelski. For any $G$-algebra $\cA \le \cP(G)$, let $\cA^s$ denote the smallest Boolean algebra of subsets of $G$ containing $\cA$ and closed under both left and right translation.

\begin{lemma} \label{lem:dcl-nwd} 
Assume $\cA \le \cSBP$ is a $G$-algebra. Then for any $C \in \cA^d$ there is $B \in \cA^s$ such that $C \symdif B$ is nowhere dense.
\end{lemma}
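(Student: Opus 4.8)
The plan is to reduce to a set of generators of $\cA^d$ and then, for a single generator $d_q(A)$ with $A \in \cA$, to exploit compactness of $G$ in order to replace the "limit of right translates of $A$'' that $d_q(A)$ essentially is by a single right translate of $A$, at the cost of a nowhere dense error.

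First I would set $\mathcal D = \{ C \subseteq G : C \symdif B \text{ is nowhere dense for some } B \in \cA^s \}$. Since the nowhere dense subsets of $G$ form an ideal, and since $(C_1 \cap C_2) \symdif (B_1 \cap B_2) \subseteq (C_1 \symdif B_1) \cup (C_2 \symdif B_2)$ while $C^c \symdif B^c = C \symdif B$, the family $\mathcal D$ is a Boolean algebra, and it clearly contains $\cA$. By Fact~\ref{fact:dcl-gen}, $\cA^d$ is the Boolean algebra generated by $\bigcup_{q \in \beta G} d_q[\cA]$, so it suffices to prove $d_q(A) \in \mathcal D$ for every $A \in \cA$ and every $q \in \beta G$.

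Fix such $A$ and $q$. The key step is to produce, using compactness of $G$, a point $c \in G$ with the property that $V \in q$ for every open $V \subseteq G$ with $c \in V$. Indeed, $\{ \overline{S} : S \in q \}$ is a family of closed subsets of $G$ with the finite intersection property (as $q$ is a filter and $S \subseteq \overline S$), so by compactness I may choose $c \in \bigcap_{S \in q} \overline{S}$; if $V$ is open, $c \in V$ but $V \notin q$, then $G \setminus V \in q$, whence $c \in \overline{G \setminus V} = G \setminus V$, a contradiction. I then claim that
\[ d_q(A) \symdif Ac^{-1} \subseteq (\bd A)\, c^{-1}. \]
To check this, take $h \in G$ with $hc \notin \bd A$, so that either $hc \in \int A$ or $hc \in \int(G \setminus A)$. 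In the first case $h^{-1}(\int A)$ is an open neighbourhood of $c$, hence lies in $q$, hence so does its superset $h^{-1}A$, i.e. $h \in d_q(A)$; and $hc \in A$, i.e. $h \in Ac^{-1}$. In the second case $h^{-1}(\int(G \setminus A)) \in q$, so $G \setminus h^{-1}A \in q$, so $h^{-1}A \notin q$ and $h \notin d_q(A)$; and $hc \notin A$, so $h \notin Ac^{-1}$. Either way $h \notin d_q(A) \symdif Ac^{-1}$, which proves the claim. Since $A \in \cSBP$ the set $\bd A$ is nowhere dense, and right translation by $c^{-1}$ is a homeomorphism of $G$, so $(\bd A)\, c^{-1}$ is nowhere dense; as $Ac^{-1} \in \cA^s$ (because $A \in \cA \subseteq \cA^s$ and $\cA^s$ is closed under right translation), we get $d_q(A) \in \mathcal D$, finishing the proof.

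I do not expect a genuine obstacle here once one hits on the idea that the correct approximant in $\cA^s$ is a single right translate $Ac^{-1}$, with $c$ the "limit point" of $q$ furnished by compactness of $G$; a more naive local-patching argument would run into trouble because a compact group need not have a basis of clopen sets. The one point to stay alert to is that this lemma must be proved without assuming $d$-closedness of $\cSBP$ (in the compact setting), since it is precisely the tool used to establish that fact; the argument above uses only that $A \in \cSBP$ forces $\bd A$ to be nowhere dense, which is immediate from the definition.
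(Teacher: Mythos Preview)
Your proof is correct and essentially the same as the paper's: both reduce to generators via Fact~\ref{fact:dcl-gen}, use compactness of $G$ to find a point $c$ whose every open neighbourhood lies in $q$, and then show $d_q(A) \symdif Ac^{-1} \subseteq \bd(Ac^{-1})$, which is nowhere dense since $A \in \cSBP$. The only cosmetic difference is that you package the Boolean-combination step by defining the algebra $\mathcal D$ up front, whereas the paper handles it at the end by writing $C = \tau(d_{q_1}A_1,\dots,d_{q_n}A_n)$ directly.
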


\begin{proof} First assume that $C = d_q A$ for some $A \in \cA, q \in \beta G$. Since $G$ is compact, there is $b \in G$ such that each open neighbourhood of $b$ belongs to $q$. For any $h \in \int (Ab^{-1})$, we have that $b \in \int (h^{-1} A)$, thus $h^{-1} A \in q$ and so $h \in d_q A$. Hence $\int (Ab^{-1}) \subseteq d_q A$. Repeating the argument for $G \setminus A$ in place of $A$, we get $\int (G \setminus Ab^{-1}) \subseteq d_q(G \setminus A) = G \setminus d_q A$. It follows that $d_q A \symdif Ab^{-1} \subseteq \bd(Ab^{-1})$, so $B := Ab^{-1} \in \cA^s$ has the desired property.

Now assume that $C \in \cA^d$ is arbitrary. Using Fact~\ref{fact:dcl-gen}, we can write $C = \tau(d_{q_1} A_1, \ldots, d_{q_n} A_n)$, where $\tau$ is a Boolean term and $A_1, \ldots, A_n \in \cA$, $q_1, \ldots, q_n \in \beta G$. For $i = 1, \ldots, n$, we use the first part to find $B_i \in \cA^s$ such that $d_{q_i} A_i \symdif B_i$ is nowhere dense. Then $C \symdif B$ is nowhere dense, where $B = \tau(B_1, \ldots, B_n) \in \cA^s$.
\end{proof}

\begin{proposition} \label{prop:comp-sbp-dcl} $\cSBP$ is a d-closed $G$-algebra.
\end{proposition}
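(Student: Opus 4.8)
The plan is to obtain the statement almost immediately from Lemma~\ref{lem:dcl-nwd}. Recall from Remark~\ref{rem:dop} that $\cSBP$ is $d$-closed precisely when $\cSBP^d = \cSBP$, and since the inclusion $\cSBP \subseteq \cSBP^d$ is automatic, it is enough to fix an arbitrary $C \in \cSBP^d$ and show $C \in \cSBP$. Applying Lemma~\ref{lem:dcl-nwd} with $\cA = \cSBP$ produces a set $B \in \cSBP^s$ with $C \symdif B$ nowhere dense. Writing $C = B \symdif (C \symdif B)$, the proof then reduces to two elementary facts: first, that $\cSBP^s = \cSBP$, so that $B \in \cSBP$; and second, that $\cSBP$ is closed under taking symmetric differences with nowhere dense sets, so that $C \in \cSBP$.

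For the first fact, I would note that $\cSBP$, being a $G$-algebra, is closed under all Boolean operations and under left translation, so it only remains to check closure under right translation. This is where I use that $G$ is a topological group: right translation $x \mapsto xg$ is a homeomorphism of $G$, hence commutes with closure and interior, so $\bd(Ag) = \bd(A)g$, and a translate of a set with empty interior again has empty interior. Thus $A \in \cSBP \implies Ag \in \cSBP$, and consequently $\cSBP^s = \cSBP$. For the second fact, I would first observe that every nowhere dense $N \subseteq G$ lies in $\cSBP$, because $\bd(N) \subseteq \cl(N)$ has empty interior, and that a union of finitely many nowhere dense sets is nowhere dense (if a nonempty open $U$ were contained in $\cl(M) \cup \cl(N)$, then $U \setminus \cl(M)$ would be either a nonempty open subset of $\cl(N)$ or empty, forcing $U \subseteq \cl(M)$ — both contradict nowhere density). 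Hence if $B = U \symdif M$ with $U$ open and $M$ nowhere dense, and $N := C \symdif B$ is nowhere dense, then $C = U \symdif (M \symdif N)$ with $M \symdif N \subseteq M \cup N$ nowhere dense, so $C$ has SBP.

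I do not expect any real obstacle here: the essential difficulty has already been dispatched in Lemma~\ref{lem:dcl-nwd}, which is where compactness of $G$ is genuinely used — namely to produce, for $C = d_q A$, a point $b \in G$ all of whose neighbourhoods belong to $q$, and thereby to approximate $d_q A$ by the honest right translate $Ab^{-1}$ up to a nowhere dense error. Once that approximation is in hand, the passage to arbitrary elements of $\cSBP^d$ and the bookkeeping above are purely routine.
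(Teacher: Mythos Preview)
Your proposal is correct and follows essentially the same route as the paper's proof: apply Lemma~\ref{lem:dcl-nwd} with $\cA = \cSBP$, note that $\cSBP^s = \cSBP$, and conclude that $C$ differs from a set in $\cSBP$ by a nowhere dense set, hence lies in $\cSBP$. The paper compresses all of your bookkeeping into the single word ``clearly''; your expanded justifications of $\cSBP^s = \cSBP$ and closure under nowhere dense perturbation are accurate but not new content.
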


\begin{proof} Take any $C \in \cSBP^d$ and using Lemma~\ref{lem:dcl-nwd}, find $B \in \cSBP^s = \cSBP$ such that $C \symdif B$ is nowhere dense. Then clearly $C \in \cSBP$.
\end{proof}

Consider any $G$-algebra $\cA \le \cSBP$. Again we are going to describe the Ellis group of the flow $S(\cA)$. Given $A \in \cSBP$, let $\varrho(A)$ denote the set $\int(\cl(A))$, which is the unique regular open set such that $A \symdif \varrho(A)$ is nowhere dense. For an overview of basic properties of the operation $\varrho$, see Fact~\ref{fact:reg}. 

For $x, y \in G$, let
\[
x \sim_{\cA} y \qquad \text{if} \qquad (\forall B \in \cA^d) ( x \in \varrho(B) \iff y \in \varrho(B) )
\]
and $N_{\cA} := [e]_{\sim_{\cA}}$. Clearly $\sim_{\cA}$ is an equivalence relation on $G$ which is closed under left and right translation by Corollary~\ref{cor:dcl-rt}. It follows that $N_{\cA}$ is a normal subgroup of $G$ and $G/{\sim_{\cA}} = G/N_{\cA}$.

\begin{theorem} \label{thm:comp-ell} Assume $\cA \le \cSBP$ is a $G$-algebra. The Ellis group of $S(\cA)$ is isomorphic to $G/N_{\cA}$.
\end{theorem}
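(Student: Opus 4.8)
The plan is to follow closely the scheme of the proof of Theorem~\ref{thm:pf-ell}. First, by Theorem~\ref{thm:iso} and Proposition~\ref{prop:comp-sbp-dcl} I may assume that $\cA$ is $d$-closed, since passing from $\cA$ to $\cA^d \le \cSBP$ changes neither the Ellis group of $S(\cA)$ nor the subgroup $N_\cA$. As in the profinite case one checks, using compactness of $G$ together with the strong Baire property, that for $A \in \cA$ the three conditions ``$A$ is nowhere dense'', ``$A$ is not generic'' and ``$\int(A) = \varnothing$'' coincide (a set with nonempty interior is generic by compactness; a generic set cannot be nowhere dense since a finite union of nowhere dense sets is nowhere dense; and a set with SBP and empty interior is nowhere dense). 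Hence the non-generic members of $\cA$ form a proper $G$-ideal, so $S(\cA)$ has a generic point and, by Fact~\ref{fact:gen-min}, a unique minimal ideal $I \trianglelefteqslant_m S(\cA)$, consisting exactly of those $q \in S(\cA)$ all of whose members are generic.

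The heart of the argument is to build a semigroup epimorphism $\varphi : S(\cA) \to G/N_\cA$ with $I \cap \varphi^{-1}[\{e\}] = J(I)$; the theorem then follows from Corollary~\ref{cor:ell-epi}, once one observes that $G/N_\cA$ is a group at all, which holds because $\sim_\cA$ is invariant under left and right translation (as $\cA^d$ is, by Corollary~\ref{cor:dcl-rt}), so $N_\cA \trianglelefteqslant G$. To define $\varphi$ I would exploit compactness of $G$: for $q \in S(\cA)$ pick a point $g \in G$ which is a \emph{limit point} of $q$, i.e. $q \in \cl\{\hat h : h \in W\}$ for every neighbourhood $W$ of $g$, and set $\varphi(q) := g N_\cA$. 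For $q \in I$ this coincides with the coset determined by $K_q := \bigcap_{A \in q}\cl(\varrho(A))$, which is nonempty by the finite intersection property and compactness. The first thing to verify is that $\varphi$ is well defined modulo $N_\cA$; this is exactly where the strong Baire property does the work, via the facts that each $\varrho(B)$ with $B \in \cA^d$ is by definition a union of cosets of $N_\cA$ and that $B$ and $\varrho(B)$ differ only by a nowhere dense set, so that the ``coarse'' data of $q$ on $\cA^d$ pin down its limit point up to $N_\cA$; Lemma~\ref{lem:dcl-nwd}, which replaces members of $\cA^d$ by two-sided translates of members of $\cA$ modulo nowhere dense sets, is the technical tool. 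Granting well-definedness, $\varphi$ is a semigroup homomorphism by Lemma~\ref{lem:ast-prod}: if $A \in p$, $B \in q$ and $AB \in \cA^d$ then $AB \in p \ast q$, so a limit point of $p \ast q$ can be taken of the form $g_p g_q$ with $g_p, g_q$ limit points of $p$ and $q$, and normality of $N_\cA$ gives $\varphi(p \ast q) = \varphi(p)\varphi(q)$; surjectivity is immediate from $\varphi(\hat g) = gN_\cA$.

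It then remains to prove $I \cap \varphi^{-1}[\{e\}] \subseteq J(I)$, the reverse inclusion being automatic. Here I would imitate the final part of the proof of Theorem~\ref{thm:pf-ell}, with ``coset of a finite-index normal subgroup'' everywhere replaced by ``$N_\cA$-coset, modulo a nowhere dense set''. Given $u \in I$ with $\varphi(u) = N_\cA$ (so $N_\cA \subseteq \varrho(C)$ for every $C \in u$) and assuming $u \ast u \ne u$, one obtains $B \in \cA^d$ with $B \in u$ but $d_u B \notin u$, hence $B \setminus d_u B \in u$; since $u \in I$ this set has nonempty interior, so after a translation it contains a neighbourhood of $e$. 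A ``rounding'' step analogous to the Claim in Theorem~\ref{thm:pf-ell} — using that a set with SBP is, on any coset, constant on some sub-coset, so that a suitable nonconstant positive Boolean combination of translates of $B$ by elements of a neighbourhood of $e$ becomes a union of $N_\cA$-cosets modulo a nowhere dense set — produces $C \in \cA^d$ with $C \in u$, $d_u C \notin u$, and $\varrho(C)$ a union of $N_\cA$-cosets containing $e$. Then $N_\cA \subseteq \varrho(C)$ forces $C \in u$, while $e \in C \setminus d_u C$ forces $C \notin u$, a contradiction; so $u$ is an idempotent.

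The main obstacle, as throughout this section, is the interface between the Stone space $S(\cA^d)$, where the semigroup $E(S(\cA))$ lives, and the group topology on $G$, where $N_\cA$, the operation $\varrho$ and the notion ``nowhere dense'' are defined: concretely, the well-definedness of $\varphi$ and the ``rounding to a union of $N_\cA$-cosets'' step. Both rest essentially on the strong Baire property, which makes boundaries nowhere dense and hence invisible to $\varrho$, together with compactness of $G$. It is also at this point that one should settle whether $N_\cA$ is closed, so that $G/N_\cA$ is genuinely a compact Hausdorff group and $\varphi$ can be taken continuous.
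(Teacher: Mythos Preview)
Your overall scheme is correct and matches the paper: reduce to $\cA$ $d$-closed, observe that for $A \in \cA$ the conditions ``nowhere dense'', ``not generic'' and ``empty interior'' coincide, get a unique minimal ideal $I$, build a semigroup epimorphism onto $G/N_\cA$, and show the fibre over $e$ consists of idempotents. However, two of your steps have genuine gaps.

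\textbf{The homomorphism step.} Your appeal to Lemma~\ref{lem:ast-prod} fails because there is no reason $AB$ should lie in $\cA^d$ for $A \in p$, $B \in q$; that lemma does not give you control over limit points of $p \ast q$ here. The paper instead defines the map $\pi$ only on $I$, by
\[
\pi(q) = \{\, x \in G : (\forall B \in \cA^d)\,(x \in \varrho(B) \Rightarrow B \in q) \,\},
\]
proves this is a single coset using compactness and Lemma~\ref{lem:comp-sep}, and then checks the homomorphism property directly from continuity of multiplication: if $a \in \pi(p)$, $b \in \pi(q)$ and $ab \in \varrho(W)$, pick neighbourhoods $U \ni a$, $V \ni b$ with $UV \subseteq \varrho(W)$; then each $x \in U$ satisfies $b \in V \subseteq \varrho(x^{-1}W)$, hence $x^{-1}W \in q$, so $U \subseteq d_q W$, giving $a \in \varrho(d_q W)$ and thus $W \in p \ast q$. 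Your alternative formula $K_q = \bigcap_{A \in q} \cl(\varrho(A))$ is in fact equivalent to $\pi(q)$ for $q \in I$, but your first ``limit point'' definition $\bigcap_{A \in q} \cl(A)$ is not obviously so, and is not what you want to work with.

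\textbf{The idempotent step.} Here you are making the argument much harder than it is. The ``rounding to a union of $N_\cA$-cosets'' is a red herring: by the very definition of $\sim_\cA$, \emph{every} $\varrho(B)$ with $B \in \cA^d$ is already a union of $N_\cA$-cosets, so no analogue of the Claim in Theorem~\ref{thm:pf-ell} is needed. With the paper's definition of $\pi$, the condition $\pi(u) = N_\cA$ literally says $e \in \varrho(B) \Rightarrow B \in u$ for all $B \in \cA^d$, and then the contradiction is one line: if $C \in u$, $C \notin u \ast u$, then $C \setminus d_u C \in u$ has nonempty interior; pick $g$ in that interior, so $e \in \int(g^{-1}C) \subseteq \varrho(g^{-1}C)$, hence $g^{-1}C \in u$, hence $g \in d_u C$, contradiction. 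Your difficulty stems entirely from defining $\varphi$ via limit points rather than via the $\varrho$-condition; once you use the latter, the rounding machinery evaporates.
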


First we need the following lemma:

\begin{lemma} \label{lem:comp-sep} Assume $D \in \cSBP$ and $a \in \varrho(D)$, $b \in G \setminus \varrho(D)$. Then there is $g \in G$ such that $a \in \varrho(gD)$ and $b \in \varrho(G \setminus gD)$.
\end{lemma}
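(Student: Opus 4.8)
The plan is to reduce the statement to a single group-theoretic intersection property of the complementary regular open sets $\varrho(D)$ and $\varrho(D^c)$, and then obtain that property by contradiction, using only that $a \in \varrho(D)$ while $b \notin \varrho(D)$.

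First I would record the equivariance of $\varrho$ under translations: since left translation by any $g \in G$ is a homeomorphism of $G$ commuting with $\cl$ and $\int$, we have $\varrho(gD) = g\,\varrho(D)$, and likewise $\varrho(G \setminus gD) = \varrho(gD^c) = g\,\varrho(D^c)$. Writing $h = g^{-1}$, the desired conclusion ``$a \in \varrho(gD)$ and $b \in \varrho(G \setminus gD)$'' is therefore equivalent to ``$ha \in \varrho(D)$ and $hb \in \varrho(D^c)$''. So it is enough to produce such an $h$.

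Put $s = a^{-1}b$. The crucial claim is that $\varrho(D)\,s \cap \varrho(D^c) \neq \varnothing$. Granting it, choose $u \in \varrho(D)$ with $us \in \varrho(D^c)$ and set $h = ua^{-1}$; then $ha = u \in \varrho(D)$ and $hb = ua^{-1}b = us \in \varrho(D^c)$, so $g = h^{-1} = au^{-1}$ is as required. To prove the claim, suppose towards a contradiction that $\varrho(D)\,s$ is disjoint from $\varrho(D^c)$. Since right translation by $s$ is a homeomorphism, $\varrho(D)\,s$ is open; an open set disjoint from $\varrho(D^c)$ is automatically disjoint from $\cl(\varrho(D^c))$ as well, because every point of $\bd(\varrho(D^c))$ has all its neighbourhoods meeting $\varrho(D^c)$. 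Now, by the basic properties of $\varrho$ on $\cSBP$ (Fact~\ref{fact:reg}), $\varrho(D^c) = G \setminus \cl(\varrho(D))$, whence $\cl(\varrho(D^c)) = \cl\big(G \setminus \cl(\varrho(D))\big) = G \setminus \int \cl(\varrho(D)) = G \setminus \varrho(D)$, the last equality because $\varrho(D)$ is regular open. Thus $\varrho(D)\,s \subseteq \varrho(D)$. Since $a \in \varrho(D)$, this yields $b = as \in \varrho(D)\,s \subseteq \varrho(D)$, contradicting $b \in G \setminus \varrho(D)$. This establishes the claim, and hence the lemma.

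The whole argument is short; the only place that calls for care is the left/right translation bookkeeping and the correct use of the identities for complementary regular open sets — namely $\varrho(D^c) = G \setminus \cl(\varrho(D))$ and $\cl(\varrho(D^c)) = G \setminus \varrho(D)$ — so I would make sure these are among the facts collected in Fact~\ref{fact:reg} before invoking them. Beyond that there is no genuine obstacle.
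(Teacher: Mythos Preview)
Your proof is correct, but it takes a somewhat different route from the paper's. The paper argues directly by perturbation near the identity: since $\varrho(D)$ is open and contains $a$, one has $ga \in \varrho(D)$ for all $g$ in some neighbourhood of $e$; and since $b \in G \setminus \varrho(D) = \cl(\int(G \setminus D))$, there exist $g$ arbitrarily close to $e$ with $gb \in \int(G \setminus D) \subseteq \varrho(G \setminus D)$. Intersecting these two conditions yields the desired $g^{-1}$. Your argument, by contrast, fixes the right translation $s = a^{-1}b$ and shows by contradiction that $\varrho(D)s$ must meet $\varrho(D^c)$, using the regular-open identities $\varrho(D^c) = G \setminus \cl(\varrho(D))$ and $\cl(\varrho(D^c)) = G \setminus \varrho(D)$ to force $\varrho(D)s \subseteq \varrho(D)$ and hence $b \in \varrho(D)$. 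Both proofs are short and use only that translations are homeomorphisms; the paper's is a direct neighbourhood argument, while yours is a cleaner algebraic reduction that leans more on the Boolean structure of $\bRO$. Your caveat about Fact~\ref{fact:reg} is apt: the identity $\varrho(D^c) = \varrho(D)^{\bot} = G \setminus \cl(\varrho(D))$ follows from parts (c) and (d) there, and the rest is the definition of regular open.
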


\begin{proof} Since $\varrho(D)$ is an open neighbourhood of $a$, we have that $ga \in \varrho(D)$ when $g$ is sufficiently close to $e$. Moreover, since $b \in G \setminus \varrho(D) = \cl(\int(G \setminus D))$, there are $g$ arbitrarily close to $e$ such that $gb \in \int(G \setminus D)$. Combining the two statements, we get that there is $g \in G$ such that $ga \in \varrho(D)$ and $gb \in \int(G \setminus D) \subseteq \varrho(G \setminus D)$, hence $a \in \varrho(g^{-1}D)$ and $b \in \varrho(G \setminus g^{-1}D)$.
\end{proof}

\begin{proof}[Proof of Theorem~\ref{thm:comp-ell}.] Repeating the argument from the proof of Theorem~\ref{thm:pf-ell}, we get that there is a unique minimal ideal $I \trianglelefteqslant_m S(\cA^d)$, which consists of generic points of $S(\cA^d)$, and the generic sets in $\cA^d$ are precisely the ones with non-empty interior. Define $\pi : I \to G/N_{\cA}$ so that 
\[
\pi(q) = \{ x \in G : (\forall B \in \cA^d) ( x \in \varrho(B) \implies B \in q ) \}.
\]
First we show that $\pi$ is well defined, that is, each $\pi(q)$ is an equivalence class of $\sim_{\cA}$. Fix $q \in I$ and note that the set $\pi(q)$ is non-empty. Indeed, if $\pi(q) = \varnothing$, then for each $a \in G$ there is $B \in \cA^d$ such that $a \in \varrho(B)$ and $B \notin q$. By the compactness of $G$, we can find $B_1, \ldots, B_n \in \cA^d$ satisfying $\varrho(B_1) \cup \ldots \cup \varrho(B_n) = G$ and $B := B_1 \cup \ldots \cup B_n \notin q$. It follows that the set $G \setminus B$ is nowhere dense, since
\[
G \setminus B = B \symdif G = \left( \bigcup_{i=1}^n B_i \right) \symdif \left( \bigcup_{i=1}^n \varrho(B_i) \right) \subseteq \bigcup_{i=1}^n \big( B_i \symdif \varrho(B_i) \big).
\]
But $G \setminus B \in q$, which contradicts the fact that $q$ is generic.

So there exists $a \in \pi(q)$. We will prove that $\pi(q) = [a]_{\sim_{\cA}}$. The right-to-left inclusion is obvious and for the other one, take $b \notin [a]_{\sim_{\cA}}$. By Lemma~\ref{lem:comp-sep}, there is $B \in \cA^d$ such that $a \in \varrho(B)$ and $b \in \varrho(G \setminus B)$ (or the other way, in which case we replace $B$ with $G \setminus B$). Then $B \in q$, so $G \setminus B$ witnesses that $b \notin \pi(q)$.

By Lemma~\ref{lem:ell-epi}, the proof will be complete once we show that $\pi$ is a semigroup epimorphism and $\pi^{-1}[ \{ N_{\cA} \} ] \subseteq J(I)$. To check that it is a homomorphism, fix $p, q \in I$ and take any $a \in \pi(p), b \in \pi(q)$. It suffices to show that $ab \in \pi(p \ast q)$. Fix any $W \in \cA^d$ such that $ab \in \varrho(W)$ and pick open neighbourhoods $U \subseteq G$ of $a$ and $V \subseteq G$ of $b$ satisfying $U \cdot V \subseteq \varrho(W)$. For each $x \in U$, we have that $b \in V \subseteq \varrho(x^{-1}W)$, so $x^{-1} W \in q$ and therefore $x \in d_q W$. It follows that $U \subseteq d_q W$, hence $a \in \varrho(d_q W)$ and so $d_q W \in p$, which means that $W \in p \ast q$.

For surjectivity, fix $K \in G/N_{\cA}$ and write $K = aN_{\cA}$, where $a \in G$. The family
\[
\{ B \in \cA^d : a \in \varrho(B) \}
\]
is a filter of $\cA^d$ consisting of generic sets, so it extends to a generic ultrafilter $q \in S(\cA^d)$. Then $q \in I$ and $a \in \pi(q)$, hence $\pi(q) = [a]_{\sim_{\cA}} = K$.

Finally, we check that $\pi^{-1}[ \{ N_{\cA} \} ] \subseteq J(I)$. Assume for contradiction that $\pi(q) = N_{\cA}$ for some $q \in I \setminus J(I)$. Then $q \ast q \neq q$, so there is $C \in \cA^d$ such that $C \in q$ and $C \notin q \ast q$, which implies $C \setminus d_q C \in q$. Since $q$ omits nowhere dense sets, there is a non-empty open subset $U \subseteq C \setminus d_q C$. Pick any $g \in U$. Then $e \in \varrho(g^{-1} C)$, so $g^{-1} C \in q$. Thus $g \in U \cap d_q C$, which is a contradiction.
\end{proof}

The description of the relation $\sim_{\cA}$, and thus of the Ellis group of $S(\cA)$, can be simplified in the following way:

\begin{remark} For every $x, y \in G$,
\[
x \sim_{\cA} y \iff (\forall C \in \cA^s) ( x \in \varrho(C) \iff y \in \varrho(C) ).
\]
\end{remark}

\begin{proof} The left-to-right implication is trivial since $\cA^s \subseteq \cA^d$ by Corollary~\ref{cor:dcl-rt}. For the other one take any $B \in \cA^d$. By Lemma~\ref{lem:dcl-nwd}, we can write $B = C \symdif M$ for some $C \in \cA^s$ and nowhere dense $M \subseteq G$. Then $\varrho(B) = \varrho(C)$, so the conclusion follows.
\end{proof}

By Proposition~\ref{prop:pf-usg-per}, when a group is equipped with the profinite topology, the algebra $\cSBP$ does not contain non-periodic uniformly strongly generic sets. The same is true for compact groups, as we prove below. Recall a basic fact:

\begin{fact} Every clopen $C \subseteq G$ is periodic.
\end{fact}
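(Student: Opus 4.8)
The plan is to exhibit an \emph{open} (hence finite–index) subgroup $H \le G$ with $CH = C$, and then to conclude by Remark~\ref{rem:per-max}, which identifies $\Per(\chi_C)$ with the largest subgroup of which $C$ is a union of left cosets. Once $CH = C$ is known, we get $H \le \Per(\chi_C)$, so $[G:\Per(\chi_C)] \le [G:H] < \infty$ and $C$ is periodic.

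\smallskip
\noindent\textbf{Step 1: find a neighbourhood $V$ of $e$ with $CV \subseteq C$.} Since $C$ is clopen in the compact space $G$, it is compact. For each $c \in C$, continuity of multiplication at $(c,e)$ (together with $ce = c \in C$ and $C$ open) gives basic open sets $W_c \ni c$ and $V_c \ni e$ with $W_c V_c \subseteq C$. Cover $C$ by finitely many $W_{c_1},\dots,W_{c_n}$ and set $V := \bigcap_{i=1}^n V_{c_i}$, an open neighbourhood of $e$; then
\[
CV \subseteq \bigcup_{i=1}^n W_{c_i} V \subseteq \bigcup_{i=1}^n W_{c_i} V_{c_i} \subseteq C .
\]
\textbf{Step 2: pass to a subgroup.} Put $V' := V \cap V^{-1}$ (open, by continuity of inversion) and let $H := \bigcup_{n \ge 1} (V')^n$ be the subgroup generated by $V'$. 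It is open, being a union of the open sets $(V')^n$, so its left cosets form an open cover of the compact group $G$ and $[G:H] < \infty$. Since $V' \subseteq V$ we have $CV' \subseteq CV \subseteq C$, and a one–line induction ($C(V')^{n+1} = \big(C(V')^n\big)V' \subseteq CV' \subseteq C$) gives $C(V')^n \subseteq C$ for all $n$, hence $CH \subseteq C$; as $e \in H$ this forces $CH = C$. Thus $C$ is a union of left cosets of $H$, and Remark~\ref{rem:per-max} finishes the proof.

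\smallskip
The argument is essentially routine. The only point that needs a moment's care is the compactness argument in Step 1: one cannot simply take the ``obvious'' neighbourhood of $e$ at each point of $C$ without this uniformity step. The other thing to watch is that one must symmetrize $V$ to $V' = V \cap V^{-1}$ before taking the generated subgroup, since $\bigcup_n V^n$ need only be a sub-semigroup. I do not expect a genuine obstacle; note in particular that Hausdorffness of $G$ is never used.
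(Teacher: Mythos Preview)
Your proof is correct and follows essentially the same approach as the paper: both use compactness of $C$ to find an open neighbourhood of $e$ that $C$ absorbs on the right, then conclude that $\Per(\chi_C)$ has finite index. The only cosmetic difference is in the last step, where the paper directly observes $W \cap W^{-1} \subseteq \Per(\chi_C)$ and invokes genericity of this set, while you instead generate an open subgroup $H$ from $V' = V \cap V^{-1}$ and appeal to Remark~\ref{rem:per-max}.
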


\begin{proof} We first find an open neighbourhood $W \subseteq G$ of identity such that $CW = C$. For any $a \in C$ choose an open neighbourhood $W_a \subseteq G$ of identity satisfying $(W_a)^2 \subseteq a^{-1} C$. The family $\{ a \cdot W_a : a \in C \}$ is an open cover of $C$, so it has a finite subcover $\{ a_1 W_{a_1}, \ldots, a_n W_{a_n} \}$. We claim that $W := W_{a_1} \cap \ldots \cap W_{a_n}$ has the desired property. Indeed, take any $b \in C$ and pick $i \in \{ 1, \ldots, n \}$ such that $b \in a_i W_{a_i}$. Then 
\[
bW \subseteq bW_{a_i} \subseteq a_i (W_{a_i})^2 \subseteq C.
\]
Hence $CW \subseteq C$ and the other inclusion is obvious.

Therefore we have that $W \cap W^{-1} \subseteq \Per( \chi_C )$. Since $W \cap W^{-1}$ is generic by the compactness of $G$, it follows that $\Per( \chi_C )$ has finite index and so $C$ is periodic.
\end{proof}

\begin{lemma} \label{lem:usg-clop} Assume $U_1, U_2, W \subseteq G$ are open, $U_1 \cup U_2$ is dense and $W$ is non-empty. Then there are open neighbourhoods $W'$ and $Z$ of identity such that for each $g \in G$ there is $h \in G$ such that $hW'$ is contained in $U_1$ or $U_2$ and $ZhW' \subseteq gW$.
\end{lemma}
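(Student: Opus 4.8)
The statement is a uniform-covering lemma: given open sets $U_1, U_2$ with dense union and a nonempty open $W$, we must produce open neighbourhoods $W'$ and $Z$ of identity so that every translate $gW$ contains a set of the form $ZhW'$ with $hW'$ entirely inside $U_1$ or $U_2$. The idea is to exploit compactness of $G$ twice: once to extract a \emph{uniform} radius at which ``small'' translates land inside one of the $U_i$, and once to turn the covering $U_1 \cup U_2$ dense into a finite covering of $G$ by translates of a fixed small neighbourhood.

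\begin{proof}
Fix a nonempty $g_0 \in W$; shrinking $W$, we may assume $W$ is an open neighbourhood of some point, and by translating we may arrange that $W$ is an open neighbourhood of $e$ (the general case follows by replacing $W$ with $g_0^{-1}W$ at the end and absorbing $g_0$ into the choice of $h$). Choose an open symmetric neighbourhood $V$ of $e$ with $V \cdot V \subseteq W$, using continuity of multiplication and inversion.

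Since $U_1 \cup U_2$ is dense and open, for every $x \in G$ the point $x$ lies in $\cl(U_1 \cup U_2)$, so some small left translate of a fixed neighbourhood of $e$ at $x$ meets $U_1 \cup U_2$ in a nonempty open set; more usefully, for each $x \in G$ there is an open $O_x \ni e$ and an $i_x \in \{1,2\}$ together with an element witnessing that $x \cdot O_x$ has nonempty intersection with $U_{i_x}$, hence (shrinking $O_x$ and translating) there is a point $y_x$ and an open neighbourhood $P_x \ni e$ with $y_x P_x \subseteq U_{i_x}$ and $y_x \in x O_x$. By compactness of $G$, finitely many of the sets $\{ x O_x : x \in G\}$ cover $G$, say $G = \bigcup_{k=1}^{n} x_k O_{x_k}$. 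Put
\[
W' := V \cap \bigcap_{k=1}^{n} P_{x_k},
\]
an open neighbourhood of $e$. Now choose, again by continuity of multiplication, an open symmetric neighbourhood $Z$ of $e$ with $Z \cdot W' \subseteq V$ (so that $Z \cdot W' \subseteq V \cdot V \subseteq W$).

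It remains to verify the covering property. Fix $g \in G$. Then $g^{-1}$ lies in some $x_k O_{x_k}$, so the point $y_{x_k}$ satisfies $y_{x_k} \in x_k O_{x_k}$ and $y_{x_k} W' \subseteq y_{x_k} P_{x_k} \subseteq U_{i_{x_k}}$; thus $hW' \subseteq U_1$ or $hW' \subseteq U_2$ for $h := g \cdot y_{x_k}'$, where $y_{x_k}'$ is an element of $O_{x_k}$ chosen close enough to $e$ that $h := g y_{x_k}'$ still has $hW'$ inside the relevant $U_i$ (this is possible since $y_{x_k}$ itself works and the condition $hW' \subseteq U_i$ is open in $h$; alternatively take $h$ a translate of $y_{x_k}$ by a controlled element). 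Finally $Z h W' = Z (g y_{x_k}') W'$; choosing $y_{x_k}'$ in a sufficiently small neighbourhood of $e$ we get $y_{x_k}' W' \subseteq W'$ up to the controlled error absorbed into $Z$, so $Z h W' \subseteq g \cdot (Z W') \subseteq g \cdot W$, as required.
\end{proof}

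\textbf{Main obstacle.} The delicate point is keeping the choice of $h$ \emph{uniform} while simultaneously guaranteeing both $hW' \subseteq U_i$ \emph{and} $ZhW' \subseteq gW$: the first wants $h$ close to the witness $y_{x_k}$, the second wants $h$ close to $g$. Reconciling these forces one to pass to the finite subcover \emph{before} fixing $W'$ and $Z$, and to build $Z$ small enough that the ``slack'' in $g$ is absorbed. I expect the honest write-up to phrase this by first fixing the finite data $\{x_k, y_{x_k}, P_{x_k}\}$, then defining $W'$ and $Z$ from them, and only then checking that for each $g$ the element $h = g\,o$ with $o \in O_{x_k}$ appropriately chosen does the job — the routine continuity estimates are straightforward once the order of quantifiers is pinned down.
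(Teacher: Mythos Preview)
Your overall strategy---compactness, finite subcover, then intersect to produce $W'$ and $Z$---is the same as the paper's. But the core of the argument, namely the choice of $h$ and the verification of the two inclusions $hW' \subseteq U_i$ and $ZhW' \subseteq gW$, is not carried out. You oscillate between taking $h = y_{x_k}$ (which secures $hW' \subseteq U_{i_{x_k}}$ but says nothing about $ZhW' \subseteq gW$) and taking $h = g y'_{x_k}$ with $y'_{x_k}$ ``close to $e$'' (which would give $ZhW' \subseteq gW$ if $Z y'_{x_k} W' \subseteq W$, but gives no reason why $g y'_{x_k} W'$ should land in any $U_i$, since $g$ is arbitrary). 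The phrase ``absorbed into $Z$'' hides a real problem: writing $g^{-1}ZhW' = (g^{-1}Zg)(g^{-1}h)W'$, you would need to control the conjugate $g^{-1}Zg$ uniformly in $g$, and you have no mechanism for that. Also, covering via $g^{-1} \in x_k O_{x_k}$ rather than $g \in x_k O_{x_k}$ makes $h = y_{x_k}$ close to $g^{-1}$ rather than to $g$, which is the wrong side.

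The paper resolves both conditions in one stroke. For each $g$ it picks $h_g \in U_j \cap gW$ (possible since $U_1 \cup U_2$ is dense and $gW$ is open nonempty), notes that $g^{-1}h_g \in g^{-1}U_j \cap W$, and then applies joint continuity of the map $(g',z,w') \mapsto g'^{-1} z\, h_g\, w'$ at the point $(g,e,e)$ to get an open neighbourhood $O_g$ of $g$ and open neighbourhoods $Z_g, W'_g$ of $e$ with
\[
O_g^{-1}\, Z_g\, h_g\, W'_g \;\subseteq\; g^{-1}U_j \cap W.
\]
This single inclusion simultaneously yields $h_g W'_g \subseteq U_j$ (take $g \in O_g$, $e \in Z_g$) and $Z_g h_g W'_g \subseteq g'W$ for every $g' \in O_g$. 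Compactness then gives a finite cover $G = O_{g_1} \cup \dots \cup O_{g_n}$, one sets $W' = \bigcap_i W'_{g_i}$, $Z = \bigcap_i Z_{g_i}$, and for arbitrary $g \in O_{g_i}$ takes $h = h_{g_i}$. The point you were missing is that the neighbourhood $O_g$ of $g$ must be chosen \emph{together with} $Z_g$ and $W'_g$ from the same continuity argument; separating the two requirements and trying to reconcile them afterwards is exactly where your write-up breaks down.
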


\begin{proof}

To each $g \in G$ we assign an element $h_g$, an open neighbourhood $O_g$ of $g$ and open neighbourhoods $W'_g, Z_g$ of identity in the following way: we choose $j \in \{ 1, 2 \}$ so that $U_j \cap gW \neq \varnothing$ and we let $h_g$ denote any element of this intersection. We then have $g^{-1} e h_g e = g^{-1} h_g \in g^{-1} U_j \cap W$, so by continuity, we can find an open neighbourhood $O_g$ of $g$ and open neighbourhoods $Z_g, W'_g$ of identity satisfying $O_g^{-1} Z_g h_g W'_g \subseteq g^{-1} U_j \cap W$.

By compactness, there are $g_1, \ldots, g_n \in G$ such that $G = O_{g_1} \cup \ldots \cup O_{g_n}$. We will prove that $W' := W'_{g_1} \cap \ldots \cap W'_{g_n}$ and $Z := Z_{g_1} \cap \ldots \cap Z_{g_n}$ are as desired. Take any $g \in G$, choose $i$ so that $g \in O_{g_i}$ and let $h = h_{g_i}$. Then $O_{g_i}^{-1} Z h W' \subseteq g_i^{-1} U_j \cap W$ for some $j \in \{ 1, 2 \}$. In particular, $hW' \subseteq U_j$ (because $g_i \in O_{g_i}$ and $e \in Z$) and $ZhW' \subseteq gW$ (as $g \in O_{g_i}$).
\end{proof}

\begin{proposition} Assume $A \in \cSBP$ is USG. Then $A$ is clopen.
\end{proposition}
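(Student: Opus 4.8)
The plan is a proof by contradiction: from a non-clopen USG set $A$ I will produce a non-empty Boolean combination of finitely many translates of $A$ that is not $m$-generic for the relevant bound $m$, contradicting uniform strong genericity.

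First I would reduce and set up. If $A$ is not clopen then $A$ or $A^c$ fails to be open, and $A^c$ is again USG (it generates the same $G$-algebra, with the same uniform bounds) and again has SBP, so I may assume $A$ is not open; fix $a_0\in A\setminus\int(A)$. Applying the definition of USG to the term $\tau(X_1,X_2)=X_1\wedge X_2^c$ gives $m\in\NN$ such that for all $g_1,g_2\in G$ the set $g_1A\cap g_2A^c$ is empty or $m$-generic; equivalently, $A\cap tA^c$ is empty or $m$-generic for every $t\in G$. Since $G$ is infinite and compact, I fix an open neighbourhood $W\ni e$ whose closure $\cl W$ is \emph{not} $m$-generic (this is the one place the infinitude of $G$ is used — separate $m+1$ points by small neighbourhoods, or invoke normalized Haar measure, for which $\mu(\cl W)\to 0$). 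Finally, for any neighbourhood $V\ni e$ the set $a_0V$ is a neighbourhood of $a_0$, hence meets $A^c$ as $a_0\notin\int(A)$; choosing $b\in a_0V\cap A^c$ and $t=a_0b^{-1}\in a_0V^{-1}a_0^{-1}$ one checks $t^{-1}a_0=b\notin A$, so $a_0\in A\cap tA^c$. Thus $A\cap tA^c$ is non-empty for $t$ ranging over a neighbourhood basis of $e$.

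The heart of the argument is to show that $A\cap tA^c$ is not $m$-generic once $t$ is close enough to $e$, and this is where SBP and Lemma~\ref{lem:usg-clop} enter. Apply that lemma with $U_1=\int(A)$, $U_2=\int(A^c)$ (dense union, by SBP) and the neighbourhood $W$ above, obtaining neighbourhoods $W',Z\ni e$ so that every translate $gW$ contains a ``monochromatic'' open block $hW'\subseteq\int(A)$ or $hW'\subseteq\int(A^c)$, with a definite amount of room $Z$ around it, uniformly in $g$. Combining this with the now-familiar Baire-category observation that an $m$-generic set $C$ with SBP has $\cl(\int C)$ also $m$-generic (exactly as in the proofs of Theorems~\ref{thm:pf-ell} and~\ref{thm:comp-ell}), one argues that $m$-genericity of $A\cap tA^c$ for $t$ near $e$ would force $\cl W$ to be $m$-generic, contradicting the choice of $W$. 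Feeding a non-empty $A\cap tA^c$ into this then contradicts USG, so $\bd(A)=\varnothing$, i.e.\ $A$ is clopen. I expect this last step — ruling out $m$-genericity of the ``collar'' $A\cap tA^c$ as $t\to e$ — to be the main obstacle: a pointwise estimate does not suffice, since $\bd(A)$ may be spread throughout $G$ and the collar can wind around it rather than sit in a small ball, which is precisely why the uniformity in Lemma~\ref{lem:usg-clop} is needed. (Alternatively one can bypass the lemma using Haar measure — an $m$-generic set has outer measure $\ge 1/m$ while $\mu^*(A\cap tA^c)\le\mu^*(A\,\symdif\,tA)\to 0$ as $t\to e$ — at the cost of some care with the nowhere-dense part of $A$.)
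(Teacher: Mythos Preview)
Your setup is correct and matches the paper's: reduce to $e\in\bd(A)$ (equivalently your $a_0$), fix the USG bound $m$ for the term $X_1\wedge X_2^c$, and seek a small $t$ near $e$ with $A\cap tA^c$ non-empty but not $m$-generic. You have also correctly identified Lemma~\ref{lem:usg-clop} as the engine. The genuine gap is the step you flag yourself: a \emph{single} application of Lemma~\ref{lem:usg-clop} does not show that $A\cap tA^c$ fails to be $m$-generic. The monochromatic block $hW'\subseteq\int(A)$ (or $\subseteq\int(A^c)$) you produce inside $gW$ avoids one of $\int(A)$, $\int(A^c)$, but you need a block avoiding $\int(A)\cap t\int(A^c)$ \emph{after} an additional shift by $t$, and you need this simultaneously for $m$ different translates $g_1,\ldots,g_m$; nothing in a single invocation of the lemma gives you that uniformity in $t$ and in the $g_i$. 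The paper closes the gap by iterating Lemma~\ref{lem:usg-clop} $N$ times: it builds a descending chain $W_0=G\supseteq W_1\supseteq\ldots\supseteq W_N$ of non-empty open sets together with shrinking neighbourhoods $Z_n\ni e$, maintaining the invariant that for every $z\in Z_n$ and every choice of $g_1,\ldots,g_n$ some left translate of $W_n$ misses $\bigcup_{i\le n} g_i(A\setminus zA)$. Each step adds one more translate $g_{n+1}$ and uses the lemma to carve a new monochromatic block inside the previous one, with the $Z$-room guaranteeing the block still works after the small shift $z$. After $N$ steps, $A\setminus zA$ is never $N$-generic for $z\in Z_N$, and one then picks such a $z$ making $A\setminus zA$ non-empty exactly as you do.

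Your Haar-measure alternative also has a real obstruction, not merely ``some care'': a nowhere dense set can have positive Haar measure, so from $A=U\symdif M$ with $M\in\cNWD$ you cannot conclude $\mu^*(A\symdif tA)\to 0$. Passing to $\cl(\int(A\cap tA^c))$ (which is indeed $m$-generic when $A\cap tA^c$ is) does not help either: its measure is $\mu(\int(A\cap tA^c))+\mu(\bd(\int(A\cap tA^c)))$, and while the first summand tends to $0$ (it sits inside $\int(A)\symdif t\int(A)$), the boundary term is bounded only by $2\mu(\bd A)$, which need not be small. So the measure route does not work without an extra hypothesis such as $\mu(\bd A)=0$.
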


\begin{proof} Assume for contradiction that $A$ is not clopen, so that without loss of generality $e \in \bd A$. Take $N \in \NN$ such that $A \setminus gA$ is always either empty or $N$-generic. 

We will inductively construct non-empty open sets $W_0, \ldots, W_N \subseteq G$ and neighbourhoods of identity $Z_0, \ldots, Z_N \subseteq G$, such that for $0 \le n \le N$ and any $z \in Z_n$ and $g_1, \ldots, g_n \in G$, some left translation of $W_n$ is disjoint from $g_1 (A \setminus zA) \cup \ldots \cup g_n (A \setminus zA)$.

Let $W_0 = Z_0 = G$. Fix $n < N$ and assume that $W_i, Z_i$ have already been defined for $i \le n$. By Lemma~\ref{lem:usg-clop}, we can find open neighbourhoods of identity $W_{n+1} \subseteq G$ and $Z_{n+1} \subseteq Z_n$ such that for each $g \in G$ there is $h \in G$ such that $h W_{n+1}$ is contained in $\int(A)$ or $\int (G \setminus A)$ and $Z_{n+1} h W_{n+1} \subseteq gW_n$. Fix $z \in Z_{n+1}$ and $g_1, \ldots, g_{n+1} \in G$. By the induction hypothesis, some left translate $t W_n$, where $t \in G$, is disjoint from $g_1 (A \setminus zA) \cup \ldots \cup g_n (A \setminus zA)$. Take $h \in G$ as above corresponding to $g := g_{n+1}^{-1} t$.

If $h W_{n+1} \subseteq \int A$, then $zhW_{n+1}$ is disjoint from $A \setminus zA$, so $g_{n+1} z h W_{n+1}$ is disjoint from $g_{n+1} (A \setminus zA)$. At the same time it is a subset of $tW_n$, hence $g_{n+1} z h W_{n+1}$ is disjoint from $g_1 (A \setminus zA) \cup \ldots \cup g_{n+1} (A \setminus zA)$.

On the other hand, if $h W_{n+1} \subseteq \int(G \setminus A)$, then $h W_{n+1}$ is disjoint from $A \setminus zA$, so $g_{n+1} h W_{n+1}$ is disjoint from $g_{n+1} (A \setminus zA)$. At the same time it is a subset of $tW_n$, hence $g_{n+1} h W_{n+1}$ is disjoint from $g_1 (A \setminus zA) \cup \ldots \cup g_{n+1} (A \setminus zA)$.

This ends the construction. It follows that for any $z \in Z_N$ the set $A \setminus zA$ is not $N$-generic. Now take an open neighbourhood $Z$ of identity satisfying $ZZ^{-1} \subseteq Z_N$, choose $a \in Z \cap A, b \in Z \setminus A$ and let $z = ab^{-1} \in Z_N$. Then $a \in A \setminus zA$. Hence $A \setminus zA$ is neither empty nor $N$-generic, which contradicts the choice of $N$.
\end{proof}

\begin{corollary} \label{cor:comp-usg} If $A \in \cSBP$ is USG, then it is periodic. 
\end{corollary}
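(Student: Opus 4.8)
The plan is to read the corollary off directly from the two results immediately preceding it. The proposition just proved states that any $A \in \cSBP$ which is USG is clopen, and the Fact recorded above states that every clopen subset of the compact group $G$ is periodic. Chaining these, if $A \in \cSBP$ is USG then $A$ is clopen, hence periodic, which is exactly the claim. No further computation is needed.

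All the substance therefore lies in the proposition that $\cSBP \cap (\text{USG})$ forces clopenness, and I would leave that argument as it stands: one assumes $e \in \bd(A)$ towards a contradiction, fixes the uniform bound $N$ for which $A \setminus gA$ is empty or $N$-generic, and inductively builds nested non-empty open sets $W_0 \supseteq \cdots \supseteq W_N$ and identity neighbourhoods $Z_0 \supseteq \cdots \supseteq Z_N$ so that for $z \in Z_n$ and all $g_1,\dots,g_n$ some left translate of $W_n$ avoids $\bigcup_i g_i(A \setminus zA)$. Each inductive step invokes Lemma~\ref{lem:usg-clop} applied to the open sets $\int(A)$ and $\int(G \setminus A)$, whose union is dense precisely because $A$ has SBP; compactness of $G$ is what makes that lemma available. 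Finally, choosing a small identity neighbourhood $Z$ with $ZZ^{-1} \subseteq Z_N$ and points $a \in Z \cap A$, $b \in Z \setminus A$, the element $z = ab^{-1} \in Z_N$ makes $A \setminus zA$ non-empty yet not $N$-generic, contradicting the choice of $N$.

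I do not expect any genuine obstacle in the corollary itself — it is a one-line consequence. If anything were to count as the ``hard part'', it is already behind us in the proposition (and in Lemma~\ref{lem:usg-clop}); the only thing worth flagging is that compactness of $G$ is used twice over, once to run that inductive construction and once, through the Fact, to turn clopenness into periodicity.
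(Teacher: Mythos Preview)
Your proposal is correct and matches the paper's approach exactly: the corollary is an immediate consequence of the preceding proposition (USG in $\cSBP$ implies clopen) together with the Fact that clopen subsets of a compact group are periodic, and the paper accordingly states it without proof. Your recap of the proposition's argument is accurate but not needed for the corollary itself.
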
 

\begin{theorem} \label{thm:comp-nusg} Assume $G$ is compact Hausdorff and infinite. Then there is a strongly generic set in $\cSBP$ that is not uniformly strongly generic.
\end{theorem}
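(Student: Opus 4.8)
The plan is to reduce the statement to producing a strongly generic member of $\cSBP$ that is \emph{not} periodic: by Corollary~\ref{cor:comp-usg} every uniformly strongly generic element of $\cSBP$ is periodic, so any such set is automatically not USG. I would then split into two cases according to whether $G$ is profinite, i.e.\ whether its identity component $G^0$ is trivial.

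Suppose first that $G^0 \ne \{e\}$. Here I argue by contradiction, the content being an Ellis-group computation. A nontrivial connected compact group is divisible, so $G^0$ is divisible; since $[G^0 : G^0 \cap H] \le [G : H]$ for every $H \le G$, this forces $G^0 \subseteq H$ for every finite-index subgroup $H$, so $\bigcap\{H \le G : [G:H] < \infty\} \supseteq G^0 \ne \{e\}$ and $G$ is not residually finite as an abstract group. On the other hand $\cSBP$ is d-closed (Proposition~\ref{prop:comp-sbp-dcl}), so $\cSBP = \cSBP^d$ contains every open subset of $G$; as $G$ is Hausdorff these separate points, which yields $N_{\cSBP} = \{e\}$, so by Theorem~\ref{thm:comp-ell} the Ellis group of $S(\cSBP)$ is isomorphic to $G$. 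If every strongly generic element of $\cSBP$ were periodic, Corollary~\ref{cor:ell-pf} would make this Ellis group profinite, hence $G$ would be abstractly isomorphic to a profinite group; but profinite groups are residually finite and $G$ is not, a contradiction. So $\cSBP$ contains a strongly generic, non-periodic set.

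Now suppose $G$ is profinite and infinite. Every open subgroup of $G$ has finite index and is hence again an infinite profinite group, so we may choose a strictly decreasing chain of clopen subgroups $G = F_0 \supsetneq F_1 \supsetneq \dots$. Example~\ref{ex:gval} then produces a set $A$ which it proves to be strongly generic with $\Per(\chi_A) = \bigcap_n F_n$ of infinite index, so $A$ is not periodic. It remains to check $A \in \cSBP$: each $F_n$ is clopen, so $A$ is open, and $\int(A) \cup \int(A^c) \supseteq G \setminus \bigcap_n F_n$, which is dense because $\bigcap_n F_n$ has Haar measure $\lim_n [G:F_n]^{-1} = 0$ and therefore empty interior. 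Again $A \in \cSBP$ is strongly generic and not periodic, hence not USG.

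The main obstacle, and the reason the proof is interesting rather than routine, is that the first case is purely existential: the machinery of Theorem~\ref{thm:comp-ell} and Corollary~\ref{cor:ell-pf} guarantees the desired set exists but gives no description of it, and for groups such as $SU(2)$ an explicit one seems hard to come by. A secondary point requiring care is turning ``the Ellis group is profinite'' into a genuine contradiction, which I do through the abstract invariant of residual finiteness rather than through any topology on the Ellis group.
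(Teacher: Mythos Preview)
Your proof is correct and follows the same overall strategy as the paper: reduce via Corollary~\ref{cor:comp-usg} to finding a strongly generic non-periodic set in $\cSBP$, use the Ellis group computations (Theorem~\ref{thm:comp-ell} and Corollary~\ref{cor:ell-pf}) to constrain $G$, and invoke Example~\ref{ex:gval} for the profinite case.

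The packaging differs in a way worth noting. The paper runs a single contradiction argument: assuming all strongly generic sets in $\cSBP$ are periodic, it argues that $G$ must be profinite \emph{as a topological group}, by claiming that the isomorphisms $G \cong \cE \cong \projlim G/N$ from Theorem~\ref{thm:comp-ell} and Corollary~\ref{cor:ell-pf} are in fact homeomorphisms for the induced topology on $\cE \subseteq S(\cSBP)$---and explicitly leaves these ``technical details'' to the reader. You instead split on whether $G^0$ is trivial and, in the connected-component case, extract the contradiction from the purely algebraic invariant of residual finiteness (divisibility of $G^0$ forces $G^0$ into every finite-index subgroup). This sidesteps the topological bookkeeping entirely and makes the argument self-contained; the paper's route, in exchange, yields the marginally stronger intermediate conclusion that $G$ is topologically profinite. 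Your verification that $A \in \cSBP$ via Haar measure is fine, though the paper's observation that $\bd A \subseteq F_\infty$ is a closed subgroup of infinite index, hence nowhere dense, is slightly quicker.
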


\begin{proof} Assume for the contrary that every strongly generic set in $\cSBP$ is uniformly strongly generic, thus periodic by Corollary~\ref{cor:comp-usg}. Let us first prove that $G$ is profinite as a topological group. We provide a sketch and leave out the technical details. Let $\cE$ denote the Ellis group of $S(\cSBP)$. By Theorem~\ref{thm:comp-ell}, the groups $\cE$ and $G$ are algebraically isomorphic, since $G$ is $T_2$ and so $N_{\cSBP} = \{ e \}$. In fact, they are isomorphic as topological groups via the same isomorphism, where the topology on $\cE$ is induced from $S(\cSBP)$. On the other hand, by Corollary~\ref{cor:ell-pf} and Proposition~\ref{prop:comp-sbp-dcl}, $\cE$ is algebraically isomorphic to an appropriate projective limit of finite groups. Again it can be checked that these are isomorphic as topological groups. Hence $G$ is profinite as a topological group.

Consider the set $A \subseteq G$ constructed in Example~\ref{ex:gval}. Clearly $A \in \cSBP$ because $\bd A \subseteq F_{\infty}$ is not generic, hence nowhere dense. It was shown that $A$ is strongly generic and not periodic. This is a contradiction.
\end{proof}

\subsection{Precompact groups}

We begin by recalling some classical notions related to topological groups. For an extended study on the subject see e.g. \cite{AT08}.

A topological group $G$ is defined to be:
\begin{itemize}
\item \emph{totally bounded} (or \emph{precompact}), if every open neighbourhood $U$ of identity is generic;
\item \emph{(Ra\u{\i}kov) complete}, if every Cauchy filter on $G$ converges.
\end{itemize}
Moreover, it is known to be
\begin{itemize}
\item compact if and only if it is totally bounded and complete;
\item totally bounded if and only if it is a dense subgroup of a compact topological group.
\end{itemize} 

We aim to show that completeness does not essentially contribute to most of the results about compact groups from the previous subsection. Hence in this subsection we assume that $G$ is just totally bounded, so it is a dense subgroup of a compact topological group $H$. Let $\cSBP(G)$ and $\cSBP(H)$ denote the algebras of sets having SBP in $G$ and $H$, respectively.

Let us first see that the other path of generalization, that is, the complete groups, does not seem promising in terms of characterizing the Ellis group of non-trivial algebras.

\begin{proposition} \label{prop:sbp-ndcl} Consider the complete topological group $(\RR, +)$. There exists $A \subseteq \RR$ with SBP such that $\< A \>^d = \cP(\RR)$, where $\< A \>$ denotes the $\RR$-algebra generated by $\{ A \}$. In particular, $\cSBP(\RR)$ is not d-closed.
\end{proposition}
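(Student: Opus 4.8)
The first move is to translate the statement into a question about the orbit closure of $\chi_A$ in $2^\RR$. Apply Fact~\ref{fact:dcl-gen} with $\cA = \< A \>$: since each $d_q$ is a $\RR$-algebra homomorphism, $d_q[\< A \>]$ is the $\RR$-algebra generated by $d_q(A)$, so $\< A \>^d$ is the Boolean algebra generated by $\{\, t + d_q(A) : t \in \RR,\ q \in \beta\RR \,\}$. By Remark~\ref{rem:dop} the set $\{\, d_q(A) : q \in \beta\RR \,\}$ depends only on $q\cap\<A\>$, and by Remark~\ref{rem:dlim}(iv) it equals $\cl_{2^\RR}\{\, A - g : g \in \RR \,\}$, i.e. the subflow $\mathcal O \subseteq 2^\RR$ generated by $\chi_A$. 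Hence the task reduces to: \emph{produce a set $A\subseteq\RR$ with SBP whose orbit closure $\mathcal O$ generates $\mathcal P(\RR)$ as a $\RR$-algebra}. A set $A$ with $\mathcal O = 2^\RR$ (dense orbit) would finish instantly, but this is impossible: if $A$ has SBP then $\int(A)\cup\int(A^c)$ is dense, so on a dense open set $A$ is locally constant, and a short interval/Baire-category argument (using a translate of a fine arithmetic progression whose step is smaller than the scale at which $A$ is locally constant where it matters) shows that no translate of $A$ lies in the basic clopen set of $2^\RR$ prescribing a sufficiently fine alternating pattern. So $\mathcal O$ is always a \emph{proper} subflow, and the construction must arrange that a proper subflow nonetheless generates $\mathcal P(\RR)$ — note this is exactly why $\cSBP(\RR)$ failing to be $d$-closed is subtle and not merely generic.

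The construction concentrates the complexity of $A$ on a nowhere dense Cantor set $C_0\subseteq\RR$. Take $A$ to be a union $\bigcup_{i\in Z}J_i$ of some of the complementary gaps $\{J_i\}$ of $C_0$; then $A$ is open, so $\bd(A)\subseteq C_0$ is nowhere dense and $A\in\cSBP(\RR)$ automatically, and this holds no matter which gaps we throw in. Because the gaps of $C_0$ accumulate to every point of $C_0$, the limit sets $d_q(A)$ for ultrafilters $q\in\beta\RR$ that are ``supported near $C_0$'' (every open neighbourhood of $C_0$ lies in $q$) are governed by the purely combinatorial datum of which gaps belong to $Z$ and how they cluster; by choosing $Z$ we can make these limits wild. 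Crucially, the set of such ultrafilters is large: the gap-endpoints form a countable set whose closure contains $C_0$, so there are $2^{2^{\aleph_0}}$ ultrafilters of this type — exactly enough ``read-off freedom'' to match the $2^{2^{\aleph_0}}$ subsets of $\RR$. The plan is then a transfinite recursion over an enumeration $\{\, S_\xi : \xi < 2^{2^{\aleph_0}} \,\}$ of $\mathcal P(\RR)$: at stage $\xi$ we extend the decision about $Z$ on a portion of the gap-structure clustering near a fresh region of $C_0$ and fix an ultrafilter $q_\xi$ so that $d_{q_\xi}(A)$ equals $S_\xi$ up to a set already placed in $\< A \>^d$; since $\< A \>^d$ is a $\RR$-algebra this puts $S_\xi\in\< A \>^d$, and after all stages $\< A \>^d=\mathcal P(\RR)$.

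The main obstacle is the bookkeeping of this recursion. Unlike a transitive-point construction in $2^\RR$ — which fails here because SBP forbids dense orbit — we have only one countable family of gaps, arranged along a nowhere dense skeleton, to encode $2^{2^{\aleph_0}}$ targets; the art is to lay out the coding so that the ``neighbourhoods of $C_0$'' used at different stages (and the ultrafilters reading them) are independent enough to avoid conflicts, while never spoiling the nowhere-density of $\bd(A)\subseteq C_0$. Making this simultaneous assignment precise — identifying for each $\xi$ an honest $q_\xi\in\beta\RR$ and verifying $d_{q_\xi}(A)$ comes out as required, given the decisions about $Z$ frozen by earlier stages — is the heart of the argument; the rest (the reduction above, and the ``in particular'' clause, which follows since $\<A\>\le\cSBP(\RR)$ yet $\<A\>^d=\mathcal P(\RR)\not\subseteq\cSBP(\RR)$) is routine.
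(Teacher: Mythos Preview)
Your reduction to the orbit closure $\mathcal O = \cl\{A - r : r \in \RR\} \subseteq 2^\RR$ is correct, but your central claim---that an SBP set cannot satisfy $\mathcal O = 2^\RR$---is false, and the rest of the plan is built on this error. The paper does precisely what you ruled out: it constructs an \emph{open} set $A$ (hence trivially SBP) with dense orbit. Enumerate the countable family of finite unions of bounded rational open intervals as $(R_n)$, spread out integer translates $k_n + R_n$ along $\RR$ so that they are pairwise order-separated, and set $A = \bigcup_n (k_n + R_n)$. Given finite disjoint $U, V \subseteq \RR$ inside some $[a,b]$, choose $R_n$ containing $U$, avoiding $V$, and also containing rational intervals on both sides of $[a,b]$; then $(A - k_n) \cap [a,b] = R_n \cap [a,b]$, so the translate $A - k_n$ realizes the prescribed finite $0/1$ pattern. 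By Remark~\ref{rem:dlim}(iv) every $B \subseteq \RR$ is then some $d_q(A)$, and $\<A\>^d = \cP(\RR)$ follows at once.

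Your impossibility heuristic fails because there is no uniform positive ``scale at which $A$ is locally constant'': the connected components of $\int(A) \cup \int(A^c)$ may be arbitrarily short (as they are for the $A$ above, since the $R_n$ range over all finite unions of rational intervals). Hence for any finite alternating pattern $\eta$, however fine, some region of $A$---namely an appropriate $k_n + R_n$---realizes it after translation. The Cantor-set transfinite recursion you propose is therefore unnecessary; and in any case you never carry it out, leaving the independence of the coding regions and the existence of the ultrafilters $q_\xi$ as unverified assertions, so the proposal as written is not a proof.
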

\begin{proof}
Let $\mathcal{R}$ be the family of finite unions of bounded open intervals with rational endpoints. Since $\mathcal{R}$ is countable, we can write $\mathcal{R} = \{ R_n : n \in \NN \}$. Pick a sequence $(k_n)$ of integers such that $k_n + R_n < k_{n+1} + R_{n+1}$ for $n \in \NN$, where by $X < Y$ we mean $(\forall x \in X)(\forall y \in Y) \, x < y$. Let 
\[
A = \bigcup_{n=1}^{\infty} (k_n + R_n).
\]
Then $A$ is open and for each finite disjoint $U, V \subseteq \RR$ there is $r \in \RR$ such that $U \subseteq A-r \subseteq \RR \setminus V$. Indeed, take any such $U, V$ and take a closed interval $[a, b]$ such that $U \cup V \subseteq [a, b]$. Then there is $R_n \in \mathcal{R}$ satisfying $U \subseteq R_n \subseteq \RR \setminus V$ and containing some open intervals $I, J$ such that $I < [a, b] < J$. It follows that $(A - k_n) \cap [a, b] = R_n$, so $U \subseteq A - k_n \subseteq \RR \setminus V$, as required.

Therefore each subset $B \subseteq \RR$ is in the pointwise closure of $\{ A-r : r \in \RR \}$, so by Remark~\ref{rem:dlim} (iv), it can be written as $B = d_q A$ for some $q \in S( \< A \>)$. Hence $\< A \>^d = \cP(\RR)$.
\end{proof}

\begin{remark} \label{rem:pre-her} Assume $M, A \subseteq G$.
\begin{enumerate}[label=(\roman{*})]
\item $M \subseteq G$ is nowhere dense if and only if $M = M' \cap G$ for some nowhere dense $M' \subseteq H$.
\item $A \in \cSBP(G)$ if and only if $A = A' \cap G$ for some $A' \in \cSBP(H)$. \noproof
\end{enumerate} 
\end{remark}

\begin{proposition} $\cSBP(G)$ is d-closed.
\end{proposition}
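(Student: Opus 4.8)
The plan is to reduce the statement for $G$ to the already-known statement for the compact completion $H$, using Remark~\ref{rem:pre-her} to transport SBP and nowhere-dense sets back and forth between $G$ and $H$. Concretely, suppose $A \in \cSBP(G)$ and $q \in \beta G$; I want to show $d_q^G(A) \in \cSBP(G)$, where I write $d_q^G$ for the operation computed inside $\cP(G)$ to distinguish it from the analogous operation $d_r^H$ inside $\cP(H)$. By Remark~\ref{rem:pre-her}(ii), pick $A' \in \cSBP(H)$ with $A = A' \cap G$. The first step is to lift the ultrafilter: let $\iota : \cP(G) \to \cP(H)$ be a map sending each $B \subseteq G$ to some $B' \subseteq H$ with $B' \cap G = B$ — but this is not canonical, so instead I would work with the restriction dual map. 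Since $G$ is dense in $H$, the inclusion $G \hookrightarrow H$ induces, via $\beta G = S(\cP(G))$ and the natural $G$-algebra homomorphism $\cP(H) \supseteq \cCO(\ldots)$, a way to regard $q$ as (the image of) an ultrafilter on a subalgebra of $\cP(H)$; more simply, extend $q$ to an ultrafilter $\tilde q \in S(\cP(H))$ restricting to $q$ on the image of $\cP(G)$ under a fixed section.

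The cleaner route avoids sections entirely and uses Remark~\ref{rem:dlim}(iv): $d_q^G(A)$ lies in the pointwise closure (inside $\cP(G) \subseteq 2^G$) of $\{ A g^{-1} : g \in G \}$. So it suffices to show that $\cSBP(G)$ is closed under such pointwise limits, i.e.\ if $B \subseteq G$ is in the $2^G$-closure of $\{ A g^{-1} : g \in G \}$ with $A \in \cSBP(G)$, then $B \in \cSBP(G)$. Now choose $A' \in \cSBP(H)$ with $A' \cap G = A$. For each $g \in G$ (which is also an element of $H$), the set $A' g^{-1} \in \cSBP(H)$ restricts to $A g^{-1}$ on $G$. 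Since $H$ is compact, $\cSBP(H)$ is $d$-closed by Proposition~\ref{prop:comp-sbp-dcl}, hence closed under pointwise limits of right translates by Remark~\ref{rem:dlim}(iv). The net $\{ A' g^{-1} : g \in G \}$ need not converge in $2^H$, but by compactness of $2^H$ it has a subnet converging to some $B' \in 2^H$, and along a matching subnet $A g^{-1} \to B$ in $2^G$; restricting the convergence to $G$ gives $B' \cap G = B$ (here density of $G$ in $H$ is what ensures that pointwise convergence over $H$ is detected on $G$ for the relevant coordinates — actually convergence at each point of $G$ is literally a subset of convergence at each point of $H$). Since $B'$ is a pointwise limit of right $H$-translates of $A' \in \cSBP(H)$, Remark~\ref{rem:dlim}(iv) together with $d$-closedness of $\cSBP(H)$ gives $B' \in \cSBP(H)$, and then Remark~\ref{rem:pre-her}(ii) yields $B = B' \cap G \in \cSBP(G)$, as desired.

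The main obstacle is the bookkeeping around nets and subnets: the family $\{ A g^{-1} : g \in G \}$ ranges over $g \in G$, not $g \in H$, so I must make sure that (a) its closure in $2^H$ still consists of elements of $\cSBP(H)$ — this follows since $\{ A' g^{-1} : g \in G \} \subseteq \{ A' g^{-1} : g \in H \}$ and the latter's closure is in $\cSBP(H)$ by $d$-closedness in $H$; and (b) passing from a convergent net in $2^G$ to one whose lifts converge in $2^H$ can be done compatibly, which is handled by taking a subnet that converges in the compact space $2^G \times 2^H$ along the map $g \mapsto (A g^{-1}, A' g^{-1})$, whose two coordinates then have limits $B$ and $B'$ with $B = B' \cap G$ automatically. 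Once this is set up, the argument is short; alternatively, one may phrase it via the restriction homomorphism $\cP(H) \to \cP(G)$, $C \mapsto C \cap G$, whose dual $S(\cP(G)) \to S(\cP(H))$ is a section-free way to lift $q$ to $\tilde q \in \beta H$, and then invoke Fact~\ref{fact:dual} to identify $d_q^G(A)$ with $d_{\tilde q}^H(A') \cap G$ directly, giving the result immediately from $d$-closedness of $\cSBP(H)$.
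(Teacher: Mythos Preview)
Your proposal is correct, and the clean alternative you sketch in the final sentence — lifting $q$ to $\tilde q \in \beta H$ via the Stone dual of the restriction homomorphism $\cP(H) \to \cP(G)$, $C \mapsto C \cap G$, and then reading off $d_q(A) = d_{\tilde q}(A') \cap G$ — is exactly the paper's proof (the paper phrases $\tilde q$ as ``the unique extension $q' \in \beta H$ of $q$'' and verifies that identity by hand rather than citing Fact~\ref{fact:dual}). Your longer net/subnet argument is also valid but is a detour: the subnet bookkeeping in $2^G \times 2^H$ merely reconstructs, pointwise, the ultrafilter lift that the dual map hands you for free.
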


\begin{proof} Fix any $A \in \cSBP(G)$ and $q \in \beta G$. Take $A' \in \cSBP(H)$ satisfying $A = A' \cap G$ and the unique extension $q' \in \beta H$ of $q$. By Proposition~\ref{prop:comp-sbp-dcl}, we have that $d_{q'}(A') \in \cSBP(H)$, so $d_q A = d_{q'}(A') \cap G \in \cSBP(G)$.
\end{proof}

Take any $\cA \le \cSBP(G)$. Yet again we wish to describe the Ellis group of $S(\cA)$. The outline of the proof is essentially the same as that of Theorem~\ref{thm:comp-ell} and only some technical details are adjusted to work in the new setting. For $D \in \cSBP(H)$, let $\gamma(D) = D \cap G$, so $\gamma : \cSBP(H) \to \cSBP(G)$ is a $G$-algebra epimomorphism. Denote by $\cC$ the smallest Boolean algebra of subsets of $H$ containing $\gamma^{-1}[\cA^d]$ and closed under left and right translation by elements of $H$. For $x, y \in H$, let
\[
x \sim_{\cA} y \qquad \text{if} \qquad (\forall C \in \cC) ( x \in \varrho(C) \iff y \in \varrho(C) )
\]
and $N_{\cA} := [e]_{\sim_{\cA}}$. Clearly $\sim_{\cA}$ is an equivalence relation on $H$ closed under left and right translation, so $N_{\cA}$ is a normal subgroup of $H$ and $H/{\sim_{\cA}} = H/N_{\cA}$.

\begin{theorem} \label{thm:pre-ell} For any $\cA \le \cSBP(G)$, the Ellis group of $S(\cA)$ is isomorphic to $H/N_{\cA}$. 
\end{theorem}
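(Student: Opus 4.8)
The plan is to mimic the proof of Theorem~\ref{thm:comp-ell} almost verbatim, carrying the computation out in the \emph{compact} group $H$ rather than in $G$ itself, and exploiting throughout that $G$ is dense in $H$.

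First I reduce to the case that $\cA$ is $d$-closed: by Theorem~\ref{thm:iso} the Ellis group of $S(\cA)$ agrees with that of $S(\cA^d)$, and $\cA^d \le \cSBP(G)$ since the latter is $d$-closed. Exactly as in the opening of the proofs of Theorems~\ref{thm:pf-ell} and~\ref{thm:comp-ell}, total boundedness of $G$ (a non-empty open set is a translate of a neighbourhood of $e$, hence generic) together with SBP gives, for every $A \in \cA^d$, the chain of equivalences ``$A$ nowhere dense $\iff$ $A$ non-generic $\iff$ $A$ has empty interior''. Thus the non-generic sets form a proper $G$-ideal of $\cA^d$, so $S(\cA^d)$ has a generic point and, by Fact~\ref{fact:gen-min}, a unique minimal ideal $I \trianglelefteqslant_m S(\cA^d)$, consisting precisely of the ultrafilters that omit all nowhere dense sets.

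Next I transfer to $H$. Because $G$ is dense in $H$, the kernel of $\gamma\colon \cSBP(H) \to \cSBP(G)$, $\gamma(D) = D \cap G$, consists of subsets of $H$ with empty interior, hence (being SBP) of nowhere dense sets; combined with Remark~\ref{rem:pre-her} this shows that each $q \in I$ extends to an ultrafilter $\tilde q$ on $\cSBP(H)$ which omits $\cNWD(H)$ and satisfies $D \in \tilde q \iff D \cap G \in q$ whenever $D \cap G \in \cA^d$. Define $\pi\colon I \to H/N_{\cA}$ by
\[
\pi(q) \;=\; \bigl\{\, x \in H \;:\; \forall\, C \in \cC \ \bigl(x \in \varrho(C) \implies C \in \tilde q \bigr) \,\bigr\}.
\]
Then one repeats, line by line, the four verifications from the proof of Theorem~\ref{thm:comp-ell}, with $H$ in place of $G$ and $\cC$ in place of $\cA^s$: compactness of $H$ shows $\pi(q) \neq \varnothing$; Lemma~\ref{lem:comp-sep}, applied inside $\cSBP(H)$ and using that $\cC$ is closed under $H$-translation, shows that $\pi(q)$ is a single $\sim_{\cA}$-class, so $\pi(q) \in H/N_{\cA}$; choosing open $U \ni a$, $V \ni b$ with $U\cdot V \subseteq \varrho(W)$ shows $\pi(p)\pi(q) = \pi(p\ast q)$; extending the generic filter $\{\, C \in \cC : a \in \varrho(C)\,\}$ to an ultrafilter in $I$ shows $\pi$ is onto; and the argument ``$q \ast q \ne q$ yields a non-empty open subset of $C \setminus d_q C$, contradicting that $q$ omits nowhere dense sets'' shows $\pi^{-1}[\{N_{\cA}\}] \subseteq J(I)$. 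Lemma~\ref{lem:ell-epi} then identifies the Ellis group of $S(\cA)$ with $H/N_{\cA}$.

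The step that is genuinely new, and which I expect to be the main obstacle, is making the map $\pi$ legitimate: $\cC$ contains translates $hD$ with $h \in H \setminus G$ whose membership in $\tilde q$ is not forced by $q$, so a priori $\pi(q)$ might depend on the chosen extension $\tilde q$, and the homomorphism computation could break. The key fact to prove is that the separating sets needed in the four verifications may always be taken ``over $G$'' — equivalently, that $\sim_{\cA}$ and the membership assertions ``$x \in \varrho(C)$'' occurring in the argument are already decided by $q$ alone. This should follow from Lemma~\ref{lem:dcl-nwd} applied to $\cC \le \cSBP(H)$, Corollary~\ref{cor:dcl-rt}, density of $G$ in $H$, and the regular-open calculus (every $C \in \cC$ has $\varrho(C)$ a regular-open Boolean combination of $H$-translates of the sets $\varrho(D)$, $D \in \gamma^{-1}[\cA^d]$), showing that such a translate cannot separate two points of $H$ unless some $D \in \gamma^{-1}[\cA^d]$ already does. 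Once this reduction is in place, the rest is a routine transcription of the compact-group argument.
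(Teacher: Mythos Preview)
Your outline is essentially correct and follows the paper's strategy, but the packaging differs in one important respect. Rather than choosing an extension $\tilde q$ of $q$ to $\cSBP(H)$ and then arguing that nothing depends on the choice, the paper defines $\pi$ directly over the smaller family $\gamma^{-1}[\cA^d]$:
\[
\pi(q)\;=\;\bigl\{\,x\in H:\ (\forall B\in\gamma^{-1}[\cA^d])\ \bigl(x\in\varrho(B)\Rightarrow\gamma(B)\in q\bigr)\bigr\},
\]
which visibly depends only on $q$. What you identify as the ``key fact'' --- that separating two $\sim_{\cA}$-inequivalent points of $H$ can already be done by some $B\in\gamma^{-1}[\cA^d]$ --- is isolated by the paper as Lemma~\ref{lem:pre-sep}. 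Its proof is not via Lemma~\ref{lem:dcl-nwd} as you suggest, but by a direct decomposition: write $C\in\cC$ as an intersection of finite unions of $H$-translates of elements of $\gamma^{-1}[\cA^d]$, peel off the $\cap$ and $\cup$ using the identities $\varrho(\bigcap R_i)=\bigcap\varrho(R_i)$ and Lemma~\ref{lem:comp-sep}, and finally replace an $H$-translate $h_\ell B h_r$ by a $G$-translate $g_\ell B g_r$ using density of $G$ in $H$ and openness of $\varrho(B)$.

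Two further points where ``routine transcription'' hides work. In the homomorphism step, after obtaining $U\cap G\subseteq d_q\gamma(W)$ the paper does not directly conclude $a\in\varrho(d_q\gamma(W))$, since $d_q\gamma(W)$ is a subset of $G$, not $H$; one must first write $d_q\gamma(W)=\gamma(D)$ for some $D\in\cSBP(H)$ (via Remark~\ref{rem:pre-her}), then use density of $G$ to upgrade $U\cap G\subseteq D\cap G$ to $U\subseteq\varrho(D)$. In the idempotent step the same manoeuvre is needed: from a non-empty $U'\subseteq B'\setminus d_qB'$ open in $G$, write $U'=\gamma(U)$, $B'=\gamma(B)$ with $U$ open in $H$, and use $\gamma(U)\subseteq\gamma(B)$ together with density to get $U\subseteq\varrho(B)$. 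These are exactly the places where the interplay between $G$ and $H$ is non-trivial, and they deserve explicit mention in your write-up.
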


The following lemma will be used:

\begin{lemma} \label{lem:pre-sep} Assume $a, b \in H$ and $a \not \sim_{\cA} b$. Then there is $B \in \gamma^{-1}[\cA^d]$ such that $a \in \varrho(B)$ and $b \in \varrho(H \setminus B)$.
\end{lemma}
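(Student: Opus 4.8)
The plan is to transfer the problem into $\bRO(H)$ via the operation $\varrho$ and then to ``descend'' from $\cC$ down to $\gamma^{-1}[\cA^d]$ by a sequence of small perturbations, exploiting that $G$ is dense in $H$. Two structural facts are used throughout. First, $\gamma^{-1}[\cA^d]$ is a Boolean subalgebra of $\cSBP(H)$ closed under both left and right translation by elements of $G$: since $\gamma$ is a $G$-algebra homomorphism and $\cA^d$, being $d$-closed, is closed under right $G$-translation by Corollary~\ref{cor:dcl-rt}, we have $\gamma(gDg')=g\,\gamma(D)\,g'\in\cA^d$ for $g,g'\in G$. Hence $\cC$ is the Boolean algebra generated by the bi-translation-closed family $\{\,hDh' : D\in\gamma^{-1}[\cA^d],\ h,h'\in H\,\}$. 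Second, $\varrho$ is a surjective homomorphism onto the Boolean algebra $\bRO(H)$ which commutes with left and right translations and is unchanged by altering its argument by a nowhere dense set (Fact~\ref{fact:reg}); here $\bRO(H)$ carries its own Boolean operations, so $\varrho(H\setminus X)$ is the complement of $\varrho(X)$ in $\bRO(H)$ and $\varrho$ sends finite intersections to intersections. Thus $\varrho[\cC]$ is the Boolean subalgebra of $\bRO(H)$ generated by the translation- and complement-closed family $\{\,h\,\varrho(E)\,h' : E\in\gamma^{-1}[\cA^d],\ h,h'\in H\,\}$, and the conclusion we want is precisely: there is $B\in\gamma^{-1}[\cA^d]$ with $a\in\varrho(B)$ and $b$ in the $\bRO$-complement of $\varrho(B)$.

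Since $a\not\sim_{\cA}b$, fix $C\in\cC$ with exactly one of $a,b$ in $\varrho(C)$; interchanging $a$ and $b$ if necessary (and replacing the $B$ produced below by $H\setminus B$ at the end), assume $a\in\varrho(C)$ and $b\notin\varrho(C)$. As $H$ is compact, Lemma~\ref{lem:comp-sep} applies to it, so after replacing $C$ by a suitable $H$-translate (still in $\cC$) we may assume the \emph{strong} separation $a\in\varrho(C)$ and $b\in\varrho(H\setminus C)$, both points now lying inside open sets. The elementary move I iterate is: if $P\subseteq H$ is open and contains a point $p$, and $Q\subseteq H$ is open with a point $q\in\cl Q$, then, $G$ being dense in $H$, there is $g\in G$ with $gp\in P$ and $gq\in Q$ (the two conditions on $g$ define an open neighbourhood of $e$ and an open set with $e$ in its closure, hence meet in a nonempty open set).

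Write $\varrho(C)=\bigvee_i\bigcap_j W_{ij}$ in disjunctive normal form over generators $W_{ij}=h_{ij}\,\varrho(E_{ij})\,h_{ij}'$ with $E_{ij}\in\gamma^{-1}[\cA^d]$ (using complement-closedness of the family), and set $V_i:=\bigcap_j W_{ij}$, each $V_i$ regular open, in particular open. First descent (outer join): $\bigcup_i V_i$ is an open set dense in $\varrho(C)=\bigvee_i V_i$, and $b$ lies in the open set $\varrho(H\setminus C)$; applying the move with $p=b$, $q=a$ gives $g\in G$ with $gb\in\varrho(H\setminus C)$ and $ga\in V_{i_0}$ for some $i_0$, whence $gb$ lies in the $\bRO$-complement of $V_{i_0}$ (which contains that of $\varrho(C)$); replace $a,b$ by $ga,gb$. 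Second descent (inner intersection): now $a$ lies in the open set $V_{i_0}=\bigcap_j W_{i_0j}$, while $\bigcup_j(H\setminus\cl W_{i_0j})$ is an open set dense in the $\bRO$-complement of $V_{i_0}$, which contains $b$; applying the move with $p=a$, $q=b$ gives $g'\in G$ with $g'a\in W_{i_0j_0}$ and $g'b\in H\setminus\cl W_{i_0j_0}$ for some $j_0$; replace $a,b$ by $g'a,g'b$. Both points are now separated by a single generator $W_{i_0j_0}=\varrho(hEh')$ with $E\in\gamma^{-1}[\cA^d]$, $h,h'\in H$, namely $a\in\varrho(hEh')$ and $b\in\varrho(H\setminus hEh')$, both open conditions. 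A last perturbation — of the pair $(h,h')$ in $H\times H$, where $G\times G$ is dense, keeping the open conditions $a\in\varrho(g_1Eg_2)$ and $b\in\varrho(H\setminus g_1Eg_2)$ — yields $g_1,g_2\in G$, so that $B_0:=g_1Eg_2\in\gamma^{-1}[\cA^d]$ satisfies $a\in\varrho(B_0)$ and $b\in\varrho(H\setminus B_0)$. Undoing the accumulated left $G$-translations, whose composition is some $g''\in G$, the set $B:=(g'')^{-1}B_0\in\gamma^{-1}[\cA^d]$ (or $H\setminus B$ in the interchanged case) is as required.

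The crux is the bookkeeping of ``robustness'': at every stage exactly one of the two conditions must be a genuine open neighbourhood of $e$ and the other only an open set with $e$ in its closure, so that the robust one can absorb the fragile one. This forces us to invoke Lemma~\ref{lem:comp-sep} first (to make both points interior) and to check that each descent step puts the point whose condition had become fragile back inside an open set. Everything else is routine manipulation of $\varrho$ and of $G$-translates of elements of $\gamma^{-1}[\cA^d]$.
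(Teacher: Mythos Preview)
Your proof is correct and follows essentially the same route as the paper's: reduce a separator in $\cC$ through two levels of Boolean structure down to a single generator $h\varrho(E)h'$ with $E\in\gamma^{-1}[\cA^d]$, then approximate the $H$-translations by $G$-translations. The differences are cosmetic—you use disjunctive normal form and translate the points $a,b$ by elements of $G$ (undoing these at the end), whereas the paper uses conjunctive normal form and translates the sets, invoking Lemma~\ref{lem:comp-sep} twice in the interior of the argument rather than once at the start; your ``move'' is the same observation that underlies that lemma.
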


\begin{proof} Without loss of generality there is $C \in \cC$ such that $a \in \varrho(C)$ and $b \notin \varrho(C)$. Let
\begin{align*}
\mathcal{Q} & = \{ h_{\ell} \cdot B \cdot h_r : h_{\ell}, h_r \in H, B \in \gamma^{-1}[\cA^d]  \}, \\
\mathcal{R} & = \{ Q_1 \cup \ldots \cup Q_n : Q_1, \ldots, Q_n \in \mathcal{Q} \},
\end{align*}
so that clearly
\[
\cC = \{ R_1 \cap \ldots \cap R_m : R_1, \ldots, R_m \in \mathcal{R} \}.
\]
Thus we can write $C = R_1 \cap \ldots \cap R_m$ for some $R_1, \ldots, R_m \in \mathcal{R}$. Given the identity $\varrho(C) = \varrho(R_1) \cap \ldots \cap \varrho(R_m)$, we have that
\begin{align*}
a \in \varrho(R_1) \cap \ldots \cap \varrho(R_m) \qquad \text{and} \qquad b \notin \varrho(R_1) \cap \ldots \varrho(R_m),
\end{align*}
so $a \in \varrho(R_i)$ and $b \notin \varrho(R_i)$ for some $i \in \{ 1, \ldots, m \}$. By Lemma~\ref{lem:comp-sep}, there is $R \in \mathcal{R}$ such that $a \in \varrho(R)$ and $b \in \varrho(H \setminus R)$ because $\mathcal{R}$ is closed under left translation.

Write $R = Q_1 \cup \ldots \cup Q_n$, where $Q_1, \ldots, Q_n \in \mathcal{Q}$. Then 
\[
b \in \varrho \left( \bigcap_{j=1}^n H \setminus Q_j \right) = \bigcap_{j=1}^n \varrho(H \setminus Q_j).
\]
On the other hand, $a \notin \varrho(H \setminus Q_j)$ for some $j \in \{ 1, \ldots, n \}$, as otherwise 
\[
a \in \varrho(R) \cap \bigcap_{j=1}^n \varrho(H \setminus Q_j) = \varrho(\varnothing) = \varnothing.
\]
Consequently, $b \in \varrho(H \setminus Q_j)$ and $a \notin \varrho(H \setminus Q_j)$ for some $j \in \{ 1, \ldots, n \}$. Since $\mathcal{Q}$ is closed under left translation, by Lemma~\ref{lem:comp-sep}, we can find $Q \in \mathcal{Q}$ such that $a \in \varrho(Q)$ and $b \in \varrho(H \setminus Q)$.

Finally, let $Q = h_{\ell} \cdot B \cdot h_r$ for some $h_{\ell}, h_r \in H$ and $B \in \gamma^{-1}[\cA^d]$. Then 
\begin{align*}
h_{\ell}^{-1} a h_r^{-1} \in \varrho(B) \qquad \text{and} \qquad h_{\ell}^{-1} b h_r^{-1} \in \varrho(H \setminus B).
\end{align*}
Since $\varrho(B)$ and $\varrho(H \setminus B)$ are open, we have that 
\begin{align*}
g_{\ell}^{-1} a g_r^{-1} \in \varrho(B) \qquad \text{and} \qquad g_{\ell}^{-1} b g_r^{-1} \in \varrho(H \setminus B),
\end{align*}
when $g_{\ell}$ is sufficiently close to $h_{\ell}$ and $g_r$ is sufficiently close to $h_r$. Thus we can find such $g_{\ell}, g_r$ in $G$ so that
\begin{align*}
a \in \varrho(g_{\ell} B g_r) \qquad \text{and} \qquad b \in \varrho(H \setminus g_{\ell} B g_r).
\end{align*}
Clearly $g_{\ell} B g_r \in \gamma^{-1}[\cA^d]$ by Corollary~\ref{cor:dcl-rt}.
\end{proof}

\begin{proof}[Proof of Theorem~\ref{thm:pre-ell}.] Once more we repeat the argument from the proof of Theorem~\ref{thm:pf-ell} and conclude that there is a unique minimal ideal $I \trianglelefteqslant_m S(\cA^d)$, which consists of generic points of $S(\cA^d)$, and the generic sets in $\cA^d$ are precisely those with non-empty interior. Define $\pi : I \to H/N_{\cA}$ as
\[
\pi(q) = \{ x \in H : (\forall B \in \gamma^{-1}[\cA^d]) ( x \in \varrho(B) \implies \gamma(B) \in q \}.
\]
First we show that $\pi$ is well defined. Fix $q \in I$ and note that the set $\pi(q)$ is non-empty. Indeed, if $\pi(q) = \varnothing$, then for each $a \in H$ we can find $B \in \gamma^{-1}[\cA^d]$ such that $a \in \varrho(B)$ but $\gamma(B) \notin q$. By the compactness of $H$, we can find $B_1, \ldots, B_n \in \gamma^{-1}[\cA^d]$ satisfying $\varrho(B_1) \cup \ldots \cup \varrho(B_n) = H$ and $\gamma(B) \notin q$, where $B := B_1 \cup \ldots \cup B_n$. The set $H \setminus B$ is nowhere dense in $H$, since
\[
H \setminus B = B \symdif H = \left( \bigcup_{i=1}^n B_i \right) \symdif \left( \bigcup_{i=1}^n \varrho(B_i) \right) \subseteq \bigcup_{i=1}^n \big( B_i \symdif \varrho(B_i) \big).
\]
It follows that the set $G \setminus \gamma(B) = \gamma(H \setminus B)$ is nowhere dense in $G$. But $G \setminus \gamma(B) \in q$, which contradicts the fact that $q$ is generic.

So there exists $a \in \pi(q)$. We will prove that $\pi(q) = [a]_{\sim_{\cA}}$. The right-to-left inclusion is obvious and for the other one, take $b \notin [a]_{\sim_{\cA}}$. By Lemma~\ref{lem:pre-sep}, there is $B \in \gamma^{-1}[\cA^d]$ such that $a \in \varrho(B)$ and $b \in \varrho(H \setminus B)$. Then $\gamma(B) \in q$, so $H \setminus B$ witnesses that $b \notin \pi(q)$.

By Lemma~\ref{lem:ell-epi}, the proof will be complete once we show that $\pi$ is a semigroup epimorphism and $\pi^{-1}[ \{ N_{\cA} \} ] \subseteq J(I)$. To check that it is a homomorphism, fix $p, q \in I$ and take any $a \in \pi(p), b \in \pi(q)$. It suffices to show that $ab \in \pi(p \ast q)$. Fix any $W \in \gamma^{-1}[\cA^d]$ such that $ab \in \varrho(W)$ and pick open neighbourhoods $U \subseteq H$ of $a$ and $V \subseteq H$ of $b$ satisfying $U \cdot V \subseteq \varrho(W)$. For each $x \in U \cap G$ we have that $x^{-1} W \in \gamma^{-1}[\cA^d]$ and $b \in V \subseteq \varrho(x^{-1}W)$, so $\gamma(x^{-1} W) \in q$ and therefore $x \in d_q \gamma(W)$. It follows that $U \cap G \subseteq d_q \gamma(W)$. By Remark~\ref{rem:pre-her}, there is $D \in \cSBP(H)$ such that $d_q \gamma(W) = \gamma(D)$. So we have that $U \cap G \subseteq D \cap G$, hence $\cl(U) = \cl(U \cap G) \subseteq \cl(D \cap G) \subseteq \cl(D)$ and so $U \subseteq \varrho(U) \subseteq \varrho(D)$. It follows that $a \in \varrho(D)$ and $D \in \gamma^{-1}[\cA^d]$, thus $d_q \gamma(W) = \gamma(D) \in p$, which means that $\gamma(W) \in p \ast q$.

For surjectivity, fix $K \in H/N_{\cA}$ and write $K = aN_{\cA}$, where $a \in H$. The family
\[
\{ \gamma(B) : B \in \gamma^{-1}[\cA^d] \ \& \ a \in \varrho(B) \}
\]
is closed under finite intersection and consists of generic sets. (By a slightly more elaborate argument it is in fact a filter of $\cA^d$, but we do not need it.) Thus the family extends to a generic ultrafilter $q \in S(\cA^d)$. Then $q \in I$ and $a \in \pi(q)$, hence $\pi(q) = [a]_{\sim_{\cA}} = K$.

Finally, we check that $\pi^{-1}[ \{ N_{\cA} \} ] \subseteq J(I)$. Assume for contradiction that $\pi(q) = N_{\cA}$ for some $q \in I \setminus J(I)$. Then $q \ast q \neq q$, so there is $B' \in \cA^d$ such that $B' \in q$ and $B' \notin q \ast q$, which implies $B' \setminus d_q B' \in q$. Since $q$ is generic, there is a non-empty subset $U' \subseteq B' \setminus d_q B'$ open in $G$. Pick any $g \in U'$ and write $B' = \gamma(B)$ and $U' = \gamma(U)$, where $B \in \cSBP(H)$ and $U \subseteq H$ is open. We have $\gamma(U) \subseteq \gamma(B)$ and consequently, $U \subseteq \varrho(B)$. It follows that $e \in g^{-1} U \subseteq \varrho(g^{-1} B)$ and $g^{-1} B \in \gamma^{-1}[\cA^d]$, so $g^{-1}B' = \gamma(g^{-1} B) \in q$. Thus $g \in U' \cap d_q B'$, which is a contradiction.
\end{proof}

It follows directly from Lemma~\ref{lem:pre-sep} that
\begin{remark} \label{rem:pre-sim} For every $x, y \in G$,
\[
x \sim_{\cA} y \iff (\forall B \in \gamma^{-1}[\cA^d]) ( x \in \varrho(B) \iff y \in \varrho(B) ).
\]
\end{remark}
\noindent
In the spirit of further simplification, using techniques as in the proofs of Lemmas~\ref{lem:dcl-nwd} and \ref{lem:pre-sep} it is possible to prove that in fact for $x, y \in G$,
\[
x \sim_{\cA} y \iff (\forall B \in \gamma^{-1}[\cA^s]) ( x \in \varrho(B) \iff y \in \varrho(B) ).
\]

\subsubsection{Applications in o-minimal structures} 

So far the results of this section have been purely abstract. Therefore now we take a moment to show how they can be applied in a classical model-theoretic setting. Consider an o-minimal structure $M = (M, \le, \ldots)$, where $\le$ is a dense linear order without endpoints. Assume $G$ is a group definable in $M$ with $\dim G = d$ and $M \preccurlyeq N$. We write $G^N$ for the interpretation of $G$ in $N$. The models $M$ and $N$ are equipped with the order topologies and their powers with the product topologies. 

By a fundamental result of Pillay, $G$ can be given a structure of a definable manifold making it a Hausdorff topological group:

\begin{proposition}[{\cite[Proposition 2.5]{Pil88}}] \label{prop:def-man}
Assume $G$ is a group definable in an o-minimal structure $M$ with $\dim G = d$. There is a topology $\tau$ on $G$ and a large (hence generic) definable subset $V \subseteq G$ such that
\begin{itemize}
\item $G$ with the topology $\tau$ is a topological group;
\item $V$ is a union of disjoint definable subsets $U_1, \ldots, U_r \subseteq G$ such that for each $i = 1, \ldots, r$, $U_i$ is $\tau$-open in $G$ and there is a definable (in $M$) homeomorphism between $U_i$ with $\tau$ and some open subset $V_i \subseteq M^d$.
\end{itemize}
\end{proposition}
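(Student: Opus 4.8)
\emph{Proof plan (reconstructing Pillay's argument).} Realize $G$ as a definable subset of some $M^n$ and write $\sigma$ for the topology it inherits from the product topology on $M^n$. The difficulty is that $(G,\sigma)$ is in general not a topological group — left and right translations need not be $\sigma$-continuous — so the task is to replace $\sigma$ by a translation invariant topology $\tau$ that still agrees with $\sigma$ on a large piece of $G$.

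\textbf{Charts and the large set.} First I would apply o-minimal cell decomposition to the definable set $G$. Removing the cells of dimension $<d$, whose union $N$ satisfies $\dim N<d$, leaves finitely many pairwise disjoint $d$-dimensional cells $U_1,\dots,U_r$ with $\dim(G\setminus(U_1\cup\cdots\cup U_r))<d$. By the structure of cells, after a permutation of the ambient coordinates each $U_i$ is the graph of a continuous definable map over an open cell $V_i\subseteq M^d$, so the restriction to $U_i$ of the corresponding coordinate projection is a definable homeomorphism $\varphi_i\colon(U_i,\sigma)\to V_i$. Set $V=U_1\cup\cdots\cup U_r$; then $\dim(G\setminus V)<d=\dim G$, so $V$ is large, and hence generic: in an o-minimal structure a definable subset of a definable group is generic exactly when it has full dimension, because finitely many generic translates of the lower-dimensional set $G\setminus V$ have empty common intersection. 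Fix $g_1,\dots,g_m$ with $G=g_1V\cup\cdots\cup g_mV$; the translated pieces $g_sU_i$, with the maps $x\mapsto\varphi_i(g_s^{-1}x)$, are candidate charts covering $G$.

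\textbf{The topology and the atlas (the crux).} Let $\tau$ be the topology on $G$ generated by all translates $gW$ with $g\in G$ and $W$ a $\sigma$-open subset of some $U_i$; by design every left translation is a $\tau$-homeomorphism. The substantial point is to verify that this is a legitimate topology which restricts to $\sigma$ on each $U_i$ and makes $\varphi_i\colon(U_i,\tau)\to V_i$ a homeomorphism; equivalently, that the translated subspace charts are mutually compatible. This is where o-minimality is used essentially, via: (a) \emph{generic continuity} — a definable map from a $d$-dimensional definable set into a power of $M$ is continuous off a definable set of dimension $<d$ — applied to the definable bijections $x\mapsto gx$ and $x\mapsto g^{-1}x$, which shows each left translation is a $\sigma$-homeomorphism between two large $\sigma$-open subsets of $G$; and (b) the o-minimal invariance of domain — an injective continuous definable map between $d$-dimensional definable subsets of $M^d$ sends interior points to interior points. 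Using (a) and (b) together with the homogeneity of $G$ (which lets a ``good locus'' that is large be transported to any prescribed point by a translation) one shows that the transition maps $\varphi_j\circ(x\mapsto kx)\circ\varphi_i^{-1}$ are homeomorphisms onto open subsets of $M^d$ on the interiors of their domains; this yields that $\tau$ is as required, each $U_i$ being $\tau$-open with $\tau\restriction U_i=\sigma\restriction U_i$. The same tools show that right translations are $\tau$-homeomorphisms as well, so $\tau$ is bi-invariant.

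\textbf{Group axioms, Hausdorffness, and the obstacle.} It then remains to prove $\tau$-continuity of multiplication $m$ and of inversion. In the charts $m$ and inversion are definable maps, so by generic continuity they are $\tau$-continuous at some point; since translations on both sides are $\tau$-homeomorphisms, continuity at one point propagates to all points via identities such as $xy=a\bigl((a^{-1}x)(yb^{-1})\bigr)b$ (and the corresponding identity for inversion). Thus $(G,\tau)$ is a topological group. Hausdorffness follows because two points in a common translated chart are separated inside the corresponding open subset of $M^d$, and finitely many translated charts cover $G$. The main obstacle throughout is the atlas-compatibility argument of the previous paragraph: because $\sigma$ is not translation invariant one cannot simply declare all translates of $\sigma$-opens to be open, and the proof that enough chart structure survives after translating — independently of the chosen covering family $g_1V,\dots,g_mV$ — is exactly where the o-minimal generic continuity and invariance of domain do the real work. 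Everything else (cell decomposition, large-implies-generic, propagation of continuity) is routine.
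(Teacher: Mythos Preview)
The paper does not prove this proposition: it is quoted verbatim as \cite[Proposition~2.5]{Pil88} and used as a black box, so there is no proof in the paper to compare your proposal against. Your outline is a reasonable reconstruction of Pillay's original argument (cell decomposition for the charts, generic continuity plus homogeneity to build a translation-invariant topology, then propagation of continuity for the group operations), and nothing in it is obviously wrong as a sketch; but since the thesis itself offers no argument here, any detailed assessment would have to be made against \cite{Pil88} rather than against this paper.
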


Although the following results hold true in this setting, for simplicity of the argument we will assume that $G \subseteq M^d$, $V$ is open in $M^d$ and $\tau \restriction V$ already agrees with the topology induced from $M^d$, and also $e \in V$. 

The generic subset $V^N \subseteq G^N$ equipped with the topology induced from $N^d$ gives rise to a topology on $G^N$ (via finitely many translations) which also makes $G^N$ a topological group. Assume that $G^N$ is compact with respect to this topology. Applying the tools developed in the last two subsections we will explicitly compute and relate the Ellis groups of $S_{\ext, G}(M)$ and $S_{\ext, G}(N)$. As usual, we let $\Def_{\ext, G}(M)$ denote the $G$-algebra of subsets of $G$ externally definable in $M$.

\begin{proposition} \label{prop:ext-sbp} Externally definable subsets of $M^n$ have SBP with respect to the product topology.
\end{proposition}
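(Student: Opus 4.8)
The plan is to reduce the statement to the well-known cell decomposition for definable sets in o-minimal structures, applied to a suitable elementary extension witnessing external definability. First I would recall what "externally definable" means: a set $A \subseteq M^n$ is externally definable if $A = \varphi(M^n; b)$ for some formula $\varphi(x; y)$ and some tuple $b$ in an elementary extension $N \succcurlyeq M$. So fix such $\varphi$ and $b \in N^m$, and let $B = \varphi(N^n; b) \subseteq N^n$, which is a \emph{definable} subset of $N^n$; note $A = B \cap M^n$. By cell decomposition in the o-minimal structure $N$, the set $B$ decomposes into finitely many cells, each of which is either open in $N^n$ or has dimension $< n$, hence is contained in (in fact equal to) a definable set with empty interior; moreover the boundary $\bd(B)$ in $N^n$ has dimension $< n$, so it too has empty interior in $N^n$. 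Equivalently, $\int_N(B) \cup \int_N(N^n \setminus B)$ is dense in $N^n$, i.e. $B$ has SBP in $N^n$.

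The main step is then to transfer this from $N$ back down to $M$. The key point is that all the relevant topological notions — being open, having empty interior, density — are first-order expressible here because the product topology on $M^n$ (resp. $N^n$) has a definable basis: the basic open boxes $\prod_i (a_i, c_i)$ with $a_i < c_i$ in $M$ (resp. $N$). So "$\int(B) \cup \int(N^n \setminus B)$ is dense" unwinds to a first-order sentence (with parameters $b$): for every basic open box $W$, there is a basic open box $W' \subseteq W$ such that $W'$ is contained in $B$ or in $N^n \setminus B$. This sentence holds in $N$; I need the corresponding sentence to hold in $M$ for $A = B \cap M^n$. Because $M \preccurlyeq N$, and the formula $\varphi(x;b)$ defines $A$ on $M^n$ (external definability is exactly this), the same sentence — now read with quantifiers ranging over $M$ — is a consequence: given a basic open box $W$ in $M^n$, view it as (the $M$-points of) a basic open box $W_N$ in $N^n$ with the same parameters; $N$ provides a sub-box $W'_N \subseteq W_N$ lying inside $B$ or inside $N^n \setminus B$; but "there exist box-endpoints in $M$ defining a sub-box of $W$ contained in $\varphi(\cdot; b)$ or its complement" is a formula over $\emptyset \cup \{b\}$ with variables to be found in $M$, and since $M \preccurlyeq N$ and $b$ is... here I must be slightly careful, since $b \notin M$. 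The correct route is: the statement "for every box $W$ over $M$ there is a sub-box over $M$ inside $\varphi$ or its complement" is expressed by a formula $\psi(b)$ whose quantifiers range over the home sort; $\psi(b)$ holds in $N$ by the cell decomposition argument above (restricting attention to boxes with endpoints in $M$ is harmless — density for all boxes implies the sub-box can be found, and then shrunk to have endpoints in any dense subset, in particular in $M$, using that $M$ is dense in $N$ and the order is dense). Then $\exists y\, \psi(y)$ holds in $N$, hence in $M$ by elementarity; but we do not just want \emph{some} witness, we want the specific external set $A$. This is where I would instead argue directly: density of $\int(A) \cup \int(M^n \setminus A)$ in $M^n$ says exactly that every $M$-box contains an $M$-sub-box inside $A$ or inside its complement, and this is witnessed inside $N$ by the decomposition of $B$ together with the fact that $M \preccurlyeq N$ is dense, so every nonempty $N$-open set meets $M^n$ and every $M$-box, viewed in $N$, still contains the good $N$-sub-box, which we shrink to an $M$-sub-box.

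The step I expect to be the genuine obstacle is this last transfer: making precise that a sub-box of an $M$-box found inside the \emph{definable} set $B \subseteq N^n$ can be taken with endpoints in $M$ and that it then actually lies inside $A = B \cap M^n$ (not merely $B$). The resolution uses density of the order and of $M$ in $N$: an open box of $N^n$ contained in $B$ contains a strictly smaller box with endpoints in $M$ (order density plus $M \preccurlyeq N$ so $M$ is order-dense in $N$), and the $M$-points of that smaller box are all in $B \cap M^n = A$. Everything else — cell decomposition giving SBP for $B$ in $N^n$, and the equivalence of the three formulations of SBP from the definition — is routine and can be cited. I would also remark that it suffices to prove the one-variable case is built into cell decomposition and that the product topology on $M^n$ is handled uniformly, so no separate induction on $n$ is needed beyond what cell decomposition already provides.
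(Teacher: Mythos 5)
There is a genuine gap, and it sits exactly at the step you flagged as the obstacle: your transfer from $N$ down to $M$ rests on the claim that $M \preccurlyeq N$ makes $M$ order-dense in $N$, so that an open $N$-box inside $B$ (or its complement) can be shrunk to a box with endpoints in $M$. This is false in general. Elementarity does not give density of $M$ in $N$: for instance, if $M$ is an o-minimal expansion of $\RR$ and $N$ is a proper elementary extension (as it typically must be to witness external definability), then $N$ contains elements infinitesimally close to points of $M$, and an $N$-box found inside $\int_N(B)$ may have all side-lengths infinitesimal over $M$; such a box contains no point of $M^n$ at all, let alone a sub-box with $M$-endpoints. So knowing that $B$ has SBP in $N^n$ (which is correct, by cell decomposition) does not by itself yield an $M$-sub-box of a given $M$-box lying inside or outside $A = B \cap M^n$. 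Your earlier attempt to encode ``every $M$-box contains an $M$-sub-box inside $\varphi(\cdot;b)$ or its complement'' as a first-order formula $\psi(b)$ also does not work, since quantification restricted to the submodel $M$ is not expressible in $N$; you rightly abandoned it, but the fallback is the density claim, which fails.

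The paper's proof avoids this by never looking for boxes with $N$-coordinates: it argues by induction on $n$. Using cell decomposition in $\fC \succcurlyeq M$ one reduces to $Y = \{(y,z) \in Y_0 \times \fC : z < f(y)\}$; then, given an $M$-box $B_0 \times (a,c)$, the induction hypothesis is applied to the \emph{traces} $Y_0 \cap M^n$ and $\{y \in Y_0 : f(y) < b\} \cap M^n$, where the cut point $b$ is chosen in $(a,c) \cap M$. Cutting the fiber at an element of $M$ (rather than at an element of $N$ produced by the decomposition) is what makes the resulting sub-box have $M$-endpoints. If you want to salvage your outline, you need some device of this kind that produces the witnessing box from $M$-data; the one-step transfer via SBP of $B$ in $N^n$ cannot do it.
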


\begin{proof} We proceed by induction on $n$. For $n = 0$ the claim is trivial, so fix $n \in \NN$ and assume the claim holds for $n$. We will prove it holds for $n+1$. Take any externally definable $X \subseteq M^{n+1}$ and write $X = Y \cap M^{n+1}$, where $Y \subseteq \fC^{n+1}$ is definable in some $\fC \succcurlyeq M$. By the cell decomposition theorem and since $\cSBP$ is a Boolean algebra, we can assume that $Y$ is of the form
\[
Y = \{ (y, z) \in Y_0 \times \fC : z < f(y) \}
\]
for some definable $Y_0 \subseteq \fC^n$ and $f : Y_0 \to \fC$. It suffices to show that for any open box $B \subseteq M^{n+1}$ there is an open box $C \subseteq B$ that is either contained in or disjoint from $X$. 

Take any open box $B = B_0 \times (a, c) \subseteq M^n \times M$. By the induction hypothesis, the externally definable set $Y_0 \cap M^n$ has SBP in $M^n$, so we can find a box $C_0 \subseteq B_0$ contained in or disjoint from $Y_0 \cap M^n$. In the second case clearly $C_0 \times (a, c) \subseteq B$ is disjoint from $X$, so assume $C_0 \subseteq Y_0 \cap M^n$. Pick $b \in (a, c)$ and consider the definable set
\[
Y_1 = \{ y \in Y_0 : f(y) < b \} \subseteq \fC^n.
\]
By the induction hypothesis, we can find a box $D_0 \subseteq C_0$ contained in or disjoint from $Y_1 \cap M^n$. If $D_0 \subseteq Y_1 \cap M^n$, then $D_0 \times (b, c) \subseteq B$ is a box disjoint from $X$. Otherwise $D_0 \cap (Y_1 \cap M^n) = \varnothing$ and $D_0 \times (a, b) \subseteq B$ is a box contained in $X$. In either case the proof is complete.
\end{proof}

\begin{corollary} \label{cor:ext-sbp} $\Def_{\ext, G}(M) \le \cSBP(G)$ and $\Def_{\ext, G}(N) \le \cSBP(G^N)$.
\end{corollary}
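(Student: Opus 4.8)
The plan is to reduce everything to Proposition~\ref{prop:ext-sbp} together with the fact that $V$ (resp.\ $V^N$) is a generic open set on which the manifold topology $\tau$ agrees with the ambient product topology. I will describe the argument for the first assertion; the second follows verbatim, using that $N$ is again a densely ordered o-minimal structure (o-minimality and density of the order pass to every model of the theory), so Proposition~\ref{prop:ext-sbp} applies to $N^n$, and that $V^N$ is large, hence generic, in $G^N$.

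First I would observe that it suffices to show every externally definable $A \subseteq G$ has SBP with respect to $\tau$, and that any such $A$ is also externally definable as a subset of $M^d$: if $A = \{a \in G : \fC \models \varphi(a, c)\}$ for some $\fC \succcurlyeq M$ and $c \in \fC$, then $A = \big(\varphi(\fC) \cap G^{\fC}\big) \cap M^d$ with $\varphi(\fC) \cap G^{\fC}$ definable in $\fC$. Hence by Proposition~\ref{prop:ext-sbp}, $A$ has SBP in $M^d$, i.e.\ $\int_{M^d}(A) \cup \int_{M^d}(M^d \setminus A)$ is dense in $M^d$.

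Next, since $V$ is $\tau$-open in $G$, open in $M^d$, and $\tau \restriction V$ coincides with the subspace topology induced from $M^d$, a routine check shows that for $x \in V$ one has $x \in \int_{\tau}(A)$ iff $x \in \int_{M^d}(A)$ (using that $V$ is open to write $\int_{M^d}(A \cap V) = \int_{M^d}(A) \cap V$), and likewise $\int_{\tau}(G \setminus A) \cap V = \int_{M^d}(M^d \setminus A) \cap V$, the point being $V \cap (G \setminus A) = V \cap (M^d \setminus A)$. Intersecting the density statement from the previous paragraph with the open set $V$, I conclude that $\int_{\tau}(A) \cup \int_{\tau}(G \setminus A)$ is dense in $V$ (with respect to $\tau \restriction V$).

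Finally I would globalize by translation. As $V$ is generic, $G = g_1 V \cup \dots \cup g_k V$ for some $g_i \in G$, and each $\pi_{g_i}$ is a $\tau$-homeomorphism of $G$. Applying the previous step to the externally definable set $g_i^{-1} A$ and pushing forward by $\pi_{g_i}$ shows $\int_{\tau}(A) \cup \int_{\tau}(G \setminus A)$ is dense in $g_i V$ for each $i$; since these open sets cover $G$, the union is dense in $G$, so $A \in \cSBP(G)$. The only delicate bookkeeping — the ``main obstacle'', though a mild one — is keeping straight the difference between $G \setminus A$ and $M^d \setminus A$ (harmless once everything is localized inside $V \subseteq G$) and confirming that the two topologies on $V$ genuinely agree, which is exactly the content of the simplifying normalization of $\tau \restriction V$ recorded after Proposition~\ref{prop:def-man}.
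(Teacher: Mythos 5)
Your proof is correct and is essentially the argument the paper intends: Corollary~\ref{cor:ext-sbp} is left as an immediate consequence of Proposition~\ref{prop:ext-sbp} precisely because, after the normalization that $\tau\restriction V$ is the topology induced from $M^d$ (resp.\ $V^N$ from $N^d$), one localizes SBP to $V$ and spreads it over $G$ by the finitely many translates witnessing genericity of $V$, exactly as you do. Your handling of the $N$-case (o-minimality and density of the order hold in every model of the theory, so the Proposition applies to $N$, and genericity of $V^N$ transfers by elementarity) is also the intended reading.
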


If a box $B \subseteq N^n$ is a product of open intervals with endpoints in $M$, it will be called an open $M$-box. Repeating the proof of Proposition~\ref{prop:ext-sbp}, we get the following:
\begin{remark} \label{rem:ext-sbp0} Externally definable subsets of $N^n$ have SBP with respect to the topology on $N^n$ generated by open $M$-boxes. \noproof
\end{remark}

\begin{lemma} \label{lem:omin-den} $G$ is dense in $G^N$.
\end{lemma}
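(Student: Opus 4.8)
The plan is to reduce the density of $G$ in $G^N$ to the density of the chart $V$ inside $V^N$, and then to rule out a failure of the latter using the standing hypothesis that $G^N$ is compact. (If $M = N$ there is nothing to prove, so one may assume $M \preccurlyeq N$ proper.)

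First I would set up the reduction. By Proposition~\ref{prop:def-man} the set $V$ is generic in $G$, so $G = g_1 V \cup \ldots \cup g_r V$ for some $g_1, \ldots, g_r \in G$, and by elementarity $G^N = g_1 V^N \cup \ldots \cup g_r V^N$, with each $g_i V^N$ open in $G^N$. Let $D = \cl_{G^N}(G)$; this is a subgroup of $G^N$ because the closure of a subgroup in a topological group is again a subgroup. Since $g_i \in G \subseteq D$, it is enough to show $V^N \subseteq D$, as then $D \supseteq g_1 V^N \cup \ldots \cup g_r V^N = G^N$. Because $V^N$ is open in $G^N$ and, through the chart, carries the topology induced from $N^d$, we have $\cl_{G^N}(V) \cap V^N = \cl_{N^d}(V) \cap V^N$; hence $V^N \subseteq D$ reduces to the statement that $V$ is dense in $V^N$ in the topology induced from $N^d$. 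As $V$ is open in $M^d$ and $V = V^N \cap M^d$, this says precisely that every non-empty open box of $N^d$ contained in $V^N$ meets $M^d$.

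Next I would prove that statement. Suppose not, and fix a non-empty open box $B = \prod_{i=1}^d (c_i, d_i) \subseteq V^N$ with $B \cap M^d = \varnothing$. If each interval $(c_i, d_i)$ contained a point $m_i \in M$, then $(m_1, \ldots, m_d) \in B \cap M^d$; so some interval, say $(c_1, d_1)$, contains no point of $M$. Now compactness enters: $G^N$ being compact Hausdorff, its open subspace $V^N$ is locally compact Hausdorff, so after shrinking $B$ one may assume the closed box $\overline{B} = \prod_{i=1}^d [c_i, d_i]$ is a compact subset of $V^N$; its image under the continuous first-coordinate projection is the interval $[c_1, d_1]_N$, which is then compact and disjoint from $M$. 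But a compact linearly ordered space is Dedekind complete, whereas an infinite densely ordered interval of $N$ disjoint from $M$ cannot be: using $M \preccurlyeq N$ and that $M$ is densely ordered without endpoints (so the $M$-cut cut out by $[c_1,d_1]_N$ has no endpoints), together with the definable subdivision available in $N$ (an o-minimal structure carrying a definable infinite group is far from a pure order), one exhibits a bounded subset of $[c_1,d_1]_N$ with no supremum. This contradiction gives that $V$ is dense in $V^N$, and hence the lemma.

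The step I expect to be the genuine obstacle is the last one: squeezing out of the compactness of $G^N$ the absence of ``gaps'' between $M^n$ and $N^n$ inside the coordinate charts, equivalently the impossibility of a compact interval of $N$ disjoint from $M$. Everything before it is routine bookkeeping with the chart structure and elementarity.
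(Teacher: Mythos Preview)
Your reduction in the first paragraph is fine: showing $V$ is dense in $V^N$ (equivalently, that every non-empty open $N$-box contained in $V^N$ meets $M^d$) suffices, and the subgroup/translate bookkeeping is correct. The extraction of a compact coordinate interval $[c',d']\subseteq N$ disjoint from $M$ is also essentially correct once you shrink using local compactness and project.

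The gap is exactly where you flag it. You have not shown that a compact interval of $N$ disjoint from $M$ is impossible, and the appeal to ``definable subdivision available in $N$'' is not an argument. The group $G$ lives in $M^d$, not in $M$; there is no evident way to turn the group law into a halving or midpoint operation on a single coordinate, and in pure DLO the statement is simply false (one can have $M\prec N$ with a Dedekind-complete interval of $N$ missing $M$). You are implicitly asking for a structural theorem about $N$ (``no compact interval avoids $M$'') that you have not proved and that does not follow from the hypotheses in any routine way. The paper does not attempt this at all. Instead it works inside $G^N$: starting from a hypothetical open $tB\subseteq G^N$ disjoint from $G$, it uses compactness to cover $G^N$ by left translates of $B$, then applies Remark~\ref{rem:ext-sbp0} (SBP of $N$-definable sets with respect to the $M$-box topology, via cell decomposition) to find an $M$-box $B_0$ inside one translate $sB$. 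Now $B_0$ is $M$-definable and generic, so by elementarity finitely many \emph{right} translates $B_0 r$ with $r\in G$ cover $G^N$; choosing $r$ with $st^{-1}\in B_0 r$ yields $r^{-1}\in tB$, contradicting $tB\cap G=\varnothing$. The essential input you are missing is precisely this SBP step, which replaces your unproved order-theoretic claim.
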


\begin{proof} If not, we can find a non-empty open subset $U \subseteq G^N$ disjoint from $G$. Without loss of generality it is of the form $U = tB$ for some open box $B \subseteq V^N$ and $t \in G^N$. By compactness, finitely many left translates of $B$ cover $G^N$ and by Remark~\ref{rem:ext-sbp0}, their intersections with $V^N$ have SBP with respect to the topology on $V^N$ generated by open $M$-boxes. Thus at least one such intersection $sB \cap V^N$, where $s \in G^N$, contains an open $M$-box $B_0 \subseteq V^N$. Again, finitely many right translates of $B_0$ cover $G^N$. Since $M \preccurlyeq N$, we can assume that they are of the form $B_0 r$, where $r \in G$. Pick $r \in G$ such that $st^{-1} \in B_0 r$. It follows that $r^{-1} \in t s^{-1} B_0 \subseteq tB$, which is a contradiction.
\end{proof}

\begin{corollary} \label{cor:omin-sub} $G$ is a topological subspace of $G^N$.
\end{corollary}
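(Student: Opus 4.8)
The plan is to prove that the topology $\tau$ on $G$ furnished by Proposition~\ref{prop:def-man} — which, under the simplifying assumptions made above, restricts on $V$ to the topology induced from $M^d$ — coincides with the subspace topology $\sigma$ that $G$ inherits from $G^N$. Note that $V$ is open in both topologies: it is $\tau$-open by construction, and $V = V^N \cap G$ is $\sigma$-open because $V^N$ is open in $G^N$; moreover $e \in V$. Since $\tau$ and $\sigma$ are both group topologies on $G$, it is enough to show that they induce the same topology on $V$: then, as the translates $gV$ ($g \in G$) cover $G$ and left translation by an element of $G$ is a homeomorphism for both $\tau$ and $\sigma$, it follows at once that $\tau = \sigma$.

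So I would fix $A \subseteq V$ and compare ``$A$ is open in the topology induced from $M^d$'' (which is $\tau|_V$) with ``$A$ is open in the topology induced from $N^d$'' (which is $\sigma|_V$, since $V \subseteq V^N \subseteq N^d$ and $V^N$ carries precisely the topology induced from $N^d$, itself the restriction of the topology of $G^N$). For the first condition to imply the second: cover $A$ by open boxes $B \subseteq V$ with endpoints in $M$; if $B^N \subseteq N^d$ denotes the box with the same endpoints read in $N$, then $B^N \cap V^N$ is open in $G^N$ and $(B^N \cap V^N) \cap G = B$, so each such $B$, hence $A$, is $\sigma$-open. The converse is the real content: given $x \in A$ and an open $N^d$-box $\prod_i (a_i, c_i) \ni x$ contained in $A$ (with $a_i, c_i \in N$), I must find $m_i, m_i' \in M$ with $a_i \le m_i < x_i < m_i' \le c_i$; then $\prod_i (m_i, m_i')$ is an $M^d$-box around $x$ contained in $A$, and intersecting it with a fixed $M^d$-box around $x$ inside $V$ yields a $\tau$-neighbourhood of $x$ contained in $A$.

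Everything thus reduces to the following claim: for every $x \in V$, every coordinate $i$, and every $a \in N$ with $a < x_i$, there is $m \in M$ with $a \le m < x_i$ (and symmetrically on the upper side). I would prove this by contradiction, using Lemma~\ref{lem:omin-den}. If no such $m$ existed, then $a \notin M$ and the interval $(a, x_i) \subseteq N$ would contain no point of $M$. Using that $V^N$ is open and $a < x_i$, pick a point $y^\ast \in V^N$ whose $i$-th coordinate lies in $(a, x_i)$ and whose remaining coordinates agree with those of $x$, and then a small open $N^d$-box $Q \subseteq V^N$ around $y^\ast$ whose $i$-th side is contained in $(a, x_i)$. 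Since $G$ is dense in $G^N$ by Lemma~\ref{lem:omin-den} and $Q$ is a non-empty open subset of $G^N$, there is $g \in G \cap Q$; but then $g \in M^d$ and its $i$-th coordinate lies in $(a, x_i) \cap M = \varnothing$, a contradiction. This is the only place where the compactness of $G^N$ is used, and it enters solely through Lemma~\ref{lem:omin-den}.

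The one point requiring genuine care is this last density argument: one must choose the auxiliary box $Q$ so that it is simultaneously contained in $V^N$, a neighbourhood of $y^\ast$, and has its $i$-th side inside $(a, x_i)$ — a small amount of bookkeeping on the sides of $Q$, but nothing deeper. The remaining ingredients — the manipulations of subspace topologies in the two preceding paragraphs, and the reduction of the equality of two group topologies to their agreement on a common neighbourhood of the identity — are routine.
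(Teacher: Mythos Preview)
Your proof is correct and follows essentially the same approach as the paper: reduce to showing that $V$ carries the subspace topology from $V^N$, handle the easy direction via $B = B^N \cap V$, and for the hard direction use the density of $G$ in $G^N$ (Lemma~\ref{lem:omin-den}) to produce $M$-coordinates squeezed between the given point and the $N$-endpoints of the box. The only difference is in execution: where you argue one coordinate at a time by contradiction, the paper applies density directly to the two ``corner'' boxes $\prod_i (a_i, b_i)$ and $\prod_i (b_i, c_i)$ inside $V^N$ to obtain points $x, y \in V$ in one stroke, then takes $\prod_i (x_i, y_i)$ as the desired $M$-box.
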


\begin{proof} It suffices to show that $V$ is a topological subspace of $V^N$. Clearly any open box $B \subseteq V$ is open in the subspace topology as it is of the form $B^N \cap V$. On the other hand, given an open box $C \subseteq V^N$ we will show that $C \cap V$ is open in $V$. Take any $b \in C \cap V$ and write $C = \prod (a_i, c_i)$, where $a_i, c_i \in N$. By Lemma~\ref{lem:omin-den}, we can find $x \in \prod (a_i, b_i) \cap V$ and $y \in \prod (b_i, c_i) \cap V$. Then $B = \prod (x_i, y_i)$ is an open box in $V$ satisfying $b \in B \subseteq C \cap V$. It follows that $C \cap V$ is open in $V$.
\end{proof}

Now we are ready to compute the Ellis groups of the $G$-flows $S_{\ext, G}(M)$ and $S_{\ext, G}(N)$. First we prove that the Ellis group of $S_{\ext, G}(N)$ is isomorphic to $G^N$. Since $G^N$ is compact, by Corollary~\ref{cor:ext-sbp} and Theorem~\ref{thm:comp-ell}, it suffices to show that $N_{\Def_{\ext, G}(N)} = \{ e \}$. Take any $x \in G^N \setminus \{ e \}$. Clearly we can find an open box $B \subseteq V^N$ such that $e \in B$ and $x \notin \cl B$. Then $B \in \Def_{\ext, G}(N)^d$ and $B$ witnesses that $x \not \sim_{\Def_{\ext, G}(N)} e$, hence $x \notin N_{\Def_{\ext, G}(N)}$.

Now we show that the Ellis group of $S_{\ext, G}(M)$ is also isomorphic to $G^N$. Using Lemma~\ref{lem:omin-den} and Corollary~\ref{cor:omin-sub}, we see that $G$ is a dense topological subgroup of the compact group $H = G^N$, hence it is precompact. By Corollary~\ref{cor:ext-sbp} and Theorem~\ref{thm:pre-ell}, it suffices to show that $N_{\Def_{\ext, G}(M)} = \{ e \}$. This is done as in the previous paragraph, since for any open box $B$ in $V^N$, $B$ has SBP in $G^N$ and $B \cap G$ is externally definable in $G$.

Thus we see that the Ellis groups of $S_{\ext, G}(M)$ and $S_{\ext, G}(N)$ are both isomorphic to $G^N$, hence to each other.

\subsection{Types at infinity}

\label{ss:infty}

In Subsection~\ref{sub:comp} we gave a description of the Ellis group of an arbitrary $G$-subalgebra $\cA \le \cSBP$ when $G$ is compact. In this subsection we aim to identify the difficulty in generalizing the argument to arbitrary topological groups.

Assume $G$ is any topological group and $\cA \le \cSBP$ is d-closed.\footnote{$\cSBP$ itself need not be d-closed, see Proposition \ref{prop:sbp-ndcl}. } Let $\cNWD$ denote the family of all nowhere dense sets, so that clearly $\cNWD \subseteq \cSBP$. Since $\cA \cap \cNWD$ is a proper $G$-ideal of $\cA$, we can find a minimal subflow $I \trianglelefteqslant_m S(\cA)$ such that $p \cap \cNWD = \varnothing$ for each $p \in I$. A good approach to describing the Ellis group is by characterizing all idempotents $v \in J(I)$, as the following basic fact shows.

\begin{fact} \label{fact:inf-ell} The relation on $I$ defined by
\[
p \sim q \iff (\exists v \in J(I)) \, q = v \ast p
\]
is a congruence and $I/{\sim}$ is isomorphic to the Ellis group of $S(\cA)$.
\end{fact}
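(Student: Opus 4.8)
The plan is to mimic the standard structure theory of minimal ideals in left topological semigroups, applied to the flow $S(\cA)$ where the relevant minimal ideal $I$ happens to consist precisely of the ultrafilters omitting all nowhere dense sets. First I would recall, as in the proof of Theorem~\ref{thm:pf-ell}, that $\cA \cap \cNWD$ is a proper $G$-ideal of $\cA$, so there is a generic point of $S(\cA)$, and hence by Fact~\ref{fact:gen-min} a \emph{unique} minimal subflow; since $\cA$ is $d$-closed, $S(\cA) \cong E(S(\cA))$ as a semigroup by Theorem~\ref{thm:iso}, so by Remark~\ref{rem:sub} the minimal subflows coincide with the minimal ideals, and $I \trianglelefteqslant_m S(\cA)$ is the unique one. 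Thus every idempotent of $S(\cA)$ lying in a minimal ideal actually lies in $I$, and by Ellis's theorem (Theorem~\ref{thm:ell}) $J(I) \neq \varnothing$.

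Next I would verify that $\sim$ is an equivalence relation. Reflexivity is immediate once we know $J(I) \neq \varnothing$ and $v \ast p = p$ for a suitable idempotent — indeed by Lemma~\ref{lem:im}(ii) (applied through the identification $S(\cA) \cong E(S(\cA)) \subseteq S(\cA)^{S(\cA)}$, so that $p = \ell_p$ and right multiplication by an idempotent $u \in J(I)$ fixes elements of $I$) we get $p \ast u = p$; but I want $u \ast p = p$, so I instead use that for $p \in I$ there is, by Theorem~\ref{thm:ell}(ii), a (unique) idempotent $u_p \in J(I)$ with $p \in u_p I$, whence $u_p \ast p = p$. Symmetry and transitivity follow from the group structure of the ideal subgroups $uI$: if $q = v \ast p$ then $p$ and $q$ generate the same smallest ideal $S(\cA) \ast p = S(\cA) \ast q = I$ (all elements of a minimal ideal do), and within the group $u_p I = u_q I = vI$ one inverts and composes in the usual way, using $u \ast u = u$ and Theorem~\ref{thm:ell}(iv) to move between idempotents. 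Then I would check $\sim$ is a congruence: if $p \sim p'$, say $p' = v \ast p$, then for any $s \in I$ one has $p' \ast s = v \ast p \ast s$ and $v \ast (\,\cdot\,)$ restricted to $I$ sends $uI \to vI$ bijectively for the appropriate $u$, in particular maps idempotents to idempotents (again Theorem~\ref{thm:ell}(iv)); combined with left continuity / associativity of $\ast$ this shows $p \ast s \sim p' \ast s$ and $s \ast p \sim s \ast p'$.

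Finally I would identify $I/{\sim}$ with an ideal subgroup. Fix $u \in J(I)$. The claim is that the composite $uI \hookrightarrow I \twoheadrightarrow I/{\sim}$ is an isomorphism. For surjectivity: given $p \in I$, by Theorem~\ref{thm:ell}(ii) $u \ast p \in uI$ and $u \ast p \sim p$ (witnessed by $v := u_p$, or directly since $u$ is an idempotent in the minimal ideal), so every class meets $uI$. For injectivity: if $p, q \in uI$ and $p \sim q$, say $q = v \ast p$ with $v \in J(I)$, then since $p, q \in uI$ one has $u \ast q = q$ and $u \ast p = p$, so $q = u \ast q = u \ast v \ast p$; now $u \ast v$ is an idempotent of $uI$ (by Theorem~\ref{thm:ell}(iv), $uv = v$, $vu = u$, so $u \ast v$ lies in $uI$ and is idempotent since the only idempotent in the group $uI$ is $u$, giving $u \ast v = u$), hence $q = u \ast p = p$. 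That the quotient map $uI \to I/{\sim}$ is a semigroup homomorphism is automatic, and $uI$ is a group by Theorem~\ref{thm:ell}(ii), so $I/{\sim}$ is the Ellis group as claimed.

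The main obstacle I anticipate is the bookkeeping around \emph{which} idempotent to use at each step — one must consistently track, for each $p \in I$, the unique idempotent $u_p \in J(I)$ with $u_p \ast p = p$, and repeatedly invoke Theorem~\ref{thm:ell}(iv) to translate statements between different idempotents $u, v \in J(I)$ (the facts $uv = v$, $vu = u$ do most of the work). Everything else is a routine transcription of the classical Ellis structure theory, with the one genuinely new input being the identification of $I$ as the set of ultrafilters omitting $\cNWD$, which is exactly the argument already carried out in the proof of Theorem~\ref{thm:pf-ell}.
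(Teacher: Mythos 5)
Your proposal is correct in substance and follows essentially the same route as the paper: the paper merely packages your surjectivity and injectivity computations into the single statement that $\varphi : I \to uI$, $\varphi(p) = u \ast p$, is a semigroup epimorphism with $\varphi(p) = \varphi(q) \iff p \sim q$, from which the congruence property and the isomorphism $I/{\sim} \cong uI$ fall out at once. In both versions the working identities are the same: $p \ast u = p$ for every $p \in I$ and $u \in J(I)$ (Lemma~\ref{lem:im}(ii), transported through Theorem~\ref{thm:iso}), and the fact that $u$ is the identity of the group $uI$.

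Two of your justifications should be repaired, although neither affects the conclusion. First, Theorem~\ref{thm:ell}(iv) concerns idempotents of two \emph{different} minimal ideals and only asserts the existence of a partner; it does not give $u \ast v = v$, $v \ast u = u$ for an arbitrary pair $u, v \in J(I)$. For idempotents of one and the same minimal (left) ideal, Lemma~\ref{lem:im}(ii) yields the opposite-looking identities $u \ast v = u$ and $v \ast u = v$, and these are exactly what your steps need: symmetry via $u_p \ast q = u_p \ast v \ast p = u_p \ast p = p$, the second half of the congruence property via $s \ast (v \ast p) = (s \ast v) \ast p = s \ast p$, and injectivity via $q = u \ast q = u \ast v \ast p = u \ast p = p$ (your detour through ``the only idempotent of $uI$ is $u$'' also works, but the identities you quote from Theorem~\ref{thm:ell}(iv) are false here, and $u_p I = u_q I$ need not hold when $q = v \ast p$). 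Second, in this subsection $G$ is an arbitrary topological group, so the argument of Theorem~\ref{thm:pf-ell} that non-empty interior implies generic is unavailable: $S(\cA)$ need not have a generic point nor a unique minimal subflow, and $I$ is just some minimal ideal whose elements omit $\cNWD$. Fortunately your proof never uses uniqueness (nor the nowhere dense condition at all): $J(I) \neq \varnothing$ comes from Theorem~\ref{thm:ell}(i) applied to $S(\cA) \cong E(S(\cA))$, and all remaining computations take place inside the fixed minimal ideal $I$.
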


\begin{proof} Fix $u \in J(I)$ and define $\varphi : I \to uI$ by $\varphi(p) = u \ast p$. It suffices to show that $\varphi$ is a semigroup epimorphism such that $p \sim q \iff \varphi(p) = \varphi(q)$ for $p, q \in I$. It is a homomorphism since $p \ast u = p$ for any $p \in I$ by Lemma~\ref{lem:im}. Furthermore, $\varphi(p) = p$ for $p \in uI$, so $\varphi$ is an epimorphism. Now fix $p, q \in I$. If $p \sim q$, i.e. $q = v \ast p$ for some $v \in J(I)$, then $uq = uvp = up$. Conversely, assume $up = uq$ and let $q \in vI$, where $v \in J(I)$. Then $vp = vup = vuq = vq = q$, hence $p \sim q$.
\end{proof}

Thus, roughly speaking, the Ellis group is obtained by dividing the minimal ideal by its idempotents. In the case of compact groups we have essentially proved the following characterization of idempotents in $I$:
\begin{equation} \label{eq:idem}
q \in J(I) \iff (\forall B \in \cA) \, \big( e \in \varrho(B) \Rightarrow B \in q \big).
\end{equation}
A more technical variant of the condition was also proved for precompact groups. If $q \in S(\cA)$ satisfies the right hand side of the equivalence (\ref{eq:idem}), we shall say that it is \emph{concentrated around the identity} and the set of such types is denoted by $S_e(\cA)$. The condition (\ref{eq:idem}) (or its variant) was at the core of describing the Ellis group in compact (or precompact) groups as the quotient of $G$ (or its compactification) by a normal subgroup obtained as an intersection of certain open neighbourhoods of identity. Fact~\ref{fact:inf-ell} expands the intuition behind that description.

In the general setting the right-to-left implication of (\ref{eq:idem}) still holds,\footnote{To see this, repeat the proof of the inclusion $\pi^{-1}[ \{ N_{\cA} \} ] \subseteq J(I)$ in Theorem~\ref{thm:comp-ell}.} while the other implication may fail, as we are about to show. However, any $q$ contradicting this implication must satisfy the following property:

\begin{proposition} \label{prop:idem-inf} Let $q \in J(I)$ and assume there is $B \in \cA$ such that $e \in \varrho(B)$ and $B \notin q$. Then there is a non-empty open subset $V \subseteq G$ such that for any $g \in G$ and $A \in \cA$, if $A \subseteq gV$, then $A \notin q$.
\end{proposition}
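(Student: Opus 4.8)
Fix $q \in J(I)$ and $B \in \cA$ with $e \in \varrho(B)$ but $B \notin q$. The idea is to extract from $q$ a nowhere dense ``witness'' set lying near the identity, and then use $d$-closedness together with the idempotency $q \ast q = q$ to push a whole open neighbourhood of $e$ into the complement of $q$. More precisely, since $B \notin q$ and $q$ is an ultrafilter, $G \setminus B \in q$; but $q$ omits nowhere dense sets, so $G \setminus B$ is not nowhere dense, i.e. it has non-empty interior. On the other hand $e \in \varrho(B) = \int(\cl(B))$, so every open neighbourhood of $e$ meets $B$ and in particular meets $\int B$ if $B$ were regular open --- in general we only know $e \in \int \cl B$. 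The correct object to track is the open set $V := \varrho(B) \setminus \cl(G \setminus B) = \int(\cl B) \cap \int(B)$; wait, that is empty-ish. Let me instead take $V$ to be a non-empty open subset of $\varrho(B)$ contained in $B$ modulo a nowhere dense set --- namely $V := \int(B)$, which is non-empty because $e \in \int \cl B$ forces $\int B \neq \varnothing$ (as $B$ has SBP: $\int B \cup \int(G \setminus B)$ is dense, and if $\int B = \varnothing$ then $\int(G \setminus B)$ is dense, so $\cl(G\setminus B) = G$, contradicting $e \in \int \cl B$). Actually $e$ itself need not lie in $\int B$, only in $\int \cl B$; still $\int B \neq \varnothing$.

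The key step is then: I claim $V = \int(B)$ works, i.e. for every $g \in G$ and $A \in \cA$ with $A \subseteq gV$ we have $A \notin q$. Suppose toward a contradiction that $A \in q$ with $A \subseteq gV = g\,\int(B)$. Consider $d_q(A) \in \cA^d = \cA$ (here I use $d$-closedness of $\cA$). Since $q \in J(I)$ is idempotent, $q \ast q = q$, which means $A \in q \iff d_q(A) \in q$; so $d_q(A) \in q$, hence $d_q(A)$ is non-nowhere-dense and has non-empty interior, so I can pick $h \in \int(d_q A)$. By Remark~\ref{rem:dlim}(ii) the map $x \mapsto d_x(A)$ (equivalently $r^*$) is continuous into $\cP(G)$; more usefully, $h \in d_q(A)$ means $h^{-1}A \in q$. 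Now translate: $h^{-1} A \subseteq h^{-1} g\, \int(B)$, and $e \in h^{-1}g\,\int(B)$ would give $h^{-1}g \in (\int B)^{-1}$-ish... this is where I need to be careful about whether $h^{-1}A$ actually contains or is near $e$. The cleaner route: use the \emph{right-to-left implication of (\ref{eq:idem})}, which the excerpt says holds in general --- no wait, we want to \emph{contradict} something. Let me reorganize.

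\textbf{Cleaner plan.} Set $U := \int(B)$ (non-empty open, as argued). I will show one can take $V$ to be \emph{any} non-empty open set whose closure is contained in $B \cup M$ for a suitable nowhere dense $M$; concretely $V := U = \int(B)$. First, $B \setminus U$ is nowhere dense (it has empty interior since $U = \int B$; and its closure: $\cl(B \setminus U) \subseteq \cl B \setminus$ nothing, but $\int \cl(B\setminus U)$ --- hmm, $B \setminus \int B = \bd B \cap B \subseteq \bd B$, which has empty interior by SBP of $B$, so indeed $B \setminus U$ is nowhere dense, hence $\notin q$). Therefore, since $G \setminus B \notin q$ (as $B \notin q$... no, $B \notin q$ gives $G \setminus B \in q$!). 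I have the sign backwards. Let me restate: $B \notin q \implies G\setminus B \in q$, and $G \setminus B$ must have non-empty interior; so $\int(G \setminus B) \neq \varnothing$; but we also have $e \in \int \cl B$, so... there is genuine tension, good. Now suppose $A \in q$, $A \subseteq gV$ where $V := \int(B)$. Then $g^{-1}A \subseteq V = \int B \subseteq B$, and $g^{-1}A \in q$ iff $g \in d_q(A)$... Again: $A \in q$, and $d_q$ homomorphism, idempotency $q\ast q = q$ gives $A \in q \Leftrightarrow d_q A \in q$. So $d_q A \in q$, so $\int(d_q A) \neq \varnothing$; pick $h \in \int(d_q A) \subseteq d_q A$, so $h^{-1}A \in q$. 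But $h^{-1}A \subseteq h^{-1} g V$. The set $h^{-1}gV$ is open; if $e \in h^{-1}gV$ then $h^{-1}A \subseteq h^{-1}gV \subseteq$ some translate of $\int B$... and $h^{-1}A \in q$ while $q \in J(I)$ --- now invoke the \emph{already-known} right-to-left direction of (\ref{eq:idem}) applied in reverse: since $q$ omits nowhere dense sets and is idempotent, and $h^{-1}A \in q$ is contained in an open set $W := h^{-1}gV$ with $e \in W$... I want to derive $e \in \varrho(\text{something in } q)$ contradicting $B \notin q$ via $\varrho(B) \ni e$.

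\textbf{The main obstacle.} The genuine difficulty --- and I expect the paper's proof to handle it with a clever choice --- is producing the right open $V$ and showing the implication ``$A \subseteq gV \Rightarrow A \notin q$'' \emph{uniformly in $g$}. The natural candidate $V = \int(B)$ needs: (a) $\int B \neq \varnothing$ (done via SBP + $e \in \int\cl B$), and (b) the transfer argument. For (b) the slick move is: if $A \in q$ and $A \subseteq gV$, then $A$ is generic (as $q \in I$ omits non-generic = nowhere dense sets, and $A$, being in $q$, has non-empty interior --- actually more: $A \ast$-related to $d_q A$), so finitely many left translates of $A$ cover $G$, hence finitely many left translates of $gV = g\,\int B$ cover $G$, hence finitely many left translates of $B$ cover $G$; that just says $B$ generic, not a contradiction. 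So one really must use idempotency more seriously. I believe the right argument is: take $V := \int(B)$; suppose $A \in q$, $A \subseteq gV$; then $d_q(gV) \supseteq d_q(A) \in q$ by monotonicity and $q \ast q = q$, and $d_q(gV) = g \cdot d_q(V)$; and one shows $d_q(\int B) \subseteq \cl(G \setminus B) \cup (\text{nwd})$ roughly because $q$ ``sits outside'' $B$... concretely, $B \notin q$ means for the idempotent $q$, $d_q(B) \notin q$, and $V \subseteq B$ gives $d_q V \subseteq d_q B$, so $d_q(gV) = g d_q V \subseteq g d_q B \notin q$ after noting genericity/closure issues --- then $d_q A \in q$ and $d_q A \subseteq d_q(gV) \notin q$ would need $d_q(gV) \in q$, contradiction. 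Wait: $d_q A \in q$ and $d_q A \subseteq g\, d_q B$ forces $g\, d_q B \in q$ (upward closure of the filter $q$!), hence $d_q B \in g^{-1} q$... and I want to relate to $d_q B \notin q$. This needs $g^{-1}q$ vs $q$, which brings in minimality of $I$. So the plan is: (i) establish $\int B \neq \varnothing$ and set $V := \int B$; (ii) assume for contradiction $A \in q$, $A \subseteq gV$; (iii) apply $d_q$, use idempotency $d_q A \in q$, use $A \subseteq gV \subseteq gB$ to get $gB \in q$ hence (upward closure) and then $B \notin q$ plus the fact that $q$ lies in the minimal ideal and $e \in \varrho(B)$ to reach a contradiction, most likely by noting $gB \in q$ together with $q$ omitting nowhere dense sets forces $\int(gB) \neq \varnothing$ in a way incompatible with $B \notin q$ and $e\in\varrho(B)$ being the unique obstruction. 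I will present this skeleton, flagging step (iii)'s contradiction as the crux; the honest expectation is that the author interleaves a use of Lemma~\ref{lem:dcl-nwd}-style nowhere-dense approximation (available here since $\cA$ is $d$-closed, though not necessarily compact) to replace $d_q B$ by a left-right translate of $B$ modulo nowhere dense, which is exactly the ``types at infinity'' subtlety the subsection is named after.
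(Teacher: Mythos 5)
Your write-up never actually closes the argument: you explicitly leave the contradiction in step (iii) as an unresolved ``crux'', and the two concrete routes you sketch there do not work as stated. From $A \in q$ and $A \subseteq gV \subseteq gB$, upward closure only gives $gB \in q$, i.e.\ $B \in g^{-1}q$, which is not in tension with $B \notin q$ for an arbitrary $g$; and the hoped-for rescue via a Lemma~\ref{lem:dcl-nwd}-style approximation of $d_qB$ is not available here, since that lemma uses compactness of $G$, precisely what this subsection drops. Moreover your candidate $V = \int(B)$ is the wrong open set for the translation trick you grope toward: if $h \in A \subseteq g\int(B)$, then $h^{-1}A \subseteq (\int B)^{-1}\int B$, and this product set has no reason to sit inside $\varrho(B)$ or to be comparable with $B$ at all, which is exactly the difficulty you flag (``whether $h^{-1}A$ actually contains or is near $e$'') and never resolve.

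The missing idea is to choose $V$ by continuity of the group operations at $e$: since $e \in \varrho(B)$ and $\varrho(B)$ is open, pick a neighbourhood $V$ of $e$ with $V^{-1}V \subseteq \varrho(B)$. Then use the idempotency in the form $A \cap d_qA \in q$ (so this set is non-empty) and pick $h \in A \cap d_qA$. Because $h \in A \subseteq gV$, one gets $h^{-1}A \subseteq (gV)^{-1}gV = V^{-1}V \subseteq \varrho(B)$, so $h^{-1}A \setminus B \subseteq \varrho(B)\setminus B \in \cNWD$; because $h \in d_qA$, one gets $h^{-1}A \in q$, and since $B \notin q$ the set $h^{-1}A\setminus B$ lies in $q$, contradicting $q \cap \cNWD = \varnothing$ for $q \in I$. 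Your preliminary observations ($\int(B) \neq \varnothing$ via SBP, $q$ omitting nowhere dense sets, $d_qA \in q$ from $q \ast q = q$) are fine, but without the choice $V^{-1}V \subseteq \varrho(B)$ and the point $h \in A \cap d_qA$ the proof does not go through, so as it stands the proposal has a genuine gap rather than an alternative argument.
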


\begin{proof} Pick an open neighbourhood $V \subseteq G$ of $e$ such that $V^{-1} V \subseteq \varrho(B)$. Assume for contradiction that $g \in G$ and $A \in \cA$ satisfy $A \subseteq gV$ and $A \in q$. Since $q = q \ast q$, we have that $A \cap d_q A \in q$ and we can find some $h \in A \cap d_q A$. Then $h^{-1} A \subseteq (gV)^{-1} gV = V^{-1} V \subseteq \varrho(B)$, hence $h^{-1} A \setminus B \in \cNWD$. But $h^{-1} A \in q$ and $B \notin q$, which contradicts the fact that $q \cap \cNWD = \varnothing$.
\end{proof}

When $\cA = \cSBP$ (assuming that it is d-closed), the conclusion of Proposition~\ref{prop:idem-inf} says that $q$ lies outside every left translate of a certain open set. If $G$ is compact or even precompact, this is clearly impossible. More generally, such a type $q$ must lie outside of any $B \in \cA$ such that $\cl(B) \subseteq G$ is compact. Hence we may think of it as lying \emph{at infinity}, following the topological idea of ``approaching infinity'' as ``ultimately lying outside every compact subset''.

\begin{definition} We say that a type $q \in S(\cA)$ is \emph{at infinity} when there is a non-empty open subset $V \subseteq G$ such that whenever $A \in \cA$ and $A \subseteq gV$ for some $g \in G$, we have that $A \notin q$. The set of such types is denoted $S_{\infty}(\cA)$.
\end{definition}

\begin{corollary} \label{cor:idem-dich} If $q \in I$ is an idempotent, then $q$ is concentrated at identity or at infinity. \noproof
\end{corollary}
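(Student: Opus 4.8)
The plan is to argue by a clean dichotomy on whether the idempotent $q$ satisfies the right-hand side of the idempotency characterization~(\ref{eq:idem}). No new construction is needed: all the work has already been done in Proposition~\ref{prop:idem-inf}, and the corollary is just the bookkeeping that packages its two possible outcomes.

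Concretely, fix $q \in J(I)$ and distinguish two cases. In the first case, suppose that for every $B \in \cA$ with $e \in \varrho(B)$ we have $B \in q$. This is exactly the condition defining $S_e(\cA)$, so $q$ is concentrated around the identity and we are done. In the second case, the negation holds: there exists $B \in \cA$ with $e \in \varrho(B)$ and $B \notin q$. This is precisely the hypothesis of Proposition~\ref{prop:idem-inf} (applied to our idempotent $q \in I$), which hands us a non-empty open set $V \subseteq G$ with the property that $A \in \cA$ and $A \subseteq gV$ for some $g \in G$ force $A \notin q$. That property is the definition of $q$ being at infinity, i.e. $q \in S_{\infty}(\cA)$.

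Since the two cases are exhaustive, every idempotent $q \in I$ lies in $S_e(\cA) \cup S_{\infty}(\cA)$, which is the assertion of the corollary. I do not expect any genuine obstacle here: the only non-formal ingredient is Proposition~\ref{prop:idem-inf}, and the one delicate point in its proof — combining $q = q \ast q$ with $q \cap \cNWD = \varnothing$ to push a translate of $A$ into $\varrho(B)$ — has already been carried out there. The corollary is therefore immediate once that proposition is available.
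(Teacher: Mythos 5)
Your proof is correct and is exactly the argument the paper intends: the corollary is stated with no proof because it is immediate from the case split you describe — either $q$ satisfies the defining condition of $S_e(\cA)$, or the negation holds and Proposition~\ref{prop:idem-inf} (applicable since $q \in J(I)$) yields the open set witnessing $q \in S_{\infty}(\cA)$. Nothing is missing.
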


The following example suggests the existence of ``smaller'' and ``bigger'' infinities, although the notion seems difficult to capture in general.
\begin{example} Let $M = (R, +, \cdot, \le)$ be a real closed, non-archimedean field and $G = (R, +)$ a group definable in $M$. Pick any $\varepsilon > 0$ (e.g. infinitesimal) and let $q_{\varepsilon} \in S_{\ext, G}(M)$ be the type corresponding to the left half of the cut $(A_{\varepsilon}, R \setminus A_{\varepsilon})$, where
\[
A_{\varepsilon} = \bigcup_{n \in \NN} (-\infty, n \cdot \varepsilon).
\]
More precisely, $q_{\varepsilon}$ is the unique type containing the family 
\[
\{ A_{\varepsilon} \} \cup \{ (a, \infty) : a \in A_{\varepsilon} \}.
\]
Clearly $q_{\varepsilon}$ is an idempotent and $q_{\varepsilon} \in S_{\infty}(\Def_{\ext, G}(M))$, as witnessed by the open set $(0, \varepsilon)$. However, $q_{\varepsilon}$ need not be a counterexample to (\ref{eq:idem}), since it is not almost periodic unless $A_{\varepsilon} = R$.

Furthermore, let $q_{\infty} \in S_{\ext, G}(M)$ be the unique type containing the family $\{ (a, \infty) : a \in R \}$. Then $q_{\infty} \cap \cNWD = \varnothing$, $I := \{ q_{\infty} \}$ is a minimal ideal of $S_{\infty}(\Def_{\ext, G}(M))$ and $q_{\infty} \in J(I)$. Now the type $q_{\infty}$ is at infinity, witnessed by $V = (0, 1)$, and it is a counterexample to (\ref{eq:idem}).
\end{example}

The idea of a type at infinity becomes distorted if the algebra $\cA$ is not sufficiently rich. For instance, it may happen that a type is both concentrated at identity and at infinity. In the extreme case of $\cA = \{ \varnothing, G \}$, any non-empty, proper open subset $V \subseteq G$ witnesses that the unique type in $S(\cA)$ lies at infinity. Below is another example.

\begin{example} Consider the compact group
\[
G = \{ z \in \CC : |z| = 1 \} \le \CC^{\times}
\]
equipped with the standard topology. For $a, b \in G$, $a \neq b$, let $(a, b)$ denote the open arc going from $a$ to $b$ in the positive direction. Let $\cA$ denote the family of all subsets of $G$ of the form $S \symdif N$, where $S$ is a union of finitely many open arcs satisfying $(\forall z \in S) \, {-z} \in S$ and $N$ is nowhere dense. Clearly $\cA \le \cSBP$ and it follows from Lemma~\ref{lem:dcl-nwd} that $\cA$ is d-closed.

The types concentrated at identity are precisely $\hat{1}^-$ and $\hat{1}^+$, where $\hat{1}^-$ is the unique extension of the family
\[
\{ (g, 1) \cup (-g, -1) : g \in G \setminus \{ 1 \} \} \cup \{ G \setminus N : N \in \cNWD \}
\]
and $\hat{1}^+$ is the unique extension of 
\[
\{ (1, g) \cup (-1, -g) : g \in G \setminus \{ 1 \} \} \cup \{ G \setminus N : N \in \cNWD \}.
\]
There is a unique minimal ideal of $S(\cA)$ and it consists of all types $q \in S(\cA)$ such that $q \cap \cNWD = \varnothing$. All these types are at infinity, as witnessed by the open arc $(1, i)$. In particular, $\hat{1}^-$ and $\hat{1}^+$ are almost periodic types that are both concentrated at identity and at infinity.
\end{example}

The following remark gives a technical condition regarding the set $S_{\infty}(\cA)$ and its relation to $S_e(\cA)$.

\begin{remark} \label{rem:inf-ex} \leavevmode
\begin{enumerate}[label=(\roman{*})]
\item $S_{\infty}(\cA) \cap S_e(\cA) = \varnothing$ if and only if every non-empty open $V \subseteq G$ contains some $A \in \cA$ with non-empty interior.
\item $S_{\infty}(\cA) = \varnothing$ if and only if every non-empty open $V \subseteq G$ contains some generic $A \in \cA$.
\end{enumerate}
\end{remark}

\begin{proof} We prove all implications by contraposition.

\vspace{2mm} \noindent 
(i) $({\implies})$ Take a non-empty open $V \subseteq G$ such that every $A \in \cA$ contained in $V$ has empty interior. The family
\[
\mathcal{R} = \{ A \in \cA : e \in \varrho(A) \} \cup \{ G \setminus N : N \in \cNWD \}
\]
has the finite intersection property. Indeed, take any $A_1, \ldots, A_n \in \cA$ with $e \in \varrho(A_i)$ and $N \in \cNWD$. Then $e \in \varrho(A_1) \cap \ldots \cap \varrho(A_n) = \varrho(A_1 \cap \ldots \cap A_n)$, hence $A_1 \cap \ldots \cap A_n \notin \cNWD$ and so its intersection with $G \setminus N$ is not empty.

Take any $q \in S(\cA)$ extending $\mathcal{R}$. Then clearly $q \in S_e(\cA)$. Moreover, $V$ witnesses that $q \in S_{\infty}(\cA)$, because if $A \in \cA$ is contained in $gV$ for some $g \in G$, then $A$ is nowhere dense and so $A \notin q$. Hence $S_{\infty}(\cA) \cap S_e(\cA) \neq \varnothing$.

\vspace{2mm} \noindent 
$({\impliedby})$ Take $q \in S_{\infty}(\cA) \cap S_e(\cA)$. Since $q \in S_{\infty}(\cA)$, we can find a non-empty open $V \subseteq G$ such that $A \notin q$ whenever $A \in \cA$ and $A \subseteq gV$ for some $g \in G$. It remains to show that each $A \in \cA$ contained in $V$ has empty interior. Indeed, if $A \in \cA$ is contained in $V$ and $g \in \int(A)$, then $e \in \varrho(g^{-1} A)$. Thus also $g^{-1} A \in q$ and $g^{-1} A \subseteq g^{-1} V$, contradicting the choice of $V$.

\vspace{2mm} \noindent 
(ii) $({\implies})$ Take a non-empty open subset $V \subseteq G$ such that any $B \in \cA$ contained in $V$ is not generic. The following family has the finite intersection property:
\[
\mathcal{R} = \{ G \setminus gA : A \in \cA, A \subseteq V \text{ and } g \in G \}.
\]
Indeed, take any $A_1, \ldots, A_n \in \cA$ contained in $V$ and $g_1, \ldots, g_n \in G$. Then $B := A_1 \cup \ldots \cup \ldots A_n \subseteq V$ is not generic and so $g_1 A_1 \cup \ldots \cup g_n A_n \neq G$.

Now any $q \in S(\cA)$ extending $\mathcal{R}$ clearly belongs to $S_{\infty}(\cA)$.

\vspace{2mm} \noindent 
$({\impliedby})$ Take $q \in S_{\infty}(\cA)$ and non-empty open $V \subseteq G$ such that $A \notin q$ whenever $A \in \cA$ and $A \subseteq gV$ for some $g \in G$. If $A \in \cA$ is a subset of $V$ and $g_1, \ldots, g_n \in G$, then $g_i A \subseteq g_i V$, hence $g_i A \notin q$ and so $g_1 A \cup \ldots \cup g_n A \neq G$.
\end{proof}

We call the algebra $\cA$ \emph{reflective} if every non-empty open $V \subseteq G$ contains a subset $A \in \cA$ with non-empty interior. Remark~\ref{rem:inf-ex} says that $\cA$ is reflective if and only if the types concentrated at identity are different from those at infinity. For that reason we consider irreflective algebras degenerate and of secondary importance. It also quickly follows from Remark~\ref{rem:inf-ex} that when $\cA$ is reflective, then types at infinity exist precisely when $G$ is not precompact:

\begin{corollary} \label{cor:pre-inf} \leavevmode
\begin{enumerate}[label=(\alph{*})]
\item If $G$ is not precompact, then $S_{\infty}(\cA) \neq \varnothing$.
\item Assume $\cA$ is reflective. Then the converse of (a) holds.
\end{enumerate}
\end{corollary}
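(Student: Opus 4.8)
The plan is to prove Corollary~\ref{cor:pre-inf} directly from Remark~\ref{rem:inf-ex}(ii), recalling that ``$G$ precompact'' means every open neighbourhood of $e$ is generic. For part (a) I would argue by contraposition: assume $S_{\infty}(\cA) = \varnothing$. By Remark~\ref{rem:inf-ex}(ii), every non-empty open $V \subseteq G$ contains some generic $A \in \cA$; a fortiori $V$ itself is generic (a superset of a generic set is generic). In particular every open neighbourhood of $e$ is generic, so $G$ is precompact. For part (b), assume $\cA$ is reflective and $S_{\infty}(\cA) = \varnothing$; I must deduce $G$ is precompact, which by part (a)'s proof is already done — but note that part (b) as stated is the converse of (a), i.e. ``$S_{\infty}(\cA) = \varnothing \implies G$ precompact'', and this follows exactly as above \emph{without} using reflectivity. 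So the real content of (b) must be the direction ``$G$ precompact $\implies S_{\infty}(\cA) = \varnothing$'', and here reflectivity is essential.

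For that direction I would argue: suppose $G$ is precompact and $\cA$ is reflective. Take any non-empty open $V \subseteq G$. By reflectivity there is $A \in \cA$ with $A \subseteq V$ and $\int(A) \neq \varnothing$; pick $g \in \int(A)$, so $U := g^{-1}\int(A)$ is an open neighbourhood of $e$, hence generic by precompactness, say $G = h_1 U \cup \ldots \cup h_n U$. Then $G = (h_1 g) A \cup \ldots \cup (h_n g) A$ after replacing $h_i$ by $h_i g$, so $A$ is generic and $A \subseteq V$. Thus every non-empty open $V$ contains a generic member of $\cA$, and Remark~\ref{rem:inf-ex}(ii) gives $S_{\infty}(\cA) = \varnothing$. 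Combining with part (a), when $\cA$ is reflective we get $S_{\infty}(\cA) \neq \varnothing \iff G$ is not precompact, which is what ``the converse of (a) holds'' asserts.

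The one subtlety — and the only place that could be called an obstacle — is making sure the quantifier structure in Remark~\ref{rem:inf-ex}(ii) is being used correctly: it says $S_{\infty}(\cA) = \varnothing$ iff \emph{every} non-empty open $V$ contains \emph{some} generic $A \in \cA$, so for (a) I extract the condition from the hypothesis $S_{\infty}(\cA)=\varnothing$, while for the precompact-implies-empty direction I must verify the condition to conclude $S_{\infty}(\cA)=\varnothing$. Everything else is routine: the superset-of-generic-is-generic observation for (a), and the translation trick turning a finite cover of $G$ by translates of an open neighbourhood of $e$ into a finite cover by translates of $A$ for (b). No step requires more than a line, so I would present this as a short proof rather than a sketch.

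\begin{proof}[Proof of Corollary~\ref{cor:pre-inf}.]
(a) We prove the contrapositive. Assume $S_{\infty}(\cA) = \varnothing$. By Remark~\ref{rem:inf-ex} (ii), every non-empty open $V \subseteq G$ contains some generic $A \in \cA$; since a superset of a generic set is generic, $V$ itself is generic. In particular every open neighbourhood of the identity is generic, so $G$ is precompact.

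(b) First note that the same argument as in (a) shows, without using reflectivity, that $S_{\infty}(\cA) = \varnothing$ implies $G$ is precompact. It remains to prove the reverse implication under the assumption that $\cA$ is reflective: if $G$ is precompact, then $S_{\infty}(\cA) = \varnothing$. Let $V \subseteq G$ be non-empty and open. By reflectivity there is $A \in \cA$ with $A \subseteq V$ and $\int(A) \neq \varnothing$. Pick $g \in \int(A)$, so $U := g^{-1} \int(A)$ is an open neighbourhood of $e$, hence generic as $G$ is precompact; write $G = h_1 U \cup \ldots \cup h_n U$ for some $h_1, \ldots, h_n \in G$. Then
\[
G = (h_1 g) \int(A) \cup \ldots \cup (h_n g) \int(A) \subseteq (h_1 g) A \cup \ldots \cup (h_n g) A,
\]
so $A$ is generic. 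Thus every non-empty open $V \subseteq G$ contains a generic member of $\cA$, and Remark~\ref{rem:inf-ex} (ii) yields $S_{\infty}(\cA) = \varnothing$. Combining this with part (a), when $\cA$ is reflective we have $S_{\infty}(\cA) \neq \varnothing$ if and only if $G$ is not precompact.
\end{proof}
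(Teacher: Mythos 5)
Your proof is correct and takes essentially the same route as the paper: part (a) is exactly the ``follows directly from Remark~\ref{rem:inf-ex} (ii)'' argument written out in full, and for part (b) the paper likewise assumes $G$ precompact, uses reflectivity to place some $A \in \cA$ with non-empty interior inside each non-empty open $V$, observes that such an $A$ is generic, and invokes Remark~\ref{rem:inf-ex} (ii). The only blemish is the planning paragraph's momentary mislabelling of the converse of (a) as its contrapositive, but your formal proof establishes the correct direction (under reflectivity, $G$ precompact $\implies S_{\infty}(\cA) = \varnothing$), so the argument stands as written.
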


\begin{proof} (a) Follows directly from Remark~\ref{rem:inf-ex} (ii).

\vspace{2mm} \noindent
(b) Assume that $G$ is precompact. Since $\cA$ is reflective, every non-empty open $V \subseteq G$ contains some $A \in \cA$ with non-empty interior, hence generic. By Remark~\ref{rem:inf-ex} (ii), it follows that $S_{\infty}(\cA) = \varnothing$.
\end{proof}

It is natural to think that if infinity exists, then being at infinity is a more generic condition that not. Below we make this thought precise and prove it. 

\begin{proposition} \label{prop:inf-per} If $S_{\infty}(\cA) \neq \varnothing$, then every almost periodic point of $S(\cA)$ lies at infinity.
\end{proposition}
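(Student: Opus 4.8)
\noindent\textit{Proof plan.} The idea is to reformulate $S_\infty(\cA)$ in terms of ideals of $\cA$ and then exploit the semigroup structure on $S(\cA)$. For a nonempty open $V \subseteq G$ let $\widetilde{\mathcal{S}}_V$ be the $G$-ideal of $\cA$ generated by $\{A \in \cA : A \subseteq gV \text{ for some } g \in G\}$, and put
\[
Y_V \;=\; \{ q \in S(\cA) : q \cap \widetilde{\mathcal{S}}_V = \varnothing \} \;=\; \bigcap \{ [A]^c : A \in \widetilde{\mathcal{S}}_V \}.
\]
Then $q \in S_\infty(\cA)$ iff $q \in Y_V$ for some such $V$; moreover each $Y_V$ is a closed $G$-invariant subset of $S(\cA)$ (an intersection of clopen sets, with $\widetilde{\mathcal{S}}_V$ $G$-invariant), hence a subflow, and it is nonempty precisely when $\widetilde{\mathcal{S}}_V \ne \cA$. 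The hypothesis $S_\infty(\cA) \ne \varnothing$ thus gives a nonempty open $V_0$ with $Y_{V_0} \ne \varnothing$.

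The key point is that $Y_V$ is a left ideal of $(S(\cA), \ast)$. Indeed, if $q \in Y_V$ and $A \in \cA$ with $A \subseteq gV$, then for every $h \in G$ the set $h^{-1}A$ is again a generator of $\widetilde{\mathcal{S}}_V$, so $h^{-1}A \notin q$, whence $d_q(A) = \{ h : h^{-1}A \in q \} = \varnothing$. As $d_q$ is a Boolean homomorphism it annihilates all of $\widetilde{\mathcal{S}}_V$, so for any $r \in S(\cA)$ and $B \in \widetilde{\mathcal{S}}_V$ we have $B \in r \ast q \iff d_q(B) \in r \iff \varnothing \in r$, which is false; hence $r \ast q \in Y_V$. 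Consequently $Y_{V_0}$, being a nonempty left ideal, contains a minimal ideal $M_0$.

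Now let $p$ be an almost periodic point, so $M := \cl(G \cdot p)$ is a minimal subflow, hence a minimal ideal of $S(\cA^d) = S(\cA)$ (using that $\cA$ is $d$-closed, together with Theorem~\ref{thm:iso}). It suffices to find a nonempty open $V$ with $M \subseteq Y_V$, for then $p \in Y_V \subseteq S_\infty(\cA)$. Pick $s \in M$; by minimality $S(\cA) \ast s = M$, and then $M_0 \ast s$ is a nonempty left ideal contained in $M$, so $M_0 \ast s = M$. Thus the $G$-flow morphism $\rho_s(r) = r \ast s$ maps $M_0$ onto $M$, and for every $A \in \cA$,
\[
[A] \cap M \ne \varnothing \iff [d_s(A)] \cap M_0 \ne \varnothing .
\]
Since $M_0 \subseteq Y_{V_0}$, this gives the implication $d_s(A) \in \widetilde{\mathcal{S}}_{V_0} \Rightarrow [A] \cap M = \varnothing$. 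So the whole proof reduces to the geometric statement: \emph{there is a nonempty open $V$ such that $d_s(A) \in \widetilde{\mathcal{S}}_{V_0}$ for every $A \in \cA$ contained in a translate of $V$.} This last point is the main obstacle. One checks easily that if $h, h' \in d_s(A)$ and $A \subseteq gV$, then (choosing $x$ with $h^{-1}A, h'^{-1}A$ both containing $x$) $h^{-1}h' = x(v^{-1}v')x^{-1}$ for some $v,v' \in V$, i.e. any two elements of $d_s(A)$ differ by a conjugate of an element of $V^{-1}V$. When $G$ is abelian this forces $d_s(A)$ to lie in a single translate of $V^{-1}V$, and it is enough to choose $V$ so small that $V^{-1}V$ is contained in a translate of $V_0$ (possible by continuity of the group operations: take a symmetric neighbourhood $U$ of $e$ with $U^2 \subseteq x_0^{-1}V_0$ for some $x_0 \in V_0$, and set $V = U$); then $d_s(A) \in \widetilde{\mathcal{S}}_{V_0}$ as required. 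For a general topological group the conjugation has to be controlled as well, and one needs a more careful choice of $V$ — this is where the real work lies. Granting this, $M \subseteq Y_V \subseteq S_\infty(\cA)$, and since every almost periodic point lies in such a minimal ideal $M$, the proof is complete. (A more dynamical alternative to the crux: if some $A \in \widetilde{\mathcal{S}}_{V_0}$ belonged to an almost periodic $p$, one tries to produce a net of translates $g_\alpha \cdot p$ converging into $Y_{V_0}$, which would give a proper nonempty subflow inside $\cl(G \cdot p)$, contradicting its minimality.)
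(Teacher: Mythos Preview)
Your semigroup-theoretic approach is conceptually clean up to the step you yourself flag as unfinished, but that step is a genuine gap, not a technicality. Your computation shows that any two elements $h,h'$ of $d_s(A)$ satisfy $h^{-1}h' = x(v^{-1}v')x^{-1}$ for some $x \in G$ and $v,v' \in V$; in the abelian case this pins $d_s(A)$ inside a single translate of $V^{-1}V$ and your argument closes. For non-abelian $G$, however, the conjugacy closure of a small neighbourhood of the identity can be enormous (e.g.\ in $\mathrm{SL}_2(\RR)$ it is everything), so there is no choice of $V$ that forces $d_s(A)$ into a translate of $V_0$, and hence no reason to expect $d_s(A) \in \widetilde{\mathcal{S}}_{V_0}$. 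Equivalently, what you would need is that some $Y_V$ is a \emph{two-sided} ideal (or at least that $Y_{V_0} \ast s \subseteq Y_V$ for suitable $V$), and this does not follow from your setup. The alternative you sketch at the end is also not an argument: producing a net of translates of $p$ that converges into $Y_{V_0}$ is precisely the difficulty, not a way around it.

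The paper's proof avoids the semigroup structure and the map $d_s$ entirely, working purely at the level of flows. It first isolates, once and for all, a non-empty open $W$ such that for every $A \in \cA$ contained in $W$, no finite union $g_1A \cup \ldots \cup g_nA$ is dense (a short case split: if $\cA$ is reflective then $G$ is not precompact and one takes $W$ with $WW^{-1}$ inside a non-generic neighbourhood of $e$; if not, $W$ is any open set witnessing irreflexivity). Now if $p$ were almost periodic but not at infinity, there would be $A \in p$ contained in a translate of $W$; minimality of $\cl(G \cdot p)$ (Remark~\ref{rem:min-gen}) gives $\cl(G \cdot p) \subseteq g_1[A] \cup \ldots \cup g_n[A]$, so $B := G \setminus (g_1A \cup \ldots \cup g_nA)$ has non-empty interior. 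Applying ``$p$ not at infinity'' a second time, now to $\int(B)$, yields $A' \in p$ inside a translate of $B$, whence $\cl(G \cdot p) \cap [B] \neq \varnothing$ --- a contradiction. The double use of ``not at infinity'' against two different open sets is exactly what replaces the control over $d_s$ that your approach lacks.
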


\begin{proof} First we prove that there is a non-empty open $W \subseteq G$ such that for each $A \in \cA$ contained in $W$ and $g_1, \ldots, g_n \in G$, the set $g_1 A \cup \ldots \cup g_n A$ is not dense. For this purpose we consider two cases.
\begin{enumerate}[leftmargin=7mm, label=(\roman{*})]
\item $\cA$ is reflective.

By Corollary~\ref{cor:pre-inf}, we get that $G$ is not precompact, so we can find a non-empty open neighbourhood $V \subseteq G$ of $e$ that is not generic. Let $W \subseteq G$ be an open neighbourhood of $e$ such that $W W^{-1} \subseteq V$. It suffices to show that for any $g_1, \ldots, g_n \in G$ the set $U := g_1 W \cup \ldots \cup g_n W$ is not dense. Indeed, given $g_1, \ldots, g_n \in G$, we can find $a \notin g_1 V \cup \ldots \cup g_n V$. In particular, $a \notin UW^{-1}$, thus $aW \cap U = \varnothing$.

\item $\cA$ is not reflective.

Take a non-empty open $W \subseteq G$ such that every $A \in \cA$ contained in $W$ has empty interior, and so is nowhere dense. Then $W$ is as desired.
\end{enumerate}

Now assume for contradiction that $p \in S(\cA) \setminus S_{\infty}(\cA)$ is almost periodic and pick $A \in p$ such that $A \subseteq gW$ for some $g \in G$. Since $\cl(G \cdot p)$ is a minimal subflow and $\cl(G \cdot p) \cap [A] \neq \varnothing$, by Remark \ref{rem:min-gen}, there are $g_1, \ldots, g_n \in G$ such that $\cl(G \cdot p) \subseteq g_1[A] \cup \ldots \cup g_n[A]$. Let $B = G \setminus (g_1A \cup \ldots \cup g_nA)$. Then $\int(B) \neq \varnothing$, so we can find $A' \in p$ such that $A' \subseteq hB$ for some $h \in G$. But then $\cl(G \cdot p) \cap [B] \neq \varnothing$, which is a contradiction.
\end{proof}

A consequence of the last result is the following dichotomy. If $G$ is precompact, all idempotents of $I$ are concentrated around the identity and an explicit description of the Ellis group follows. On the other hand, if $G$ is not precompact, then by Corollary~\ref{cor:pre-inf} and Proposition~\ref{prop:inf-per}, all minimal subflows of $S(\cA)$, along with their idempotents, are at infinity. It is thus likely that a completely different approach is necessary to describe the Ellis group.

\subsection{Regular open sets}
\label{sub:reg}

In the discussion after Proposition~\ref{prop:im-ell} we explained the significance of the problem of characterizing maximal generic $G$-subalgebras of a given d-closed $G$-algebra $\cA \le \cP(G)$. Following the spirit of the section, we continue to focus on the more tractable variant of the problem where $\cA \le \cSBP$. Even this case appears difficult due to the mysterious nature of strong genericity combined with the complexity of arbitrary nowhere dense sets. In this subsection we aim to reduce the problem to its variant in the realm of regular open sets, which we consider a non-trivial simplification. All the results from this subsection are joint with Newelski.

Let us recall some definitions related to $G$-algebras of subsets of $G$ while naturally extending them to arbitrary $G$-algebras. 

\begin{definition} \label{def:sgen-ext} Assume $\bA$ is a $G$-algebra.
\begin{enumerate}[label=(\roman{*})]
\item $A \in \bA$ is \emph{generic} if $\one = g_1 A \vee \ldots \vee g_n A$ for some $g_1, \ldots, g_n \in G$.
\item $\bA$ is \emph{generic} if every $A \in \bA \setminus \{ \zero \}$ is generic.
\item $A \in \bA$ is \emph{strongly generic} if the generated $G$-algebra $\< A \> \le \bA$ is generic.
\end{enumerate}
\end{definition}

\begin{definition} Assume $\bA \le \bB$ are $G$-algebras. For $A \in \bA$ and $q \in S(\bB)$, let
\[
d_q(A) = \{ h \in G : h^{-1} A \in q \}.
\]
Clearly $d_q : \bA \to \cP(G)$ is a $G$-algebra homomorphism.
\end{definition}

\begin{definition} Assume $\bA$ is a $G$-algebra and $\cA \le \cP(G)$ contains $\bigcup_{q \in S(\bA)} d_q[\bA]$. For $p \in S(\cA), q \in S(\bA)$, let
\[
p \ast q = \{ A \in \bA : d_q(A) \in p \}.
\]
Equivalently, $p \ast q = r_q(p)$, where $r_q : S(\cA) \to S(\bA)$ is the dual of $d_q : \bA \to \cA$.
\end{definition}

\begin{remark} \label{rem:hom-gen} Assume $\bA$ is generic and $\varphi : \bA \to \bB$ is a $G$-algebra homomorphism. Then $\varphi[\bA]$ is generic and $\varphi$ is a monomorphism. \noproof
\end{remark}

Throughout the subsection $G$ is an arbitrary topological group. Given any $A \in \cSBP$, define $\varrho(A) := \int(\cl(A))$. A set $A \in \cSBP$ is called \emph{regular open} if $\varrho(A) = A$. Let $\cNWD$ and $\bRO$ denote the families of all nowhere dense subsets of $G$ and all regular open subsets of $G$, respectively. We now recall some basic properties of regular open sets.

\begin{fact} \label{fact:reg} \leavevmode
\begin{enumerate}[label=(\alph{*})]
\item $\cNWD$ is a $G$-ideal of $\cSBP$.
\item For any $A \in \cSBP$, the set $U = \varrho(A)$ is a unique regular open set $U \subseteq G$ satisfying $U \symdif A \in \cNWD$. In particular, each coset of $\cNWD \trianglelefteqslant \cSBP$ contains exactly one $U \in \bRO$.
\item The bijection $\varphi : \bRO \to \cSBP / \cNWD$ defined by $\varphi(U) = U \symdif \cNWD$ gives rise to a $G$-algebra structure on $\bRO$ such that $\varphi$ becomes a $G$-algebra isomorphism. The structure can be described explicitly by
\[
\setlength{\arraycolsep}{1mm}
\begin{array}{lllll}
U \vee V & := & \varrho(U \cup V), \\[1ex]
U \wedge V & := & \varrho(U \cap V) & = & U \cap V, \\[1ex]
U^{\bot} & := & \varrho(G \setminus U) & = & \int(G \setminus U).
\end{array}
\]
In particular, typically $\bRO \not \le \cSBP$, even though $\bRO \subseteq \cSBP$.
\item $\varrho : \cSBP \to \bRO$ is a $G$-algebra epimorphism such that $\Ker \varrho = \cNWD$. Consequently, $\varrho(A) = \varrho(B)$ if and only if $A \symdif B \in \cNWD$. \noproof
\end{enumerate}
\end{fact}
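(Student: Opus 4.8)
The plan is to prove the four items in the order (a), (b), (c), (d), each resting on its predecessors, after which everything reduces to a handful of classical point-set identities for regular open sets together with the trivial remark that every nowhere dense $M$ lies in $\cSBP$ (write $M = \varnothing \symdif M$). For (a) I check the three closure conditions of a $G$-ideal inside $\cSBP$: a subset of a nowhere dense set is nowhere dense; a left translate of a nowhere dense set is nowhere dense, since $x \mapsto gx$ is a homeomorphism of $G$ and commutes with $\cl$ and $\int$; and $M_1 \cup M_2 \in \cNWD$ for $M_1, M_2 \in \cNWD$, because $\cl(M_1 \cup M_2) = \cl M_1 \cup \cl M_2$ and a union of two closed sets with empty interior has empty interior (a nonempty open $W \subseteq \cl M_1 \cup \cl M_2$ yields, on deleting the closed set $\cl M_1$, either a nonempty open subset of $\cl M_2$ or else $W \subseteq \cl M_1$; both impossible). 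Thus $\cNWD$ is a $G$-ideal of $\cSBP$, and by the $G$-algebra version of the first isomorphism theorem the quotient $\cSBP/\cNWD$ carries a natural $G$-algebra structure with quotient epimorphism $\pi$.

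For (b) the facts I need are: (i) $\varrho(A) = \int\cl(A)$ is regular open, i.e. $\int\cl\varrho(A) = \varrho(A)$ --- if $U = \int\cl A$ then $\cl U \subseteq \cl A$ gives $\int\cl U \subseteq U$, while $U$ open gives $U \subseteq \int\cl U$; (ii) $\varrho(A) \symdif A \subseteq \bd(A)$, since $\int A$ is contained in both $A$ and $\varrho(A)$ while both $A$ and $\varrho(A)$ are contained in $\cl A$, so the symmetric difference sits in $\cl A \setminus \int A = \bd(A)$; and (iii) $\bd(A) \in \cNWD$ whenever $A \in \cSBP$, because $\bd(A)$ is always closed and, by the definition of SBP, has empty interior. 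Together these give existence of $U$. For uniqueness I use that an open subset of a nowhere dense set is empty: if $U_1, U_2 \in \bRO$ both differ from $A$ by a nowhere dense set then $U_1 \symdif U_2 \in \cNWD$, and were $x \in U_1 \setminus U_2$, picking an open $W$ with $x \in W \subseteq U_1$ the set $W \cap (G \setminus \cl U_2)$ is open, nonempty (as $x \notin U_2 = \int\cl U_2$ forces $W \not\subseteq \cl U_2$), and contained in $U_1 \symdif U_2$, hence empty, a contradiction; so $U_1 = U_2$. The ``in particular'' is immediate: a coset of $\cNWD$ is $\{ B \in \cSBP : A \symdif B \in \cNWD \}$, it contains $\varrho(A)$, and it contains no other regular open set.

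For (c), item (b) says precisely that $\varphi \colon \bRO \to \cSBP/\cNWD$, $U \mapsto U \symdif \cNWD$, is a bijection; transporting the $G$-algebra structure of $\cSBP/\cNWD$ along $\varphi^{-1}$ turns $\bRO$ into a $G$-algebra and $\varphi$ into a $G$-algebra isomorphism ($G$-equivariance is automatic, since $g(A \symdif \cNWD) = gA \symdif \cNWD$ and $gU \in \bRO$ when $U \in \bRO$). The explicit formulas are obtained by computing $\varphi^{-1}$ of the quotient operations: $\varphi(U) \vee \varphi(V)$ is the coset of $U \cup V$, whose regular open representative is $\varrho(U \cup V)$, so $U \vee V = \varrho(U \cup V)$; similarly $U \wedge V = \varrho(U \cap V)$, and $\varrho(U \cap V) = U \cap V$ for regular open $U, V$ because $\cl(U \cap V) \subseteq \cl U \cap \cl V$ forces $\int\cl(U \cap V) \subseteq U \cap V$, the reverse inclusion being clear as $U \cap V$ is open; finally $U^{\bot} = \varrho(G \setminus U) = \int(G \setminus U)$, using that $G \setminus U$ is closed so $\varrho(G \setminus U) = \int(G \setminus U)$, which is regular open since $\cl\int(G \setminus U) = \cl(G \setminus \cl U) = G \setminus \int\cl U = G \setminus U$. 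The concluding remark $\bRO \not\le \cSBP$ needs no more than noting that two disjoint nonempty open intervals in $\RR$ have union strictly smaller than their join.

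For (d), one checks $\varrho = \varphi^{-1} \circ \pi$: indeed $\varphi(\varrho(A))$ is the coset of $\varrho(A)$, which by (b) equals the coset of $A$, i.e. $\pi(A)$. As a composite of the $G$-algebra epimorphism $\pi$ with the $G$-algebra isomorphism $\varphi^{-1}$, $\varrho$ is a $G$-algebra epimorphism; its kernel is $\{ A \in \cSBP : \int\cl A = \varnothing \} = \cNWD$ directly from the definition of nowhere dense, and the consequence $\varrho(A) = \varrho(B) \iff A \symdif B \in \cNWD$ is the standard statement that a Boolean homomorphism identifies two elements exactly when their symmetric difference lies in its kernel. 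The entire argument is classical regular-open-algebra bookkeeping; the single place where the hypothesis genuinely bites is step (b)(iii), where ``$A$ has SBP'' is precisely what makes $\bd(A)$ nowhere dense, and the only mild care needed elsewhere is to carry the $G$-action through each construction. I therefore do not anticipate a real obstacle --- the ``hard'' part is just to assemble the standard facts cleanly and in the right order.
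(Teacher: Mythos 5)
Your proof is correct, and it is exactly the standard regular-open-algebra argument that the paper implicitly invokes: the Fact is stated with no proof (marked as a known classical result), so there is nothing in the paper your route diverges from. All the individual steps check out — in particular the key uses of SBP (that $\bd(A)$ is nowhere dense, giving existence in (b)) and the uniqueness argument via a nonempty open subset of a nowhere dense set are exactly where the hypothesis is needed, and the closure/ideal/transport-of-structure bookkeeping in (a), (c), (d) is sound.
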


The idea of the subsection is based on the following observation, which establishes a certain correspondence between generic $G$-subalgebras of $\cSBP$ and $\bRO$.

\begin{remark} \label{rem:gen-reg} \leavevmode
\begin{enumerate}[label=(\roman{*})]
\item Assume $\bB \le \bRO$ is generic and $q \in S(\bRO)$. Then $\cB := d_q[\bB] \le \cP(G)$ is generic.
\item Assume $\cB \le \cSBP$ is generic. Then there is a generic $\bB \le \bRO$ and some $q \in S(\bRO)$ such that $\varrho : \cB \to \bB$ and $d_q : \bB \to \cB$ are mutually inverse $G$-algebra isomorphisms.
\end{enumerate}
\end{remark}

\begin{proof} (i) Follows from Remark~\ref{rem:hom-gen}.

\vspace{2mm} \noindent
(ii) Let $\bB := \varrho[\cB] \le \bRO$. Then $\bB$ is generic and $\varrho : \cB \to \bB$ is an isomorphism. Therefore $\varrho^* : S(\bB) \to S(\cB)$ is a bijection, so we can find $u \in S(\bB)$ such that $\varrho^*(u) = \hat{e} \in S(\cB)$. Let $q \in S(\bRO)$ be an arbitrary extension of $u$. It remains to show that $d_q \circ \varrho = \id_{\cB}$. Take $B \in \cB$. Then $d_q(\varrho(B)) = d_u(\varrho(B)) = d_{\hat{e}}(B) = B$, where the second equality follows from Fact~\ref{fact:dual}.
\end{proof}

Now fix $\cA \le \cSBP$ that is d-closed. Recall that by Proposition~\ref{prop:im-ell}, if $\cB \le \cA$ is maximal generic, the Ellis group of $S(\cA)$ can be recovered from $\cB$ as
\[
\cE = \{ q \in S(\cB) : d_q[ \cB ] \subseteq \cB \}.
\]
The general goal is to describe maximal generic $G$-subalgebras $\cB \le \cA$ and then the group $\cE$. We propose the following approach: let $\bA := \varrho[\cA] \le \bRO$ and assume we managed to find a maximal generic $G$-subalgebra $\bB \le \bA$. Then we pick $q \in S(\bRO)$ and let $\cB := d_q[\bB]$, which in a perfect world will be a maximal generic $G$-subalgebra of $\cA$ isomorphic to $\bB$. Finally, we recover $\cE$ from $\cB$. 

Although Remark~\ref{rem:gen-reg} guarantess that every generic $\cB \le \cA$ can be obtained this way from some generic $\bB \le \bA$, there are two issues with this approach:
\begin{enumerate}
\item It is not evident whether $\cB := d_q[\bB] \le \cA$ is always maximal generic.
\item The condition $d_q[\cB] \subseteq \cB$ is not invariant under $G$-algebra isomorphism. Therefore in general it is not possible to retrieve $\cE$ directly from an isomorphic copy of $\cB$, such as $\bB$. Thus it is necessary to work through the isomorphism $d_q$, which may be complicated.
\end{enumerate}
These issues are addressed below, starting with the second one. For this purpose we define a subset $\fU \subseteq S(\bRO)$ such that any isomorphism $d_v$, where $v \in \fU$, may be considered relatively simple. We also identify which generic $\cB \le \cA$ can be obtained from a generic $\bB \le \bA$ via such an isomorphism.

\begin{definition} \leavevmode
\begin{enumerate}[label=(\alph{*})]
\item Let $\fU = \{ v \in S(\bRO) : V \in v \text{ for every } V \in \bRO \text{ with } e \in V \}$.
\item $A \subseteq G$ is \emph{tidy} if $V \subseteq A \subseteq \cl(V)$ for some $V \in \bRO$.
\item $\cA \le \cSBP$ is \emph{tidy} if each $A \in \cA$ is tidy.
\end{enumerate}
\end{definition}
The next proposition refines Remark~\ref{rem:gen-reg} by saying that generic algebras $\cB \le \cSBP$ that can be obtained from some generic $\bB \le \bRO$ via an ultrafilter $v \in \fU$ are precisely those that are tidy.
\begin{proposition} \label{prop:gen-reg} \leavevmode
\begin{enumerate}[label=(\roman{*})]
\item Assume $\bB \le \bRO$ is generic and $v \in \fU$. Then $\cB := d_v[\bB] \le \cSBP$ is tidy and generic. Moreover, $\varrho : \cB \to \bB$ and $d_v : \bB \to \cB$ are mutually inverse $G$-algebra isomorphisms.
\item Assume $\cB \le \cSBP$ is tidy and generic. Then there is a generic $\bB \le \bRO$ and some $v \in \fU$ such that $\varrho : \cB \to \bB$ and $d_v : \bB \to \cB$ are mutually inverse $G$-algebra isomorphisms.
\end{enumerate}
\end{proposition}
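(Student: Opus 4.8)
The plan is to prove both directions by unwinding the defining equalities and leaning on Remark~\ref{rem:gen-reg}, Fact~\ref{fact:reg} and Fact~\ref{fact:dual}, with the only genuinely new ingredient being the bookkeeping needed to control the ``tidy'' condition via ultrafilters in $\fU$.

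For part (i), assume $\bB \le \bRO$ is generic and $v \in \fU$, and set $\cB := d_v[\bB]$. First, $d_v : \bB \to \cP(G)$ is a $G$-algebra homomorphism whose domain is generic, so by Remark~\ref{rem:hom-gen} it is a monomorphism and $\cB$ is generic. The key point is to show $\varrho \circ d_v = \id_{\bB}$, i.e. that $\varrho(d_v(V)) = V$ for every $V \in \bRO \cap \bB$; this simultaneously shows $\cB \le \cSBP$ (since $\varrho$ being defined on $d_v(V)$ forces $d_v(V) \in \cSBP$, or more directly one checks $d_v(V)$ has SBP), gives the inverse isomorphism, and yields tidiness. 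Concretely, I would show $V \subseteq d_v(V) \subseteq \cl(V)$ directly from the definition of $\fU$: if $h \in V$ then $e \in h^{-1}V \in \bRO$, so $h^{-1}V \in v$ because $v \in \fU$, hence $h \in d_v(V)$; conversely if $h \notin \cl(V)$ then $e \notin \cl(h^{-1}V)$, so $e \in \int(G \setminus h^{-1}V) = (h^{-1}V)^{\bot} \in \bRO$, which lies in $v$ by the $\fU$ property, and since $v$ is an ultrafilter and $(h^{-1}V)^{\bot} = \varrho(G \setminus h^{-1}V)$ is the complement of $h^{-1}V$ in $\bRO$ modulo nowhere dense sets — here I must be slightly careful, working with the representative and using that $h^{-1}V$ and its $\bot$ partition $G$ up to a nowhere dense set — I get $h^{-1}V \notin v$, i.e. $h \notin d_v(V)$. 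Thus $d_v(V)$ is tidy with witness $V$, so $\cB$ is tidy, and $\varrho(d_v(V)) = V$ by Fact~\ref{fact:reg}(b). That $\varrho \restriction \cB$ and $d_v \restriction \bB$ are mutually inverse then follows, and $\varrho$ restricted to a tidy generic algebra is automatically an isomorphism onto its image.

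For part (ii), assume $\cB \le \cSBP$ is tidy and generic. Let $\bB := \varrho[\cB] \le \bRO$; this is a generic $G$-subalgebra by Fact~\ref{fact:reg}(d) and Remark~\ref{rem:hom-gen}, and $\varrho : \cB \to \bB$ is a $G$-algebra isomorphism (injectivity holds because distinct tidy sets with the same regular open kernel would have to agree, as tidiness pins a set between $V$ and $\cl V$). Hence $\varrho^* : S(\bB) \to S(\cB)$ is a bijection, and I pick $u \in S(\bB)$ with $\varrho^*(u) = \hat e \in S(\cB)$. The task is to extend $u$ to some $v \in \fU \subseteq S(\bRO)$; the natural candidate is the ultrafilter generated by $u$ together with the filter $\{V \in \bRO : e \in V\}$, and one must check this family has the finite intersection property. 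This is where I expect the only real subtlety: given $B \in \cB$ with $e \in \varrho(B) \in u$ — equivalently $\varrho(B) \in u$ and $e \in \varrho(B)$ — and a basic regular open neighbourhood $V \ni e$, I need $\varrho(B) \wedge V \ne \zero$ in $\bRO$, which holds because $\varrho(B) \cap V$ is a non-empty open set (it contains a neighbourhood of $e$, using tidiness of $B$ to locate $e$ in the interior). More carefully, since $\cB$ is tidy, $\hat e$ being the restriction means exactly that $e \in V_B$ for the regular open witness $V_B$ of each $B \in \hat e$, so the two filters are compatible. Once $v \in \fU$ extends $u$ is obtained, part (i) applies to $\bB$ and $v$, giving that $d_v[\bB]$ is tidy and generic with $\varrho : d_v[\bB] \to \bB$ an isomorphism inverse to $d_v$; and finally $d_v \circ \varrho = \id_{\cB}$ on $\cB$ because for $B \in \cB$, $d_v(\varrho(B)) = d_u(\varrho(B)) = d_{\hat e}(B) = B$ by Fact~\ref{fact:dual} and Remark~\ref{rem:dlim}(i), so $d_v[\bB] = \cB$ and the two maps are mutually inverse as claimed.

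The main obstacle is the finite-intersection-property verification in (ii) — ensuring the filter of regular open neighbourhoods of $e$ is consistent with the pulled-back ultrafilter $u$ — together with the parallel bookkeeping in (i) that distinguishes $h^{-1}V$ from its complement inside the Boolean algebra $\bRO$ (where complement is $\bot$, not set-theoretic complement) when reading off membership in $v$. Both are handled by translating everything back to genuine open subsets of $G$ and invoking tidiness to keep $e$ in the interior; the rest is routine diagram-chasing with Fact~\ref{fact:dual}.
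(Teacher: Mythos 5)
Your overall route is the same as the paper's: in (i) you prove $V \subseteq d_v(V) \subseteq \cl(V)$ directly from the definition of $\fU$ and read off tidiness, SBP and $\varrho \circ d_v = \id_{\bB}$; in (ii) you follow Remark~\ref{rem:gen-reg}(ii) and reduce everything to extending $u \in S(\bB)$ to some $v \in \fU$ via a finite intersection property check. Part (i) is correct as written. However, two justifications in part (ii) are genuinely wrong as stated, even though the statements they are meant to support are true.

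First, injectivity of $\varrho \restriction \cB$ does \emph{not} hold ``because distinct tidy sets with the same regular open kernel would have to agree'': tidiness only pins a set between $V$ and $\cl(V)$, and two tidy sets can differ on $\bd(V)$ while having the same kernel (e.g.\ $[a,b)$ and $(a,b]$ on the circle, both with kernel $(a,b)$, as in Example~\ref{ex:reg-s1}). Injectivity is instead an instance of Remark~\ref{rem:hom-gen}: since $\cB$ is generic, any $G$-algebra homomorphism out of it is a monomorphism (a nonzero element of the kernel would be generic, but it lies in $\cNWD$). Second, and more importantly, your FIP verification assumes $e \in \varrho(B)$ (``$e \in V_B$ for the regular open witness of each $B \in \hat e$'', so that $\varrho(B) \cap V$ ``contains a neighbourhood of $e$''). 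This is not available: from $B \in \hat e$ you only get $e \in B$, and tidiness gives $U' \subseteq B \subseteq \cl(U')$ with $\varrho(B) = U'$, hence only $e \in \cl(\varrho(B))$; the identity $e$ may well lie on $\bd(\varrho(B))$, in which case $\varrho(B) \cap V$ need not contain a neighbourhood of $e$. The step is easily repaired — and this is exactly what the paper does — by noting that $e \in \cl(\varrho(B))$ together with $V$ being an open neighbourhood of $e$ already forces $\varrho(B) \cap V \neq \varnothing$, i.e.\ $U \wedge V \neq \zero$ in $\bRO$. With these two repairs your argument coincides with the paper's proof.
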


\begin{proof} \leavevmode \\
(i) By Remark~\ref{rem:hom-gen}, $\cB \le \cP(G)$ is generic and $d_v : \bB \to \cB$ is an isomorphism. Now we will check that every $V \in \bB$ satisfies $V \subseteq d_v(V) \subseteq \cl(V)$. Fix $V \in \bB$. For any $h \in V$ we have that $h^{-1}V \in \bB$ is a neighbourhood of $e$, so $h^{-1} V \in v$ and so $h \in d_v(V)$. Thus $V \subseteq d_v(V)$. Repeating the argument for $V^{\bot}$ in place of $V$, we get $V^{\bot} \subseteq d_v(V^{\bot}) = G \setminus d_v(V)$, hence $V \subseteq d_v(V) \subseteq \cl(V)$.

It follows that $d_v(V)$ is tidy and $d_v(V) \symdif V \in \cNWD$, hence $d_v(V) \in \cSBP$ and $\varrho(d_v(V)) = V$. Consequently, $\cB \le \cSBP$ and $\varrho \circ d_v = \id_{\bB}$.

\vspace{2mm} \noindent
(ii) The outline of the proof is as in Remark~\ref{rem:gen-reg}. We adjust it by showing that the extension $v = q \in S(\bRO)$ of $u \in S(\bB)$ can be found in $\fU$. It suffices to show that given $U \in u$ and $V \in \bRO$ such that $e \in V$, we have that $U \wedge V \neq \varnothing$. Take $B \in \cB$ such that $U = \varrho(B)$. Since $B$ is tidy, there is $U' \in \bRO$ such that $U' \subseteq B \subseteq \cl(U')$. It follows that $U = U'$. Furthermore, $e \in B$ because $\varrho(B) = U \in u$ and so $B \in \varrho^*(u) = \hat{e}$. Thus $e \in \cl(U)$ and $e \in V$, which implies that $U \wedge V = U \cap V \neq \varnothing$.
\end{proof}

Assume $\cB \le \cSBP$ is generic and tidy and take $v \in \fU$ and generic $\bB \le \bRO$ as in Proposition~\ref{prop:gen-reg}. The isomorphism $d_v : \bB \to \cB$ can be understood as follows. Any non-empty $V \in \bB$ is strongly generic as an element of $\bB$. The operation $d_v$ ``corrects'' $V$ on a nowhere dense set so that it becomes strongly generic in the usual sense. Namely, $v$ is an ultrafilter concentrated around identity. For each point $g$ we consider the left translate $gv$ as a template for a new set $B \subseteq G$ at $g$: if $V \in gv$, we include $g$ into $B$, and otherwise we do not. Then formally $B = d_v(V)$ and $V \subseteq B \subseteq \cl(V)$ as in the proof of Proposition~\ref{prop:gen-reg}. It follows that $B$ is a strongly generic set in $\cB$ that differs from $V$ on a nowhere dense set contained in $\bd(V)$. The above description justifies our view that $d_v$ is simple when $v \in \fU$.

Proposition~\ref{prop:gen-reg} is particularly strong when $G$ is a precompact topological group because in this case every generic algebra $\cB \le \cSBP$ is tidy, as the next proposition shows.
\begin{proposition} Assume $G$ is precompact and $A \in \cSBP$ is strongly generic. Then $A$ is tidy.
\end{proposition}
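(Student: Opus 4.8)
I would first dispose of the trivial case $A\in\{\varnothing,G\}$ (then $A$ is clopen, hence tidy with $V=\int(A)$), and set $V:=\varrho(A)\in\bRO$; by Fact~\ref{fact:reg} this is the only possible witness, so the goal is $V\subseteq A\subseteq\cl(V)$. Note that $A$ and $A^c$ are non‑empty members of the generic $G$‑algebra $\langle A\rangle\le\cSBP$, hence generic; since $G$ is precompact, an SBP set with empty interior is nowhere dense and a non‑empty open set is generic, so both $A$ and $A^c$ have non‑empty interior. The next step is a reduction: if some $a\in A\setminus\cl(V)$, then after translating by $a^{-1}$ we may assume $e\in A$ and $e\notin\cl(\varrho(A))$, and (regularity of topological groups) pick an open $W\ni e$ with $\cl(W)\cap\varrho(A)=\varnothing$; a short computation gives $\int\big(\cl(A)\cap\cl(W)\big)\subseteq\varrho(A)\cap\cl(W)=\varnothing$, so $A\cap W$ is nowhere dense. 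Symmetrically, if some $a\in V\setminus A$, then $a\in\int(\cl(A))$ yields an open $O\ni a$ with $O\subseteq\cl(A)$, whence $A^c\cap O\subseteq\bd(A)$ is nowhere dense, and translating reduces this to the previous situation for the (equally strongly generic, SBP) set $A^c$. Thus it suffices to prove the following Claim: \emph{if $B\in\cSBP$ is strongly generic, $e\in B$, and $B\cap W$ is nowhere dense for some open $W\ni e$, then we have a contradiction.}

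To prove the Claim, observe first that $B^c\cap W$ is then dense in $W$, so $W\subseteq\cl(B^c)$ and $e\notin\int(B)$; in particular $B$ is \emph{not} clopen. I would produce a finite $U\subseteq G$ with $e\in U$ for which the Boolean combination
\[
\Per_U(\chi_B)\;=\;\bigcap_{u\in U}u^{-1}B^{\chi_B(u)}\;\in\;\langle B\rangle
\]
has empty interior. Since this set always contains $e$ and lies in $\cSBP$, having empty interior it is nowhere dense, hence a non‑empty non‑generic member of $\langle B\rangle$, contradicting the strong genericity of $B$ (equivalently, by Proposition~\ref{prop:sgen-lper}, the local periodicity of $\chi_B$). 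Writing $u=g^{-1}$, the terms with $u\in B$ are translates $g\,B$ with $g\in B^{-1}$ and those with $u\notin B$ are translates $g\,B^c$ with $g\in(B^c)^{-1}$; a computation using $\int(B)\cap\varrho(B^c)=\varnothing=\int(B^c)\cap\varrho(B)$ shows that $\int(\Per_U(\chi_B))$ is disjoint from $\bigcup_{g\in F_1}g\,\varrho(B^c)\cup\bigcup_{g\in F_2}g\,\varrho(B)$, where $F_1\subseteq B^{-1}$, $F_2\subseteq(B^c)^{-1}$ are the sets of such $g$'s used. Hence it suffices to choose finite $F_1,F_2$ with
\[
\bigcup_{g\in F_1}g\,\varrho(B^c)\;\cup\;\bigcup_{g\in F_2}g\,\varrho(B)\;=\;G .
\]

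For the full admissible collection (all $g\in B^{-1}$, all $g\in(B^c)^{-1}$) one checks, using that $B$ and $B^c$ are generic with non‑empty interior and, crucially, that $B$ is \emph{not} clopen, that a point $x$ fails to be covered only if $\varrho(B)\subseteq Bx\subseteq\cl(\int(B))$; in particular $x=e$ is covered. The step I expect to be the main obstacle is passing from this covering of $G$ by (a priori infinitely many) open sets to a \emph{finite} subcover. Precompactness of $G$ is decisive here: when $G$ is not compact one embeds $G$ as a dense subgroup of its compact completion $H$, and then $\int_H(B)=\int_H(B^c)=\varnothing$ (a proper dense subgroup has empty interior), which trivialises the "no uncovered point" condition inside $H$ (it becomes $Bx\subseteq\varnothing$, impossible); compactness of $H$ then yields a finite subcover, which one restricts to $G$ using $\varrho_H(B)\cap G=\varrho_G(B)$ and $\varrho_H(B^c)\cap G=\varrho_G(B^c)$. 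The remaining case $G=H$ compact requires the extra verification that the set of uncovered points $\{x:\varrho(B)\subseteq Bx\subseteq\cl(\int(B))\}$ is nowhere dense — this is the delicate point, and it should follow from the fact that, for strongly generic $B$, the right‑stabiliser of $B$ is a subgroup with empty interior (otherwise it would be dense, forcing $B$ dense and $B^c$ nowhere dense, contradicting that $B^c$ is generic). Once a finite such cover is in hand, the corresponding finite $U$ gives $\int(\Per_U(\chi_B))=\varnothing$, completing the proof of the Claim and hence of the proposition.
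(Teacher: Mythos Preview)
Your reduction to the Claim is correct and essentially matches the paper's opening move. From there the two arguments diverge, and your covering strategy has one place where it is stronger than you think and one place where it genuinely breaks.

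First, the compact case actually goes through without the ``delicate point'' you flag. The uncovered set $\{x:\varrho(B)\subseteq Bx\subseteq\cl(\int(B))\}$ is in fact empty: if such an $x$ existed, then since $\cl(\int(B))\subseteq\cl(\varrho(B))$ we would have $\varrho(B)\subseteq Bx\subseteq\cl(\varrho(B))$, i.e.\ $Bx$ is tidy with witness $\varrho(B)\in\bRO$; but right translation by $x$ is a homeomorphism, so $B$ would be tidy as well, contradicting the hypothesis of the Claim. So for compact $G$ your open family covers $G$, compactness gives a finite subcover, and you are done. No right-stabiliser analysis is needed.

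The genuine gap is in the precompact non-compact case. Your passage to the completion $H$ contains a miscalculation: with $B^c=G\setminus B$ one has $H\setminus B^c=(H\setminus G)\cup B$, so the uncovered condition in $H$ reads $Bx\subseteq\cl_H\big(\int_H((H\setminus G)\cup B)\big)$, \emph{not} $Bx\subseteq\cl_H(\int_H(B))$. The set $\int_H((H\setminus G)\cup B)$ is the largest $H$-open set whose trace on $G$ lies in $B$; its trace on $G$ equals $\int_G(B)\neq\varnothing$, so it is certainly nonempty, and your ``trivialisation'' does not occur. It is then no longer clear that the extended family covers all of $H$ (for $x\in H\setminus G$ the sets $Bx,B^c x$ lie entirely in $H\setminus G$, where the argument from the previous paragraph does not apply), and without that you cannot extract a finite subcover by compactness of $H$. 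Since $G$ itself is not compact, you also cannot extract one directly in $G$.

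The paper avoids this obstacle by a different mechanism. It uses precompactness at the outset, not at the end: one fixes an open neighbourhood $U\ni e$ with $A\cap aU$ nowhere dense, chooses $V$ with $V^{-1}V\subseteq U$, and takes a finite cover $G=t_1V\cup\dots\cup t_nV$. Then one considers, among all nonempty finite intersections of left translates of $A$, one that minimises the number of pieces $t_iV$ on which it is \emph{not} nowhere dense; a short computation (intersecting with a suitable further translate $ba^{-1}A$) strictly decreases this number, forcing the minimum to be zero. The resulting intersection is nowhere dense yet nonempty, contradicting strong genericity. This descent argument sidesteps any need to cover $G$ by translates of $\varrho(B)$ and $\varrho(B^c)$ and hence any appeal to compactness of a completion.
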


\begin{proof} Assume for contradiction that $A$ is not tidy. In particular, we have that $\varrho(A) \not \subseteq A$ or $A \not \subseteq \cl(\varrho(A))$. Because $\varrho(G \setminus A) \subseteq G \setminus A \iff A \subseteq \cl(\varrho(A))$, by replacing $A$ with $G \setminus A$ if necessary, we can assume that $A \not \subseteq \cl(\varrho(A))$. Take $a \in A \setminus \cl(\varrho(A))$ and pick open neighbourhoods $U, V \subseteq G$ of identity such that $\varrho(A) \cap aU = \varnothing$ and $V^{-1} V \subseteq U$. Then $A \cap aU$ is nowhere dense. Since $G$ is precompact, there are $t_1, \ldots, t_n \in G$ such that $G = t_1 V \cup \ldots \cup t_n V$.

Take $B \subseteq G$ that is a non-empty intersection of left translates of $A$ which minimizes the number of $i \in \{ 1, \ldots, n \}$ such that $B \cap t_i V \notin \cNWD$. We will prove that this number is zero. Indeed, otherwise we can find $i \in \{ 1, \ldots, n \}$ such that $B \cap t_i V \notin \cNWD$. Pick $b \in B \cap t_i V$ and let $B' = B \cap ba^{-1} A$. Then $B' \neq \varnothing$ because $b \in B'$. Clearly for each $j \in \{ 1, \ldots, n \}$ if $B \cap t_j V \in \cNWD$, then also $B' \cap t_j V \in \cNWD$. It remains to show that $B' \cap t_i V$ is nowhere dense, contradicting the choice of $B$. Using $b \in t_i V$, it is easy to check that $t_i V \subseteq bU$. It follows that
\[
B' \cap t_i V \subseteq ba^{-1} A \cap bU = ba^{-1} (A \cap aU) \in \cNWD,
\]
as desired.

Therefore $B \cap t_i V \in \cNWD$ for each $i \in \{ 1, \ldots, n \}$. It follows that $B$ is nowhere dense, hence not generic. This contradicts the assumption that $A$ is strongly generic.
\end{proof}

Now we address the first issue, that is, we show that $\cB := d_v[\bB]$ is always a maximal generic $G$-subalgebra of $\cA$ when $v \in \fU$. We also justify that tidy $G$-subalgebras $\cB \le \cA$ are enough to focus on even when $G$ is not precompact. As a consequence, we obtain an essentially complete correspondence between maximal generic $G$-subalgebras of $\cA$ and of $\bA$. 

\begin{notation*} Assume $\cB \le \cSBP$, $\bB \le \bRO$ and $q \in S(\bRO)$. We write $\cB \cong_q \bB$ to denote the property that $\varrho : \cB \to \bB$ and $d_q : \bB \to \cB$ are mutually inverse isomorphisms.
\end{notation*}

\begin{lemma} \label{lem:dRO-sub} Assume $\cA \le \cSBP$ is d-closed, $\bA = \varrho[\cA] \le \bRO$ and $q \in S(\bRO)$. Then $d_q[\bA] \subseteq \cA$.
\end{lemma}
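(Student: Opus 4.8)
The plan is to deduce the inclusion directly from the $d$-closedness of $\cA$, transporting the operation $d_q$ back to $\cA$ across the epimorphism $\varrho$ by means of Fact~\ref{fact:dual}. The first observation is that restricting $\varrho : \cSBP \to \bRO$ to the $G$-subalgebra $\cA$ and corestricting to its image yields a $G$-algebra epimorphism $\bar\varrho : \cA \to \bA$ (the restriction of a homomorphism to a subalgebra is again a homomorphism, and a homomorphism onto its image is surjective). The second observation is that, for $U\in\bA$, the set $d_q(U)$ does not really see all of $q\in S(\bRO)$ but only its trace on $\bA$, which then lies in the range of the dual map $\bar\varrho^{*}$.

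Carrying this out: fix $U \in \bA$; I want $d_q(U) \in \cA$. Since $\bA = \varrho[\cA]$, choose $A \in \cA$ with $\varrho(A) = U$. Let $\iota : \bA \hookrightarrow \bRO$ be the inclusion and put $p := \iota^{*}(q) = q \cap \bA \in S(\bA)$. Because every left translate $h^{-1}U$ again lies in $\bA$ (translation in $\bRO$ being just ordinary translation of subsets of $G$, translations being homeomorphisms), we have $h^{-1}U \in q \iff h^{-1}U \in p$ for every $h \in G$, so $d_q(U) = d_p(U)$; this is exactly Fact~\ref{fact:dual} applied to $\iota$. Applying Fact~\ref{fact:dual} now to the homomorphism $\bar\varrho : \cA \to \bA$, the element $A \in \cA$ and $p \in S(\bA)$ gives
\[
d_q(U) = d_p(U) = d_p\big(\bar\varrho(A)\big) = d_{\bar\varrho^{*}(p)}(A).
\]
Since $\bar\varrho^{*}(p) \in S(\cA)$ and $\cA$ is $d$-closed, the right-hand side lies in $\cA$, whence $d_q(U) \in \cA$, as required.

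There is no genuine obstacle here; the argument is a two-step diagram chase, and one may even collapse the two invocations of Fact~\ref{fact:dual} into a single application to the composite $\iota \circ \bar\varrho : \cA \to \bRO$ using functoriality of the dual. The only point that deserves an explicit sentence is the identification, used above, of the abstract $G$-algebra operation $d_p$ on $\bA$ with the set-theoretic formula $d_p(U) = \{\,h \in G : h^{-1}U \in p\,\}$ — this is what makes the restriction $q \mapsto q \cap \bA$ harmless for computing $d_q(U)$ and lets the chaining through $\iota$ and $\bar\varrho$ go through.
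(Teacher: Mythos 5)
Your argument is correct and is essentially the paper's proof: the paper applies Fact~\ref{fact:dual} once, to $\varrho:\cSBP\to\bRO$ with $A\in\cA$ and $q\in S(\bRO)$, getting $d_q(\varrho(A))=d_{\varrho^*(q)}(A)\in\cA$ by d-closedness (using, as in Remark~\ref{rem:dop}, that $d_{\varrho^*(q)}(A)$ depends only on $\varrho^*(q)\cap\cA$). Your version merely restricts to $\bar\varrho:\cA\to\bA$ and $p=q\cap\bA$ first so that the dual ultrafilter lands in $S(\cA)$, which is the same diagram chase with slightly more explicit bookkeeping.
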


\begin{proof} Fix $V \in \bA$ and pick $A \in \cA$ such that $V = \varrho(A)$. Using Fact \ref{fact:dual},
\[
d_q(V) = d_q(\varrho(A)) = d_{\varrho^*(q)}(A) \in \cA
\]
because $\varrho^*(q) \in S(\cSBP)$ and $\cA$ is d-closed.
\end{proof}

\begin{theorem} \label{thm:max-reg} Assume $\cA \le \cSBP$ is d-closed, $\bA = \varrho[\cA] \le \bRO$ and $v \in \fU$.
\begin{enumerate}[label=(\roman{*})]
\item If $\bB \le \bA$ is maximal generic, then $\cB \cong_v \bB$ for some tidy, maximal generic $\cB \le \cA$.
\item If $\cB \le \cA$ is tidy and maximal generic, then $\cB \cong_u \bB$ for some $u \in \fU$ and maximal generic $\bB \le \bA$.
\item If $\cB_0 \le \cA$ is maximal generic, then there are a maximal generic $\bB \le \bA$, a tidy, maximal generic $\cB \le \cA$ and an isomorphism $\varphi : \cB_0 \to \cB$ such that $\cB \cong_v \bB$ and $\varphi(B) \symdif B \in \cNWD$ for $B \in \cB_0$.
\end{enumerate}
\end{theorem}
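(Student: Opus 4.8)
The idea is to transport maximal genericity back and forth between $\cA$ and $\bA=\varrho[\cA]$ using Proposition~\ref{prop:gen-reg} (which already handles the \emph{tidy} case), Lemma~\ref{lem:dRO-sub} (which keeps the $d_q$-images inside $\cA$), and—only for part~(iii)—the image-algebra calculus of Section~2. I would treat the parts in the order (i), (ii), (iii).

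\textbf{Part (i).}
Given maximal generic $\bB\le\bA$, set $\cB:=d_v[\bB]$. Proposition~\ref{prop:gen-reg}(i) makes $\cB\le\cSBP$ tidy and generic with $\cB\cong_v\bB$, and Lemma~\ref{lem:dRO-sub} together with $\bB\subseteq\bA$ gives $\cB=d_v[\bB]\subseteq d_v[\bA]\subseteq\cA$. For maximality, suppose $\cB\subseteq\cC\le\cA$ with $\cC$ generic. By Remark~\ref{rem:hom-gen} the map $\varrho\restriction\cC$ is a monomorphism and $\varrho[\cC]\le\bRO$ is generic; since $\cC\le\cA$ we get $\bB=\varrho[\cB]\subseteq\varrho[\cC]\subseteq\varrho[\cA]=\bA$, so maximality of $\bB$ forces $\varrho[\cC]=\bB=\varrho[\cB]$. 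Now for any $A\in\cC$ there is $B\in\cB$ with $\varrho(B)=\varrho(A)$; as $B\in\cB\subseteq\cC$ and $\varrho\restriction\cC$ is injective, $A=B\in\cB$. Hence $\cC=\cB$.

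\textbf{Part (ii).}
Given tidy maximal generic $\cB\le\cA$, Proposition~\ref{prop:gen-reg}(ii) produces $u\in\fU$ and a generic $\bB\le\bRO$ with $\cB\cong_u\bB$, hence $\bB=\varrho[\cB]\subseteq\varrho[\cA]=\bA$. If $\bB\subseteq\bC\le\bA$ with $\bC$ generic, then $\cC:=d_u[\bC]$ is tidy generic with $\cC\cong_u\bC$ (Proposition~\ref{prop:gen-reg}(i)) and $\cC\le\cA$ (Lemma~\ref{lem:dRO-sub}); moreover $\cB=d_u[\bB]\subseteq d_u[\bC]=\cC$, so maximality of $\cB$ gives $\cC=\cB$ and therefore $\bC=\varrho[\cC]=\varrho[\cB]=\bB$. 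Thus $\bB$ is maximal.

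\textbf{Part (iii).}
Put $\bB:=\varrho[\cB_0]$; by Remark~\ref{rem:hom-gen} this is a generic $G$-subalgebra of $\bA$ and $\varrho\restriction\cB_0$ is an isomorphism onto it. Put $\cB:=d_v[\bB]$, which by Proposition~\ref{prop:gen-reg}(i) is tidy, generic, satisfies $\cB\cong_v\bB$, and lies in $\cA$ by Lemma~\ref{lem:dRO-sub}. The crucial step is that $\cB$ is maximal generic in $\cA$: for $B\in\cB_0$ one has $d_v(\varrho(B))=d_{\varrho^*(v)}(B)=d_p(B)$, where $p\in S(\cA)$ is the restriction of $\varrho^*(v)$ to the ($d$-closed) algebra $\cA$ (by Fact~\ref{fact:dual} and Remark~\ref{rem:dop}); hence $\cB=d_p[\cB_0]$. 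Since $\cB_0$ is an image algebra of $\cA=\cA^d$ (Corollary~\ref{cor:im-maxgen}), write $\cB_0=d_q[\cA^d]$ with $q$ in a minimal ideal $I\trianglelefteqslant_m S(\cA^d)$; then $\cB=d_p[d_q[\cA^d]]=d_{p\ast q}[\cA^d]$ by Fact~\ref{fact:dual}, and $p\ast q\in I$, which (being a minimal subflow by Theorem~\ref{thm:iso} and Remark~\ref{rem:sub}) consists of almost periodic points—so $\cB$ is again an image algebra, i.e.\ maximal generic in $\cA$. Maximality of $\bB$ in $\bA$ now follows as in parts~(i)–(ii): if $\bB\subseteq\bC\le\bA$ is generic, then $d_v[\bC]\le\cA$ is generic and contains $\cB$, hence equals $\cB$, forcing $\bC=\varrho[d_v[\bC]]=\varrho[\cB]=\bB$. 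Finally take $\varphi:=d_v\circ(\varrho\restriction\cB_0):\cB_0\to\cB$, an isomorphism; for $B\in\cB_0$ and $V:=\varrho(B)$ we have $B\symdif V\in\cNWD$ (Fact~\ref{fact:reg}(b)) and $V\subseteq\varphi(B)\subseteq\cl(V)$ by the proof of Proposition~\ref{prop:gen-reg}(i), so $\varphi(B)\symdif V\subseteq\bd(V)\in\cNWD$ and hence $\varphi(B)\symdif B\in\cNWD$.

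\textbf{Main obstacle.}
Parts (i) and (ii) are essentially bookkeeping, because maximality on one side is given and passes across via the mutually inverse isomorphisms $\varrho$ and $d_v$ (resp.\ $d_u$). The genuine difficulty is in part (iii): an arbitrary maximal generic $\cB_0\le\cA$ need not be tidy nor ``aligned with $v$'', so one cannot simply invoke Proposition~\ref{prop:gen-reg}; the point is to recognize the $v$-realigned copy $d_v[\varrho[\cB_0]]=d_p[\cB_0]$ of $\cB_0$ and to show it is \emph{still} maximal generic, which is exactly what the identity $d_p[d_q[\cA^d]]=d_{p\ast q}[\cA^d]$ with $p\ast q$ almost periodic delivers. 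I expect the routine-but-fiddly part to be checking that the several restriction/duality identities ($\varrho^*(v)$ versus its restriction to $\cA$, $d_p\circ d_q=d_{p\ast q}$, $\varrho\circ d_v=\id$) are applied to the right algebras.
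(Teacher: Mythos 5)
Your proof is correct. Parts (i) and (ii) follow the paper's argument essentially verbatim: transport via Proposition~\ref{prop:gen-reg} and Lemma~\ref{lem:dRO-sub}, then bounce a hypothetical larger generic algebra across the mutually inverse isomorphisms $\varrho$ and $d_v$ (resp.\ $d_u$). In part (iii) you diverge on the maximality claims. The paper first shows $\bB=\varrho[\cB_0]$ is maximal generic in $\bA$ by invoking Remark~\ref{rem:gen-reg}(ii) to get some $q\in S(\bRO)$ with $d_q$ inverting $\varrho$ on $\cB_0$, and then repeats the bookkeeping of (i)--(ii) (so maximality of $\cB=d_v[\bB]$ is just part (i) applied to $\bB$); no dynamical input beyond Corollary~\ref{cor:im-maxgen} is needed. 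You instead prove maximality of $\cB$ directly, writing $\cB=d_p[\cB_0]$ with $p$ the restriction of $\varrho^*(v)$ to $\cA$, expressing $\cB_0=d_q[\cA]$ as an image algebra and using $d_p\circ d_q=d_{p\ast q}$ with $p\ast q$ in the minimal ideal, hence almost periodic; maximality of $\bB$ then falls out. Your key step is precisely the content of the paper's Lemma~\ref{lem:im-maxgen}, which it proves only after the theorem and uses for Proposition~\ref{prop:max-reg}; so your route is heavier on the semigroup machinery but has the advantage of working for arbitrary realignments $q\in S(\bRO)$ (not just $v\in\fU$), effectively anticipating that later, more general statement, while the paper's route stays elementary and reuses (i)--(ii) unchanged. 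All the duality identities you rely on (Fact~\ref{fact:dual}, Remark~\ref{rem:dop}, $\varrho\circ d_v=\id$ on generic subalgebras of $\bRO$) are applied to the right algebras, and your derivation of $\varphi(B)\symdif B\in\cNWD$ via $\bd(\varrho(B))$ is equivalent to the paper's computation $\varrho(\varphi(B))=\varrho(B)$.
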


\begin{proof} (i) By Lemma~\ref{lem:dRO-sub} and Proposition~\ref{prop:gen-reg} (i), $\cB := d_v[\bB] \le \cA$ is tidy and generic and satisfies $\cB \cong_v \bB$. To prove maximality, assume $\cB \le \cB_1 \le \cA$ is generic. By Remark~\ref{rem:hom-gen}, $\varrho \restriction \cB_1$ is injective and $\bB \le \varrho[\cB_1] \le \bA$ is generic. It follows that $\varrho[\cB_1] = \bB$ and so $\cB_1 = \cB$. Therefore $\cB \le \cA$ is maximal generic.

\vspace{2mm} \noindent
(ii) By Proposition~\ref{prop:gen-reg} (ii), we can find $u \in \fU$ such that $\bB := \varrho[\cB] \le \bA$ satisfies $\cB \cong_u \bB$. To prove maximality, assume $\bB \le \bB_1 \le \bA$ is generic. By Remark~\ref{rem:hom-gen} and Lemma~\ref{lem:dRO-sub}, $d_u \restriction \bB_1$ is injective and $\cB \le d_u[\bB_1] \le \cA$ is generic. It follows that $d_u[\bB_1] = \cB$ and so $\bB_1 = \bB$. Therefore $\bB \le \bA$ is maximal generic.

\vspace{2mm} \noindent
(iii) Let $\bB = \varrho[\cB_0] \le \bA$, $\cB = d_v[\bB] \le \cA$ and $\varphi = d_v \circ \varrho : \cB_0 \to \cB$. Then $\bB$ is generic, $\cB$ is tidy and generic, $\cB \cong_v \bB$ and $\varphi$ is an isomorphism. If $B \in \cB_0$, then $\varrho(\varphi(B)) = \varrho(d_v(\varrho(B)) = (\varrho \circ d_v)(\varrho(B)) = \varrho(B)$, hence $\varphi(B) \symdif B \in \cNWD$. Using Remark~\ref{rem:gen-reg} (ii) together with techniques as in (i) and (ii), it is easy to prove that $\bB \le \bA$ and $\cB \le \cA$ are maximal generic.
\end{proof}

Although Theorem \ref{thm:max-reg} is the main result of the subsection, we note that a correspondence between maximal generic $G$-subalgebras still exists when tidiness is dropped and arbitrary $q \in S(\bRO)$ are allowed.

\begin{lemma} \label{lem:U-den} For each $v \in \fU$ we have that $\beta G \ast v = \cl(G \cdot v) = S(\bRO)$.
\end{lemma}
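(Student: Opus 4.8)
The plan is to prove the two asserted equalities separately. The first, $\beta G \ast v = \cl(G \cdot v)$, I would establish exactly as in the proof of Theorem~\ref{thm:iso} (cf.\ Remark~\ref{rem:sub}); it will in fact hold for every $v \in S(\bRO)$, the hypothesis $v \in \fU$ playing no role here. Recall that $p \ast v = r_v(p)$, where $r_v : \beta G \to S(\bRO)$ is the dual map of the $G$-algebra homomorphism $d_v : \bRO \to \cP(G)$, and is therefore continuous. Unwinding the definitions, for $g \in G$ one has $A \in \hat g \ast v \iff g \in d_v(A) \iff g^{-1}A \in v \iff A \in g \cdot v$, so $r_v(\hat g) = g \cdot v$. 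Since the principal ultrafilters $\{\hat g : g \in G\}$ are dense in $\beta G$ (Remark~\ref{rem:den} with $\cA = \cP(G)$), $r_v$ is continuous, and $\beta G$ is compact, I obtain
\[
\beta G \ast v = r_v[\beta G] = r_v\big[\cl\{\hat g : g \in G\}\big] = \cl\{r_v(\hat g) : g \in G\} = \cl(G \cdot v).
\]

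It then remains to show $\cl(G \cdot v) = S(\bRO)$, i.e.\ that the orbit $G \cdot v$ is dense in $S(\bRO)$; this is the step where $v \in \fU$ is used. The basic open sets of $S(\bRO)$ are the $[V]$ with $V \in \bRO$, and since the bottom element of $\bRO$ is $\varnothing$, the non-empty ones are exactly those with $V \neq \varnothing$. So I would fix a non-empty $V \in \bRO$, pick some $g \in V$, and note that left translation by $g^{-1}$ is a homeomorphism of $G$ and hence preserves regular open sets, so $g^{-1}V \in \bRO$, while clearly $e \in g^{-1}V$. By the definition of $\fU$ this forces $g^{-1}V \in v$, equivalently $V \in g \cdot v$, i.e.\ $g \cdot v \in [V]$. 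Hence $G \cdot v$ meets every non-empty basic open set, so it is dense and $\cl(G \cdot v) = S(\bRO)$.

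I do not expect a real obstacle: the argument only combines Stone duality, the description of $\ast$ as a dual map (already used throughout the section), and the defining property of $\fU$. The single point requiring care will be keeping the two $G$-algebra structures — on $\cP(G)$ and on $\bRO$ — apart, so that ``left translation'' means the right thing in each; once one observes that the $G$-action on $\bRO$ is by ordinary left translations (these preserve $\varrho$, hence map $\bRO$ to itself), both the identity $\hat g \ast v = g \cdot v$ and the membership $g^{-1}V \in \bRO$ are immediate.
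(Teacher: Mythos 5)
Your proof is correct and follows the same route as the paper: the first equality is the standard compactness/continuity argument via the dual map $r_v$ (which the paper dismisses with "holds as always"), and your density argument for the second equality — pick $g \in V$, note $e \in g^{-1}V \in \bRO$, invoke the definition of $\fU$ to get $gv \in [V]$ — is exactly the paper's proof.
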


\begin{proof} The first equality holds as always. For the second, take any non-empty $V \in \bRO$ and pick $g \in V$. Then $e \in g^{-1} V$, hence $g^{-1} V \in v$ and so $gv \in [V]$.
\end{proof}

\begin{lemma} \label{lem:im-maxgen} Assume $\cA \le \cP(G)$ is d-closed, $\cB \le \cA$ is maximal generic and $p \in \beta G$. Then $d_p[\cB] \le \cA$ is maximal generic.
\end{lemma}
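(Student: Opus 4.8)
The plan is to reduce everything to the correspondence between maximal generic $G$-subalgebras and image algebras established in Corollary~\ref{cor:im-maxgen}. Since $\cA$ is d-closed we have $\cA = \cA^d$, so a maximal generic $G$-subalgebra of $\cA$ is precisely an image algebra. First I would write $\cB = d_u[\cA]$ for some almost periodic point $u \in S(\cA)$ and put $I := \cl(G \cdot u) \trianglelefteqslant_m S(\cA)$, which is a minimal subflow (and minimal ideal) of $S(\cA)$.

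Next I would set $q := p \ast u$, using the generalized multiplication $\ast : \beta G \times S(\cA) \to S(\cA)$ that is available because $\cA \supseteq \cA^d$; equivalently $q = r_u(p)$, where $r_u = d_u^* : \beta G \to S(\cA)$ is the dual map of $d_u : \cA \to \cA$. I would then check that $q$ is almost periodic: $r_u$ is a $G$-flow morphism and $r_u(\hat g) = g \cdot u$ for $g \in G$, so by compactness of $\beta G = \cl(G \cdot \hat e)$ we get $r_u[\beta G] = \cl(G \cdot u) = I$; hence $q \in I$ and $q$ lies in a minimal subflow of $S(\cA)$. Finally I would compute the image: for any $A \in \cA$, since $d_u(A) \in \cA$ the value $d_p(d_u(A))$ depends only on $p \cap \cA$, so Fact~\ref{fact:dual} applied to $d_u : \cA \to \cA$ gives $d_p(d_u(A)) = d_{d_u^*(p)}(A) = d_q(A)$. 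Therefore $d_p[\cB] = d_p[d_u[\cA]] = d_q[\cA] = d_q[\cA^d]$, which is an image algebra because $q$ is almost periodic, hence a maximal generic $G$-subalgebra of $\cA$ by Corollary~\ref{cor:im-maxgen}.

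There is essentially no real obstacle here beyond bookkeeping; the only point that needs a moment's care is applying the dual-map identity (Fact~\ref{fact:dual}) and the generalized $\ast$ with $p \in \beta G$ rather than $p \in S(\cA)$, which is legitimate precisely because $d_u$ maps $\cA$ into $\cA$, so both $d_p \circ d_u$ and $p \ast u$ factor through the restriction $p \cap \cA \in S(\cA)$.
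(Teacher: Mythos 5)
Your proof is correct and follows essentially the same route as the paper: write $\cB = d_u[\cA]$ with $u$ almost periodic via Corollary~\ref{cor:im-maxgen}, observe $d_p \circ d_u = d_{p \ast u}$ with $p \ast u$ almost periodic, and conclude that $d_p[\cB]$ is again an image algebra, hence maximal generic. You merely spell out two steps the paper leaves implicit (the almost periodicity of $p \ast u$ and the use of Fact~\ref{fact:dual}).
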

\begin{proof} By Corollary~\ref{cor:im-maxgen}, $\cB$ is an image algebra, hence we can write $\cB = d_q[\cA]$ for some almost periodic $q \in S(\cA)$. Then $d_p[\cB] = d_p[d_q[\cA]] = d_{p \ast q}[\cA] \le \cA$ is an image algebra since $p \ast q \in S(\cA)$ is almost periodic.
\end{proof}

\begin{proposition} \label{prop:max-reg} Assume $\cA \le \cSBP$ is d-closed, $\bA = \varrho[\cA] \le \bRO$.
\begin{enumerate}[label=(\roman{*})]
\item If $\bB \le \bA$ is maximal generic and $q \in S(\bRO)$, then $\cB := d_q[\bB] \le \cA$ is maximal generic.
\item If $\cB \le \cA$ is maximal generic, then $\cB \cong_q \bB$ for some maximal generic $\bB \le \bA$ and $q \in S(\bRO)$.
\end{enumerate}
\end{proposition}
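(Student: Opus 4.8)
The plan is to deduce both parts from the machinery assembled just before the statement, chiefly Theorem~\ref{thm:max-reg}, Lemma~\ref{lem:im-maxgen}, Lemma~\ref{lem:U-den}, Lemma~\ref{lem:dRO-sub} and Remark~\ref{rem:gen-reg}, so the work is really one of organisation rather than new ideas.

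For (i), the idea is to reduce the general $q \in S(\bRO)$ to the case $q \in \fU$ already handled by Theorem~\ref{thm:max-reg}(i). First I would fix any $v \in \fU$; such $v$ exists because the regular open neighbourhoods of $e$ form a filter base in $\bRO$ (for $V, W \in \bRO$ with $e \in V \cap W$ one has $V \wedge V = V \cap W \in \bRO$ by Fact~\ref{fact:reg}(c)), and any ultrafilter extending it lies in $\fU$. By Lemma~\ref{lem:U-den}, $S(\bRO) = \beta G \ast v$, so the given $q$ can be written as $q = p \ast v$ for some $p \in \beta G$. Applying Fact~\ref{fact:dual} to the homomorphism $d_v : \bRO \to \cP(G)$ gives $d_q = d_{p \ast v} = d_p \circ d_v$ on $\bRO$, hence on $\bB$, so $\cB = d_q[\bB] = d_p\bigl[ d_v[\bB] \bigr]$. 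By Theorem~\ref{thm:max-reg}(i) the algebra $\cB_0 := d_v[\bB]$ is a tidy, maximal generic $G$-subalgebra of $\cA$; since $\cA \le \cSBP$ is d-closed, Lemma~\ref{lem:im-maxgen} then shows that $\cB = d_p[\cB_0]$ is a maximal generic $G$-subalgebra of $\cA$, as required. (Here we also implicitly use that, being an image algebra, $\cB_0$ equals $d_{q_0}[\cA]$ for an almost periodic $q_0$, and that $p \ast q_0 \in \cl(G \cdot q_0)$ stays almost periodic, exactly as in the proof of Lemma~\ref{lem:im-maxgen}.)

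For (ii), I would start from Remark~\ref{rem:gen-reg}(ii): since $\cB \le \cSBP$ is generic there are a generic $\bB \le \bRO$ and a $q \in S(\bRO)$ with $\cB \cong_q \bB$, and in fact $\bB = \varrho[\cB]$. As $\varrho$ is a $G$-algebra homomorphism and $\cB \le \cA$, we get $\bB = \varrho[\cB] \le \varrho[\cA] = \bA$. It remains to see that $\bB$ is maximal generic in $\bA$. Suppose $\bB \le \bB_1 \le \bA$ with $\bB_1$ generic. By Lemma~\ref{lem:dRO-sub}, $d_q[\bB_1] \le \cA$, and by Remark~\ref{rem:hom-gen} the algebra $d_q[\bB_1]$ is generic while $d_q \restriction \bB_1$ is injective. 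Since $\cB = d_q[\bB] \subseteq d_q[\bB_1] \le \cA$, maximality of $\cB$ forces $d_q[\bB_1] = \cB = d_q[\bB]$, and injectivity of $d_q$ on $\bB_1 \supseteq \bB$ then yields $\bB_1 = \bB$. Thus $\bB \le \bA$ is maximal generic and $\cB \cong_q \bB$ by construction.

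The step I expect to be the genuine (if modest) obstacle is precisely the reduction in (i): one cannot argue maximality of $d_q[\bB]$ directly by applying $\varrho$ to a supposed enlargement, because for a general $q \in S(\bRO)$ the map $\varrho$ need not be a one-sided inverse of $d_q$ (that fails already at the level of Remark~\ref{rem:gen-reg}(i) versus Proposition~\ref{prop:gen-reg}(i)). Routing through some $v \in \fU$ via Lemma~\ref{lem:U-den} and then quoting Lemma~\ref{lem:im-maxgen} is what makes the argument go through. The remaining care needed is purely notational: one must check that the mixed products $p \ast v$ with $p \in \beta G$, $v \in S(\bRO)$ are the ones covered by Lemma~\ref{lem:U-den}, and that the identity $d_p \circ d_v = d_{p \ast v}$ holds with these conventions, which is exactly Fact~\ref{fact:dual} applied to $d_v : \bRO \to \cP(G)$.
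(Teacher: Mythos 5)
Your proposal is correct and follows essentially the same route as the paper: for (i) it writes $q = p \ast v$ with $v \in \fU$ via Lemma~\ref{lem:U-den}, applies Theorem~\ref{thm:max-reg}(i) to $d_v[\bB]$ and then Lemma~\ref{lem:im-maxgen} to $d_p$, using $d_{p\ast v} = d_p \circ d_v$ (Fact~\ref{fact:dual}); for (ii) it invokes Remark~\ref{rem:gen-reg}(ii) and proves maximality of $\bB$ by the same injectivity argument (Remark~\ref{rem:hom-gen} plus Lemma~\ref{lem:dRO-sub}) that the paper refers to when it says to repeat the technique from Theorem~\ref{thm:max-reg}. Your extra remarks (existence of $v \in \fU$, the mixed-product conventions) only make explicit what the paper leaves implicit.
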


\begin{proof} (i) Pick $v \in \fU$. By Lemma~\ref{lem:U-den}, we can write $q = p \ast v$ for some $p \in \beta G$. By Theorem~\ref{thm:max-reg}, $d_v[\bB] \le \cA$ is maximal generic. The conclusion follows from Lemma~\ref{lem:im-maxgen}, since $d_q[\bB] = d_{p \ast v}[\bB] = d_p[ d_v[\bB] ]$.

\vspace{2mm} \noindent
(ii) Use Remark~\ref{rem:gen-reg} (ii) to find a generic $\bB \le \bA$ and some $q \in S(\bRO)$ such that $\cB \cong_q \bB$. For maximality of $\bB$, repeat the proof of Theorem~\ref{thm:max-reg} (iii).
\end{proof}

A notable application of Theorem~\ref{thm:max-reg} is in the o-minimal structures. Let $(M, \le, \ldots)$ be an o-minimal structure, where $\le$ is a dense linear order without endpoints, and $G$ a group definable in $M$. We consider $G$ as a topological group with the structure given by Proposition~\ref{prop:def-man}. Let $\cA := \Def_{\ext, G}(M) \le \cP(G)$. It is a d-closed $G$-algebra and by Corollary~\ref{cor:ext-sbp}, $\cA \le \cSBP$. Thus Theorem~\ref{thm:max-reg} applies in this setting.

Let $\bA := \varrho[\cA] \le \bRO$. It is worth noting that in this case $\bA \subseteq \cA$. We prove it using the following result of Shelah:

\begin{fact}[\cite{She09}] \label{fact:nip-ext} Assume $N$ is a model of an NIP theory. Let $N^{\ext}$ be the expansion of $N$ by all externally definable subsets of $N$. Then the theory of $N^{\ext}$ has NIP and quantifier elimination.
\end{fact}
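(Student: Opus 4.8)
The statement is Shelah's expansion theorem, so the plan follows the now-standard route through \emph{honest definitions}; I would prove the two conclusions---quantifier elimination and NIP---separately, the second being a short consequence of the first. Fix a monster model $\fC \succcurlyeq N$ and recall that a subset of $N^k$ is externally definable precisely when it has the form $\phi(N^k; b) := \{ a \in N^k : \fC \models \phi(a; b) \}$ for an $L$-formula $\phi(x;y)$ and a parameter $b \in \fC$. The relations of $N^{\ext}$ are, by definition, exactly all such sets in every arity, and this family is visibly closed under Boolean combinations and under permuting or identifying coordinates. Consequently quantifier elimination reduces to one closure property: if $D \subseteq N^{k+1}$ is externally definable, then the projection $\pi(D) = \{ a \in N^k : (\exists c \in N)\,(a,c) \in D \}$ is externally definable as well. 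Granting this, induction on formula complexity shows every $N^{\ext}$-definable set is externally definable, which is exactly quantifier elimination for the relational language of $N^{\ext}$.

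The only non-formal point is thus closure under projection, and this is where NIP enters. Writing $D = \phi(N^{k+1}; b)$, I would invoke the honest definition theorem---a consequence of the $(p,q)$-theorem, available because the NIP formula $\phi$ has finite VC dimension---to produce a formula $\psi(x,z;w)$ and a parameter $d \in \fC$ whose type $\operatorname{tp}(d/Nb)$ is finitely satisfiable in $N$, such that
\[
\phi(N^{k+1}; b) \subseteq \psi(\fC^{k+1}; d) \subseteq \phi(\fC^{k+1}; b).
\]
Intersecting the sandwich with $N^{k+1}$ gives $\psi(N^{k+1}; d) = \phi(N^{k+1}; b) = D$, so $\psi(\cdot;d)$ reproduces $D$ exactly on $N$; the right-hand inclusion (the ``honesty'') says it never overshoots $\phi(\cdot;b)$ anywhere in $\fC$. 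The target is then to identify $\pi(D)$ with the trace on $N^k$ of the $L$-formula $\exists z\,\psi(x,z;d)$, which is externally definable by construction. The inclusion carrying $a \in \pi(D)$ into this trace is immediate from $D \subseteq \psi(\fC^{k+1};d)$. The reverse inclusion is the substantive one: starting from $a \in N^k$ with $\fC \models \exists z\,\psi(a,z;d)$, the formula $\exists z\,\psi(a,z;w)$ over $N$ is satisfied by $d$, hence---by finite satisfiability of $\operatorname{tp}(d/N)$ in $N$---by some parameter from $N$, and I would combine this descent with the exactness $\psi(N^{k+1};d) = D$ and the elementarity of $N \preccurlyeq \fC$ to pull an existential witness back into $N$.

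I expect the delicate heart of the argument---and the main obstacle---to be exactly this descent of the witness into $N$. For an arbitrary sandwiched $\psi$ the $\fC$-projection of $\psi(\cdot;d)$ can be strictly larger than the genuine within-$N$ projection $\pi(D)$ (the within-$\fC$ projection of $D$ itself is externally definable for free, and already differs from $\pi(D)$), so the entire scheme rests on the honest definition being produced with the finite-satisfiability (coheir) property and on $\psi$ arising, via the $(p,q)$-theorem, as a controlled finite combination of instances of $\phi$. Reconciling these so that the descent is valid is precisely where all the NIP content is spent, and verifying the honest definition theorem itself in the required form is the genuinely hard lemma underlying the whole result.

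Once quantifier elimination is in hand, NIP of $\operatorname{Th}(N^{\ext})$ is routine. By elimination every $N^{\ext}$-formula $\theta(x;y)$ is equivalent to a Boolean combination of externally definable atomic relations, so there is a single $L$-formula $\Theta(x,y)$ with parameters in $\fC$ satisfying $\theta(a;b) \iff \fC \models \Theta(a,b)$ for all $a,b \in N$. An independence pattern for $\theta$ witnessed by tuples from $N$ would then be an independence pattern for $\Theta$ realised at points of $\fC$, contradicting the NIP of $T$. Hence no formula of $N^{\ext}$ has the independence property, which completes the proof.
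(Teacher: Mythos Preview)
The paper does not prove this statement; it is quoted as a known result of Shelah and used as a black box, so there is no proof in the paper to compare against. Your outline via honest definitions is the standard modern route and is correct in overall shape.

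The one place your sketch does not close is exactly where you flag it: the descent. You correctly record that $\operatorname{tp}(d/Nb)$ is finitely satisfiable in $N$, but in the descent paragraph you apply finite satisfiability only to the single formula $\exists z\,\psi(a,z;w)$, obtaining some $d' \in N$ and then a witness $c \in N$ for $\psi(a,c;d')$; you then appeal to ``exactness $\psi(N^{k+1};d) = D$'', which concerns the original $d$, not $d'$, and so says nothing about whether $(a,c) \in D$. The fix is to feed the honesty condition itself into the finite-satisfiability step: the formula $\forall x\,\forall z\,\bigl(\psi(x,z;w) \to \phi(x,z;b)\bigr)$ lies in $\operatorname{tp}(d/Nb)$, so one can choose $d' \in N$ satisfying \emph{both} $\exists z\,\psi(a,z;d')$ and $\forall x\,\forall z\,\bigl(\psi(x,z;d') \to \phi(x,z;b)\bigr)$. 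Elementarity then gives $c \in N$ with $\psi(a,c;d')$, and the second formula forces $\phi(a,c;b)$, i.e.\ $(a,c) \in D$. This is precisely why finite satisfiability is required over $Nb$ rather than merely over $N$; once this is said, the argument goes through as you intend.
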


Take $A \in \cA$. It is easy to see that $\varrho(A)$ is a subset of $G$ definable in $M^{\ext}$. Since $M$ is o-minimal, its theory has NIP, so by Fact~\ref{fact:nip-ext}, $M^{\ext}$ has quantifier elimination. It follows that $\varrho(A)$ is externally definable, i.e. $\varrho(A) \in \cA$.

We can summarize the results of this subsection using the example at hand. In order to compute the Ellis group of $S_{\ext, G}(M)$, we first find a maximal generic $G$-subalgebra $\bB$ of $\bA$. This may be easier than finding such an algebra in $\cA$ since firstly, regular open sets have less complexity than arbitrary sets with SBP, and secondly, nowhere dense sets can be neglected. Next we pick any $v \in \fU$ and compute $\cB := d_v[\bB]$, which is a maximal generic $G$-subalgebra of $\Def_{\ext, G}(M)$. The computation should be feasible because of the regular behaviour of $d_v$ which we described before. Finally, we use $\cB$ to compute $\cE$, which is isomorphic to the Ellis group of $S_{\ext, G}(M)$.

Theorem~\ref{thm:max-reg} guarantees that essentially every maximal generic subalgebra of $\cA$ can be obtained this way. Therefore the approach is unlikely to introduce unnecessary complexity, because if any simple maximal generic subalgebras of $\cA$ exist, some of them can be found via the method we propose.

We illustrate the procedure on the following basic example.

\begin{example} \label{ex:reg-s1} Let $M = (\RR, \le, +, \cdot, \ldots)$ be an o-minimal expansion of the reals. Consider the definable group
\[
G := S^1 = \{ (x, y) \in \RR^2 : x^2 + y^2 = 1 \} \subseteq \RR^2
\]
equipped with complex numbers multiplication and the Euclidean topology. Let $\cA = \Def_{\ext, G}(M)$ and $\bA = \varrho[\cA]$. For $a, b \in G$, we let $(a, b)$ denote the open arc of $S^1$ that goes from $a$ to $b$ counterclockwise. We similarly define $[a, b]$, $[a, b)$ etc. Each $A \in \cA$ is a finite union of arcs and points and each $V \in \bA$ is a finite union of open arcs with distinct endpoints. Thus $\bA$ is generic and it is a unique maximal generic $G$-subalgebra of itself.

The set $\fU$ is large, but since we are only interested in the algebras $d_v[\bA]$, where $v \in \fU$, we only need to describe the image $\pi[\fU]$ under the restriction $\pi : S(\bRO) \to S(\bA)$. We have that $\pi[\fU] = \{ \hat{e}^+, \hat{e}^- \}$, where $\hat{e}^+$ and $\hat{e}^-$ are ultrafilters of $\bA$ determined by
\begin{align*}
\hat{e}^+ & \supseteq \{ (e, b) : b \in S^1 \setminus \{ e \} \}, \\[1ex]
\hat{e}^- & \supseteq \{ (a, e) : a \in S^1 \setminus \{ e \} \}.
\end{align*}
For any distinct $a, b \in S^1$ we have that $d_{\hat{e}^+}\big( (a, b) \big) = [a, b)$ and $d_{\hat{e}^-}\big( (a, b) \big) = (a, b]$. Note how $(a, b)$ is only strongly generic in $G$ up to nowhere dense sets, but its modification $[a, b)$ (or $(a, b]$) is strongly generic in the usual sense. Thus $\cB_+ := d_{\hat{e}^+}[\bA]$ consists of finite unions of arcs of the form $[a, b)$ and $\cB_- := d_{\hat{e}^-}[\bA]$ consists of finite unions of arcs of the form $(a, b]$. Both are maximal generic $G$-subalgebras of $\Def_{\ext, G}(M)$.

Consider any $q \in S(\cB_+)$. Then $q$ is the restriction to $\cB_+$ of some left translate $g \varrho^*(\hat{e}^+)$ or $g \varrho^*(\hat{e}^-)$, where $g \in S^1$ and $\varrho^* : S(\bA) \to S(\cA)$ is the dual of $\varrho$. In the first case $d_q[\cB_+] = \cB_+$ and in the second $d_q[\cB_+] = \cB_-$. Thus
\[
\cE = \{ q \in S(\cB_+) : d_q[\cB_+] \subseteq \cB_+ \} = \{ g \varrho^*(\hat{e}^+) \cap \cB_+ : g \in S^1 \}.
\]
When $g \varrho^*(\hat{e}^+)$ is identified with $g$, it is easy to check that the operation $\ast$ on $\cE$ coincides with the group operation of $S^1$. Therefore the Ellis group of $S_{\ext, G}(M)$ is isomorphic to $S^1$.
\end{example}

We conclude the subsection by discussing a final, more elaborate example.

\subsubsection{Groups with definable compact-torsion-free decomposition}

In \cite{Jag15} Jagiella explored the topological dynamics of an interesting class of groups definable in o-minimal expansions of the reals. We briefly recall the set-up and then analyse the generic algebras of (externally) definable subsets and their regular open counterparts. In the process, we negatively answer the following question, motivated by Example~\ref{ex:reg-s1}: if $G$ is a group definable in an o-minimal expansion $M$ of the reals and $\cA = \Def_{\ext, G}(M)$, must $\bA := \varrho[\cA]$ contain a unique maximal generic subalgebra?

Assume $M = (\RR, \le, +, \cdot, \ldots)$ is an o-minimal expansion of the ordered field of reals and fix a group $G$ definable in $M$. We consider $G$ as a topological group with the structure from Proposition~\ref{prop:def-man}. Moreover, we assume that $G$ admits a \emph{definable compact-torsion-free decomposition}, meaning that there exist definable subgroups $K, H \le G$ such that $K$ is definably compact, $H$ is torsion-free, $G = K \cdot H$ and $K \cap H = \{ e \}$. Since we work in $\RR$, the group $K$ is actually compact. Note that $H$ and $K$ are closed in $G$ by \cite[Corollary 2.8]{Pil88}.

The algebra $\cA := \Def_{\ext, G}(M) = \Def_G(\RR)$ of $\RR$-definable subsets of $G$ is d-closed. Let $S_G(\RR)$, $S_K(\RR)$, $S_H(\RR)$ denote the Stone space of $\Def_{G}(\RR)$, $\Def_{K}(\RR)$, $\Def_{H}(\RR)$, respectively. $S_K(\RR)$ and $S_H(\RR)$ are subsemigroups of $S_G(\RR)$ with respect to the usual operation $\ast$. The group $H$ naturally acts on $G/H \approx K$ and we denote this action as $\varphi_h : K \to K$, where $h \in H$. Explicitly, $\varphi_h(k)$ is the unique $k' \in K$ such that $hk = k'h'$ for some $h' \in H$. The action extends to an action of the semigroup $S_H(\RR)$ on $S_K(\RR)$, which also will be denoted as $\varphi_p : S_K(\RR) \to S_K(\RR)$ for $p \in S_H(\RR)$. We let $I$ denote the set of generic points of $S_K(\RR)$, which is a unique minimal subflow of $S_K(\RR)$ and is closed under the action of $H$. There is an $H$-invariant type $p_{\infty} \in S_H(\RR)$ and for any such type $I \ast p_{\infty} \trianglelefteqslant_m S_G(\RR)$. From now $p_{\infty}$ will be fixed.

Since $K$ is compact, each $q \in S_K(\RR)$ has a standard part $\st(q) \in K$. The map $\st : S_K(\RR) \to K$ is a semigroup homomorphism and for each $u \in J(I)$, its restriction $\st : uI \to K$ is an isomorphism. Define $\psi : K \to K$ as $\psi = {\st} \circ \varphi_{p_{\infty}}$ and let $Z = \{ z \in K : \psi(z) = 1_K \}$. The set of idempotents of $I \ast p_{\infty}$ is precisely $J(I \ast p_{\infty}) = \{ u \ast \hat{z} \ast p_{\infty} : u \in J(I), z \in Z \}$. We have that $\psi[K] = N_G(H) \cap K$ and $\psi \circ \psi = \psi$, hence $\psi \restriction N_G(H) \cap K = \id$. 

We direct the reader to \cite{Jag15} for more details.

Jagiella describes the set $J(I \ast p_{\infty})$ in a technically different way:
\[
J(I \ast p_{\infty}) = \{ q \ast p_{\infty} : q \in I, \psi(\st(q)) = 1_K \}.
\]
The following lemma explains why our description is equivalent.

\begin{lemma} \label{lem:ctf-equiv} Let $u \in J(I)$ and $k \in K$. Then $u \ast \hat{k}$ is the unique type $q \in uI$ such that $\st(q) = k$. Consequently,
\[
\{ q \in I : \psi(\st(q)) = 1_K \} = \{ u \ast \hat{z} : u \in J(I), z \in Z \}.
\]
\end{lemma}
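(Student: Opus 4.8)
The plan is to prove the two displayed claims in order, the first being the substantive one. Fix $u \in J(I)$ and $k \in K$. Since $\st : uI \to K$ is a group isomorphism (recalled from Jagiella's setup), there is a unique $q \in uI$ with $\st(q) = k$; so the content of the first assertion is precisely that $u \ast \hat k$ is that $q$. I would establish this in two steps. First, $u \ast \hat k \in uI$: indeed $u \ast \hat k \in uI$ because $u \ast \hat k \in u \ast S_K(\RR)$ and, by Lemma~\ref{lem:im}(ii) applied in $I$ (or directly, since $u$ is a left identity on $uI$ and $I \ast \hat k = I$ as $I$ is a minimal ideal of $S_K(\RR)$), one gets $u \ast \hat k \ast u = u \ast \hat k$, which is the membership condition for $uI$. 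Second, $\st(u \ast \hat k) = k$: since $\st : S_K(\RR) \to K$ is a semigroup homomorphism, $\st(u \ast \hat k) = \st(u) \cdot \st(\hat k) = \st(u) \cdot k$, and $\st(u) = 1_K$ because $u$ is an idempotent of $I$ and $\st \restriction uI$ is an isomorphism of groups, so it sends the identity $u$ of $uI$ to $1_K$. Combining, $u \ast \hat k$ is the unique element of $uI$ with standard part $k$, as claimed.

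For the ``consequently'' part I would argue by double inclusion, using that $\psi = {\st} \circ \varphi_{p_\infty}$ and $Z = \{ z \in K : \psi(z) = 1_K \}$. For $\supseteq$: take $u \in J(I)$ and $z \in Z$, and set $q = u \ast \hat z \in uI \subseteq I$. By the first part $\st(q) = z$, so $\psi(\st(q)) = \psi(z) = 1_K$; hence $q$ lies in the left-hand set. For $\subseteq$: take $q \in I$ with $\psi(\st(q)) = 1_K$; put $z := \st(q) \in K$, so $z \in Z$. Since $q \in I$ and $I$ is a disjoint union of the groups $uI$ over $u \in J(I)$ by Theorem~\ref{thm:ell}(ii), there is $u \in J(I)$ with $q \in uI$. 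Then $\st(u \ast \hat z) = z = \st(q)$ and both $u \ast \hat z$ and $q$ lie in $uI$, so by the uniqueness in the first part $q = u \ast \hat z$, exhibiting $q$ in the right-hand set.

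Finally, to connect with the description stated in the main text, one notes that $J(I \ast p_\infty) = \{ u \ast \hat z \ast p_\infty : u \in J(I), z \in Z \}$ and, applying the set equality just proved and right-multiplying by $p_\infty$, this equals $\{ q \ast p_\infty : q \in I, \psi(\st(q)) = 1_K \}$, which is Jagiella's formulation.

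\textbf{Main obstacle.}
The reasoning is essentially bookkeeping with the Ellis-semigroup structure and the homomorphism $\st$; the only genuinely delicate point is verifying that $\st(u) = 1_K$ and, more generally, being careful that $\st$ really is a semigroup homomorphism on all of $S_K(\RR)$ (not merely on the ideal subgroups) — this is the fact that makes $\st(u \ast \hat k) = \st(u)\st(\hat k)$ legitimate, and it is the hinge of the whole argument. Everything else follows from the disjoint-union structure of the minimal ideal (Theorem~\ref{thm:ell}) and the injectivity of $\st \restriction uI$, both quoted from Jagiella's setup.
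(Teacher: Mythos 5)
Your route mirrors the paper's: show $u \ast \hat{k} \in uI$, compute $\st(u \ast \hat{k}) = \st(u)\,\st(\hat{k}) = k$ using that $\st$ is a semigroup homomorphism with $\st(u) = 1_K$, get uniqueness from the injectivity of $\st \restriction uI$, and deduce the displayed set equality from the partition of $I$ into the groups $uI$ (Theorem~\ref{thm:ell}). The standard-part computation, the uniqueness argument and the ``consequently'' part are all fine.

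The weak point is precisely the membership $u \ast \hat{k} \in uI$, and your justification there does not hold up as written. First, $q \ast u = q$ is not ``the membership condition for $uI$'': every $q \in I$ satisfies $q \ast u = q$ by Lemma~\ref{lem:im}(ii), while $I$ splits into many groups $vI$; the correct criterion is $q \in I$ together with $u \ast q = q$. Second, and more importantly, applying Lemma~\ref{lem:im}(ii) to $u \ast \hat{k}$ presupposes $u \ast \hat{k} \in I$, which is the very point at issue, and the alternative claim that $I \ast \hat{k} = I$ ``as $I$ is a minimal ideal'' is not justified by minimality alone: the identity $I \ast a = I$ from the preliminaries requires $a \in I$, and $\hat{k}$ is a realized type, hence (for infinite $K$) not generic, so $\hat{k} \notin I$. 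What makes the statement true is the \emph{uniqueness} of the minimal subflow of $S_K(\RR)$ (it is the set of generic types): right multiplication $r_k(q) = q \ast \hat{k}$ is a $G$-flow endomorphism, so it maps the unique minimal subflow $I$ onto itself, giving $u \ast \hat{k} \in I$; then $u \ast \hat{k} = u \ast u \ast \hat{k} \in uI$ is immediate. This is exactly how the paper argues. So your proof is repairable, but the genuinely delicate step is this membership (not, as your closing remark suggests, the homomorphism property of $\st$, which is quoted directly from the setup), and it needs the uniqueness/endomorphism argument rather than the reasons you give.
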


\begin{proof} The function $r_k : S_K(\RR) \to S_K(\RR)$ defined by $r_k(q) = q \ast \hat{k}$ is a $G$-flow endomorphism, so the unique minimal subflow $I \trianglelefteqslant_m S_K(\RR)$ is mapped by $r_k$ onto itself. In particular, $u \ast \hat{k} = r_k(u) \in I$. Furthermore, $u \ast \hat{k} \in uI$ since $u \ast \hat{k} = u \ast u \ast \hat{k}$. Finally, $\st(u \ast \hat{k}) = \st(u) \cdot \st(\hat{k}) = 1_K \cdot k  = k$. Uniqueness follows from the fact that $\st : uI \to K$ is an isomorphism.
\end{proof}

\begin{lemma} \label{lem:ctf-prod} The function $m : K \times H \to G$ defined as $m(k, h) = k \cdot h$ is a homeomorphism, where $K \times H$ is equipped with the product topology.
\end{lemma}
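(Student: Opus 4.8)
The plan is to verify that $m \colon K \times H \to G$, $m(k,h) = k\cdot h$, is a continuous bijection between compact-and-Hausdorff-like spaces and then obtain continuity of the inverse. First I would check bijectivity, which is immediate from the definable compact-torsion-free decomposition: $G = K\cdot H$ gives surjectivity, while $K\cap H = \{e\}$ gives injectivity, since $k_1 h_1 = k_2 h_2$ forces $k_2^{-1}k_1 = h_2 h_1^{-1}\in K\cap H = \{e\}$. Continuity of $m$ is clear because multiplication in $G$ is continuous and $K\times H$ carries the product topology (so the two coordinate projections followed by the inclusions $K\hookrightarrow G$, $H\hookrightarrow G$ are continuous, and $m$ is their product composed with multiplication).

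The one genuine point is continuity of $m^{-1}$. Since $K$ is compact (we work over $\RR$), the standard argument would be: a continuous bijection from a compact space to a Hausdorff space is a homeomorphism. But $K\times H$ is not compact because $H$ is torsion-free and hence non-compact, so this needs adaptation. The right way is to exhibit the inverse maps $G \to K$ and $G\to H$ explicitly and prove each is continuous. Write $g = k(g)\,h(g)$ for the unique decomposition. The map $g\mapsto k(g)$ can be identified with the projection $G\to G/H$ followed by the definable homeomorphism $G/H \approx K$ mentioned in the set-up (here one uses that $H$ is closed in $G$, by \cite[Corollary 2.8]{Pil88}, so $G/H$ is Hausdorff, and that $K$ is a closed transversal); this projection is continuous as a quotient map, so $g\mapsto k(g)$ is continuous. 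Then $h(g) = k(g)^{-1} g$ is continuous as a composition of continuous maps (inversion and multiplication in $G$, together with $g\mapsto k(g)$). Hence $m^{-1}(g) = (k(g), h(g))$ is continuous, so $m$ is a homeomorphism.

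Alternatively, and perhaps cleaner given the o-minimal context: both $K\times H$ and $G$ are locally compact (indeed definable manifolds), $m$ is a continuous bijection, and one can argue locally. Fix $(k_0,h_0)$; choose a compact definable neighbourhood $L$ of $k_0$ in $K$ (all of $K$ works, being compact) and a compact definable neighbourhood $P$ of $h_0$ in $H$. Then $L\times P$ is compact, $m\restriction L\times P$ is a continuous injection into the Hausdorff space $G$, hence a homeomorphism onto its image $L\cdot P$, which one checks contains an open neighbourhood of $k_0 h_0$ (using openness of multiplication or a dimension/invariance-of-domain count, since $\dim(K\times H) = \dim K + \dim H = \dim G$). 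So $m^{-1}$ is continuous near $k_0 h_0$, and since $(k_0,h_0)$ was arbitrary, $m^{-1}$ is continuous. I expect the main obstacle to be precisely this local step — making rigorous that $L\cdot P$ has nonempty interior and that $m$ is open — which is where the dimension equality and the manifold structure from Proposition~\ref{prop:def-man} (or the quotient description $G/H\approx K$) really get used; the bijectivity and continuity of $m$ itself are routine.
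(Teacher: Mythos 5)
Your bijectivity and continuity of $m$ are fine, but neither of your two routes to continuity of $m^{-1}$ is complete as written. In the first route you invoke ``the definable homeomorphism $G/H \approx K$ mentioned in the set-up''; in this paper that identification is only the set-level bijection coming from the decomposition, and the assertion that it is a homeomorphism (with the quotient topology on $G/H$) is \emph{equivalent} to the lemma you are proving: in one direction compose $G \to G/H \to K$ to get continuity of $g \mapsto k(g)$, in the other direction the lemma makes $g \mapsto k(g)$ continuous and $H$-invariant, so it descends to the inverse of $k \mapsto kH$. Quoting it is therefore circular unless you prove it, and your parenthetical justification (``$H$ is closed, so $G/H$ is Hausdorff, and $K$ is a closed transversal'') is not sufficient --- mere closedness of $K$ is not what is needed at this point. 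What closes the gap is the trick you stated at the start but did not apply in the right place: $k \mapsto kH$ is a continuous bijection from the \emph{compact} space $K$ onto the Hausdorff space $G/H$ (Hausdorff because $H$ is closed), hence a homeomorphism; then $g \mapsto k(g)$ is the quotient map followed by its inverse, and $h(g) = k(g)^{-1}g$ is continuous, which finishes your first route. Your second route is left open exactly at the step you yourself flag (openness of $m$, i.e.\ that $L \cdot P$ has non-empty interior), so it does not stand on its own; it could be rescued via invariance of domain together with the fact that definable bijections preserve o-minimal dimension, but that is much heavier machinery than the statement requires.

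For comparison, the paper's proof avoids the quotient altogether and argues with nets: given $g_i \to g$ with $m^{-1}(g_i) = (k_i, h_i)$, compactness of $K$ gives a subnet with $k_i \to k \in K$, then $h_i = k_i^{-1} g_i \to k^{-1} g$, which lies in $H$ because $H$ is closed, so $m^{-1}(g_i) \to (k, k^{-1}g) = m^{-1}(g)$ along that subnet, which suffices. Both arguments rest on exactly the same two inputs --- compactness of $K$ and closedness of $H$ --- and your corrected first route is a legitimate repackaging of the same idea through $G/H$, at the cost of introducing the quotient space that the paper's direct argument avoids.
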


\begin{proof} Clearly $m$ is a continuous bijection. To check that $m^{-1}$ is continuous, it suffices to prove that for any net $(g_i)_{i \in I}$ in $G$ convergent to some $g \in G$ there is a subnet $(g_{i_j})_{j \in J}$ such that $m^{-1}(g_{i_j})$ converges to $m^{-1}(g)$. Take any net $(g_i)_{i \in I}$ in $G$ convergent to $g \in G$ and let $(k_i, h_i) = m^{-1}(g_i)$. Since $K$ is compact, by passing to a subnet, we can assume that $(k_i)_{i \in I}$ converges to some $k \in K$. Then $h_i = k_i^{-1} g_i \to k^{-1} g$ and $k^{-1} g \in H$, since $H$ is closed. Thus $m^{-1}(g_i) = (k_i, h_i) \to (k, k^{-1} g) = m^{-1}(g)$ in $K \times H$, as desired.
\end{proof}

\begin{corollary} The function $\varphi : H \times K \to K$ defined by $\varphi(h, k) = \varphi_h(k)$ is continuous.
\end{corollary}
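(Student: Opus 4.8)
The plan is to recognize $\varphi_h(k)$ as the ``$K$-coordinate'' of the product $hk$ with respect to the decomposition $G = K \cdot H$, and then to read off continuity from Lemma~\ref{lem:ctf-prod}. Concretely, by definition of the action, for $h \in H$ and $k \in K$ the element $\varphi_h(k)$ is the unique $k' \in K$ such that $hk = k' h'$ for some $h' \in H$. Writing $m : K \times H \to G$, $m(k', h') = k' h'$, as in Lemma~\ref{lem:ctf-prod}, this says precisely that $m^{-1}(hk) = \big( \varphi_h(k),\, h' \big)$ for the appropriate $h' \in H$.

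First I would define $\pi_K : G \to K$ as the composition $G \xrightarrow{\ m^{-1}\ } K \times H \xrightarrow{\ \mathrm{pr}_1\ } K$, where $\mathrm{pr}_1$ is the projection onto the first coordinate. By Lemma~\ref{lem:ctf-prod}, $m$ is a homeomorphism, so $m^{-1}$ is continuous, and hence $\pi_K$ is continuous. The identity from the previous paragraph then reads $\pi_K(hk) = \varphi_h(k)$ for all $(h,k) \in H \times K$. Next, since $G$ is a topological group and $H$, $K$ carry the subspace topologies, the map $\mu : H \times K \to G$ given by $\mu(h,k) = hk$ is continuous (it is the restriction of multiplication $G \times G \to G$). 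Therefore $\varphi = \pi_K \circ \mu$ is continuous as a composition of continuous maps, which is the claim.

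The only point requiring care — and it is a bookkeeping point rather than a genuine obstacle — is to keep the order of multiplication straight: $m$ is defined with the $K$-factor on the \emph{left} ($m(k',h') = k'h'$), and the relevant product is $hk$ with the $H$-factor on the left, so that $m^{-1}(hk)$ indeed has $\varphi_h(k) \in K$ as its first coordinate and the ``error term'' $h' \in H$ as its second. Once this is matched up correctly, there is no substantive difficulty: everything reduces to Lemma~\ref{lem:ctf-prod} together with continuity of multiplication in the topological group $G$.
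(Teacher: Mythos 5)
Your proof is correct and rests on the same key ingredient as the paper's: Lemma~\ref{lem:ctf-prod}, which lets you read $\varphi_h(k)$ off as the continuous $K$-component of $hk$. The paper phrases this as a net argument (decompose $h_ik_i = \varphi_{h_i}(k_i)\,h_i'$ and apply the lemma to get componentwise convergence), while you package the identical idea as the composition $\mathrm{pr}_1 \circ m^{-1} \circ \mu$ with $\mu$ continuous by continuity of multiplication in $G$ — a cleaner write-up of essentially the same argument.
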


\begin{proof} Assume $(h_i, k_i)_{i \in I}$ is a net in $H \times K$ convergent to some $(h, k) \in H \times K$. Take $h', h'_i \in H$ such that $h \cdot k = \varphi_h(k) \cdot h'$ and $h_i \cdot k_i = \varphi_{h_i}(k_i) \cdot h'_i$. Then 
\[
\varphi_{h_i}(k_i) \cdot h'_i = h_i \cdot k_i \to h \cdot k = \varphi_h(k) \cdot h',
\]
so by Lemma~\ref{lem:ctf-prod}, $\varphi_{h_i}(k_i) \to \varphi_h(k)$. Hence $\varphi$ is continuous.
\end{proof}

Take any idempotent $u \ast \hat{z} \ast p_{\infty} \in J(I \ast p_{\infty})$, where $u \in J(I), z \in Z$. We will describe the image algebra $\Im d_{u \ast \hat{z} \ast p_{\infty}}$. Since $d_{u \ast \hat{z} \ast p_{\infty}} = d_{u \ast \hat{z}} \circ d_{p_{\infty}}$ we first compute the image of $d_{p_{\infty}}$.

\begin{lemma} When $d_{p_{\infty}}$ is treated as a function $\Def_G(\RR) \to \Def_G(\RR)$,
\[
\Im d_{p_{\infty}} = \{ A \cdot H : A \subseteq K \text{ is definable} \}.
\]
\end{lemma}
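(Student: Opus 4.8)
The plan is to prove the two inclusions separately, using the semigroup description of $d_{p_\infty}$ together with the facts established above about $p_\infty$, $\varphi$, and the compact-torsion-free decomposition. Throughout I write elements of $G$ via the homeomorphism $m$ of Lemma~\ref{lem:ctf-prod} as products $kh$ with $k \in K$, $h \in H$, uniquely.

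\textbf{Step 1 (shape of $d_{p_\infty}(A)$).} Fix a definable $A \subseteq G$. I would compute, for $g = kh \in G$,
\[
g \in d_{p_\infty}(A) \iff g^{-1} A \in p_\infty \iff h^{-1} k^{-1} A \in p_\infty.
\]
Since $p_\infty \in S_H(\RR)$ is a type concentrated on $H$, the set $h^{-1}k^{-1}A$ is in $p_\infty$ iff its intersection with $H$ is, i.e. iff $(h^{-1}k^{-1}A) \cap H \in p_\infty$. The key observation is that whether a right-$H$-translate lies in the $H$-invariant type $p_\infty$ depends only on the set of $H$-cosets it hits; more precisely, using that $p_\infty$ is $H$-invariant and that $d_{p_\infty}$ is a $G$-algebra homomorphism, I would show $d_{p_\infty}(A) = d_{p_\infty}(A \cdot H)$ for every $A$, and that $d_{p_\infty}(A)$ is always right-$H$-invariant (since $p_\infty$ is $H$-invariant, $d_{p_\infty}(A)h = d_{p_\infty}(A)$ for $h \in H$, directly from the definition $d_q(A) = \{h' : h'^{-1}A \in q\}$ and $h'^{-1}A h \in p_\infty \iff h'^{-1}A \in p_\infty$). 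This already gives $\Im d_{p_\infty} \subseteq \{ B \cdot H : B \subseteq G \text{ definable, } B \cdot H = B\}$, and since $B \cdot H$ is determined by $B \cap K$ when $B$ is right-$H$-invariant (here using $G = KH$, $K \cap H = \{e\}$ and closedness of $K$, $H$), this is exactly $\{ A \cdot H : A \subseteq K \text{ definable}\}$ — modulo checking definability, see Step 3.

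\textbf{Step 2 (the reverse inclusion).} For the containment $\supseteq$, I would take a definable $A \subseteq K$ and exhibit a definable $C \subseteq G$ with $d_{p_\infty}(C) = A \cdot H$. The natural candidate is $C = A \cdot H$ itself: I claim $d_{p_\infty}(A \cdot H) = A \cdot H$. Indeed, $kh \in d_{p_\infty}(A\cdot H)$ iff $h^{-1}k^{-1}(A\cdot H) \in p_\infty$ iff $k^{-1}(A \cdot H) \in p_\infty$ (right-$H$-invariance of $A \cdot H$ and $H$-invariance of $p_\infty$) iff $(k^{-1}A)\cdot H \in p_\infty$. Now $k^{-1}A \subseteq K$, and $(k^{-1}A) \cdot H$ contains $H = 1_K \cdot H$ iff $1_K \in k^{-1}A$ iff $k \in A$. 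Since $p_\infty$ concentrates on $H$ and contains arbitrarily small neighborhoods of infinity inside $H$... more carefully: a right-$H$-invariant definable set $D \cdot H$ with $D \subseteq K$ belongs to $p_\infty$ iff $H \subseteq D \cdot H$ iff $1_K \in D$, because $p_\infty$ is a type on $H$ and $D \cdot H \cap H$ is either all of $H$ (when $1_K \in D$) or empty. Hence $kh \in d_{p_\infty}(A \cdot H) \iff k \in A \iff kh \in A \cdot H$, proving $d_{p_\infty}(A \cdot H) = A \cdot H \in \Im d_{p_\infty}$.

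\textbf{Step 3 (definability bookkeeping) and the main obstacle.} What remains is purely definability-theoretic: one must verify that for definable $A \subseteq G$ the set $A \cdot H$ is definable (immediate, as $H$ is definable), that $d_{p_\infty}(A)$ is definable (this is exactly the content of $\cA = \Def_G(\RR)$ being $d$-closed, already invoked), and conversely that every definable right-$H$-invariant subset of $G$ is of the form $A \cdot H$ for a \emph{definable} $A \subseteq K$ — here one takes $A = (A\cdot H) \cap K$, definable since $K$ is definable. I expect the main obstacle to be not any single hard argument but getting the first inclusion stated cleanly: showing $d_{p_\infty}(A)$ is right-$H$-invariant and equals $d_{p_\infty}(A\cdot H)$ requires carefully unwinding the definition of $d_q$ and using $H$-invariance of $p_\infty$ in the form $B \in p_\infty \iff Bh \in p_\infty$ for $h \in H$, which is where all the work sits. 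Once that is in place, combining with Step 2 closes the circle: $\Im d_{p_\infty} = \{ B \subseteq G : B \text{ definable}, B = B \cdot H\} = \{ A \cdot H : A \subseteq K \text{ definable}\}$.
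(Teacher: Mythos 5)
Your overall route coincides with the paper's: for $(\subseteq)$ you use right-$H$-invariance of $d_{p_{\infty}}(X)$ together with $d$-closedness and $A=d_{p_{\infty}}(X)\cap K$; for $(\supseteq)$ you compute $d_{p_{\infty}}(A\cdot H)=A\cdot H$ using $H\in p_{\infty}$ and $K\cap H=\{e\}$, which is exactly the paper's computation. Steps 2 and 3 are correct. The problem is precisely the step you yourself flag as carrying "all the work": your justification of right-$H$-invariance is wrong as written. Unwinding the definition, $g\in d_{p_{\infty}}(X)\cdot h$ means $(gh^{-1})^{-1}X = h\,g^{-1}X\in p_{\infty}$, so what you need is invariance of $p_{\infty}$ under \emph{left} translation by $h\in H$, i.e. $hC\in p_{\infty}\iff C\in p_{\infty}$ --- and this is exactly what ``$p_{\infty}$ is an $H$-invariant type'' gives, since the action of $G$ on types is by left translation. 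Instead you invoke $Ch\in p_{\infty}\iff C\in p_{\infty}$ (right-translation invariance), which is not among the hypotheses and is not what the computation produces; you have mixed up the sides. The fix is one line (as above), or the paper's version: $d_{p_{\infty}}(X)\cdot h^{-1}=d_{\hat h}(d_{p_{\infty}}(X))=d_{\hat h\ast p_{\infty}}(X)=d_{p_{\infty}}(X)$ because $\hat h\ast p_{\infty}=h\cdot p_{\infty}=p_{\infty}$.

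A second, smaller flaw: your auxiliary claim that $d_{p_{\infty}}(A)=d_{p_{\infty}}(A\cdot H)$ for \emph{every} definable $A$ is false. Take $A=\{e\}$: since $p_{\infty}$ is not realized, no translate of a singleton lies in it, so $d_{p_{\infty}}(\{e\})=\varnothing$, while $d_{p_{\infty}}(\{e\}\cdot H)=d_{p_{\infty}}(H)\supseteq H$. Membership of a set in $p_{\infty}$ is not determined by its right-$H$-saturation. Fortunately this claim is not needed anywhere: the inclusion $(\subseteq)$ only requires right-$H$-invariance of $d_{p_{\infty}}(X)$ and definability of $d_{p_{\infty}}(X)\cap K$, and your $(\supseteq)$ argument produces its own witness $A\cdot H$. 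Deleting the false claim and replacing the invariance argument by the correct left-invariance computation yields a complete proof, essentially identical to the paper's.
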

\begin{proof} $(\subseteq)$ Take any definable $X \subseteq G$. By Remark~\ref{rem:dlim} (i), for any $h \in H$
\[
d_{p_{\infty}}(X) \cdot h^{-1} = d_{\hat{h}}( d_{p_{\infty}}(X) ) = d_{\hat{h} \ast p_{\infty}}(X) = d_{p_{\infty}}(X),
\]
hence $d_{p_{\infty}}(X) = A \cdot H$ for some $A \subseteq K$. Moreover, $A$ is definable, since $A = d_{p_{\infty}}(X) \cap K$.

\vspace{2mm} \noindent
$(\supseteq)$ Take any definable $A \subseteq K$. As in the first part, $d_{p_{\infty}}(A \cdot H) = B \cdot H$ for some definable $B \subseteq K$. It suffices to prove that $B = A$. For $k \in K$,
\[
k \in d_{p_{\infty}}(A \cdot H) \iff k^{-1}A \cdot H \in p_{\infty} \iff (k^{-1}A \cdot H) \cap H \in p_{\infty} \iff k \in A,
\]
where the last equivalence holds because $(k^{-1}A \cdot H) \cap H$ is equal to $H$ if $k \in A$ and to $\varnothing$ otherwise. Hence $d_{p_{\infty}}(A \cdot H) \cap K = A$ and so $B = (B \cdot H) \cap K = A$.
\end{proof}

We proceed to compute $\Im d_{u \ast \hat{z} \ast p_{\infty}}$. Take any definable $A \subseteq K$ and let $Y = d_{u \ast \hat{z}}(A \cdot H)$. Since $Y \subseteq G \approx K \times H$, we may define $H$-sections $Y^h$, $h \in H$, of $Y$:
\[
Y^h = \{ k \in K : k \cdot h \in Y \}.
\]
We will describe $Y$ in terms of its $H$-sections. Let $k \in K, h \in H$. We have that 
\begin{align*}
k \cdot h \in Y & \iff h^{-1} k^{-1} A H \in u \ast \hat{z} \iff \varphi_{h^{-1}}[k^{-1}A] \cdot H \in u \ast \hat{z} \\
& \iff \varphi_{h^{-1}}[k^{-1}A] \in u \ast \hat{z} \iff k^{-1} A \in \varphi_h(u \ast \hat{z}).
\end{align*}
Therefore $Y$ is given by $Y^h = d_{\varphi_h(u \ast \hat{z})}(A)$ and so the corresponding maximal generic subalgebra of $\Def_G(\RR)$ is 
\[
\cB_{u, z} := \Im d_{u \ast \hat{z} \ast p_{\infty}} = \{ Y \subseteq K \times H : (\exists A \underset{\text{def.}}{\subseteq} K)(\forall h \in H) \, Y^h = d_{\varphi_h(u \ast \hat{z})}(A) \}.
\]
Now we compute $\bB_z := \varrho[\cB_{u, z}]$, which turns out to only depend on $z$. Fix $Y \in \cB_{u, z}$ and take a definable $A \subseteq K$ such that $Y^h = d_{\varphi_h(u \ast \hat{z})}(A)$ for $h \in H$.

\begin{lemma} \label{lem:ctf-form} For each $h \in H$ there is $v_h \in J(I)$ such that $\varphi_h(u \ast \hat{z}) = v_h \ast \widehat{\varphi_h(z)}$.
\end{lemma}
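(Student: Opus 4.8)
The plan is to compute $\varphi_h(u \ast \hat{z})$ directly and recognize it as a type of the required form. First I would use the fact that $\varphi_h : S_K(\RR) \to S_K(\RR)$ is a semigroup homomorphism (it is the action of the semigroup $S_H(\RR)$, which is known to respect $\ast$), so that
\[
\varphi_h(u \ast \hat{z}) = \varphi_h(u) \ast \varphi_h(\hat{z}).
\]
The second factor is easy: $\varphi_h$ is continuous and agrees with the concrete map $\varphi_h : K \to K$ on principal ultrafilters, hence $\varphi_h(\hat{z}) = \widehat{\varphi_h(z)}$. For the first factor, I would argue that $\varphi_h(u)$ is again an idempotent in $I$. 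Indeed, $u^2 = u$ and $\varphi_h$ is a semigroup homomorphism, so $\varphi_h(u)^2 = \varphi_h(u)$; and $I \trianglelefteqslant_m S_K(\RR)$ is the unique minimal subflow, which is closed under the action of $H$ (as recalled in the setup from \cite{Jag15}), so $\varphi_h(u) \in I$. Setting $v_h := \varphi_h(u) \in J(I)$ then gives $\varphi_h(u \ast \hat{z}) = v_h \ast \widehat{\varphi_h(z)}$, as claimed.

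The one point that needs a little care is making sure the two expansions of $\varphi_h$ — as a homomorphism of the topological semigroup $S_K(\RR)$ and as the continuous extension of the set map on $K$ — coincide on the relevant elements, and that the homomorphism property $\varphi_h(p \ast q) = \varphi_h(p) \ast \varphi_h(q)$ is the one recorded in the setup (it comes from $\varphi$ being an action of $S_H(\RR)$ on $S_K(\RR)$, which was established in \cite{Jag15}; alternatively it follows from Stone duality applied to the dual algebra map, using that $\varphi_h$ on subsets of $K$ is a $G$-algebra-type homomorphism compatible with $d_q$). Once this bookkeeping is in place, the computation $\varphi_h(u \ast \hat z) = \varphi_h(u)\ast\varphi_h(\hat z) = v_h \ast \widehat{\varphi_h(z)}$ is immediate.

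I expect the main (though minor) obstacle to be precisely this compatibility check: verifying that $\varphi_h$, as used here on $S_K(\RR)$, is genuinely a semigroup endomorphism for $\ast$ and that $\varphi_h(\hat{z}) = \widehat{\varphi_h(z)}$ — both of which are either quoted from \cite{Jag15} or follow from the continuity of $\varphi$ (established in the corollary just above) together with density of principal ultrafilters, since $\widehat{\varphi_h(z)}$ and $\varphi_h(\hat z)$ are the images under two continuous maps that agree on the dense set $\{\hat k : k \in K\}$ and hence are equal by Corollary~\ref{cor:mor}-style uniqueness. After that, closure of $J(I)$ under $\varphi_h$ is the only genuinely structural input, and it follows from $H$-invariance of the minimal subflow $I$ plus the trivial observation that homomorphisms send idempotents to idempotents.
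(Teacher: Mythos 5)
There is a genuine gap, and it sits exactly where you flagged "a little care" was needed: the identity $\varphi_h(u \ast \hat{z}) = \varphi_h(u) \ast \varphi_h(\hat{z})$ is not available. That the action of $S_H(\RR)$ on $S_K(\RR)$ "respects $\ast$" means compatibility in the acting variable (roughly $\varphi_{p \ast q} = \varphi_p \circ \varphi_q$), not that each individual map $\varphi_p$ is an endomorphism of the semigroup $(S_K(\RR), \ast)$. In fact each $\varphi_h$ is generally \emph{not} such an endomorphism: restricted to principal types, your identity would force $\varphi_h(k_1 k_2) = \varphi_h(k_1)\varphi_h(k_2)$, i.e.\ that $\varphi_h : K \to K$ is a group homomorphism. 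But $G = K \cdot H$ is not a semidirect product (neither factor is normal), and $\varphi_h$ is only the action on the coset space $G/H \approx K$: writing $hk_1 = \varphi_h(k_1)h_1$ one gets $\varphi_h(k_1k_2) = \varphi_h(k_1)\varphi_{h_1}(k_2)$ with $h_1$ depending on $k_1$, so multiplicativity fails. Concretely, in the $SL_2(\RR)$ example $\varphi_h$ is the projective action of an upper-triangular matrix on the circle, which is not an automorphism of $SO_2(\RR)$. Consequently your two uses of the homomorphism property both collapse: you cannot split $\varphi_h(u \ast \hat z)$ as $\varphi_h(u) \ast \widehat{\varphi_h(z)}$, and you cannot conclude that $\varphi_h(u)$ is idempotent (so $v_h := \varphi_h(u)$ is not a legitimate choice; the lemma only asserts existence of \emph{some} $v_h \in J(I)$, which in general will not be $\varphi_h(u)$).

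The step that does work in your note, and the correct route, is the soft one: $H$-invariance of the unique minimal subflow gives $\varphi_h(u \ast \hat z) \in I$, hence $\varphi_h(u \ast \hat z) \in v_h I$ for some idempotent $v_h \in J(I)$ by Theorem~\ref{thm:ell}. Then, since $\st$ is a homomorphism, $\st(u \ast \hat z) = \st(u)\,z = z$, and continuity of $\varphi$ gives $\st(\varphi_h(u \ast \hat z)) = \varphi_h(\st(u \ast \hat z)) = \varphi_h(z)$. Finally Lemma~\ref{lem:ctf-equiv} says $v_h \ast \widehat{\varphi_h(z)}$ is the \emph{unique} element of $v_h I$ with standard part $\varphi_h(z)$, so $\varphi_h(u \ast \hat z) = v_h \ast \widehat{\varphi_h(z)}$. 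No multiplicativity of $\varphi_h$ in the $K$-variable is needed anywhere.
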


\begin{proof} Fix $h \in H$. As mentioned in the set-up description, $I$ is closed under the action of $H$, i.e. $\varphi_h[I] \subseteq I$. Hence there is $v_h \in J(I)$ such that $\varphi_h(u \ast \hat{z}) \in v_hI$. By the continuity of $\varphi_h$, 
\[
\st( \varphi_h( u \ast \hat{z} ) ) = \varphi_h( \st( u \ast \hat{z} ) ) = \varphi_h(z).
\]
The conclusion follows from Lemma~\ref{lem:ctf-equiv}.
\end{proof}

Reasoning similarly as in Proposition~\ref{prop:gen-reg} (i), we have:
\begin{fact} \label{fact:ctf-id} For any $v \in J(I)$ and definable $B \subseteq K$ we have that 
\[
\int(B) \subseteq d_v B \subseteq \cl(B). \noproof
\]
\end{fact}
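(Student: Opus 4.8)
The plan is to transplant the core argument of Proposition~\ref{prop:gen-reg}(i): there the ultrafilter $v \in \fU$ entered only through the fact that it contains every regular open neighbourhood of $e$, and here the idempotent $v \in J(I)$ will play the same role because it is forced to concentrate around $1_K$. The one fact I would isolate first is the following:

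\emph{Every definable open neighbourhood $U \subseteq K$ of $1_K$ belongs to $v$.}

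\noindent Indeed, since $\st \colon S_K(\RR) \to K$ is a semigroup homomorphism and $v \ast v = v$, the element $\st(v)$ is an idempotent of the group $K$, hence $\st(v) = 1_K$ (alternatively: $v$ is the identity of the group $vI$ and $\st \colon vI \to K$ is a group isomorphism, so it sends $v$ to $1_K$). Recalling the standard characterisation of the standard part — for $q \in S_K(\RR)$ one has $\st(q) = k$ if and only if every definable open neighbourhood of $k$ lies in $q$ (equivalently, $k \in \cl(A)$ for all $A \in q$), which holds because $K$ is compact Hausdorff with a definable topology, by Proposition~\ref{prop:def-man} and \cite{Pil88} — the displayed fact follows at once.

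With this in hand both inclusions are short. For $\int(B) \subseteq d_v B$: given $h \in \int(B)$, left translation by $h^{-1}$ is a definable homeomorphism of $K$, so $\int(h^{-1}B) = h^{-1}\int(B)$ is a definable open neighbourhood of $1_K$; by the displayed fact it lies in $v$, hence so does the larger set $h^{-1}B$, i.e.\ $h \in d_v B$. For $d_v B \subseteq \cl(B)$: if $h \in d_v B$ but $h \notin \cl(B)$, then $1_K \notin h^{-1}\cl(B) = \cl(h^{-1}B)$, so $K \setminus \cl(h^{-1}B)$ is a definable open neighbourhood of $1_K$ and therefore lies in $v$; but $h^{-1}B \in v$ as well, while $h^{-1}B \cap \big( K \setminus \cl(h^{-1}B) \big) = \varnothing$, contradicting that $v$ is a filter. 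Hence $h \in \cl(B)$, as desired.

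I do not expect a genuine obstacle: this statement is deliberately the ``compact-group skeleton'' of Proposition~\ref{prop:gen-reg}(i). The only points requiring care are (a) keeping every set one feeds to the type $v$ definable over the ground model — the sets $h^{-1}B$, $\int(h^{-1}B)$, $\cl(h^{-1}B)$ are all definable over $M$ since $B$ is and $h \in K \subseteq M^{d}$ — and (b) the routine o-minimal fact that interiors and closures of definable sets in the definable-manifold topology on $K$ are again definable, so that the standard part and the displayed fact are legitimately available.
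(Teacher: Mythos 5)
Your proof is correct and is essentially the paper's intended argument: the paper states this fact without proof, saying only that one reasons as in Proposition~\ref{prop:gen-reg}(i), and your argument is exactly that reasoning with the hypothesis $v \in \fU$ replaced by its compact-group analogue, namely that $\st(v)$ is an idempotent of $K$ (so $\st(v) = 1_K$ and hence every definable open neighbourhood of $1_K$ lies in $v$), after which both inclusions follow by translating as in that proposition. The only added content is that you make explicit the step the paper leaves implicit, using the recalled fact that $\st : S_K(\RR) \to K$ is a semigroup homomorphism; this is sound.
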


In order to disambiguate the notation, we let $\varrho_G : \cSBP(G) \to \bRO(G)$ and $\varrho_K : \cSBP(K) \to \bRO(K)$ denote the functions defined by the same formula $\varrho(X) = \int(\cl(X))$, computed in the respective topological groups.

\begin{lemma} $\varrho_G(Y)$ is given by $\varrho_G(Y)^h = \varrho_K(A) \varphi_h(z)^{-1}$ for $h \in H$.
\end{lemma}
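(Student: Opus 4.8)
The plan is to compute $\varrho_G(Y)$ via the homeomorphism $m\colon K\times H\to G$ of Lemma~\ref{lem:ctf-prod}, which lets us identify $G$ with the product $K\times H$. Under this identification, for a fixed $h\in H$ the ``slice'' $\{(k,h):k\in K\}$ is a closed copy of $K$, but more importantly the subspace $K\times\{h\}$ is open-and-closed in the $H$-direction only locally; what we really need is that the topology of $G$ near such a slice is controlled by the product topology, so that interior and closure can be computed fibrewise up to the contribution of nearby fibres. So the first step is to pin down exactly how $\int$ and $\cl$ in $G$ relate to $\int$ and $\cl$ in $K$ taken on each fibre. Since $H$ is torsion-free definable in an o-minimal structure, it is (definably, hence topologically) a manifold without isolated points, so no fibre $K\times\{h\}$ is open in $G$; nonetheless, for a set $Y$ whose fibres $Y^h$ vary ``continuously'' — which is the case here because $Y^h=d_{\varphi_h(u\ast\hat z)}(A)$ and $\varphi_h$ depends continuously on $h$ — one expects $\varrho_G(Y)^h=\varrho_K(Y^h)$ for each $h$. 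The cleanest route is: (1) show $\varrho_G(Y)$ is again an element of $\cB_{u,z}$ up to nowhere dense error, using that $\cA=\Def_G(\RR)$ is closed under $\varrho_G$ (by Fact~\ref{fact:nip-ext} and o-minimality, as noted before Fact~\ref{fact:nip-ext}); (2) compute its fibres.

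For step (2), fix $h\in H$. By Lemma~\ref{lem:ctf-form} we may write $\varphi_h(u\ast\hat z)=v_h\ast\widehat{\varphi_h(z)}$ with $v_h\in J(I)$, so
\[
Y^h = d_{v_h\ast\widehat{\varphi_h(z)}}(A) = d_{v_h}\bigl(d_{\widehat{\varphi_h(z)}}(A)\bigr) = d_{v_h}\bigl(A\,\varphi_h(z)^{-1}\bigr),
\]
where the last equality is Remark~\ref{rem:dlim}(i). Now $B:=A\,\varphi_h(z)^{-1}$ is a definable subset of $K$, and by Fact~\ref{fact:ctf-id} we have $\int_K(B)\subseteq d_{v_h}(B)\subseteq\cl_K(B)$, so $d_{v_h}(B)$ differs from $B$ by a nowhere dense (in $K$) set contained in $\bd_K(B)$. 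Hence $\varrho_K(Y^h)=\varrho_K(B)=\varrho_K(A)\varphi_h(z)^{-1}$, using that right translation by $\varphi_h(z)^{-1}$ is a homeomorphism of $K$ and so commutes with $\varrho_K$. This identifies the candidate value of $\varrho_G(Y)^h$; it remains to confirm that the fibrewise regularization actually agrees with the genuine $\varrho_G$.

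The main obstacle is precisely this last comparison: $Y\symdif(\text{fibrewise }\varrho_K\text{ of }Y)$ is contained, fibre by fibre, in a set that is nowhere dense in $K$, but ``nowhere dense on every fibre'' does not by itself imply ``nowhere dense in $K\times H$'' (e.g. $K\times\{h_0\}$ is nowhere dense on each fibre yet, when $H$ has isolated-free but still ``thin'' local structure, could conceivably be large). The fix is to use that $Y\in\cB_{u,z}\subseteq\cA$, so $Y$ and its fibrewise regularization are both definable in $M^{\ext}$, and a definable set which is nowhere dense on every fibre of a definable fibration in an o-minimal structure is nowhere dense (this follows from generic fibre-dimension / cell decomposition: the set of fibres of maximal dimension is itself large, so if every fibre is nowhere dense then the total space has dimension strictly less than $\dim G$, hence is nowhere dense). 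Combined with step (1) — $\varrho_G(Y)\in\cSBP(G)$ is the unique regular open set differing from $Y$ by a nowhere dense set — this forces $\varrho_G(Y)$ to equal the fibrewise regularization, i.e. $\varrho_G(Y)^h=\varrho_K(A)\varphi_h(z)^{-1}$ for all $h\in H$, as claimed. Finally I would record that this exhibits $\varrho_G(Y)$ concretely as a member of $\bB_z$ depending only on $\varrho_K(A)$ and $z$, which is what is needed for the subsequent identification of $\bB_z$.
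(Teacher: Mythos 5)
Your first half coincides with the paper's: you compute $Y^h = d_{v_h}\bigl(A\varphi_h(z)^{-1}\bigr)$ via Lemma~\ref{lem:ctf-form} and sandwich it between $\int_K(A)\varphi_h(z)^{-1}$ and $\cl_K(A)\varphi_h(z)^{-1}$ by Fact~\ref{fact:ctf-id}, so the fibrewise candidate $W$ with $W^h = \varrho_K(A)\varphi_h(z)^{-1}$ is correct. The gap is in the final identification $\varrho_G(Y) = W$. What Fact~\ref{fact:reg}(b) gives you is uniqueness of the regular open set in the $\cNWD$-coset of $Y$; so even granting your definability/dimension argument that $Y \symdif W$ is nowhere dense in $G$ (which is plausible: $Y, W \in \Def_G(\RR)$, each fibre of $Y \symdif W$ lies in $\bd_K(A)\varphi_h(z)^{-1}$, and o-minimal dimension theory converts ``every fibre has dimension $< \dim K$'' into $\dim(Y\symdif W) < \dim G$, hence nowhere dense), you still must know that $W$ itself is \emph{regular open in $G$}. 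You never establish this — indeed you do not even show $W$ is open in $G$, since its fibres twist with $h$ through $\varphi_h(z)$, and openness already requires the joint continuity of $(k,h) \mapsto k\varphi_h(z)$, which you mention as a heuristic but never use. Without regular-openness of $W$, ``$W$ differs from $Y$ by a nowhere dense set'' only yields $\varrho_G(W) = \varrho_G(Y)$, not $W = \varrho_G(Y)$.

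The paper closes exactly this point (and bypasses the fibrewise-to-global nowhere-density issue you worried about) with the untwisting map $T(k,h) = (k\varphi_h(z), h)$, a homeomorphism of $K \times H$ by the continuity of $\varphi$ (the corollary after Lemma~\ref{lem:ctf-prod}): after applying $T$ the sandwich becomes the global inclusion $\int_K(A)\times H \subseteq T[Y] \subseteq \cl_K(A)\times H$, where the two bounds differ by the nowhere dense set $\bd_K(A)\times H$ and $\varrho_K(A)\times H$ is visibly regular open, so $\varrho_G(T[Y]) = \varrho_K(A)\times H$ and one pulls back by $T$. To repair your argument you would either introduce $T$ (or an equivalent continuity argument) to show $W = T^{-1}[\varrho_K(A)\times H]$ is regular open in $G$, or compute $\int_G\cl_G(Y)$ fibrewise directly — in either case the continuity of $h \mapsto \varphi_h(z)$ is the ingredient your proposal is missing. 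Your ``step (1)'' (that $\varrho_G(Y)$ stays in the algebra up to nowhere dense error) is not what is needed and does not substitute for this.
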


\begin{proof} Given $h \in H$, pick $v_h \in J(I)$ as in Lemma~\ref{lem:ctf-form}. Then 
\[
Y^h = d_{\varphi_h(u \ast \hat{z})} A = d_{v_h} \circ d_{\widehat{\varphi_h(z)}}(A) = d_{v_h} \big( A \cdot \varphi_h(z)^{-1} \big)
\]
so by Fact~\ref{fact:ctf-id},
\[
\int_K(A) \varphi_h(z)^{-1} \subseteq Y^h \subseteq \cl_K(A) \varphi_h(z)^{-1}.
\]
We continue to identify $G$ with $K \times H$. The function $T : K \times H \to K \times H$ defined by $T(k, h) = (k \varphi_h(z), h)$ is a homeomorphism and $T[Y]^h = Y^h \cdot \varphi_h(z)$. Hence
\[
\int_K(A) \subseteq T[Y]^h \subseteq \cl_K(A)
\]
for each $h \in H$, so 
\[
\int_K(A) \times H \subseteq T[Y] \subseteq \cl_K(A) \times H.
\]
By Corollary~\ref{cor:ext-sbp}, $A$ has SBP, so the sets on the left and on the right differ by a nowhere dense set from each other, and also from $\varrho_K(A) \times H$, which is regular open. It follows that 
\[
T[\varrho_G(Y)] = \varrho_G(T[Y]) = \varrho_K(A) \times H
\]
and so for $h \in H$,
\[
\varrho_G(Y)^h = T[\varrho_G(Y)]^h \cdot \varphi_h(z)^{-1} = \varrho_K(A) \cdot \varphi_h(z)^{-1}. \qedhere
\]
\end{proof}

Thus by Theorem~\ref{thm:max-reg}, the maximal generic subalgebras of $\varrho_G[\Def_G(\RR)]$ are given by
\[
\bB_z = \{ V \subseteq K \times H : (\exists U \in \bA_K)(\forall h \in H) \, V^h = U \varphi_h(z)^{-1} \},
\]
where $z \in Z$ and $\bA_K = \varrho_K[\Def_K(\RR)] = \Def_K(\RR) \cap \bRO(K)$. 

Now we wish to find out when $\varrho_G[\Def_G(\RR)]$ contains a unique maximal generic subalgebra. For $z \in Z$, $h \in H$, define $\delta_z(h) = z \varphi_h(z)^{-1}$.

\begin{lemma} \label{lem:maxreg-eq} For every $z_1, z_2 \in Z$,
\[
\bB_{z_1} = \bB_{z_2} \iff \delta_{z_1} = \delta_{z_2}.
\]
\end{lemma}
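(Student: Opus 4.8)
The plan is to analyse when two algebras $\bB_{z_1}$ and $\bB_{z_2}$ coincide by comparing the defining conditions $V^h = U\varphi_h(z_i)^{-1}$ on the $H$-sections. First I would fix $z_1, z_2 \in Z$ and translate set equality $\bB_{z_1} = \bB_{z_2}$ into a statement about the families $\{\, (U\varphi_h(z_1)^{-1})_{h \in H} : U \in \bA_K \,\}$ and $\{\, (U\varphi_h(z_2)^{-1})_{h \in H} : U \in \bA_K \,\}$. The key reduction is to notice that if $(V^h)_{h \in H}$ is produced by $U$ via $z_1$, then reading off $V^e$ gives $U \varphi_e(z_1)^{-1} = U z^{-1}$... more carefully, since $\varphi_e = \id$ and $z \in Z \le K$, we get $V^e = U \varphi_e(z_1)^{-1} = U z_1^{-1}$, so $U$ is recovered from $V$ and $z_1$ as $U = V^e z_1$. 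Hence the correspondence $U \mapsto (U\varphi_h(z_i)^{-1})_h$ is injective, and membership of a tuple $(V^h)_h$ in $\bB_{z_i}$ is equivalent to the identity $V^h = V^e z_i \varphi_h(z_i)^{-1} = V^e \delta_{z_i}(h)^{-1}\cdot(\text{correction})$; I would be careful to get the precise form, writing $V^h = V^e z_i \varphi_h(z_i)^{-1}$ and recognizing $z_i \varphi_h(z_i)^{-1} = \delta_{z_i}(h)$.

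For the direction $\delta_{z_1} = \delta_{z_2} \implies \bB_{z_1} = \bB_{z_2}$: given $V \in \bB_{z_1}$, set $U := V^e z_1 \in \bA_K$ (using that $\bA_K$ is closed under right translation, which follows since $\Def_K(\RR)$ is and $\bRO(K)$ is preserved by the homeomorphism of right translation). Then for all $h$, $V^h = U \varphi_h(z_1)^{-1} = V^e z_1 \varphi_h(z_1)^{-1} = V^e \delta_{z_1}(h) = V^e \delta_{z_2}(h) = V^e z_2 \varphi_h(z_2)^{-1} = (V^e z_2)\varphi_h(z_2)^{-1}$, and $V^e z_2 \in \bA_K$ as well, so $V \in \bB_{z_2}$; symmetry finishes this direction. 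Here I should double-check the algebra: $z_i \varphi_h(z_i)^{-1}$ versus $\delta_{z_i}(h) = z_i\varphi_h(z_i)^{-1}$ — these literally match the definition given, so the computation is immediate once the section-extraction step is in place.

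For the converse $\bB_{z_1} = \bB_{z_2} \implies \delta_{z_1} = \delta_{z_2}$: I would exhibit a specific $V \in \bB_{z_1}$ whose $e$-section is a fixed nonempty proper $U_0 \in \bA_K$ (e.g. an open arc-type element, guaranteed to exist since $\bA_K = \Def_K(\RR)\cap\bRO(K)$ is nontrivial and generic), so $V^h = U_0 \varphi_h(z_1)^{-1}$. By hypothesis $V \in \bB_{z_2}$, so there is $U' \in \bA_K$ with $V^h = U' \varphi_h(z_2)^{-1}$ for all $h$; taking $h = e$ gives $U' = U_0 z_1 z_2^{-1}$... rather $U' = V^e = U_0 \varphi_e(z_1)^{-1} = U_0 z_1^{-1}\cdot z_1\cdot$ — I need to be careful, but in any case $U'$ is determined, and then equating $U_0 \varphi_h(z_1)^{-1} = U' \varphi_h(z_2)^{-1}$ for every $h$, i.e. $U_0 = U'\varphi_h(z_2)^{-1}\varphi_h(z_1)$, combined with the analogous relation at $h=e$, forces $\varphi_h(z_1)^{-1}\varphi_h(z_2) $ to be independent of $h$; unravelling this yields $z_1\varphi_h(z_1)^{-1} = z_2\varphi_h(z_2)^{-1}$, i.e. $\delta_{z_1}(h) = \delta_{z_2}(h)$, for all $h$. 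The main obstacle is bookkeeping: getting all the left/right translations and the $\varphi_h$-twists in the correct order, and making sure the chosen test element $U_0$ is genuinely in $\bA_K$ and that $U_0 g$ stays in $\bA_K$ for $g \in K$ — both follow from $\Def_K(\RR)$ being a $K$-algebra closed under right translation (Corollary~\ref{cor:dcl-rt}) together with the fact that right translation is a homeomorphism of $K$, hence preserves $\bRO(K)$. Once the section extraction $U = V^e z_i$ is set up cleanly, both implications reduce to one-line computations.
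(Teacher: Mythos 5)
Your reformulation of $\bB_{z}$ via the $1_H$-section (membership reads: $V^{1_H}\in\bA_K$ and $V^h=V^{1_H}\delta_{z}(h)$ for all $h$) is exactly the paper's first step, and your proof of the implication $\delta_{z_1}=\delta_{z_2}\implies\bB_{z_1}=\bB_{z_2}$ is correct; the closure of $\bA_K$ under right translation by elements of $K$, which you need for $V^{1_H}z_2\in\bA_K$, holds simply because right translation by a real point of $K$ is a definable homeomorphism (so it preserves both $\Def_K(\RR)$ and $\bRO(K)$), with no need to route through Corollary~\ref{cor:dcl-rt}.

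The converse, however, has a genuine gap. From $U_0\varphi_h(z_1)^{-1}=U'\varphi_h(z_2)^{-1}$ for all $h$ you conclude that $c_h:=\varphi_h(z_2)^{-1}\varphi_h(z_1)$ is independent of $h$; but the set equations only say that $c_h c_{h'}^{-1}$ lies in the right stabilizer $\{t\in K: U't=U'\}$ of your single test set. For an arbitrary element of $\bA_K$ this stabilizer can be nontrivial (e.g.\ $U'=K$, or a rotation-invariant union of arcs when $K=SO_2(\RR)$), so one fixed ``arc-type'' test set does not suffice unless you also prove it has trivial right stabilizer --- a condition you neither formulate nor verify, and which is stronger than what the lemma requires (the lemma is stated for a general compact definable $K$, not just $SO_2(\RR)$). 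The paper sidesteps this by arguing contrapositively, pointwise in $b$: if $\delta_{z_1}(b)\neq\delta_{z_2}(b)$ for some $b\in H$, choose $U\in\bA_K$ with $U\delta_{z_1}(b)\neq U\delta_{z_2}(b)$; this only requires $U$ not to be right-invariant under the single nontrivial element $\delta_{z_1}(b)\delta_{z_2}(b)^{-1}$, which is easy since $K$ is Hausdorff. Then $V$ defined by $V^h:=U\delta_{z_1}(h)$ lies in $\bB_{z_1}\setminus\bB_{z_2}$, because $V^{1_H}=U$ while $V^b=U\delta_{z_1}(b)\neq U\delta_{z_2}(b)$. If you repair your argument by choosing the test set after fixing $b$, rather than once and for all, it becomes the paper's proof. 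There is also a small bookkeeping slip (you use $U_0$ both as the witness and as the $1_H$-section, which differ by right translation by $z_1$), but that is easily fixed and you flag it yourself.
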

\begin{proof} Note that for $z \in Z$,
\[
\bB_{z} = \{ V \subseteq K \times H : V^{1_H} \in \bA_K \ \& \ (\forall h \in H) \, V^h = V^{1_H} \delta_z(h) \}.
\]
The right-to-left implication follows. For the other one, assume $\delta_{z_1}(b) \neq \delta_{z_2}(b)$ for some $b \in H$. Then we can find $U \in \bA_K$ such that $U \delta_{z_1}(b) \neq U \delta_{z_2}(b)$. Define $V \subseteq K \times H$ so that $V^h = U \delta_{z_1}(h)$ for $h \in H$. Clearly $V \in \bB_{z_1}$, but $V \notin \bB_{z_2}$ since $V^b = U \delta_{z_1}(b) \neq U \delta_{z_2}(b) = V^{1_H} \delta_{z_2}(b)$.
\end{proof}

\begin{proposition} $\varrho_G[\Def_G(\RR)]$ has a unique maximal generic $G$-subalgebra if and only if $Z = \{ 1_K \}$.
\end{proposition}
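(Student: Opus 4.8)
The plan is to reduce the statement to a question about the single map $z \mapsto \delta_z$, and then settle that by pushing a fixed‑point property through the standard‑part map. Set $\bA := \varrho_G[\Def_G(\RR)]$. First I would recall what has just been proved: the maximal generic $G$‑subalgebras of $\bA$ are exactly the algebras $\bB_z$, $z \in Z$, and by Lemma~\ref{lem:maxreg-eq} we have $\bB_{z_1} = \bB_{z_2}$ if and only if $\delta_{z_1} = \delta_{z_2}$, where $\delta_z(h) = z\varphi_h(z)^{-1}$ for $h \in H$. Hence $\bA$ has a unique maximal generic $G$‑subalgebra precisely when $z \mapsto \delta_z$ is constant on $Z$. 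Since $1_K \in Z$ (because $\varphi_h(1_K) = 1_K$ for every $h$, so $\psi(1_K) = 1_K$) and $\delta_{1_K} \equiv 1_K$, constancy of $z \mapsto \delta_z$ is equivalent to: $\varphi_h(z) = z$ for all $z \in Z$ and $h \in H$. The implication ``$Z = \{1_K\}$ $\Rightarrow$ uniqueness'' is then immediate, so the whole content is in the converse.

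For the converse I would assume uniqueness, fix an arbitrary $z \in Z$, and aim at $z = 1_K$; by the reduction we already know $\varphi_h(z) = z$ for all $h \in H$. The plan is: (i) observe that the dual of the homeomorphism $\varphi_h \colon K \to K$ carries principal types to principal types, so $\varphi_{\hat h}(\hat z) = \widehat{\varphi_h(z)} = \hat z$ for every $h \in H$; (ii) use that the extended $S_H(\RR)$‑action on $S_K(\RR)$ is continuous in its first variable (it is a restriction of the left‑topological multiplication of $S_G(\RR)$, cf.\ \cite{Jag15}) together with the density of $\{\hat h : h \in H\}$ in $S_H(\RR)$ (Remark~\ref{rem:den}) to conclude that $p \mapsto \varphi_p(\hat z)$ is constantly $\hat z$, so in particular $\varphi_{p_\infty}(\hat z) = \hat z$; (iii) apply $\st$ and the definition of $\psi$ to get $\psi(z) = \st(\varphi_{p_\infty}(\hat z)) = \st(\hat z) = z$. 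Since $z \in Z$ forces $\psi(z) = 1_K$, this yields $z = 1_K$, and as $z$ was arbitrary, $Z = \{1_K\}$.

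The main obstacle is step (ii): one must know that $\varphi_h(z) = z$ for all $h$ actually propagates to $\varphi_{p_\infty}(\hat z) = \hat z$, which relies on the compatibility $\varphi_{\hat h}(\hat z) = \widehat{\varphi_h(z)}$ and, crucially, on continuity of the extended action in the semigroup variable — the feature that makes the density argument legitimate. Both are part of the standard machinery recalled from \cite{Jag15}, but should be invoked carefully. If one prefers to avoid Stone spaces altogether, there is an alternative, purely algebraic route for the converse: from $\varphi_h(z) = z$ for all $h \in H$ one reads off $z^{-1} H z \subseteq H$; since $h \mapsto z^{-1} h z$ is a definable isomorphism of $H$ onto the definable subgroup $z^{-1} H z$, the latter has the same dimension as $H$, hence equals $H$ (a torsion‑free definable group in an o‑minimal structure is definably connected, so it has no proper finite‑index subgroup), giving $z \in N_G(H) \cap K$ and therefore $z = \psi(z) = 1_K$ by $\psi \restriction (N_G(H) \cap K) = \id$. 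Either way, the entire difficulty is concentrated in this one implication.
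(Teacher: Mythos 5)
Your proposal is correct, and its skeleton is the same as the paper's: classify the maximal generic subalgebras as the $\bB_z$, $z \in Z$, reduce uniqueness via Lemma~\ref{lem:maxreg-eq} to $\delta_z \equiv 1_K$ for all $z \in Z$, and finish with the facts $\psi \restriction N_G(H) \cap K = \id$ and $\psi \restriction Z \equiv 1_K$. The only real difference is in the one nontrivial implication, which the paper compresses into the bare assertion $\delta_z \equiv 1_K \iff z \in N_G(H) \cap K$: your alternative route (from $\varphi_h(z) = z$ for all $h$ read off $z^{-1}Hz \subseteq H$, then use dimension plus definable connectedness of the torsion-free group $H$ to get $z \in N_G(H) \cap K$) is precisely the missing justification of that equivalence, while your primary route bypasses $N_G(H)$ altogether by computing $\psi(z) = \st(\varphi_{p_\infty}(\hat z)) = \st(\hat z) = z$ directly; the latter is legitimate because $\varphi_{p_\infty}(\hat z)$ is the limit of the (here constant) net $\widehat{\varphi_h(z)}$ as $\hat h \to p_\infty$, i.e.\ exactly the compatibility-plus-continuity facts you flag from \cite{Jag15}. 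Either way the argument goes through, and the standard-part computation is arguably the more economical of the two since it needs no input about definable subgroups of $H$ beyond the already recalled properties of $\psi$.
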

\begin{proof} Clearly $1_K \in Z$. By Lemma~\ref{lem:maxreg-eq}, for any $z \in Z$ we have
\[
\bB_z = \bB_{1_K} \iff \delta_z = \delta_{1_K} \iff \delta_z \equiv 1_K \iff z \in N_G(H) \cap K.
\]
The maximal generic subalgebra corresponding to all such $z$ is
\[
\bB_{1_K} = \{ U \times H : U \in \bA_K \}.
\]
It follows that this algebra is unique precisely when $Z \subseteq N_G(H) \cap K$. This is equivalent to $Z = \{ 1_K \}$ because $\psi \restriction N_G(H) \cap K = \id$ and $\psi \restriction Z \equiv 1_K$.
\end{proof}

Finally, to see that $\varrho_G[\Def_G(\RR)]$ can have more than one maximal generic subalgebra, we check that $Z \neq \{ 1_K \}$ holds in a concrete group $G$, originally studied in \cite{GPP15}. Consider $G := SL_2(\RR)$, the group of all $2 \times 2$ matrices with determinant equal to $1$. It admits a definable compact-torsion-free decomposition $G = KH$, where $K = SO_2(\RR)$ is the group of all orthogonal matrices with determinant equal to $1$ and $H = T^+_2(\RR)$ is the group of all upper-triangular matrices with determinant equal to $1$ and positive elements on the diagonal. 

We define an $H$-invariant type $p_{\infty} \in S_H(\RR)$ as
\[
p_{\infty} = \operatorname{\text{tp}} \left( \begin{pmatrix} b_{\infty} & c_{\infty} \\ 0 & (b_{\infty})^{-1} \end{pmatrix} / \RR \right),
\]
where $b_{\infty} > \RR$ and $c_{\infty} > \operatorname{\text{dcl}}(\RR \cup \{ b_{\infty} \})$ in the monster model. For $\alpha \in \RR$, let $R_{\alpha}$ denote the rotation of $\RR^2$ through $\alpha$ about the origin, so that
\[
SO_2(\RR) = \{ R_{\alpha} : \alpha \in [0, 2\pi) \}.
\]
If $h = \begin{pmatrix} b & c \\ 0 & \frac{1}{b} \end{pmatrix}$ for some $b > 0$ and $c \in \RR$, then $\varphi_h(R_{\alpha}) = R_{\beta}$, where $\beta$ is the angle of the vector
\[
h \cdot \begin{pmatrix} \cos \alpha \\ \sin \alpha \end{pmatrix} = \begin{pmatrix} b \cos \alpha + c \sin \alpha \\ \frac{1}{b} \sin \alpha \end{pmatrix}.
\]
Consequently,
\[
\psi(R_{\alpha}) = \st(\varphi_{p_{\infty}}(R_{\alpha})) = \begin{cases} R_0 & \text{if } \alpha \in [0, \pi), \\ R_{\pi} & \text{if } \alpha \in [\pi, 2\pi), \end{cases}
\]
and $Z = \{ R_{\alpha} : \alpha \in [0, \pi) \} \neq \{ R_0 \}$.

\newpage
\appendix

\section{Explicit description of an Ellis group} \label{app:semi}

In this appendix we compute the Ellis group of the flow $S(\cA)$ introduced in Example~\ref{ex:usg-nper1}. For convenience we repeat the definitions here. We also present a variant of the example where the Ellis group appears more difficult to describe.

Assume $\cG \neq \{ e \}$ is a finite group and let $G = \cG^{\omega} \rtimes_{\varphi} \Sym(\omega)$, where the underlying action of the product is $\varphi_{\sigma}(s) = s \circ \sigma^{-1}$ for $\sigma \in \Sym(\omega)$, $s \in \cG^{\omega}$. For $n \in \omega$, $g \in \cG$ let
\[
A_n^g = \{ s \in \cG^{\omega} : s(n) = g \} \times \Sym(\omega).
\]
Define $\cA \le \cP(G)$ as the $G$-algebra generated by $A_0^e$. For any $\< s, \sigma \> \in G$ we have that $\< s, \sigma \> A_0^e = A_{\sigma(0)}^{s(\sigma(0))}$. As a result, 
\[
\{ \< s, \sigma \> A_0^e : \< s, \sigma \> \in G \} = \{ A_n^g : g \in \cG, n \in \omega \}\]
and
\[
\cA = \{ C \times \Sym(\omega) : C \subseteq \cG^{\omega} \text{ is clopen} \}.
\]
It follows that the space $S(\cA)$ is canonically homeomorphic to $\cG^{\omega}$. Via this homeomorphism $\cG^{\omega}$ becomes a $G$-flow with the action $\< s, \sigma \> \cdot x = s \varphi_{\sigma}(x)$ for $s, x \in \cG^{\omega}$ and $\sigma \in \Sym(\omega)$. It suffices to compute the Ellis group of $\cG^{\omega}$. For $g \in \cG$, let $\tilde{g} \in \cG^{\omega}$ denote the constant function everywhere equal to $g$. Recall that for $\< s, \sigma \> \in G$, the functions $\pi_{\< s, \sigma \>} : \cG^{\omega} \to \cG^{\omega}$ are defined as 
\[
\pi_{\<s, \sigma\>}(x) = \< s, \sigma \> \cdot x = s \varphi_{\sigma}(x)
\]
and $E(\cG^{\omega}) = \cl \{ \pi_{\<s, \sigma\>} : \<s, \sigma\> \in G \} \subseteq (\cG^{\omega})^{\cG^{\omega}}$.

\begin{proposition} \label{prop:semi-ell} The Ellis group of $\cG^{\omega}$ is isomorphic to $\cG$.
\end{proposition}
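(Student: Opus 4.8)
The plan is to locate an explicit minimal left ideal of $E(\cG^\omega)$ and to read off the ideal subgroup inside it by a direct computation. Fix a non-principal ultrafilter $p$ on $\omega$. Since $\cG$ is finite, every $x \in \cG^\omega$ has a $p$-limit $x_p \in \cG$, namely the unique $g \in \cG$ with $\{ n \in \omega : x(n) = g \} \in p$. For $s \in \cG^\omega$ define $\lambda_s : \cG^\omega \to \cG^\omega$ by $\lambda_s(x)(n) = s(n) \cdot x_p$; equivalently $\lambda_s(x) = s \cdot \bar c$, where $\bar c \in \cG^\omega$ is constantly $c := x_p$. Put $L_p = \{ \lambda_s : s \in \cG^\omega \}$. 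The claim will be that $L_p \trianglelefteqslant_m E(\cG^\omega)$ and that the idempotent $\lambda_{\bar e}$ (with $\bar e$ the constant function $e$) generates the ideal subgroup $\lambda_{\bar e} L_p \cong \cG$.

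First I would verify $L_p \subseteq E(\cG^\omega)$ and that $L_p$ is closed. For a basic neighbourhood of $\lambda_s$ given by finitely many $x_1, \dots, x_k \in \cG^\omega$ and coordinates $n_1, \dots, n_m$, the set $D = \{ m' \in \omega : x_l(m') = (x_l)_p \text{ for all } l \le k \}$ belongs to $p$, hence is infinite; choosing distinct $m_1, \dots, m_m \in D$ and a permutation $\tau$ with $\tau(m_j) = n_j$ we get $\pi_{\<s,\tau\>}(x_l)(n_j) = s(n_j)\, x_l(m_j) = s(n_j)\,(x_l)_p = \lambda_s(x_l)(n_j)$, so $\lambda_s \in \cl\{ \pi_g : g \in G \} = E(\cG^\omega)$. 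As $s \mapsto \lambda_s$ is continuous and $\cG^\omega$ is compact, $L_p$ is compact, hence closed in $E(\cG^\omega)$. Non-principality of $p$ is exactly what makes $D$ infinite, so that $\tau$ can be taken injective; this is the only place it is needed.

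Next I would do the algebra. One computes $\lambda_s \circ \lambda_t = \lambda_{s \cdot \overline{t_p}}$ and $\pi_{\<r,\sigma\>} \circ \lambda_t = \lambda_{s'}$ with $s'(n) = r(n)\, t(\sigma^{-1}(n))$. The first identity makes $L_p$ closed under composition; together with closedness of $L_p$ and continuity of $f \mapsto f \circ \lambda_t$ (the operation on $E(\cG^\omega)$ is continuous in the first coordinate), the second identity gives $E(\cG^\omega) \cdot L_p \subseteq L_p$, so $L_p$ is a left ideal. Since already $\{ \pi_{\<r,\id\>} \circ \lambda_t : r \in \cG^\omega \} = \{ \lambda_{rt} : r \in \cG^\omega \} = L_p$, we get $E(\cG^\omega) \cdot \lambda_t = L_p$ for every $t$, so $L_p$ is minimal. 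Finally, $u := \lambda_{\bar e}$ satisfies $u \circ u = \lambda_{\bar e \cdot \bar e} = u$, and $u \circ \lambda_t = \lambda_{\overline{t_p}}$, so (as $t_p$ runs over all of $\cG$ when $t$ does) $u L_p = \{ \lambda_{\bar g} : g \in \cG \}$; and $\lambda_{\bar g} \circ \lambda_{\bar h} = \lambda_{\bar g \cdot \bar h} = \lambda_{\overline{gh}}$, so $g \mapsto \lambda_{\bar g}$ is a group isomorphism of $\cG$ onto $u L_p$. By Theorem~\ref{thm:ell}, $u L_p$ is an ideal subgroup of $E(\cG^\omega)$, whence the Ellis group of $\cG^\omega$ is isomorphic to $\cG$.

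The real difficulty is conceptual — guessing the right minimal left ideal. The bookkeeping with $\beta\omega$, and especially keeping track of which self-maps of $\cG^\omega$ genuinely occur as limits of $\pi_g$ (limits of permutations, not of arbitrary injections of $\omega$), needs a little care; but once $L_p$ is isolated, everything reduces to the short computations above.
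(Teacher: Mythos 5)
Your proof is correct, and it reaches the conclusion by a leaner route than the paper. Your $\lambda_s$ is exactly the paper's map $f_u^s(x) = s \cdot \widetilde{x(u)}$ (with your non-principal ultrafilter playing the role of $u \in \beta\omega \setminus \omega$), and your $L_p$ is the paper's minimal ideal $I_u$; the final computations ($\lambda_s \circ \lambda_t = \lambda_{s\overline{t_p}}$, the idempotent $\lambda_{\bar e}$, and $\lambda_{\bar e} L_p = \{ \lambda_{\bar g} : g \in \cG \} \cong \cG$) coincide with the paper's last step. The difference is in how the ideal is certified: the paper first characterizes \emph{all} almost periodic points of $E(\cG^{\omega})$ as the maps $f_u^p$, which requires the proximality analysis (Lemma~\ref{lem:prox}, Fact~\ref{fact:im-nprox}, the bound $|f[\cG^{\omega}]| = |\cG|$, and the extraction of the ultrafilter $u$ from a given almost periodic $f$), whereas you simply exhibit one candidate ideal and verify minimality directly from $E(\cG^{\omega}) \circ \lambda_t \supseteq \{ \pi_{\<r,\id\>} \circ \lambda_t : r \in \cG^{\omega} \} = L_p$ together with closedness of $L_p$ and left-continuity of composition; by Theorem~\ref{thm:ell} one minimal ideal and one idempotent suffice to identify the Ellis group. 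Your approach is shorter and self-contained for the stated proposition; the paper's approach buys strictly more, namely a complete description of the almost periodic elements (hence of all minimal ideals) of $E(\cG^{\omega})$, which is what makes the comparison with the $\cG^{\ZZ}$ variant in the appendix meaningful. Your verification that $\lambda_s \in E(\cG^{\omega})$ — choosing distinct indices in a set of the ultrafilter and extending to a genuine permutation $\tau \in \Sym(\omega)$ — is the same device the paper uses, and you are right that non-principality is needed exactly there.
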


Before proving the proposition we need to make a few basic observations. Recall from topological dynamics that the points $p, q \in \cG^{\omega}$ are called \emph{proximal} if $f(p) = f(q)$ for some $f \in E(\cG^{\omega})$. 

\begin{lemma} \label{lem:prox} $p, q \in \cG^{\omega}$ are proximal if and only if $p(n) = q(n)$ for infinitely many $n < \omega$. 
\end{lemma}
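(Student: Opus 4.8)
The plan is to work directly from the definition of proximality recalled just above, using the explicit formula for the action. Write $D := \{n < \omega : p(n) = q(n)\}$; the goal is to show that $p$ and $q$ are proximal precisely when $D$ is infinite. The computational core is the observation that for any $\langle s, \sigma \rangle \in G$ and any $n < \omega$, since $\pi_{\langle s, \sigma \rangle}(x)(n) = s(n) \cdot x(\sigma^{-1}(n))$, cancelling $s(n)$ gives
\[
\pi_{\langle s, \sigma \rangle}(p)(n) = \pi_{\langle s, \sigma \rangle}(q)(n) \iff p(\sigma^{-1}(n)) = q(\sigma^{-1}(n)) \iff \sigma^{-1}(n) \in D.
\]

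For the implication ``$D$ infinite $\Rightarrow$ proximal'', enumerate $D = \{d_0 < d_1 < \cdots\}$ and, for each $i < \omega$, choose $\sigma_i \in \Sym(\omega)$ with $\sigma_i^{-1}(k) = d_k$ for all $k \le i$; this is possible because a finite partial injection extends to a permutation of $\omega$. Then $\pi_{\langle e, \sigma_i \rangle}(x)(k) = x(d_k)$ whenever $k \le i$. The sequence $(\pi_{\langle e, \sigma_i \rangle})_{i}$ lies in the compact space $(\cG^{\omega})^{\cG^{\omega}}$, so it has a subnet converging to some $f \in E(\cG^{\omega})$; for each fixed $k$ the index of this subnet eventually exceeds $k$, whence $f(x)(k) = x(d_k)$ for every $x$ and every $k$. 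In particular $f(p)(k) = p(d_k) = q(d_k) = f(q)(k)$ for all $k$, so $f(p) = f(q)$ and $p, q$ are proximal.

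For the converse, suppose $f(p) = f(q)$ with $f = \lim_i \pi_{\langle s_i, \sigma_i \rangle} \in E(\cG^{\omega})$, and fix an arbitrary $N < \omega$. Convergence in the product topology on $(\cG^{\omega})^{\cG^{\omega}}$ is coordinatewise, and since $\cG$ is discrete this means the relevant values eventually stabilise; as the index set is directed and only finitely many coordinates are involved, there is a \emph{single} index $i^{*}$ for which $\pi_{\langle s_{i^*}, \sigma_{i^*} \rangle}(p)(n) = f(p)(n)$ and $\pi_{\langle s_{i^*}, \sigma_{i^*} \rangle}(q)(n) = f(q)(n)$ hold simultaneously for all $n < N$. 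For such $n$ we then get $\pi_{\langle s_{i^*}, \sigma_{i^*} \rangle}(p)(n) = \pi_{\langle s_{i^*}, \sigma_{i^*} \rangle}(q)(n)$, so by the core observation $\sigma_{i^*}^{-1}(n) \in D$. Since $\sigma_{i^*}^{-1}$ is injective, $D$ contains the $N$-element set $\sigma_{i^*}^{-1}(\{0, \ldots, N-1\})$; as $N$ was arbitrary, $D$ is infinite.

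The routine half is the first implication. The step requiring care is the converse: one must extract a single common index $i^{*}$ handling all $N$ coordinates at once — choosing an index separately for each coordinate would only place individual elements into $D$ and would not force it to be infinite. Once the common index is secured, injectivity of $\sigma_{i^*}^{-1}$ finishes the argument.
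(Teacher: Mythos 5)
Your proof is correct and follows essentially the same route as the paper: both directions rest on the coordinate computation $\pi_{\langle s,\sigma\rangle}(x)(n)=s(n)\,x(\sigma^{-1}(n))$ together with the fact that $E(\cG^{\omega})$ is the pointwise closure of the translations, with your forward direction (a single index $i^*$ handling finitely many coordinates) matching the paper's choice of a translation approximating $f$ on the first $K+1$ coordinates. The only cosmetic difference is in the converse, where you extract an explicit subnet limit $f(x)(k)=x(d_k)$, while the paper uses a compactness/finite-intersection argument to produce some $f$ with $f(p)=f(q)=\tilde{e}$.
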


\begin{proof} $({\implies})$ Assume that $f(p) = f(q) =: r$ for some $f \in E(\cG^{\omega})$ and fix $K \in \omega$. It suffices to show that $p$ and $q$ agree on at least $K+1$-many values. The set 
\[
U = \{ y \in \cG^{\omega} : y(k) = r(k) \text{ for } k \le K \}
\]
is an open neighbourhood of $r$, so we can find $\< s, \sigma \> \in G$ such that $\< s, \sigma\> p \in U$ and $\< s, \sigma \> q \in U$. In particular, $\varphi_{\sigma}(p)(k) = \varphi_{\sigma}(q)(k)$ for $k \le K$, which means that $p$ and $q$ agree at least on the $K+1$-many values $\sigma^{-1}(0), \ldots, \sigma^{-1}(K)$.

\vspace{2mm}
\noindent
$({\impliedby})$ Assume that $p(n) = q(n)$ for infinitely many $n \in \omega$. To show that there is $f \in E(\cG^{\omega})$ such that $f(p) = f(q) = \tilde{e}$, by compactness it suffices to show that for any open neighbourhood $U \subseteq \cG^{\omega}$ of $\tilde{e}$ we can find $\< s, \sigma \> \in G$ such that $\< s, \sigma \> p \in U$ and $\< s, \sigma \> q \in U$. Take any such $U$. Without loss of generality it is of the form
\[
U = \{ y \in \cG^{\omega} : y(k) = e \text{ for } k \le K \}
\]
for some $K \in \omega$. Take $\sigma \in \Sym(\omega)$ such that $\sigma^{-1}(0), \ldots, \sigma^{-1}(K)$ are the first $K+1$-many indices $n$ satisfying $p(n) = q(n)$ so that $\varphi_{\sigma}(p)$ and $\varphi_{\sigma}(q)$ agree on each $k \le K$. Let $s = \varphi_{\sigma}(p)^{-1}$. Then $\< s, \sigma \> p \in U$ and $\< s, \sigma \> q \in U$.
\end{proof}

We shall now investigate the structure of almost periodic points in $E(\cG^{\omega})$. For $p \in \cG^{\omega}, u \in \beta \omega \setminus \omega$, define $f_u^p : \cG^{\omega} \to \cG^{\omega}$ by $f_u^p(x) = p \cdot \widetilde{x(u)}$, where $x(u) = \lim_{n \to u} x(n)$. We aim to show that the almost periodic points in $E(\cG^{\omega})$ are precisely the functions of the form $f_u^p$, where $p \in \cG^{\omega}, u \in \beta \omega \setminus \omega$.

\begin{fact} \label{fact:im-nprox} Assume $X$ is a $G$-flow and $f \in E(X)$ is almost periodic. Then no two distinct points in $f[X]$ are proximal.
\end{fact}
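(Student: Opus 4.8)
The plan is to reduce the statement to the structure theory of the minimal ideals of $E(X)$ and to exploit the one-sided cancellation property $I \circ a = I$ valid in every minimal ideal.

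First I would unpack what almost periodicity of $f$ buys us. Since $f$ is an almost periodic point of the $G$-flow $E(X)$, the orbit closure $\cl(G \cdot f)$ is a minimal subflow of $E(X)$. By Remark~\ref{rem:sub}, the smallest subflow of $E(X)$ through $f$ is the smallest ideal through $f$, namely $E(X) \circ f$, and minimal subflows of $E(X)$ coincide with minimal ideals. Hence $I := E(X) \circ f$ is a minimal ideal of $E(X)$, and $f = \pi_e \circ f \in I$. So the real content is that $f$ itself (not merely a point of its orbit closure) sits inside a minimal ideal.

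Now suppose $p, q \in f[X]$ are proximal; I must show $p = q$. Choose $x, y \in X$ with $p = f(x)$ and $q = f(y)$, and choose $g \in E(X)$ witnessing proximality, so $g(p) = g(q)$. Because $I$ is a left ideal, $g \circ f \in I$; because $I$ is a minimal ideal, $I \circ (g \circ f) = I$, so in particular $f \in I \circ (g \circ f)$ and there is $h \in I$ with $h \circ g \circ f = f$. Evaluating this identity at $x$ and at $y$ gives
\[
p = f(x) = h\bigl(g(f(x))\bigr) = h\bigl(g(p)\bigr) = h\bigl(g(q)\bigr) = h\bigl(g(f(y))\bigr) = f(y) = q,
\]
as desired.

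I do not anticipate a serious obstacle: the argument is short once one spots that the right move is to "left-divide" $f$ by $g \circ f$ inside $I$, rather than trying to cancel $g$ on the other side. The only genuinely load-bearing step is the reduction in the first paragraph, which must go through Remark~\ref{rem:sub} to identify the minimal subflow through $f$ with the minimal ideal $E(X) \circ f$. An alternative route is available via idempotents — take $u \in J(I)$ with $f \in uI$, note that $f[X] = \Im f = \Im u$ and that $u$ is the identity on $\Im u$ by Lemma~\ref{lem:im}, and push the proximality relation through $u$ — but the cancellation argument above is cleaner and does not require selecting an idempotent.
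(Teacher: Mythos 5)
Your proof is correct and follows essentially the same route as the paper: the paper's proof also takes $f'$ witnessing proximality and then uses almost periodicity of $f$ to produce $f''$ with $f'' \circ f' \circ f = f$, which is exactly your cancellation of $g \circ f$ inside the minimal ideal. You merely make explicit (via Remark~\ref{rem:sub} and $I \circ a = I$) the step the paper leaves implicit.
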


\begin{proof} Assume $f(p)$ and $f(q)$ are proximal and take $f' \in E(X)$ such that $f'(f(p)) = f'(f(q))$. Since $f$ is almost periodic, we can find $f'' \in E(X)$ such that $f'' \circ f' \circ f = f$. It follows that $f(p) = f(q)$.
\end{proof}

\begin{remark} \label{rem:uni} Every $f \in E(\cG^{\omega})$ satisfies $f(x \tilde{g}) = f(x) \tilde{g}$ for $x \in \cG^{\omega}$, $g \in \cG$.
\end{remark}

\begin{proof} The set of all functions $f : \cG^{\omega} \to \cG^{\omega}$ satisfying the formula is closed in $(\cG^{\omega})^{\cG^{\omega}}$ and contains $\{ \pi_{\<s, \sigma\>} : \<s, \sigma\> \in G \}$, hence it contains $E(\cG^{\omega})$.
\end{proof}

\begin{lemma} \label{lem:aper-im} Assume $f \in E(\cG^{\omega})$ is almost periodic. Then there is $p \in \cG^{\omega}$ such that $f[\cG^{\omega}] = \{ p \tilde{g} : g \in \cG \}$.
\end{lemma}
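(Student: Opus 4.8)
The plan is to get everything out of the three facts assembled immediately before the lemma: Remark~\ref{rem:uni} (every $f\in E(\cG^{\omega})$ satisfies $f(x\tilde g)=f(x)\tilde g$), Lemma~\ref{lem:prox} (two points are proximal exactly when they agree on infinitely many coordinates), and Fact~\ref{fact:im-nprox} (for almost periodic $f$ the image $f[\cG^{\omega}]$ contains no two distinct proximal points). The candidate for $p$ is simply $p:=f(\tilde e)$.

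First I would check the inclusion $\{\,p\tilde g:g\in\cG\,\}\subseteq f[\cG^{\omega}]$, which is immediate from Remark~\ref{rem:uni}: $p\tilde g=f(\tilde e)\tilde g=f(\tilde e\tilde g)=f(\tilde g)\in f[\cG^{\omega}]$ for every $g\in\cG$.

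For the reverse inclusion, take an arbitrary $q\in f[\cG^{\omega}]$. Coordinate by coordinate there is, for each $n<\omega$, exactly one element $g_n\in\cG$ with $q(n)=(p\tilde{g_n})(n)$, namely $g_n=p(n)^{-1}q(n)$. Since $\cG$ is finite, the pigeonhole principle yields a single $g\in\cG$ with $g_n=g$ for infinitely many $n$; thus $q$ and $p\tilde g$ agree on infinitely many coordinates, hence are proximal by Lemma~\ref{lem:prox}. Both $q$ and $p\tilde g$ lie in $f[\cG^{\omega}]$ (the latter by the first step), so Fact~\ref{fact:im-nprox} applied to the almost periodic $f$ forces $q=p\tilde g$. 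Therefore $f[\cG^{\omega}]\subseteq\{\,p\tilde g:g\in\cG\,\}$, and combined with the first inclusion this gives the claimed equality.

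I do not expect a genuine obstacle here: once Lemma~\ref{lem:prox}, Fact~\ref{fact:im-nprox} and Remark~\ref{rem:uni} are in place, the argument is a short pigeonhole. The one spot where the hypothesis on $\cG$ is essential is precisely that pigeonhole step — finiteness of $\cG$ is what converts ``for every $n$ some $g_n$ works'' into ``some fixed $g$ works for infinitely many $n$'', and without it two points of $f[\cG^{\omega}]$ could disagree at every coordinate while remaining coordinatewise compatible, breaking the proof. So in the write-up I would make a point of flagging where finiteness of $\cG$ enters, rather than dwelling on any difficulty.
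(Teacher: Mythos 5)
Your proposal is correct and uses exactly the ingredients the paper uses (Remark~\ref{rem:uni}, Lemma~\ref{lem:prox}, Fact~\ref{fact:im-nprox} together with finiteness of $\cG$); the only difference is organizational. The paper packages the pigeonhole as a cardinality count — any $|\cG|+1$ points contain a proximal pair, so $|f[\cG^{\omega}]|\le|\cG|$, while $\{p\tilde g:g\in\cG\}$ gives $|\cG|$ distinct image points — whereas you identify each $q\in f[\cG^{\omega}]$ directly with some $p\tilde g$ via proximality, which is an equally valid and essentially equivalent rendering of the same argument.
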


\begin{proof} By Lemma~\ref{lem:prox}, among any $|\cG|+1$-many points in $\cG^{\omega}$ there are two that are proximal. Hence by Fact~\ref{fact:im-nprox}, the image of $f$ contains at most $|\cG|$-many points. On the other hand, it follows from Remark~\ref{rem:uni} that $f[\cG^{\omega}]$ contains at least $|\cG|$-many elements which are of the form $\{ p \tilde{g} : g \in \cG \}$ for any fixed $p \in f[\cG^{\omega}]$. Therefore  $f[\cG^{\omega}] = \{ p \tilde{g} : g \in \cG \}$.
\end{proof}

\begin{lemma} Assume $f \in E(\cG^{\omega})$ and take $p \in f[\cG^{\omega}]$. 
\begin{enumerate}[label=(\roman{*})]
\item For every $m \in \NN$ and $x_1, \ldots, x_m \in f^{-1}[\{ p \}]$ there are infinitely many $n \in \omega$ such that $x_i(n) = x_j(n)$ for any $1 \le i, j \le m$.
\item There is $g \in \cG$ such that for every $m \in \NN$ and $x_1, \ldots, x_m \in f^{-1}[\{ p \}]$ there are infinitely many $n \in \omega$ such that $x_i(n) = g$ for any $1 \le i \le m$.
\item We can choose $p \in f[\cG^{\omega}]$ such that (ii) holds with $g = e$.
\end{enumerate}
\end{lemma}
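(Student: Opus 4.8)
The plan is to treat the three items in order, with (i) carrying the key idea and (ii), (iii) following by short combinatorial arguments.

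For (i), I would work directly with the description $E(\cG^{\omega}) = \cl\{ \pi_{\<s, \sigma\>} : \<s, \sigma\> \in G \}$ inside $(\cG^{\omega})^{\cG^{\omega}}$, rather than passing through proximality: Lemma~\ref{lem:prox} only yields agreement of each \emph{pair} $x_i, x_j$ on infinitely many coordinates, which is weaker than the simultaneous agreement we need. Fix $x_1, \ldots, x_m \in f^{-1}[\{p\}]$ and $K \in \omega$. The set of $F \in (\cG^{\omega})^{\cG^{\omega}}$ with $F(x_i)(k) = p(k)$ for all $i \le m$ and $k \le K$ is a basic open neighbourhood of $f$ (since $f(x_i) = p$), hence contains some $\pi_{\<s, \sigma\>}$. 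As $\pi_{\<s, \sigma\>}(x_i)(k) = s(k) \cdot x_i(\sigma^{-1}(k))$, this forces $x_i(\sigma^{-1}(k)) = s(k)^{-1} p(k)$ for every $k \le K$, a value independent of $i$. Thus $x_1, \ldots, x_m$ all agree at each of the $K+1$ distinct indices $\sigma^{-1}(0), \ldots, \sigma^{-1}(K)$; letting $K$ grow shows that $\{ n \in \omega : x_1(n) = \ldots = x_m(n) \}$ is infinite.

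For (ii), I would argue by contradiction. If the conclusion fails, then for every $g \in \cG$ there is a finite $F_g \subseteq f^{-1}[\{p\}]$ such that $\{ n : x(n) = g \text{ for all } x \in F_g \}$ is finite. Put $F := \bigcup_{g \in \cG} F_g$, which is still finite because $\cG$ is. By (i) the set $E$ of $n$ at which all members of $F$ take a common value is infinite, and since $\cG$ is finite some $g^{*} \in \cG$ occurs as this common value for infinitely many $n \in E$. For each such $n$ we get $x(n) = g^{*}$ for all $x \in F_{g^{*}} \subseteq F$, contradicting the choice of $F_{g^{*}}$; hence some $g$ works. For (iii), I would start from an arbitrary $p_0 \in f[\cG^{\omega}]$, take $g$ as supplied by (ii), and set $p := p_0 \widetilde{g^{-1}}$. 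By Remark~\ref{rem:uni}, $p$ lies in $f[\cG^{\omega}]$ (if $p_0 = f(y)$ then $p = f(y \widetilde{g^{-1}})$) and $f^{-1}[\{p\}] = \{ x \widetilde{g^{-1}} : x \in f^{-1}[\{p_0\}] \}$; a finite family in $f^{-1}[\{p\}]$ therefore has the form $x_1 \widetilde{g^{-1}}, \ldots, x_m \widetilde{g^{-1}}$ with $x_i \in f^{-1}[\{p_0\}]$, and the infinitely many $n$ with $x_i(n) = g$ for all $i$ (from (ii)) are precisely those with $(x_i \widetilde{g^{-1}})(n) = e$ for all $i$.

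The main obstacle is the step in (i): upgrading the hypothesis $f(x_1) = \ldots = f(x_m) = p$ from pairwise proximality to \emph{simultaneous} agreement of all $m$ sequences on infinitely many coordinates. The resolution is that a single element $\<s, \sigma\>$ of $G$ realizes $f$ approximately on all the $x_i$ at once, so the \emph{common} permutation $\sigma$ produces the common indices $\sigma^{-1}(0), \ldots, \sigma^{-1}(K)$; once (i) is in hand, parts (ii) and (iii) are routine (finiteness of $\cG$ and a translation by $\widetilde{g^{-1}}$).
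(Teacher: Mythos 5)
Your proposal is correct and follows essentially the same route as the paper's proof: part (i) uses that a single $\pi_{\langle s, \sigma \rangle}$ approximates $f$ simultaneously on the finitely many points $x_1, \ldots, x_m$ up to coordinate $K$, so the common permutation $\sigma$ yields $K+1$ common agreement indices; part (ii) merges the putative counterexample witnesses over the finitely many $g \in \cG$ into one finite family and contradicts (i); and part (iii) translates by $\widetilde{g^{-1}}$ using Remark~\ref{rem:uni}. The only differences are cosmetic (you phrase the neighbourhood in $(\cG^{\omega})^{\cG^{\omega}}$ rather than pulling back a neighbourhood of $p$, and in (ii) you pigeonhole a common value $g^{*}$ where the paper intersects cofinite sets), so there is nothing to fix.
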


\begin{proof} (i) Take any $x_1, \ldots, x_m \in f^{-1}[\{ p \}]$ and fix $K \in \omega$. Since $f \in E(\cG^{\omega})$ and 
\[
U = \{ y \in \cG^{\omega} : y(k) = p(k) \text{ for } k \le K \}
\]
is an open neighbourhood of $p$, we can find $\< s, \sigma \> \in G$ such that $\< s, \sigma \> \cdot x_i \in U$ for $1 \le i \le m$. In particular, $x_1, \ldots, x_m$ agree on $\sigma^{-1}(k)$ for $k \le K$, so they must agree on infinitely many $n \in \omega$ since $K$ is arbitrary.

\vspace{2mm}
\noindent
(ii) Assume for contradiction that for each $g \in \cG$ we can find $m_g \in \NN$ and $x^{(g)}_1, \ldots, x^{(g)}_{m_g} \in f^{-1}[\{ p \}]$ such that for almost all $n \in \omega$ at least one of $x^{(g)}_i(n)$, where $1 \le i \le m_g$, is different from $g$. Since $\cG$ is finite, we can rewrite all $x^{(g)}_i$ as $x_1, \ldots, x_m \in f^{-1}[\{ p \}]$, where $m = \sum_{g \in \cG} m_g$. Then for almost all $n \in \omega$ the elements $x_1(n), \ldots, x_m(n)$ are not all equal, which contradicts (i).

\vspace{2mm}
\noindent
(iii) Take $g \in \cG$ such that the pair $(p, g)$ satisfies (ii). Then the pair $(p \tilde{h}, gh)$ also satisfies it for any $h \in \cG$. Hence $p' := p \tilde{g}^{-1}$ is as desired.
\end{proof}

For the next two facts fix an almost periodic point $f \in E(\cG^{\omega})$. Furthermore, pick $p \in f[\cG^{\omega}]$ as in item (iii) of the last lemma and let
\[
u = \{ x^{-1}[\{ e \}] : x \in f^{-1}[\{ p \}] \} \subseteq \cP(\omega).
\]

\begin{lemma} $u$ is a non-principal ultrafilter on $\omega$.
\end{lemma}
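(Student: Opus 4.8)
The plan is to verify the three defining properties of a non-principal ultrafilter: that $u$ is a filter, that it is an ultrafilter, and that it contains no finite sets (equivalently, no singletons). The key structural fact we will repeatedly invoke is item (iii) of the preceding lemma: the set $p$ was chosen so that for every $m \in \NN$ and $x_1, \dots, x_m \in f^{-1}[\{p\}]$ there are infinitely many $n \in \omega$ with $x_i(n) = e$ for all $i \le m$. In ultrafilter language this says precisely that the family $\{x^{-1}[\{e\}] : x \in f^{-1}[\{p\}]\}$ has the finite intersection property, and moreover that every finite intersection is infinite. So already we get that $u$ is a filter base consisting of infinite sets, and $\varnothing \notin u$.

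First I would establish that $u$ is closed under supersets and finite intersections, i.e. generates a proper filter; actually I would like to show $u$ is \emph{itself} a filter. Closure under finite intersections: given $x, y \in f^{-1}[\{p\}]$, I need $z \in f^{-1}[\{p\}]$ with $z^{-1}[\{e\}] \supseteq x^{-1}[\{e\}] \cap y^{-1}[\{e\}]$; here I would use Remark~\ref{rem:uni} and the fact that $f$ respects the $\cG$-action by $\tilde g$ to glue $x$ and $y$ together on the common zero set. Closure under supersets: if $S \supseteq x^{-1}[\{e\}]$ for some $x \in f^{-1}[\{p\}]$, modify $x$ off $x^{-1}[\{e\}]$ so that the new zero set is exactly $S$; one must check the modified point is still sent to $p$ by $f$, which again should follow from $f$ being determined on a neighbourhood basis together with Remark~\ref{rem:uni} (changing coordinates outside a cofinite-on-any-finite-window set does not affect the limit). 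The ultrafilter property (for every $S \subseteq \omega$, either $S \in u$ or $\omega \setminus S \in u$) is the most delicate: given $x \in f^{-1}[\{p\}]$, I want to replace $x$ by a point whose zero set is exactly $S$ or exactly $\omega \setminus S$. The trick is that $f$, being in $E(\cG^\omega)$, acts on $x$ by an element of $\cl\{\pi_{\langle s,\sigma\rangle}\}$, and Lemma~\ref{lem:aper-im} tells us $f[\cG^\omega] = \{p\tilde g : g \in \cG\}$ — so $f^{-1}[\{p\}]$ is a fairly large set; I would argue that for any $S$, either the point that is $e$ on $S$ and some fixed $g \ne e$ off $S$, or the point that is $e$ off $S$ and $g$ on $S$, lies in $f^{-1}[\{p\}]$, because $f(x\tilde g^{-1}) = f(x)\tilde g^{-1} = p \tilde g^{-1}$ combined with a proximality/limit argument à la Lemma~\ref{lem:prox}.

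For non-principality, I would argue directly: if $\{n_0\} \in u$ for some single $n_0$, then every finite intersection of sets in $u$ would be forced to contain $n_0$, but we already know from item (iii) that finite intersections are infinite, and in fact by closure under supersets we could take $x$ with $x^{-1}[\{e\}] = \{n_0\}$ and $y$ with $y^{-1}[\{e\}] = \omega \setminus \{n_0\}$ (both in $f^{-1}[\{p\}]$ if $u$ is an ultrafilter), whose intersection is empty — contradicting $\varnothing \notin u$. Alternatively and more cleanly: the family $\{x^{-1}[\{e\}] : x \in f^{-1}[\{p\}]\}$ has the property that every \emph{finite} intersection is \emph{infinite} (this is exactly the content of item (iii)), so $u$ cannot contain a finite set, hence $u \in \beta\omega \setminus \omega$.

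The main obstacle I anticipate is the ultrafilter dichotomy step: showing that for an arbitrary $S \subseteq \omega$, one of $S$, $\omega \setminus S$ arises as $x^{-1}[\{e\}]$ for some $x \in f^{-1}[\{p\}]$. This requires understanding $f^{-1}[\{p\}]$ well enough — and the honest way to do it is to use that $f$ almost periodic forces $f[\cG^\omega]$ to be exactly the $\cG$-orbit $\{p\tilde g\}$ (Lemma~\ref{lem:aper-im}), so $f^{-1}[\{p\}]$, $f^{-1}[\{p\tilde g\}]$ for $g \in \cG$ partition $\cG^\omega$ into $|\cG|$ pieces, each a translate of the others by $\tilde g$; then a point $x$ with zero set $S$ lies in exactly one piece $f^{-1}[\{p\tilde g\}]$, and its translate $x\tilde g^{-1}$ (which has the \emph{same} $e$-set only when $g = e$, so one must instead track where the constant value lands) — this bookkeeping with the $\cG$-action is where care is needed. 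I expect the cleanest route is to first prove $u$ has the finite intersection property with infinite intersections (immediate from item (iii)), extend it to some ultrafilter, and then show that extension must equal $u$ by the closure properties above; but writing out that $u$ is \emph{already} maximal is the delicate part.
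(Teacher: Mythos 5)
Your first step---reading item (iii) of the preceding lemma as saying that every finite intersection of members of $u$ is infinite, hence $u$ has the finite intersection property and contains no finite set---is exactly how the paper begins, and it already disposes of non-principality. The genuine gap lies in the two places you yourself flag as delicate, and in both you work harder than necessary while still leaving the key step unfinished. For the closure properties: you do not need to realize $x^{-1}[\{e\}]\cap y^{-1}[\{e\}]$, or a superset of $x^{-1}[\{e\}]$, as the $e$-set of another fiber element. A family with the finite intersection property which, for every $I\subseteq\omega$, contains $I$ or $\omega\setminus I$ is automatically an ultrafilter: if $A\in u$, $A\subseteq B$ and $B\notin u$, then $\omega\setminus B\in u$ and $A\cap(\omega\setminus B)=\varnothing$, contradicting FIP, and closure under intersections is analogous. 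Your proposed gluing/modification arguments are not only superfluous but suspect: whether changing $x$ outside its $e$-set preserves $f(x)=p$ is essentially the statement $f=f_u^p$, which is proved only \emph{after} this lemma, so that route is circular as sketched.

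The dichotomy itself, which you rightly identify as the heart of the matter, is left unproved, and the plan you sketch is not quite right. The paper's argument: fix $I$, pick distinct $h_1,h_2\in\cG$, and let $x$ be $h_1$ on $I$ and $h_2$ off $I$. By Lemma~\ref{lem:aper-im}, $f(x)=p\tilde{g}$ for some $g\in\cG$, so by Remark~\ref{rem:uni}, $x\tilde{g}^{-1}\in f^{-1}[\{p\}]$; its $e$-set is $\{n:x(n)=g\}$, which equals $I$ if $g=h_1$, equals $\omega\setminus I$ if $g=h_2$, and is $\varnothing$ otherwise---and the last case is impossible precisely because every member of $u$ is infinite (your first step). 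Note that the fiber element produced is a translate taking the values $h_1g^{-1}$ and $h_2g^{-1}$, so it need not be either of your two candidate points ``$e$ on $S$, $g$ off $S$'' / ``$e$ off $S$, $g$ on $S$'' when $|\cG|>2$; and the mechanism excluding the bad case is the infinitude of members of $u$, not a proximality argument à la Lemma~\ref{lem:prox}. So the missing ideas are (a) the standard fact that FIP together with the complement dichotomy already yields an ultrafilter, and (b) the two-valued test point plus the translation trick plus the non-emptiness observation that actually establishes the dichotomy.
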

\begin{proof} By the choice of $p$, for every $I_1, \ldots, I_m \in u$ we have that
\[
\left| \bigcap_{i=1}^m I_i \right| = \infty.
\]
It remains to show that for any $I \subseteq \omega$ we have $I \in u$ or $\omega \setminus I \in u$. Fix $I \subseteq \omega$, choose some distinct $h_1, h_2 \in \cG$ and define $x \in \cG^{\omega}$ by
\[
x(n) = \begin{cases} h_1 & \text{if } n \in I, \\ h_2 & \text{if } n \notin I. \end{cases}
\]
By Lemma~\ref{lem:aper-im}, $f(x) = p \tilde{g}$ for some $g \in \cG$. Hence by Remark~\ref{rem:uni}, $f(x \tilde{g}^{-1}) = p$, so $(x \tilde{g}^{-1})^{-1}[\{ e \}]$ belongs to $u$ and equals either $I$ or $\omega \setminus I$.
\end{proof}

\begin{proposition} We have that $f = f_u^p$.
\end{proposition}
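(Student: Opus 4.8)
The plan is to fix an arbitrary $x \in \cG^{\omega}$ and verify that $f(x) = f_u^p(x) = p \cdot \widetilde{x(u)}$; since $x$ is arbitrary this gives $f = f_u^p$. First I would note that the $u$-limit $x(u) = \lim_{n \to u} x(n)$ is well defined precisely because $\cG$ is finite: the sets $x^{-1}[\{g\}]$, $g \in \cG$, form a finite partition of $\omega$, so exactly one of them belongs to the ultrafilter $u$, and then $x(u)$ is that value $g$, with $x^{-1}[\{x(u)\}] \in u$.

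Next I would use Lemma~\ref{lem:aper-im}, which gives $f[\cG^{\omega}] = \{ p\tilde{g} : g \in \cG \}$, so that $f(x) = p\tilde{h}$ for some $h \in \cG$; the goal becomes to show $h = x(u)$. By Remark~\ref{rem:uni} we have $f(x\tilde{h}^{-1}) = f(x)\tilde{h}^{-1} = p\tilde{h}\tilde{h}^{-1} = p$, so $x\tilde{h}^{-1} \in f^{-1}[\{p\}]$. By the very definition of $u$ this forces $(x\tilde{h}^{-1})^{-1}[\{e\}] \in u$, and a one-line computation identifies this set with $\{ n : x(n) = h \} = x^{-1}[\{h\}]$. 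Thus both $x^{-1}[\{x(u)\}]$ and $x^{-1}[\{h\}]$ lie in $u$; since $u$ is a proper filter and these two sets are disjoint whenever $h \neq x(u)$, we conclude $h = x(u)$. Therefore $f(x) = p\,\widetilde{x(u)} = f_u^p(x)$, as desired.

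The only delicate point has already been dealt with in the preceding lemmas, namely the careful choice of $p \in f[\cG^{\omega}]$ witnessing item (iii) of the earlier lemma, which is exactly what makes $u$ a genuine non-principal ultrafilter; once that is granted, the argument above is purely formal manipulation with constant functions, Remark~\ref{rem:uni}, and the definition of $u$, so I expect no further obstacle.
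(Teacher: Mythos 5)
Your proof is correct and follows essentially the same route as the paper: apply Lemma~\ref{lem:aper-im} to write $f(x)=p\tilde h$, use Remark~\ref{rem:uni} to get $x\tilde h^{-1}\in f^{-1}[\{p\}]$, and then the definition of $u$ forces $h=x(u)$. You merely spell out the paper's "equivalently, $x(u)=g$" step (via disjointness of the fibres $x^{-1}[\{g\}]$ in the ultrafilter $u$), which is a fine, if slightly more verbose, rendering of the same argument.
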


\begin{proof} Take any $x \in \cG^{\omega}$ and using Lemma~\ref{lem:aper-im}, write $f(x) = p \tilde{g}$ for some $g \in \cG$. Then $f(x \tilde{g}^{-1}) = p$, so by the definition of $u$ we have that $(x \tilde{g}^{-1})^{-1}[ \{ e \} ] \in u$. Equivalently, $x(u) = g$, hence $f(x) = p \cdot \widetilde{x(u)} = f_u^p(x)$.
\end{proof}

So far we have proved that every almost periodic point of $E(\cG^{\omega})$ is of the form $f_u^p$ for some $p \in \cG^{\omega}$, $u \in \beta \omega \setminus \omega$. Now we prove the converse.

\begin{proposition} For any $p \in \cG^{\omega}, u \in \beta \omega \setminus \omega$ we have that $f_u^p \in E(\cG^{\omega})$. Moreover, it is an almost periodic point of $E(\cG^{\omega})$.
\end{proposition}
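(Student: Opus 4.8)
The plan is to prove the two assertions in turn, using only the explicit description $f_u^p(x)=p\cdot\widetilde{x(u)}$, i.e. $f_u^p(x)(k)=p(k)\,x(u)$, where $x(u)=\lim_{n\to u}x(n)\in\cG$ is well defined because $\cG$ is finite.

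For membership in $E(\cG^{\omega})$ I would argue by approximation in the product topology on $(\cG^{\omega})^{\cG^{\omega}}$. Fix finitely many $x_1,\dots,x_m\in\cG^{\omega}$ and $K<\omega$; it suffices to produce $\<s,\sigma\>\in G$ with $\pi_{\<s,\sigma\>}(x_i)(k)=f_u^p(x_i)(k)$ for all $i\le m$ and $k\le K$. The set $\bigcap_{i\le m}x_i^{-1}[\{x_i(u)\}]$ is a finite intersection of members of $u$, hence lies in $u$ and is infinite; pick distinct $n_0,\dots,n_K$ inside it and choose a permutation $\sigma\in\Sym(\omega)$ with $\sigma^{-1}(k)=n_k$ for $k\le K$ (possible since $\{n_0,\dots,n_K\}$ and $\{0,\dots,K\}$ have infinite complements in $\omega$). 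Then $\varphi_\sigma(x_i)(k)=x_i(n_k)=x_i(u)$, and taking any $s\in\cG^{\omega}$ with $s(k)=p(k)$ for $k\le K$ gives $\pi_{\<s,\sigma\>}(x_i)(k)=s(k)\,\varphi_\sigma(x_i)(k)=p(k)\,x_i(u)=f_u^p(x_i)(k)$, as required.

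For almost periodicity the key observation I would establish first is the identity
\[
\pi_{\<s,\sigma\>}\circ f_u^q \;=\; f_u^{\,\<s,\sigma\>\cdot q}\qquad(\<s,\sigma\>\in G,\ q\in\cG^{\omega}),
\]
obtained by unwinding definitions and using $\varphi_\sigma(q\cdot\widetilde g)=\varphi_\sigma(q)\cdot\widetilde g$; thus $G$ permutes the points $f_u^q$ among themselves. Next consider $\Phi:\cG^{\omega}\to(\cG^{\omega})^{\cG^{\omega}}$, $\Phi(q)=f_u^q$: it is continuous (for fixed $x,k$ the value $\Phi(q)(x)(k)=q(k)\,x(u)$ depends continuously on $q$) and $G$-equivariant by the identity above, so $Z:=\Phi(\cG^{\omega})$ is compact, hence a closed, $G$-invariant, non-empty subset of $(\cG^{\omega})^{\cG^{\omega}}$, that is, a subflow. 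One checks directly that $G\cdot q$ is dense in $\cG^{\omega}$ for every $q$ (given $y\in\cG^{\omega}$ and $K<\omega$, take $\sigma=\id$ and $s(k)=y(k)q(k)^{-1}$ for $k\le K$). Therefore for every $q_0\in\cG^{\omega}$, continuity and equivariance of $\Phi$ give
\[
\cl(G\cdot f_u^{q_0})=\cl\bigl(\Phi(G\cdot q_0)\bigr)=\Phi\bigl(\cl(G\cdot q_0)\bigr)=\Phi(\cG^{\omega})=Z,
\]
using that $\Phi(\cl A)=\cl(\Phi(A))$ when $\cl A$ is compact. Hence $\cl(G\cdot f_u^p)=Z$ is a subflow in which every point has dense orbit, so $Z$ is minimal and $f_u^p\in Z$ is almost periodic; combined with the first part $Z\subseteq E(\cG^{\omega})$, so $f_u^p$ is almost periodic in $E(\cG^{\omega})$. (Equivalently, by Remark~\ref{rem:sub} one may phrase this as: $E(\cG^{\omega})\circ f_u^p=Z$ is a minimal ideal.)

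I do not anticipate a genuine obstacle here; the only points requiring care are the elementary combinatorics of choosing $\sigma\in\Sym(\omega)$ realizing a prescribed finite partial injection, and the routine verifications of $\pi_{\<s,\sigma\>}\circ f_u^q=f_u^{\<s,\sigma\>\cdot q}$ and of $\Phi(\cl A)=\cl(\Phi(A))$ for a continuous map on a compact space.
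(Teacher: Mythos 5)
Your proposal is correct. The first half (membership in $E(\cG^{\omega})$) is essentially the paper's argument: both reduce a basic product-topology neighbourhood determined by $x_1,\dots,x_m$ and coordinates $\le K$ to finding $\sigma$ with $\sigma^{-1}(0),\dots,\sigma^{-1}(K)$ inside the set $\bigcap_i x_i^{-1}[\{x_i(u)\}]\in u$, and then translating by (a function agreeing with) $p$. The second half is where you genuinely diverge. The paper argues ideal-theoretically: to show $f_u^p$ is almost periodic it fixes $f\in E(\cG^{\omega})$ and produces $f'$ with $f'\circ f\circ f_u^p=f_u^p$; by the equivariance of Remark~\ref{rem:uni} this collapses to the single equation $f'(f(p))=p$, solved by the left translation $\pi_{\<p f(p)^{-1},\id\>}$ --- a two-line verification, but one that leaves the minimal subflow implicit. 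You instead exhibit the minimal subflow explicitly: the identity $\pi_{\<s,\sigma\>}\circ f_u^q=f_u^{\<s,\sigma\>\cdot q}$ makes $\Phi(q)=f_u^q$ a continuous $G$-flow morphism from the transitive (hence minimal) flow $\cG^{\omega}$ onto $Z=\{f_u^q:q\in\cG^{\omega}\}$, and minimality of $Z$ follows (your direct dense-orbit check could even be replaced by a citation of Fact~\ref{fact:per}(iii)). What your route buys is that it already identifies $\cl(G\cdot f_u^p)=E(\cG^{\omega})\circ f_u^p$ as $\{f_u^q:q\in\cG^{\omega}\}$, a computation the paper performs separately at the start of the proof of Proposition~\ref{prop:semi-ell} via $f\circ f_u^p=f_u^{f(p)}$ (of which your displayed identity is the special case $f=\pi_{\<s,\sigma\>}$); what the paper's route buys is brevity, since it never needs the map $\Phi$ or the closed-image argument. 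All the steps you flag as routine (extending a finite partial injection of $\omega$ to a permutation, the equivariance identity, and $\Phi(\cl A)=\cl(\Phi(A))$ for a continuous map on a compact space into a Hausdorff space) do go through.
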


\begin{proof} Fix an open basic neighbourhood $V$ of $f_u^p$ in $(\cG^{\omega})^{\cG^{\omega}}$. To show that $f_u^p \in E(\cG^{\omega})$, it suffices to find $\< s, \sigma \> \in G$ such that $\pi_{\<s, \sigma\>} \in V$. We may write
\[
V = \{ f \in (\cG^{\omega})^{\cG^{\omega}} : f(x_i) \in U_i \text{ for } i = 1, \ldots, m \},
\]
where $x_i \in \cG^{\omega}$ and $U_i$ is an open neighbourhood of $f_u^p(x_i)$ for $i = 1, \ldots, m$. Let $g_i = x_i(u)$ so that $f_u^p(x_i) = p \tilde{g_i}$. We may also assume that 
\[
U_i = \{ y \in \cG^{\omega} : y(k) = p(k) g_i \text{ for } k \le K \}
\]
for each $i = 1, \ldots, m$ and some fixed $K \in \omega$.  The set
\[
I = \bigcap_{i=1}^m x_i^{-1}[\{ g_i \}] \in u
\]
is infinite, so we can find $\sigma \in \Sym(\omega)$ such that $\sigma^{-1}(0), \ldots, \sigma^{-1}(K) \in I$. It is routine to check that $\pi_{\< p, \sigma \>} \in V$.

To prove that $f_u^p$ is almost periodic, fix $f \in E(\cG^{\omega})$. It is enough to find $f' \in E(\cG^{\omega})$ such that $f' \circ f \circ f_u^p = f_u^p$. Equivalently, $f' \circ f(y) = y$ for $y \in f_u^p[\cG^{\omega}] = \{ p \tilde{g} : g \in \cG \}$. By Remark~\ref{rem:uni}, this is further equivalent to $f'(f(p)) = p$. Hence $f' = \pi_{\<s, \id\>}$, where $s = p f(p)^{-1}$, is a good choice.
\end{proof}

We are now ready to prove the main proposition.

\begin{proof}[Proof of Proposition~\ref{prop:semi-ell}] Given $x, p \in \cG^{\omega}$, $u \in \beta \omega \setminus \omega$ and $f \in E(\cG^{\omega})$, we have that
\[
f \circ f_u^p(x) = f \big( p \cdot \widetilde{x(u)} \big) = f(p) \cdot \widetilde{x(u)},
\]
hence $f \circ f_u^p = f_u^{f} \! \! \; {}^{(p)}$. It follows that $f_u^p$ generates the ideal
\begin{align*}
I & = \{ f \circ f_u^p : f \in E(\cG^{\omega}) \} \\
& = \{ f_u^{f(p)} : f \in E(\cG^{\omega}) \} \\
& = \{ f_u^q : q \in \cG^{\omega} \},
\end{align*}
which only depends on $u$, so we denote it as $I_u$. In this ideal $f_u^p$ belongs to the Ellis group
\begin{align*}
\cE = f_u^p I_u & = \{ f_u^p \circ f_u^q : q \in \cG^{\omega} \} \\
& = \{ f_u^{p \cdot \widetilde{q(u)}} : q \in \cG^{\omega} \} \\
& = \{ f_u^{p \cdot \tilde{g}} : g \in \cG \}.
\end{align*}
We may choose $f_u^p$ as the idempotent of this group, which means that $p(u) = e$. It remains to show that the map $\psi : \cG \to \cE$, $\psi(g) = f_u^{p \tilde{g}}$ is an isomorphism. It is clearly bijective. For $g, h \in \cG$ we have that
\[
f_u^{p \tilde{g}} \circ f_u^{p \tilde{h}} = f_u^{p \cdot \tilde{g} \cdot \widetilde{(p \tilde{h})(u)}} = f_u^{p \cdot \tilde{g} \cdot \widetilde{p(u) \cdot h}} = f_u^{p \widetilde{gh}},
\]
hence $\psi(gh) = \psi(g) \circ \psi(h)$.
\end{proof} 

\subsection{A variant}

An interesting variant of the previous example is the following: assume $\cG \neq \{ e \}$ is a finite group and let $G = \cG^{\ZZ} \rtimes_{\varphi} \ZZ$. Here $\ZZ$ is identified with a subgroup $\{ \sigma_m : m \in \ZZ \} \le \Sym(\ZZ)$, where $\sigma_m(n) = n+m$ for $m, n \in \ZZ$, and the underlying action of the semidirect product is the same as before, that is, $\varphi_m(x) = x \circ \sigma_m^{-1}$ for $m \in \ZZ$, $x \in \cG^{\ZZ}$. For $n \in \ZZ$, $g \in \cG$ let
\[
A_n^g = \{ x \in \cG^{\ZZ} : x(n) = g \} \times \ZZ.
\]
The subset $A := A_0^e \subseteq G$ is uniformly strongly generic and not periodic. Let $\cA \le \cP(G)$ denote the $G$-algebra generated by $A$. By the same reasoning as in the previous example we have that the $G$-flow $S(\cA)$ is canonically isomorphic to $\cG^{\ZZ}$ with the action $\< s, m \> \cdot x = s \varphi_m(x)$ for $\< s, m \> \in G$ and $x \in \cG^{\ZZ}$.

We have not been able to compute the Ellis group of $\cG^{\ZZ}$. We will prove the following:

\begin{proposition} \label{prop:semi-Z-ell} The Ellis group of $\cG^{\ZZ}$ has cardinality at least $2^{\aleph_0}$.
\end{proposition}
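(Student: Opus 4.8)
The plan is to compute with the concrete model: by the discussion preceding the proposition, $S(\cA)$ is $G$-flow isomorphic to $X=\cG^{\ZZ}$ with the action $\langle s,m\rangle\cdot x=s\,\varphi_m(x)$, i.e. $(\langle s,m\rangle\cdot x)(n)=s(n)\,x(n-m)$, and $E:=E(X)=\cl\{\pi_{\langle s,m\rangle}\}\subseteq X^X$. I would first record two easy structural facts. (a) $X$ is a \emph{minimal} $G$-flow, since already $\{\langle s,0\rangle\cdot x:s\in\cG^{\ZZ}\}=\cG^{\ZZ}$, so $G\cdot x=X$ for every $x$. (b) Two points $x,x'\in X$ are proximal iff for every $K$ they agree on some interval of length $K$ in $\ZZ$; this comes from the description of basic open sets of $E$ (as in Appendix~\ref{app:semi}), using that the entries $s(n)$, $|n|\le K$, are free parameters of $\pi_{\langle s,m\rangle}$, so that the only genuine constraint on a limit is that the ``difference pattern'' of $f(x),f(x')$ on $[-K,K]$ be a shift of that of $x,x'$. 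The same computation shows that this relation coincides with proximality for the $\ZZ$-subsystem (the shift), which will be used below; in particular $X$ is far from distal (e.g. the constant points $\tilde g$, $g\in\cG$, are pairwise non-proximal).

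Next I would reduce the bound to the size of the image of a single idempotent. Fix $I\trianglelefteqslant_m E$ and $u\in J(I)$; put $\cE:=uI$ (isomorphic to the Ellis group by Theorem~\ref{thm:ell}) and $Y:=\Im u=\{x:u(x)=x\}$. Every $f\in\cE$ satisfies $u\circ f=f=f\circ u$, whence $\Im f=Y$; and $\cE$ acts \emph{transitively} on $Y$: for $y,y'\in Y$ minimality of $X$ gives $a\in E$ with $a(y)=y'$, and then $u\circ a\circ u\in u\circ(E\circ I)\subseteq uI=\cE$ sends $y$ to $u(a(u(y)))=u(a(y))=u(y')=y'$, using $u|_Y=\id$. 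Hence the orbit map $\cE\to Y$ is onto, so $|\cE|\ge|Y|$. It therefore suffices to exhibit \emph{one} minimal idempotent $u$ of $E$ with $|\Im u|\ge 2^{\aleph_0}$.

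To get such a $u$ I would import an uncountable minimal \emph{subshift}: let $M\subseteq\cG^{\ZZ}$ be the orbit closure (under the shift) of an aperiodic Toeplitz point over $\cG$, an almost automorphic extension $\pi\colon M\to\Omega$ of an infinite odometer $\Omega$, so $|\Omega|=2^{\aleph_0}$. For a minimal idempotent ultrafilter $\mathcal W$ on $(\ZZ,+)$, the map $f_{\mathcal W}\colon X\to X$, $f_{\mathcal W}(x)(n):=\lim_{m\to\mathcal W}x(n-m)$, is a pointwise limit of the pure shifts $\pi_{\langle e,m\rangle}$, hence lies in $E$ and is an idempotent; moreover its restriction $u_0:=f_{\mathcal W}|_M$ is the image of $\mathcal W$ under the canonical surjection $\beta\ZZ\twoheadrightarrow E(M,\ZZ)$, hence is a minimal idempotent of $E(M,\ZZ)$. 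One checks $|\Im u_0|=2^{\aleph_0}$: $\pi\circ u_0=\pi$ because the only idempotent of the group $E(\Omega)$ is the identity, so $\Im u_0$ surjects onto $\Omega$; and by Fact~\ref{fact:im-nprox} applied to the a.p. point $u_0$ no two distinct points of $\Im u_0$ are proximal, so (since for a.a.\ systems proximality is exactly the relation ``equal $\pi$-image'') $\pi|_{\Im u_0}$ is a bijection onto $\Omega$. Finally, to land in a minimal ideal of $E$ itself, take a minimal left ideal $L\subseteq E\circ f_{\mathcal W}$, pick $v\in J(L)$, and set $u:=f_{\mathcal W}\circ v$; a short computation shows $u$ is a minimal idempotent of $E$ with $u\circ f_{\mathcal W}=f_{\mathcal W}\circ u=u$, hence $\Im u=u[\Im f_{\mathcal W}]\supseteq u[\Im u_0]$. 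Since $\Im u_0\subseteq M$ and $u$ is an element of $E$, $u(z)=u(z')$ forces $z,z'$ proximal in $X$, equivalently proximal in $M$ by fact (b); as $\Im u_0$ is proximal-free this means $u|_{\Im u_0}$ is injective, so $|\Im u|\ge|\Im u_0|=2^{\aleph_0}$, and therefore $|\cE|\ge|\Im u|\ge 2^{\aleph_0}$.

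The bulk of the work — and the part I expect to be the genuine obstacle — is the transfer step: proving carefully that the combinatorial proximality relation on $X$ is what fact~(b) claims and that it is insensitive to whether one uses the full group $G$ or just the shift $\ZZ$, and then plugging the Ellis-semigroup theory of the auxiliary Toeplitz subshift $M$ into that of $E(X,G)$ without collapsing $\Im u_0$. The other ingredients (minimality of $X$, the ``$|\cE|\ge|\Im u|$'' reduction, and the existence of a minimal idempotent of $E$ below $f_{\mathcal W}$) are routine given Theorem~\ref{thm:ell} and the material already in the paper.
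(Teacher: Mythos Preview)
Your proof is correct and shares the paper's overall architecture: characterize proximality combinatorially, exhibit a proximal-free set of size $2^{\aleph_0}$, observe that any minimal idempotent is injective on such a set, and conclude via transitivity of the Ellis group on the idempotent's image.

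The difference lies almost entirely in how the proximal-free set is produced. You build it inside a single Toeplitz orbit closure $M$, invoking its odometer factor, the almost-automorphic structure, and a minimal idempotent ultrafilter on $\ZZ$, then carefully transfer this back into $E(\cG^{\ZZ},G)$ via your $u=f_{\mathcal W}\circ v$ construction. The paper instead takes the set $\cC$ of all profinite-continuous maps $\ZZ\to\cG$ (equivalently, all Toeplitz sequences) and proves in two lines that distinct elements of $\cC$ are non-proximal: if $p\neq q$ in $\cC$ then $p^{-1}q$ is nontrivial and continuous, so it differs from $\tilde e$ on an entire arithmetic progression, which meets every sufficiently long interval. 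No subshift, no odometer, no $\beta\ZZ$ are needed; one then picks \emph{any} minimal idempotent $f\in E(\cG^{\ZZ})$ and it is automatically injective on $\cC$. The paper's transitivity step is also more concrete: for $p,q\in f[\cG^{\ZZ}]$ one simply uses $f\circ\pi_{\langle qp^{-1},0\rangle}\circ f$, exploiting that the $\cG^{\ZZ}$-factor already acts transitively on $X$ (your observation~(a)), rather than passing through abstract minimality. Your route has the virtue of making the connection to classical symbolic dynamics explicit, but the paper's argument is substantially shorter and avoids the transfer step you flagged as the main obstacle.
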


\begin{lemma} Two points $p, q \in \cG^{\ZZ}$ are proximal if and only if they agree on arbitrary long intervals in $\ZZ$, i.e. for each $K \in \NN$ there is $n \in \ZZ$ such that $p(n+k) = q(n+k)$ for $0 \le k \le K$.
\end{lemma}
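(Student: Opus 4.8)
The plan is to follow the proof of Lemma~\ref{lem:prox} in Appendix~\ref{app:semi} almost line for line, the only structural difference being that the permutations occurring as second coordinates of elements of $G = \cG^{\ZZ} \rtimes_{\varphi} \ZZ$ are now exactly the shifts $\sigma_m$, $m \in \ZZ$. Throughout I use that $\pi_{\<s, m\>}(x)(k) = s(k) \cdot x(k-m)$ for $\<s, m\> \in G$, $x \in \cG^{\ZZ}$, $k \in \ZZ$, and that $\cG^{\ZZ}$ is a Stone space, so its basic open sets are clopen and depend on finitely many coordinates.

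For the forward direction I would suppose $f(p) = f(q) =: r$ for some $f \in E(\cG^{\ZZ})$ and fix $K \in \NN$. As in Lemma~\ref{lem:prox}, the clopen set of $g \in (\cG^{\ZZ})^{\cG^{\ZZ}}$ sending both $p$ and $q$ into the neighbourhood
\[
U = \{ y \in \cG^{\ZZ} : y(k) = r(k) \text{ for } 0 \le k \le K \}
\]
of $r$ is an open neighbourhood of $f$, hence contains some $\pi_{\<s, m\>}$. Comparing coordinates $0 \le k \le K$ gives $s(k) p(k-m) = s(k) q(k-m)$, so $p$ and $q$ agree on the block of $K+1$ consecutive integers $\{ -m, \ldots, K-m \}$; since $K$ was arbitrary, this is the desired conclusion. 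The point to notice is that, because only a translation $\sigma_m$ is available, the finitely many coordinates pinned by $U$ pull back to a \emph{block of consecutive} integers — this is exactly where the statement differs from the $\cG^{\omega}$ case.

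For the converse I would assume $p$ and $q$ agree on arbitrarily long blocks. Exactly as in Lemma~\ref{lem:prox}, by compactness of $E(\cG^{\ZZ})$ it is enough, for each basic neighbourhood $U = \{ y \in \cG^{\ZZ} : y(k) = e \text{ for } -K \le k \le K \}$ of $\tilde{e}$, to produce $\<s, m\> \in G$ with $\pi_{\<s, m\>}(p), \pi_{\<s, m\>}(q) \in U$: the clopen sets $\{ g \in E(\cG^{\ZZ}) : g(p), g(q) \in U \}$ are then nonempty, closed and directed, so their intersection yields an $f$ with $f(p) = f(q) = \tilde{e}$. To build such an $\<s, m\>$ I would choose $n$ with $p$ and $q$ agreeing on $[n, n+2K]$, set $m := -K-n$ so that $k - m$ runs through $[n, n+2K]$ as $k$ runs through $[-K, K]$, and take $s(k) := (p(k-m))^{-1}$ for $-K \le k \le K$ (and $s \equiv e$ elsewhere); then $\pi_{\<s, m\>}(p)(k) = e$ and $\pi_{\<s, m\>}(q)(k) = (p(k-m))^{-1}q(k-m) = e$ for $-K \le k \le K$.

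There is no serious obstacle here; the lemma is the bookkeeping analogue of Lemma~\ref{lem:prox}. The only thing genuinely requiring attention is the replacement of ``infinitely many agreeing coordinates'' by ``arbitrarily long agreeing blocks'', forced by the fact that the re-indexing freedom inside $G$ is restricted to translations. This is precisely the feature that will ultimately make the Ellis group of $\cG^{\ZZ}$ strictly larger, and more delicate to compute, than that of $\cG^{\omega}$, feeding into Proposition~\ref{prop:semi-Z-ell}.
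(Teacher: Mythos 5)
Your proposal is correct and is exactly the argument the paper intends: the paper's proof of this lemma simply says it is essentially the same as that of Lemma~\ref{lem:prox}, and you carry out that adaptation faithfully, with the correct observation that the restriction to translations $\sigma_m$ turns ``infinitely many agreeing coordinates'' into ``arbitrarily long agreeing blocks''. Both directions (the pullback of the pinned coordinates to a block $\{-m,\ldots,K-m\}$, and the explicit choice of $m$ and $s$ together with the compactness argument) check out.
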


\begin{proof} Essentially the same as that of Lemma~\ref{lem:prox}.
\end{proof}

Let $\cC \subseteq \cG^{\ZZ}$ denote the set of all continuous functions $p : \ZZ \to \cG$, where $\ZZ$ is equipped with the profinite topology and $\cG$ is discrete. Clearly $\cC \le \cG^{\ZZ}$. Furthermore, it is not hard to see that $|\cC| = 2^{\aleph_0}$ (adjust the construction from Example~\ref{ex:tf}).

\begin{lemma} \label{lem:pfcont-nprox} Distinct points in $\cC$ are not proximal.
\end{lemma}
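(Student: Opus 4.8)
The plan is to invoke the proximality criterion from the preceding lemma: two points $p, q \in \cG^{\ZZ}$ are proximal if and only if for every $K \in \NN$ there is $n \in \ZZ$ with $p(n+k) = q(n+k)$ for all $0 \le k \le K$. Hence it suffices to show that if $p, q \in \cC$ are distinct, then there is a bound $K$ such that no window of $K$ consecutive integers can witness agreement of $p$ and $q$; this contradicts proximality.

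First I would pick $m \in \ZZ$ with $p(m) \neq q(m)$, which exists since $p \neq q$. Recall that the finite-index subgroups of $\ZZ$ are exactly the subgroups $N\ZZ$ with $N \ge 1$, and the profinite topology on $\ZZ$ is generated by their cosets. Since $p$ and $q$ are pf-continuous and $\cG$ is discrete, the sets $p^{-1}[\{p(m)\}]$ and $q^{-1}[\{q(m)\}]$ are pf-open, so each contains a coset of a finite-index subgroup passing through $m$: there are $N_1, N_2 \ge 1$ with $m + N_1\ZZ \subseteq p^{-1}[\{p(m)\}]$ and $m + N_2\ZZ \subseteq q^{-1}[\{q(m)\}]$. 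Setting $N = N_1 N_2$, the coset $m + N\ZZ$ lies in both, so $p$ and $q$ are constant on $m + N\ZZ$, with the respective distinct values $p(m)$ and $q(m)$.

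Next I would use the elementary fact that any $N$ consecutive integers form a complete residue system modulo $N$; in particular, for every $n \in \ZZ$ the interval $\{ n, n+1, \ldots, n+N-1 \}$ contains exactly one element of $m + N\ZZ$, say $n + k$ with $0 \le k \le N-1$. For that $k$ we have $p(n+k) = p(m) \neq q(m) = q(n+k)$. Thus $p$ and $q$ disagree somewhere inside every interval of length $N$, so there is no $n$ with $p(n+k) = q(n+k)$ for all $0 \le k \le N-1$. Taking $K = N-1$ (or any larger $K$) in the proximality criterion, we conclude that $p$ and $q$ are not proximal.

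There is essentially no hard step here; the only point requiring slight care is that pf-continuity must be applied separately — at the value $p(m)$ for $p$ and at the value $q(m)$ for $q$ — and the two resulting moduli must be combined into a single $N$ that governs both functions simultaneously.
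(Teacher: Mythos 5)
Your proof is correct and follows essentially the same route as the paper: both arguments produce a coset of a finite-index subgroup $N\ZZ$ on which $p$ and $q$ disagree everywhere, and then observe that every interval of length $N$ meets that coset, so agreement on arbitrarily long intervals is impossible. The only cosmetic difference is that the paper applies pf-continuity once to the quotient function $p^{-1}q$ (using that $\cC$ is a subgroup of $\cG^{\ZZ}$), whereas you apply it separately to $p$ and $q$ and intersect the two cosets.
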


\begin{proof} Take any distinct $p, q \in \cC$. Then $p^{-1} q \neq \tilde{e}$, so there is a coset $K \subseteq \ZZ$ of a finite index subgroup of $\ZZ$ such that $p(k) \neq q(k)$ for $k \in K$. In particular, it is not the case that $p$ and $q$ agree on arbitrary long intervals in $\ZZ$, hence they are not proximal.
\end{proof}

\begin{proof}[Proof of Proposition~\ref{prop:semi-Z-ell}]
Take any minimal ideal $I \trianglelefteqslant_m E(\cG^{\ZZ})$ and $f \in J(I)$. By Lemma~\ref{lem:pfcont-nprox}, $f$ is injective on $\cC$, so $|f[\cG^{\ZZ}]| = 2^{\aleph_0}$. Hence it suffices to show that the Ellis group $fI$ acts transitively on $f[\cG^{\ZZ}]$.

Take any $p, q \in f[\cG^{\ZZ}]$ and let $x = qp^{-1} \in \cG^{\ZZ}$. Recall that $\pi_{\< x, 0 \>}$ is an element of $E(\cG^{\ZZ})$ defined by $\pi_{\< x, 0 \>}(y) = xy$.  Then $f \circ \pi_{\< x, 0 \>} \circ f \in fI$ and 
\[
(f \circ \pi_{\< x, 0 \>} \circ f)(p) = (f \circ \pi_{\< x, 0 \>})(p) = f(xp) = f(q) = q,
\]
where the first and fourth equality hold because $f \in J(I)$ and thus $f \restriction \Im f$ is the identity map.
\end{proof}

We have proved that when $f \in E(\cG^{\ZZ})$ is an almost periodic point, $f[\cG^{\ZZ}]$ has $2^{\aleph_0}$-many elements. This is very different from the previous example, where $f[\cG^{\ZZ}]$ would be finite of size $|\cG|$.

\begin{question} Can the Ellis group of $S(\cA)$ be computed explicitly?
\end{question}

\newpage

\section{USG families} \label{app:usg}

This appendix is devoted to some exploration of Question~\ref{q:usg}. In an arbitrary group $G$ we show a correspondence between non-periodic USG subsets of $ G$ and certain families of finite partial functions $G \to \{ 0, 1 \}$, which we call \emph{non-periodic USG families}. A family corresponding to $A \subseteq G$ can be thought of as consisting of finite ``chunks'' of $A$, which turn out to contain enough information to capture the property of $A$ being USG or periodic. The condition of non-periodicity of a USG family takes a particularly simple form in case $G = (\ZZ, +)$. It follows that in order to prove the existence of a non-periodic USG subset of $\ZZ$, it suffices to construct a non-periodic USG family there, which might be an easier task.

Assume $G$ is a group. We regard $2^G$ as a $G$-flow with the action given by
\[
(g \odot f)(x) = f(xg), \qquad \text{or equivalently,} \qquad g \odot \chi_A = \chi_{Ag^{-1}}
\]
for $g \in G$, $f \in 2^G$, $A \subseteq G$. When $X \subseteq G$ and $g \in G$, $f \in 2^X$, we also define $g \odot f \in 2^{Xg^{-1}}$ in the same way. Clearly $f_1 \subseteq f_2 \iff g \odot f_1 \subseteq g \odot f_2$ for $g \in G, f_1 \in 2^X, f_2 \in 2^Y$ and $X, Y \subseteq G$. 

\begin{notation*} \leavevmode
\begin{itemize}[itemsep=0pt] 
\item For any set $X$, let $[X]^{<\omega}$ denote the family of all finite subsets of $X$.
\item $2^{\le G}$ is the set of all functions $f : X \to \{ 0, 1 \}$, where $X \subseteq G$.
\item $2^{<G}$ is the set of all functions $\eta : U \to \{ 0, 1 \}$, where $U \in [G]^{<\omega}$.
\item For $\eta \in 2^{<G}$, $[\eta] = \{ f \in 2^{G} : \eta \subseteq f \}$.
\item For $U \in [G]^{<\omega}$, $f \in 2^{\le G}$, we write $U \preccurlyeq f$ when 
\[
(\exists g \in G) \, U \subseteq \dom (g \odot f).
\]
\item For $\eta \in 2^{<G}$, $f \in 2^{\le G}$ we say that \emph{$\eta$ occurs in $f$}, denoted $\eta \sqsubseteq f$, when 
\[
(\exists g \in G) \, \eta \subseteq g \odot f.
\]
\end{itemize}
\end{notation*}

\begin{definition}
Assume $\cA \subseteq 2^{<G}$. 
\begin{enumerate}[label=(\roman{*})]
\item We say that a property $P(\eta)$ is satisfied by \emph{cofinally many} elements $\eta \in \cA$, denoted $(\exists^{\infty} \eta \in \cA) \, P(\eta)$, provided that for every $U \in [G]^{<\omega}$ there is $\eta \in \cA$ such that $U \preccurlyeq \eta$ and $P(\eta)$ holds.
\item We say that a property $P(\eta)$ is satisfied by \emph{sufficiently large} $\eta \in \cA$, denoted $(\forall^{\infty} \eta \in \cA) \, P(\eta)$, when there is $U \in [G]^{<\omega}$ such that for each $\eta \in \cA$ with $U \preccurlyeq \eta$, $P(\eta)$ holds.
\end{enumerate}
We have the de Morgan-like duality
\[
\neg (\forall^{\infty} \eta \in \cA) \, P(\eta) \iff (\exists^{\infty} \eta \in \cA) \, \neg P(\eta).
\]
\end{definition}

\begin{remark} Assume $\cA \subseteq 2^{<G}$.
\begin{itemize}
\item If both $P(\eta)$ and $Q(\eta)$ hold for sufficiently large $\eta \in \cA$, then $P(\eta) \wedge Q(\eta)$ holds for sufficiently large $\eta \in \cA$.
\item If $P(\eta)$ holds for sufficiently large $\eta \in \cA$ and $Q(\eta)$ holds for cofinally many $\eta \in \cA$, then $P(\eta) \wedge Q(\eta)$ holds for cofinally many $\eta \in \cA$.
\end{itemize} 
\end{remark}

\begin{proof}Follows from the fact that if $U \cup V \preccurlyeq \eta$, then $U \preccurlyeq \eta$ and $V \preccurlyeq \eta$.
\end{proof}

\begin{definition} Assume $\cA \subseteq 2^{<G}$.
\begin{enumerate}[leftmargin=7mm, label=(\arabic{*})]
\item $\cA$ is \emph{cofinal} if $(\exists^{\infty} \eta \in 2^{<G}) \, \eta \in \cA$.
\item $\cA$ is a \emph{strongly generic family} when for each $\eta \in 2^{<G}$ occurring in cofinally many elements of $\cA$ there is $V \in [G]^{<\omega}$ such that all sufficiently large $\theta \in \cA$ satisfy
\[
(\forall g \in G) \big( V \subseteq \dom (g \odot \theta) \implies \eta \sqsubseteq (g \odot \theta) \restriction V \big).
\]
\item We call $\cA$ a \emph{uniformly strongly generic family}, abbreviated as \emph{USG family}, if it satisfies the strengthening of (2) such that $|V|$ only depends on $|\eta|$.
\item $t \in G$ is a \emph{period} of $\cA$ if for sufficiently large $\theta \in \cA$ neither of the two elements of $2^{<G}$
\[
\{ (e, 0), (t, 1) \}, \qquad \{ (e, 1), (t, 0) \}
\]
occurs in $\theta$.\footnote{We abuse the terminology and consider the condition satisfied when $t = e$, even though technically $\{ e \} \times \{ 0, 1 \} \notin 2^{<G}$. Thus we always have $e \in \Per(\cA)$.} The set of all periods of $\cA$ is denoted $\Per(\cA)$.
\item $\cA$ is \emph{periodic} if $\Per(\cA) $ is a generic subset of $G$.
\end{enumerate}
\end{definition}

It is worth noting that item (2) of the last definition is to some degree analogous to Definition~\ref{def:rep}, where $\eta$ corresponds to $f \restriction U$ and $\sqsubseteq$ replaces $(\exists h)$. The analogy further extends between item (3) and Definition~\ref{def:uni} (a). As we are about to show, a [uniformly] strongly generic family encodes a [uniformly] strongly generic subset of $G$ up to the minimal subflow it generates in $2^G$. Because the condition (2) is relaxed with the use of \emph{cofinally many} and \emph{sufficiently large} instead of \emph{some} and \emph{all}, the family is allowed to contain a certain amount of ``noise'' which does not affect the subset of $G$ it encodes. However, each pattern that exhibits itself across cofinally many elements of the family will be reflected in the encoded subset. 

Now we state the main result of the appendix.
\begin{theorem} \label{thm:usg-fam}
The following are equivalent:
\begin{enumerate}[label=(\arabic{*})]
\item There is a non-periodic USG subset $A \subseteq G$.
\item There is a non-periodic USG cofinal family $\cA \subseteq 2^{<G}$.
\end{enumerate}
\end{theorem}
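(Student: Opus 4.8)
The plan is to establish the two implications separately, each by an explicit construction that is essentially a translation between the ``global'' object $A \subseteq G$ and the ``local'' object $\cA \subseteq 2^{<G}$.

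For $(1) \Rightarrow (2)$, given a non-periodic USG set $A \subseteq G$, the natural candidate is the family
\[
\cA = \{ \chi_A \restriction U : U \in [G]^{<\omega} \}
\]
of all finite restrictions of $\chi_A$. This family is clearly cofinal. To see it is a USG family, fix $\eta \in 2^{<G}$ occurring in cofinally many elements of $\cA$; since every element of $\cA$ occurs in $\chi_A$, this just means $\eta \sqsubseteq \chi_A$, i.e. some translate of $\eta$ appears in $\chi_A$. One then unwinds the definitions: by Remark~\ref{rem:usg-char}, $\chi_A$ is uniformly self-replicating, so writing $\eta = h_0 \odot (\chi_A \restriction U_0)$ for suitable $h_0$ and $U_0$ with $|U_0| = |\eta|$, the witness $V$ provided by uniform self-replication for $U_0$ (with $|V|$ depending only on $|U_0| = |\eta|$) is exactly what is required in clause (3) of the definition of USG family, after translating back. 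Non-periodicity transfers similarly: $t \in \Per(\cA)$ means that for sufficiently large $\theta = \chi_A \restriction U$, neither $\{(e,0),(t,1)\}$ nor $\{(e,1),(t,0)\}$ occurs in $\theta$; since $\theta$ can be taken arbitrarily large, this is equivalent to neither occurring in $\chi_A$ at all, which says precisely $t \in \Per(\chi_A)$. Hence $\Per(\cA) = \Per(\chi_A)$, so $\cA$ is non-periodic iff $A$ is.

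For $(2) \Rightarrow (1)$, suppose $\cA \subseteq 2^{<G}$ is a non-periodic USG cofinal family. The idea is to take a ``limit point'' of $\cA$ in an appropriate sense: regard each $\eta \in \cA$ as a partial function and, using a non-principal ultrafilter on the directed set of pairs $(U, \eta)$ with $U \preccurlyeq \eta$ (directed via $(U,\eta) \le (U',\eta')$ when $U \subseteq U'$), extract a total function $f \in 2^G$ such that every pattern $\eta_0$ that occurs in cofinally many members of $\cA$ also occurs in $f$, and conversely every $\eta_0 \sqsubseteq f$ occurs in cofinally many members of $\cA$ (this ``conversely'' direction is where cofinality of $\cA$ is used). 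Concretely: for each $g \in G$ one wants to define $f(g) \in \{0,1\}$ so that the resulting function is compatible, up to translation, with sufficiently large members of $\cA$; using the USG clause one shows that for each finite $W$, the set of $\theta \in \cA$ whose translates ``look like $f$ on $W$'' is cofinal, which lets one build $f$ coherently by a compactness/ultrafilter argument on $2^G$. Having constructed $f$, one replaces $f$ by an almost periodic point $f'$ in $\cl(G \odot f) \subseteq 2^G$ (which exists by compactness and Zorn); the key point is that $\cl(G \odot f')\subseteq \cl(G\odot f)$, and patterns occurring in $f'$ still occur in cofinally many members of $\cA$, while the USG-family condition forces $f'$ to be uniformly self-replicating, hence uniformly almost periodic by (the uniform analogue established in) Remark~\ref{rem:usg-char}; therefore $A' := f'^{-1}[\{1\}]$ is USG by Theorem~\ref{thm:sgen-aper} and Remark~\ref{rem:usg-char}. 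Finally, one must check $A'$ is not periodic: if it were, then $\Per(f')$ has finite index, so $f'$ is periodic, hence its finite patterns stabilize; chasing this back through the correspondence shows $\Per(\cA)$ would be generic, contradicting non-periodicity of $\cA$. (One uses here that $\Per(f')$ contains, up to translation, a period of every sufficiently large $\theta \in \cA$ that is compatible with $f'$.)

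The main obstacle is the construction of the limit function $f$ in $(2) \Rightarrow (1)$ and the bookkeeping needed to guarantee both directions of the correspondence ``pattern in $f$'' $\leftrightarrow$ ``pattern cofinal in $\cA$''. The subtlety is that a USG family is permitted to contain ``noise'' — patterns occurring in only finitely many members — and these must be discarded cleanly without losing the genuinely recurrent patterns; getting the ultrafilter (or net) set up so that exactly the cofinal patterns survive, and then verifying that the relaxed ``cofinally many / sufficiently large'' quantifiers in clause (3) of the USG-family definition deliver the unrelaxed ``there exists $h$'' quantifier in the definition of uniform self-replication for the limit, is the technical heart of the argument. Once $f$ (and then the almost periodic $f'$) is in hand, the transfer of the USG property and the computation $\Per(\cA) \supseteq \Per(f')$ up to translation are routine unwindings analogous to the $(1)\Rightarrow(2)$ direction.
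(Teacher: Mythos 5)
Your overall architecture is the same as the paper's: pass between $A$ and a family via restrictions/limit points, transfer the USG property through uniform self-replication (Remark~\ref{rem:usg-char}), and transfer non-periodicity by identifying $\Per(\cA)$ with the period group of a limit point. Two points need repair. First, in $(1)\Rightarrow(2)$ your identity $\Per(\cA)=\Per(\chi_A)$ is wrong for non-abelian $G$: the patterns $\{(e,0),(t,1)\}$ and $\{(e,1),(t,0)\}$ failing to occur in large restrictions of $\chi_A$ says $\chi_A(tx)=\chi_A(x)$ for all $x$, i.e.\ $tA=A$ (left invariance), whereas $\Per(\chi_A)$ records $At^{-1}=A$ (right invariance). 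The correct statement is $\Per(\cA)=\Per(\chi_{A^{-1}})$, which still yields the equivalence of non-periodicities via Remark~\ref{rem:per}(iv); the paper handles this with the involution $f^*(x)=f(x^{-1})$.

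Second, in $(2)\Rightarrow(1)$ you ask your limit $f$ to satisfy a two-way correspondence: every pattern occurring cofinally in $\cA$ occurs in $f$, \emph{and} every finite piece of $f$ occurs cofinally in $\cA$. The forward half is not obtainable from the compactness/ultrafilter construction alone (two incompatible patterns can each occur cofinally, and a single $f$ need not realize both as sub-patterns of every window), and trying to build it in directly is what makes your construction feel like the "technical heart." The cleaner route — and the paper's — is to construct $f$ with only the second property (a routine compactness argument: every cofinal family has a limit point), and then \emph{derive} the forward property for the patterns you actually need from the USG clause itself: if $\eta$ occurs cofinally in $\cA$, the USG condition supplies a window $V$ such that $\eta$ occurs in every $V$-window of every sufficiently large $\theta$; since $f\restriction V$ occurs in some such $\theta$, the pattern $\eta$ is forced into $f\restriction V$. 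This is exactly what is needed both to show the limit is uniformly self-replicating and to prove $\Per(\cA)\supseteq$-type containments for periods. Your detour through an almost periodic point $f'$ of $\cl(G\odot f)$ is then superfluous, since the limit point is already uniformly self-replicating, hence almost periodic. With these two adjustments your argument coincides with the paper's proof.
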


Before proving the theorem we introduce some ideas.

\begin{proposition}
Assume $A \subseteq G$ is USG. Then every $B \subseteq G$ such that $\chi_B \in \cl(G \odot \chi_A)$ is USG. Moreover, for each Boolean term $\tau$ precisely the same numbers $m_{\tau}$ witness that $A$ and $B$ satisfy the condition (\ref{eq:usg}) from the definition of a USG set.
\end{proposition}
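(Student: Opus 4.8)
The key observation is that the property of being USG is expressed purely in terms of the pattern of finite restrictions $\chi_A \restriction U$ that occur in translates of $\chi_A$, and this pattern is preserved under passing to $\cl(G \odot \chi_A)$. So the plan is to use the characterization from Remark~\ref{rem:usg-char}, namely that $A$ is USG iff $\chi_A$ is uniformly self-replicating, together with the fact that $\cl(G \odot \chi_A)$ is exactly the set of $f \in 2^G$ such that every finite restriction $f \restriction V$ occurs in $\chi_A$ (i.e. equals $g \odot \chi_A$ restricted to $V$ for some $g$).

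First I would fix a Boolean term $\tau$, and recall from the discussion before Definition~\ref{def:uni} and from Remark~\ref{rem:usg-char} that the relevant quantity $m_\tau$ (for $\tau$ of the canonical conjunctive form, which by the first Remark of Subsection 3.3 suffices) corresponds, via the proof of Proposition~\ref{prop:sgen-lper}, to a bound witnessing that for each finite $U \subseteq G$ the set $\Per_U(\chi_A)$ is $m$-generic, with $m$ depending only on $|U|$. Equivalently, via the proof of Proposition~\ref{prop:ap-char}, it corresponds to the size $|W|$ of a finite set with $\cl(G \odot \chi_A) \subseteq W^{-1} \odot [\chi_A \restriction U]$, i.e. to a uniform self-replication witness $V$ with $|V|$ depending only on $|U|$. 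The cleanest route is therefore: show that if $\chi_B \in \cl(G \odot \chi_A)$, then $\chi_B$ is uniformly self-replicating with the very same witness function $U \mapsto V$ as $\chi_A$.

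So the main step: suppose $\chi_A$ is uniformly self-replicating, and fix finite $U \subseteq G$; let $V$ be the finite set furnished by Definition~\ref{def:uni}(a), so for every $g \in G$ there is $h \in G$ with $h \odot (\chi_A \restriction U) \subseteq \chi_A \restriction Vg$. I claim the same $V$ works for $\chi_B$. Take any $g \in G$; I need $h$ with $h \odot (\chi_B \restriction U) \subseteq \chi_B \restriction Vg$. Since $\chi_B \in \cl(G \odot \chi_A)$, there is $g_0 \in G$ with $g_0 \odot \chi_A$ agreeing with $\chi_B$ on the finite set $Vg \cup Ug'$ for any fixed auxiliary $g'$ — more carefully, I first apply self-replication of $\chi_A$ to the point $g_0^{-1}\cdot$(stuff) to get an $h$ adapted to $g_0$, then transfer via the agreement of $\chi_B$ and $g_0 \odot \chi_A$ on a large enough finite set. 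Concretely: pick $g_0$ so that $\chi_B$ and $g_0 \odot \chi_A$ agree on $Vg \cup V'$ where $V'$ is chosen to contain the relevant translate $Uh^{-1}$; then $h \odot (\chi_B \restriction U) \subseteq h \odot ((g_0 \odot \chi_A)\restriction U) = $ a translate of $\chi_A$ restricted to $U$, which by self-replication of $\chi_A$ embeds into $g_0\odot\chi_A$ restricted to $Vg$, which agrees with $\chi_B$ there. The bookkeeping of which translate agreement is needed on which finite set is the only delicate point, but it is purely combinatorial and $|V|$ is never altered, so the witnessing constants $m_\tau$ are literally the same for $A$ and $B$. Running the equivalences of Proposition~\ref{prop:sgen-lper} and Remark~\ref{rem:usg-char} backwards then gives that $B$ is USG with the same $m_\tau$'s.

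The main obstacle I anticipate is purely notational: organizing the finite sets so that the single approximating translate $g_0 \odot \chi_A$ of $\chi_B$ is good enough simultaneously on the ``input window'' $U h^{-1}$ and the ``output window'' $Vg$ — and making sure the choice of $g_0$ can be made after $h$ is produced (or, alternatively, produce $h$ from $\chi_A$ at a point determined by $g_0$). This is resolved by choosing $g_0$ with agreement on the finite set $Vg$, running $\chi_A$-self-replication at the point corresponding to $g_0^{-1}g$ to get $h$, noting $Uh^{-1} \subseteq Vg$ automatically (that is exactly what $h \odot (\chi_A\restriction U) \subseteq \chi_A \restriction Vg$ says), so a single agreement window $Vg$ suffices. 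No blank lines, no new macros, and all the heavy lifting is inherited from Propositions~\ref{prop:ap-char}, \ref{prop:sgen-lper} and Remark~\ref{rem:usg-char}.
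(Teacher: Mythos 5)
There is a genuine gap, in two places. First, in your main step the self-replication witness you produce for $\chi_B$ ends up depending on $g$, which violates the quantifier order $(\forall U)(\exists V)(\forall g)(\exists h)$ of Definition~\ref{def:uni}(a). Concretely: to replicate the pattern $\chi_B \restriction U$ you must choose an approximating translate $g_0 \odot \chi_A$ agreeing with $\chi_B$ on $U \cup Vg$, and then $\chi_B \restriction U = g_0 \odot (\chi_A \restriction Ug_0)$ is a pattern of $\chi_A$ sitting in the \emph{window $Ug_0$, not $U$}. Invoking self-replication of $\chi_A$ therefore uses the witness attached to $Ug_0$, which has the right cardinality but is a different set depending on $g_0$, hence on $g$; your claim that ``the same $V$ works'' and that ``a single agreement window $Vg$ suffices'' replicates the wrong pattern ($\chi_A \restriction U$ instead of $\chi_A \restriction Ug_0$). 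The transfer does go through if you instead use characterization (iii) of Remark~\ref{rem:usg-char} (uniform almost periodicity), whose statement quantifies over the whole orbit closure and has no inner $\forall g$: from $\cl(G \odot \chi_A) \subseteq W^{-1} \odot [\chi_A \restriction Ug_0]$ one gets $\cl(G \odot \chi_B) \subseteq (g_0 W)^{-1} \odot [\chi_B \restriction U]$ with $g_0$ chosen once per window $U$. Second, the ``moreover'' clause does not follow from your argument: the equivalences in Propositions~\ref{prop:ap-char} and \ref{prop:sgen-lper} do not preserve the numerical constants in a round trip (e.g.\ $V = U \cdot W$ and $W = \{ w : Uw \subseteq V \}$ distort cardinalities), so ``same $|V|$'' does not yield ``precisely the same $m_\tau$''. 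Moreover, $m$-genericity of $\tau(g_1B,\ldots,g_nB)$ is a global covering statement that cannot be read off finite windows of $\chi_B$, so a direct finitary transfer of the exact constants is also problematic.

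The paper's proof avoids both issues in two lines: by Remark~\ref{rem:dlim}(iv), $\chi_B \in \cl(G \odot \chi_A)$ means $B = d_qA$ for some $q \in \beta G$, and $d_q$ is a $G$-algebra homomorphism, so $\tau(g_1B,\ldots,g_nB) = d_q(\tau(g_1A,\ldots,g_nA))$ and any covering witness for the latter maps to one for the former (Remark~\ref{rem:hom-usg}); this gives that every $m_\tau$ for $A$ works for $B$. The reverse inequality — needed for ``precisely the same'' — comes from symmetry: $\chi_A$ is almost periodic by Theorem~\ref{thm:sgen-aper}, so $\chi_A \in \cl(G \odot \chi_B)$ as well. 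Your proposal is missing both the homomorphism $d_q$ and this symmetry step.
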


\begin{proof} Take $B \subseteq G$ with $\chi_B \in \cl(G \odot \chi_A)$. By Remark~\ref{rem:dlim} (iv), there is $q \in \beta G$ such that $B = d_q A$, so $B$ is USG by Remark~\ref{rem:hom-usg}. The rest follows from the analysis of the proof of Remark~\ref{rem:hom-usg} and the fact that $\chi_A$ is almost periodic by Theorem~\ref{thm:sgen-aper}, hence symmetrically $\chi_A \in \cl(G \odot \chi_B)$.
\end{proof}

It follows from the last proposition that whether $A \subseteq G$ is USG only depends on the subflow of $2^{G}$ generated by $\chi_A$. Therefore we may think of being USG as a property of subflows of $2^{G}$. Since $2^{G}$ is a Stone space, its closed subsets correspond to filters of the Boolean algebra $\cCO(2^{G})$ in the classical way. Below we define a similar correspondence which better suits our purposes.

\begin{definition} The \emph{content} of a subflow $X \subseteq 2^{G}$ is the family 
\[
c(X) = \{ \eta \in 2^{<G} : X \cap [\eta] \neq \varnothing \}.
\]
\end{definition}

The following notion allows to retrieve a subflow back from its content.

\begin{definition} Assume $\cA \subseteq 2^{<G}$. A point $f \in 2^{G}$ is said to be a \emph{limit point of $\cA$} when each finite $\eta \subseteq f$ occurs in cofinally many elements of $\cA$.
\end{definition} 

\begin{remark} \label{rem:lim-cont} If $X \subseteq 2^{G}$ is a subflow, then $X$ is the set of limit points of $c(X)$.
\end{remark}

\begin{proof} $(\subseteq)$ Assume $f \in X$, take any finite $\eta \subseteq f$ and fix $U \in [G]^{<\omega}$. Then $\theta := f \restriction (U \cup \dom \eta) \in c(X)$ satisfies $U \preccurlyeq \theta$ and $\eta$ occurs in $\theta$.

\vspace{2mm}
\noindent
$(\supseteq)$ Assume $f \notin X$. Then there is a finite $\eta \subseteq f$ such that $X \cap [\eta] = \varnothing$. Thus $\eta$ does not occur in any element of $c(X)$, so $f$ is not a limit point of $c(X)$.
\end{proof}

\begin{lemma} Every cofinal $\cA \subseteq 2^{<G}$ has a limit point. Moreover, the set of limit points of any $\cA \subseteq 2^{<G}$ is a subflow of $2^{G}$.
\end{lemma}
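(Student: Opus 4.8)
The plan is to treat the two claims separately; both reduce to elementary point-set topology in the Stone space $2^{G}$.

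\emph{Existence of a limit point.} Fix a cofinal $\cA \subseteq 2^{<G}$. For each $U \in [G]^{<\omega}$ set
\[
C_U = \{\, f \in 2^{G} : (\exists \theta \in \cA)(\exists g \in G)\ U \subseteq \dom(g \odot \theta) \text{ and } (g \odot \theta)\restriction U = f \restriction U \,\}.
\]
First I would observe that $C_U$ is clopen (it is the union of the basic clopens $[\sigma]$ over the finitely many $\sigma\colon U \to \{0,1\}$ that extend to some translate of a member of $\cA$), that $C_U \neq \varnothing$ by cofinality of $\cA$, and that $C_{U'} \subseteq C_U$ whenever $U \subseteq U'$. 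Consequently $\{C_U : U \in [G]^{<\omega}\}$ is a family of nonempty closed subsets with the finite intersection property, so by compactness there is some $f \in \bigcap_{U} C_U$. It then remains to check that $f$ is a limit point: given a finite $\eta \subseteq f$ and an arbitrary $U \in [G]^{<\omega}$, membership $f \in C_{U \cup \dom \eta}$ yields $\theta \in \cA$ and $g \in G$ with $U \cup \dom\eta \subseteq \dom(g\odot\theta)$ and $g\odot\theta$ agreeing with $f$ on that set; hence $\eta \subseteq g\odot\theta$, i.e. $\eta \sqsubseteq \theta$, and $U \preccurlyeq \theta$. Since $U$ was arbitrary, $\eta$ occurs in cofinally many elements of $\cA$.

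\emph{The limit points form a subflow.} Let $L \subseteq 2^{G}$ be the set of limit points of an arbitrary $\cA \subseteq 2^{<G}$. For closedness I would argue by contraposition: if $f \notin L$, pick a finite $\eta \subseteq f$ occurring in only boundedly many elements of $\cA$, i.e. $\neg(\exists^{\infty}\theta \in \cA)\ \eta \sqsubseteq \theta$; then $[\eta]$ is a neighbourhood of $f$ and every $f' \in [\eta]$ contains the same $\eta$, so $[\eta] \cap L = \varnothing$. For $G$-invariance I would first record the translation-invariance of $\sqsubseteq$: for $g \in G$ and $\eta \in 2^{<G}$ one has $\eta \sqsubseteq \theta \iff (g\odot\eta)\sqsubseteq\theta$, using the monotonicity of $\odot$ on partial functions and the identity $g \odot (h \odot \theta) = (gh)\odot\theta$. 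Then, given $f \in L$ and $g \in G$, any finite $\eta \subseteq g\odot f$ satisfies $g^{-1}\odot\eta \subseteq f$, hence $g^{-1}\odot\eta$ occurs in cofinally many members of $\cA$, and by translation-invariance so does $\eta$; thus $g\odot f \in L$. So $L$ is topologically closed and $G$-invariant, hence a subflow (possibly empty), and by the first part it is nonempty precisely when $\cA$ is cofinal.

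I do not expect a genuine obstacle here; the one place to be careful — what I would flag as the main source of slips — is the bookkeeping with translations of partial functions: that $\dom(g\odot\theta) = (\dom\theta)g^{-1}$, that $\odot$ is monotone and a left action in the sense $g\odot(h\odot\theta) = (gh)\odot\theta$, and, in the compactness argument, remembering to enlarge $U$ by $\dom\eta$ before invoking $f \in C_U$. None of these is deep, but getting the group elements on the correct side is essential.
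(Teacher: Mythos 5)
Your proof is correct. It differs from the paper's argument only in the formulation of the compactness step: the paper assumes there is no limit point, extracts a finite subcover $2^G=[\eta_1]\cup\ldots\cup[\eta_n]$ with each $\eta_i$ failing to occur in sufficiently large elements of $\cA$, and then uses cofinality to find a single $\theta\in\cA$ and $g$ with $\dom\eta_1\cup\ldots\cup\dom\eta_n\subseteq\dom(g\odot\theta)$, so that any total extension of $g\odot\theta$ escapes the cover --- a contradiction. You instead run the dual, direct version: the clopen sets $C_U$ are nonempty (by cofinality), nested under enlarging $U$, hence have the finite intersection property, and any $f$ in their intersection is a limit point. Both proofs rest on exactly the same two ingredients (compactness of $2^G$ and cofinality of $\cA$), so the gain is mainly expository: your construction produces the limit point directly rather than by contradiction. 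For the second claim the paper simply says ``the rest is clear''; your verification of closedness and of $G$-invariance, including the identities $g\odot(h\odot\theta)=(gh)\odot\theta$ and the translation-invariance of $\sqsubseteq$ (with the witnesses for $U\preccurlyeq\theta$ unchanged), is the intended routine argument and is carried out correctly.
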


\begin{proof} To prove that $\cA$ has a limit point, assume for contradiction that for each $f \in 2^{G}$ there is a finite $\eta \subseteq f$ such that for sufficiently large $\theta \in \cA$, $\eta$ does not occur in $\theta$. By the compactness of $2^{G}$, we can find $\eta_1, \ldots, \eta_n \in 2^{<G}$ such that $2^{G} = [\eta_1] \cup \ldots \cup [\eta_n]$ and for sufficiently large $\theta \in \cA$ none of $\eta_i$ occurs in $\theta$. Since $\cA$ is cofinal, it follows that for cofinally many $\theta \in \cA$ we have that $\eta_1, \ldots, \eta_n \not \sqsubseteq \theta$. Thus we can find $\theta \in \cA$ with that property and $g \in G$ such that $\dom \eta_1 \cup \ldots \cup \dom \eta_n \subseteq \dom(g \odot \theta)$. Take $f \in 2^G$ extending $g \odot \theta$. Then $f \notin [\eta_1] \cup \ldots \cup [\eta_n]$, which is a contradiction. The rest is clear.
\end{proof}

Note that the correspondence from Remark~\ref{rem:lim-cont} does not work the other way, that is, an arbitrary family $\cA \subseteq 2^{<G}$ need not coincide with the content of the subflow consisting of its limit points. For instance, when $\cA \subseteq 2^{<G}$ is a content, it has the following property:
\[
(\forall \eta \in \cA)(\forall g \in G)(\exists \varepsilon \in \{ 0, 1 \}) \, \eta \cup \{ (g, \varepsilon) \} \in \cA,
\]
which an arbitrary family need not have. For this reason there is slightly more freedom in building USG families than in directly building a USG subset of $G$. We hope this little additional freedom can make it easier to construct such subsets in concrete groups like $(\ZZ, +)$.

\begin{remark} For $f \in 2^G$ and $\eta \in 2^{<G}$ we have $\eta \in c(\cl(G \odot f)) \iff \eta \sqsubseteq f$.
\end{remark}

The following two lemmas explain the connection between USG families and USG subsets of $G$.

\begin{lemma} \label{lem:usg-sub-fam} Assume $A \subseteq G$ is USG. Then $\cA := c(\cl(G \odot \chi_A))$ is a USG family.
\end{lemma}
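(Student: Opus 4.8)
The statement to prove is that if $A \subseteq G$ is USG, then $\cA := c(\cl(G \odot \chi_A))$ is a USG family. I would unpack both sides of this directly. On the left, write $X = \cl(G \odot \chi_A)$, a minimal subflow of $2^G$ since $\chi_A$ is almost periodic by Theorem~\ref{thm:sgen-aper}. Then $\cA = c(X) = \{ \eta \in 2^{<G} : X \cap [\eta] \neq \varnothing \} = \{ \eta \in 2^{<G} : \eta \sqsubseteq \chi_A \}$ using the remark that $\eta \in c(\cl(G \odot f)) \iff \eta \sqsubseteq f$. Cofinality of $\cA$ is immediate: for any finite $U \subseteq G$, the restriction $\chi_A \restriction U$ lies in $\cA$ (take $g = e$) and satisfies $U \preccurlyeq \chi_A \restriction U$. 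So the real content is the USG-family condition (3): for each $\eta \in 2^{<G}$ occurring in cofinally many elements of $\cA$, find $V \in [G]^{<\omega}$ with $|V|$ depending only on $|\eta|$ such that sufficiently large $\theta \in \cA$ satisfy $(\forall g \in G)(V \subseteq \dom(g \odot \theta) \Rightarrow \eta \sqsubseteq (g \odot \theta) \restriction V)$.

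First I would observe that $\eta$ occurs in cofinally many elements of $\cA$ iff $\eta \sqsubseteq \chi_A$: indeed every element of $\cA$ is a finite subpattern of (a translate of) $\chi_A$, so if $\eta$ occurs in some $\theta \in \cA$ then $\eta \sqsubseteq \chi_A$; conversely if $\eta \sqsubseteq \chi_A$ then $\eta \sqsubseteq \chi_A \restriction W$ for a suitable finite $W$, and for any finite $U$ we can enlarge to $U' \supseteq W$ with $\chi_A \restriction U' \in \cA$ and $U \preccurlyeq \chi_A \restriction U'$. Now fix such an $\eta$, say with $\dom \eta = U_0 \subseteq G$, $|U_0| = |\eta|$. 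Since $\eta \sqsubseteq \chi_A$ and $\chi_A$ is uniformly self-replicating (Remark~\ref{rem:usg-char}, or directly the hypothesis via Remark~\ref{rem:usg-char}), apply uniform self-replication to the finite set $U := U_0$: there is a finite $V \subseteq G$ with $|V|$ depending only on $|U_0| = |\eta|$ such that $(\forall g \in G)(\exists h \in G)\, h \odot (\chi_A \restriction U_0) \subseteq \chi_A \restriction Vg$. The key translation step: I want this to say that $\eta$ occurs, in a uniform window of size $|V|$, in every suitably long chunk of $\chi_A$. Concretely, reading the self-replication statement with $g$ ranging over $G$: for each $g$, $\chi_A \restriction Vg$ contains a translate of $\eta$, i.e. $\eta \sqsubseteq \chi_A \restriction Vg$. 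Translating by $g^{-1}$, $\eta \sqsubseteq (g^{-1} \odot \chi_A) \restriction V$ — this is exactly the window condition but evaluated on $\chi_A$ itself rather than on a generic large $\theta \in \cA$.

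The bridge from $\chi_A$ to arbitrary sufficiently large $\theta \in \cA$ is the main thing to get right. Each $\theta \in \cA$ occurs in $\chi_A$: $\theta \sqsubseteq \chi_A$, say $\theta \subseteq t \odot \chi_A$ for some $t \in G$. So $\dom \theta \subseteq \dom(t \odot \chi_A) = (\dom \chi_A) t^{-1} = G t^{-1}$; since $A \subseteq G$, $\dom \chi_A = G$ and this is automatic — actually every $\theta \in 2^{<G}$ with $\theta \subseteq g \odot \chi_A$ for some $g$ has $\dom \theta$ an arbitrary finite subset of $G$, and $\theta$ agrees with a translate of $\chi_A$ there. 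I would choose the ``sufficiently large'' threshold for $\theta$ to be: $\dom \theta \supseteq$ (a translate of) $VU_0^{-1}U_0$ or a similar product ensuring that whenever a translate $g \odot \theta$ has $V$ inside its domain, the portion $(g \odot \theta) \restriction V$ coincides with $(g' \odot \chi_A) \restriction V$ for the $g'$ realizing $\theta \subseteq g'^{-1}\odot \chi_A$ composed with $g$ — then the window conclusion $\eta \sqsubseteq (g \odot \theta)\restriction V$ follows from the corresponding fact for $\chi_A$ proved above. The bookkeeping here — tracking which translate aligns $\theta$ with $\chi_A$, and verifying $V$-sized windows transfer — is where I expect the friction; it is not deep, just a careful chase through the $\odot$-action conventions and the definitions of $\preccurlyeq$ and $\sqsubseteq$. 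The fact that $|V|$ depends only on $|\eta|$ is inherited verbatim from the uniformity in Definition~\ref{def:uni}(a), so uniformity costs nothing extra once the plain strongly-generic-family version is done. I would present the non-uniform argument in full and remark that the uniform refinement is immediate, mirroring how Remark~\ref{rem:usg-char} is proved by ``essentially repeat the reasoning''.
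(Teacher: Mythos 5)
Your proposal is correct and follows essentially the same route as the paper's proof: reduce to the case $\eta \subseteq \chi_A$ (a translate), apply uniform self-replication of $\chi_A$ to $U = \dom\eta$ to get $V$ with $|V|$ depending only on $|\eta|$, and transfer the conclusion to an arbitrary $\theta \in \cA$ via the translate $h$ witnessing $h \odot \theta \subseteq \chi_A$. The ``friction'' you anticipate in the bridge step is in fact absent: since $g \odot \theta \subseteq gh^{-1} \odot \chi_A$ and the latter is total, $(g \odot \theta)\restriction V$ automatically equals $(gh^{-1} \odot \chi_A)\restriction V$ whenever $V \subseteq \dom(g \odot \theta)$, so no largeness threshold on $\theta$ is needed at all — the paper's argument applies to every $\theta \in \cA$.
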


\begin{proof} Fix $\eta \in 2^{<G}$ occurring in cofinally many elements of $\cA$. By the last remark, $\eta$ occurs in $\chi_A$. Since shifting $\chi_A$ changes neither the assumption nor the conclusion, we can assume that $\eta \subseteq \chi_A$.

Let $U = \dom \eta \in [G]^{<\omega}$. By Remark~\ref{rem:usg-char}, $\chi_A$ is uniformly self-replicating, so we can find $V \in [G]^{<\omega}$ such that
\begin{equation} \tag{$*$}
(\forall s \in G)(\exists t \in G) \, t \odot \eta \subseteq \chi_A \restriction Vs.
\end{equation}
Moreover, $|V|$ only depends on $|U| = |\eta|$. 

Take any $\theta \in \cA$ and $g \in G$ such that $V \subseteq \dom(g \odot \theta)$. Since $\theta \in \cA$, there is $h \in G$ such that $h \odot \theta \subseteq \chi_A$. Letting $s = gh^{-1}$ in ($*$), we get that $\eta \sqsubseteq \chi_A \restriction Vs$. Since 
\[
s \odot (\chi_A \restriction Vs) = (gh^{-1} \odot \chi_A) \restriction V = (g \odot \theta) \restriction V,
\]
it follows that $\eta \sqsubseteq (g \odot \theta) \restriction V$.
\end{proof}

\begin{lemma} \label{lem:usg-fam-sub} Assume $\cA \subseteq 2^{<G}$ is a USG family, $A \subseteq G$ and $\chi_A$ is a limit point of $\cA$. Then $A$ is USG.
\end{lemma}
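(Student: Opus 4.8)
The plan is to show that $\chi_A$ is uniformly self-replicating, which by Remark~\ref{rem:usg-char} is equivalent to $A$ being USG. So fix a finite $U \subseteq G$ and let $\eta = \chi_A \restriction U \in 2^{<G}$; I must produce a finite $V \subseteq G$ with $|V|$ depending only on $|U|$ such that $(\forall g \in G)(\exists h \in G)\, h \odot \eta \subseteq \chi_A \restriction Vg$. The first step is to observe that since $\chi_A$ is a limit point of $\cA$, the finite function $\eta$ (being a finite restriction of $\chi_A$) occurs in cofinally many elements of $\cA$. This puts $\eta$ in the scope of the defining property of a USG family: there is a finite $V \subseteq G$, with $|V|$ depending only on $|\eta| = |U|$, such that all sufficiently large $\theta \in \cA$ satisfy $(\forall g' \in G)(V \subseteq \dom(g' \odot \theta) \implies \eta \sqsubseteq (g' \odot \theta)\restriction V)$. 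I expect to enlarge $V$ slightly (replacing it by $V \cup U$, say, which still has bounded size) so that the restriction $(g' \odot \theta)\restriction V$ makes sense and the witnessing shift $h$ lands inside $Vg$; I will sort out the exact finite set in the writing.

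The second step is to transfer this statement about sufficiently large $\theta \in \cA$ to a statement about $\chi_A$ itself, via the fact that $\chi_A$ is a limit point of $\cA$. Concretely, fix $g \in G$. I want a finite chunk of $\chi_A$ sitting over $Vg$; since $\chi_A$ is a limit point, for the finite set $U' := $ (the relevant finite superset of $V$) there are elements $\theta \in \cA$ that are arbitrarily large in the sense of $\preccurlyeq$, and moreover the finite restriction $\chi_A \restriction (Vg \cup U')$ of $\chi_A$ itself occurs in cofinally many $\theta \in \cA$. So I can pick a single $\theta \in \cA$ that is simultaneously large enough for the USG-family conclusion to apply and such that a shift of $\theta$ extends $\chi_A \restriction Vg$. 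Applying the USG-family property to this $\theta$ with the appropriate $g'$ yields $\eta \sqsubseteq (g' \odot \theta)\restriction V = \chi_A \restriction Vg$ (after untangling the shifts), i.e. there is $h \in G$ with $h \odot \eta \subseteq \chi_A \restriction Vg$. This is exactly the self-replication clause for the fixed $g$, and since $V$ and its size bound were chosen independently of $g$, uniform self-replication follows.

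The main obstacle, and the part I expect to require the most care, is bookkeeping the various left-translation (``shift'') operations and their domains. The relation $\eta \sqsubseteq f$ hides an existential shift, $U \preccurlyeq f$ hides another, and the action $g \odot f$ translates domains by $g^{-1}$; I need to verify that when I feed the $\theta$ coming from ``$\chi_A$ is a limit point'' into the USG-family property, the group element $g'$ can be chosen so that $(g' \odot \theta)\restriction V$ is literally a piece of $\chi_A \restriction Vg$ and not merely a shift of one. The identity $s \odot (\chi_A \restriction Vs) = (\,\cdot\, \odot \chi_A)\restriction V$ used in the proof of Lemma~\ref{lem:usg-sub-fam} is the template for this computation and should go through symmetrically here. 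Apart from that, everything is a direct unwinding of definitions, so no genuinely hard step is anticipated; the proof will essentially be the converse computation to Lemma~\ref{lem:usg-sub-fam}.

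\begin{proof}
By Remark~\ref{rem:usg-char}, it suffices to show that $\chi_A$ is uniformly self-replicating. Fix a finite $U \subseteq G$ and put $\eta = \chi_A \restriction U \in 2^{<G}$. Since $\chi_A$ is a limit point of $\cA$, the finite function $\eta$ occurs in cofinally many elements of $\cA$. Applying the defining property of a USG family to $\eta$, we obtain a finite $V_0 \subseteq G$ with $|V_0|$ depending only on $|\eta| = |U|$ such that all sufficiently large $\theta \in \cA$ satisfy
\[
(\forall g' \in G)\big( V_0 \subseteq \dom (g' \odot \theta) \implies \eta \sqsubseteq (g' \odot \theta) \restriction V_0 \big).
\]
Let $U^* \in [G]^{<\omega}$ be a finite set witnessing ``sufficiently large'' here, and set $V := V_0 \cup U^*$, so $|V|$ still depends only on $|U|$.

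Now fix $g \in G$. Since $\chi_A$ is a limit point of $\cA$, the finite function $\chi_A \restriction (Vg \cup U^*)$ occurs in cofinally many elements of $\cA$; in particular, using that $\cA$ is closed enough under the relevant operations, we may choose $\theta \in \cA$ together with $k \in G$ such that $k \odot \theta \supseteq \chi_A \restriction (Vg \cup U^*)$ and $U^* \preccurlyeq \theta$ (the latter guaranteeing that $\theta$ is large enough for the displayed implication to apply). Put $g' := kg$. Then $V_0 g \subseteq Vg \subseteq \dom(k \odot \theta)$, hence $V_0 \subseteq \dom(g' \odot \theta)$, and moreover
\[
(g' \odot \theta) \restriction V_0 = \big( g \odot (k \odot \theta) \big) \restriction V_0 = \big( g \odot (\chi_A \restriction Vg) \big) \restriction V_0 = \chi_A \restriction V_0 g.
\]
By the choice of $\theta$ we get $\eta \sqsubseteq (g' \odot \theta) \restriction V_0 = \chi_A \restriction V_0 g$, i.e. there is $h \in G$ with $h \odot \eta \subseteq \chi_A \restriction V_0 g \subseteq \chi_A \restriction Vg$. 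Since $g \in G$ was arbitrary and $V$, together with the bound on $|V|$, was chosen independently of $g$, this shows
\[
(\forall \underset{\text{finite}}{U \subseteq G})(\exists \underset{\text{finite}}{V \subseteq G})(\forall g \in G)(\exists h \in G)\, h \odot (\chi_A \restriction U) \subseteq \chi_A \restriction Vg,
\]
with $|V|$ depending only on $|U|$. Thus $\chi_A$ is uniformly self-replicating, and by Remark~\ref{rem:usg-char}, $A$ is USG.
\end{proof}
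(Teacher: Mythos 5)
Your proof is essentially the paper's: reduce to uniform self-replication via Remark~\ref{rem:usg-char}, feed $\eta=\chi_A\restriction U$ (which occurs cofinally in $\cA$ because $\chi_A$ is a limit point) into the USG-family definition to get $V_0$, and for each $g$ use the limit-point property to find a single sufficiently large $\theta\in\cA$ into which $\chi_A\restriction V_0g$ embeds. Two small slips to fix: (a) your final witness $V=V_0\cup U^*$ does \emph{not} have size depending only on $|U|$, since nothing in the definition bounds $|U^*|$ — but your argument actually lands in $h\odot\eta\subseteq\chi_A\restriction V_0g$, so you should simply return $V_0$ as the self-replication witness (as the paper does); (b) since $(g_1g_2)\odot f=g_1\odot(g_2\odot f)$, the computation $g'\odot\theta=g\odot(k\odot\theta)$ requires $g'=gk$ rather than $kg$, and the last term of your displayed chain should be $g\odot(\chi_A\restriction V_0 g)$ rather than $\chi_A\restriction V_0 g$ — harmless, because $\sqsubseteq$ is invariant under $\odot$.
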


\begin{proof} By Remark~\ref{rem:usg-char}, it suffices to show that $\chi_A$ is uniformly self-replicating. Take any $U \in [G]^{<\omega}$ and let $\eta = \chi_A \restriction U \in 2^{<G}$. Then $\eta$ occurs in cofinally many elements of $\cA$, so there is $V \in [G]^{<\omega}$ such that all sufficiently large $\theta \in \cA$ satisfy
\begin{equation} \tag{$*$}
(\forall g \in G) \big( V \subseteq \dom (g \odot \theta) \implies \eta \sqsubseteq (g \odot \theta) \restriction V \big).
\end{equation}
Moreover, $|V|$ only depends on $|\eta| = |U|$. Fix $s \in G$. It remains to show that $\eta$ occurs in $\chi_A \restriction Vs$. Since $\chi_A \restriction Vs$ occurs in cofinally many elements of $\cA$, we can find $\theta \in \cA$ satisfying ($*$) such that $\chi_A \restriction Vs \sqsubseteq \theta$. Take $h \in G$ such that $\chi_A \restriction Vs \subseteq h \odot \theta$. In particular, $V \subseteq \dom(sh \odot \theta)$, hence $\eta$ occurs in $(sh \odot \theta) \restriction V$. It then occurs in $(h \odot \theta) \restriction Vs = \chi_A \restriction Vs$, as desired.
\end{proof}

Now we establish the relation between periods of strongly generic families and their limit points.

\begin{lemma} \label{lem:per-fam-sub} Assume $\cA \subseteq 2^{<G}$ is a strongly generic family and $f \in 2^G$ is a limit point of $\cA$. Define $f^* \in 2^G$ by $f^*(x) = f(x^{-1})$. Then $\Per(\cA) = \Per(f^*)$.
\end{lemma}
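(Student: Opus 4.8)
The plan is to translate both sides into concrete combinatorial conditions and then prove the two inclusions separately. On the $f^*$ side, observe first that $t\in\Per(f^*)$ iff $f^*(xt)=f^*(x)$ for all $x$, i.e. $f(t^{-1}x^{-1})=f(x^{-1})$ for all $x$; substituting $y=x^{-1}$ and then $y\mapsto ty$, this becomes $f(ty)=f(y)$ for all $y\in G$. So membership in $\Per(f^*)$ just says that $f$ is invariant under left translation by $t$. On the $\cA$ side, unwinding $\sqsubseteq$ and the action $\odot$ shows that, for $t\ne e$, the pattern $\{(e,0),(t,1)\}$ occurs in $\theta\in 2^{<G}$ precisely when there is $g\in G$ with $g,tg\in\dom\theta$, $\theta(g)=0$ and $\theta(tg)=1$ (and symmetrically for $\{(e,1),(t,0)\}$); hence $t\notin\Per(\cA)$ iff for cofinally many $\theta\in\cA$ there is $g\in G$ with $g,tg\in\dom\theta$ and $\theta(g)\ne\theta(tg)$. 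The case $t=e$ is trivial, as $e$ lies in both sets by convention.

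For the inclusion $\Per(\cA)\subseteq\Per(f^*)$ I would argue by contraposition, using only that $f$ is a limit point. If $t\notin\Per(f^*)$ then $t\ne e$ and there is $y_0$ with $f(ty_0)\ne f(y_0)$, so $\eta:=\{(y_0,f(y_0)),(ty_0,f(ty_0))\}$ is a two-element subfunction of $f$ and therefore occurs in cofinally many $\theta\in\cA$. For each such $\theta$, pick $h$ with $\eta\subseteq h\odot\theta$ and set $g=y_0h$; then $g,tg\in\dom\theta$, $\theta(g)=f(y_0)$ and $\theta(tg)=f(ty_0)\ne\theta(g)$, which is exactly one of the two forbidden patterns. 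Thus one of the patterns occurs for cofinally many $\theta$, i.e. $t\notin\Per(\cA)$.

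For the reverse inclusion, again by contraposition, suppose $t\notin\Per(\cA)$, so $\eta_1:=\{(e,0),(t,1)\}$ or $\eta_2:=\{(e,1),(t,0)\}$ occurs in $\theta$ for cofinally many $\theta\in\cA$. Using the de Morgan duality of the appendix together with the closure of ``sufficiently large'' under conjunction, one sees that if $P_1(\theta)\vee P_2(\theta)$ holds cofinally then one of $P_1,P_2$ holds cofinally; so (say) $\eta_1$ occurs in cofinally many elements of $\cA$. Strong genericity of $\cA$ then yields a finite $V\subseteq G$ such that every sufficiently large $\theta\in\cA$ satisfies $\eta_1\sqsubseteq(g\odot\theta)\restriction V$ whenever $V\subseteq\dom(g\odot\theta)$. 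Since $f$ is a limit point, $f\restriction V\sqsubseteq\theta$ for cofinally many $\theta\in\cA$; combining ``sufficiently large'' with ``cofinally many'' I can fix one $\theta\in\cA$ enjoying both properties. Taking $h$ with $f\restriction V\subseteq h\odot\theta$, we get $V\subseteq\dom(h\odot\theta)$ and $(h\odot\theta)\restriction V=f\restriction V$, so $\eta_1\sqsubseteq f\restriction V\subseteq f$; unwinding $\eta_1\sqsubseteq f$ produces $k\in G$ with $f(k)=0\ne 1=f(tk)$, hence $t\notin\Per(f^*)$. The case where $\eta_2$ occurs cofinally is identical.

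I expect the only genuinely non-formal step to be the use of the strongly generic hypothesis in the last paragraph; everything else is definition chasing. The main thing to be careful about is the handedness: $\odot$ acts by right multiplication on the argument, while $\Per(f^*)$ is phrased with left multiplication, so the passage through $f^*(x)=f(x^{-1})$ and the bookkeeping with $\dom(g\odot\theta)=(\dom\theta)g^{-1}$ must be carried out with the correct orientation throughout.
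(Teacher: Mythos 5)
Your proposal is correct and follows essentially the same route as the paper's proof: translate $\Per(f^*)$ into left-invariance of $f$, use the limit-point property for one inclusion, and for the other use strong genericity applied to the cofinally occurring forbidden pattern together with the occurrence of $f \restriction V$ in a sufficiently large $\theta$. Your explicit handling of the disjunction (splitting ``one of the two patterns occurs cofinally'' into a single pattern occurring cofinally) fills in a point the paper passes over silently, but the argument is the same.
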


\begin{proof} $(\subseteq)$ Fix $t \in \Per(\cA)$ so that for sufficiently large $\theta \in \cA$ neither of $\{ (e, 0), (t, 1) \}, \{ (e, 1), (t, 0) \}$ occurs in $\theta$. Then neither of these occurs in $f$, as $f$ is a limit point of $\cA$. It follows that $f(tx) = f(x)$ for each $x \in G$, hence $t^{-1} \in \Per(f^*)$ and thus $t \in \Per(f^*)$.

\vspace{2mm} \noindent
$(\supseteq)$ Fix $t \notin \Per(\cA)$ so that one of the patterns $\{ (e, 0), (t, 1) \}, \{ (e, 1), (t, 0) \}$ occurs in cofinally many $\theta \in \cA$. Let $\eta$ denote that pattern. Since $\cA$ is strongly generic, we can find $V \in [G]^{<\omega}$ such that for all sufficiently large $\theta \in \cA$
\begin{equation} \tag{$*$}
(\forall g \in G) \big( V \subseteq \dom (g \odot \theta) \implies \eta \sqsubseteq (g \odot \theta) \restriction V \big).
\end{equation}
The pattern $f \restriction V$ occurs in cofinally many elements of $\cA$, so we can find $\theta \in \cA$ satisfying ($*$) such that $f \restriction V$ occurs in $\theta$. Take $g \in G$ such that $f \restriction V \subseteq g \odot \theta$. By ($*$), $\eta$ occurs in $(g \odot \theta) \restriction V = f \restriction V$, which means that $f(x) \neq f(tx)$ for some $x \in G$. Hence $t^{-1} \notin \Per(f^*)$ and so $t \notin \Per(f^*)$.
\end{proof}

\begin{corollary} Assume $\cA \subseteq 2^{<G}$ is strongly generic. Then $\Per(\cA) \le G$.  In particular, $\cA$ is periodic if and only if $\Per(\cA)$ has finite index.
\end{corollary}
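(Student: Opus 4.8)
The plan is to reduce the statement to Lemma~\ref{lem:per-fam-sub} together with the fact, recorded earlier, that the set of periods of any single point of the flow $2^G$ is its stabilizer, hence a subgroup.

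First I would dispose of the degenerate case in which $\cA$ is not cofinal. Unwinding the definitions, this means there is a finite $U_0 \subseteq G$ such that no $\theta \in \cA$ satisfies $U_0 \preccurlyeq \theta$. Then for \emph{every} property $Q$ the assertion ``$Q(\theta)$ holds for sufficiently large $\theta \in \cA$'' is witnessed vacuously by $U_0$; in particular every $t \in G$ is a period of $\cA$, so $\Per(\cA) = G$. Here $\Per(\cA) \le G$ is trivial, $\Per(\cA)$ has finite index, and since $G$ is generic in itself $\cA$ is periodic, so the claimed equivalence holds for a trivial reason.

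Now suppose $\cA$ is cofinal. By the lemma asserting that every cofinal family of finite partial functions admits a limit point, fix a limit point $f \in 2^G$ of $\cA$. Since $\cA$ is strongly generic, Lemma~\ref{lem:per-fam-sub} applies and yields $\Per(\cA) = \Per(f^*)$, where $f^*(x) = f(x^{-1})$. By the remark identifying $\Per(f^*)$ with the stabilizer of $f^*$ in the $G$-flow $2^G$, the set $\Per(f^*)$ is a subgroup of $G$; hence $\Per(\cA) \le G$.

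In either case $\Per(\cA) \le G$, which is the first assertion. For the ``in particular'' clause, recall that by definition $\cA$ is periodic precisely when the subgroup $\Per(\cA)$ is a generic subset of $G$, and a subgroup is generic if and only if it has finite index: one direction picks a finite transversal, the other counts the cosets covered by a finite generic family of translates. I do not expect a genuine obstacle here, since all the analytic content already lives in Lemma~\ref{lem:per-fam-sub}; the only point needing a moment's care is the non-cofinal case, where one must observe that a \emph{for all sufficiently large} quantifier over a family containing no members of large enough support is vacuously satisfied.
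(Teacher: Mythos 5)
Your proof is correct and follows essentially the same route as the paper: dispose of the non-cofinal case by noting $\Per(\cA)=G$ vacuously, and otherwise take a limit point $f$ and apply Lemma~\ref{lem:per-fam-sub} together with the fact that $\Per(f^*)$ is a stabilizer, hence a subgroup. The extra detail you supply on the vacuous quantifier and on genericity of subgroups is exactly what the paper leaves implicit.
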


\begin{proof} If $\cA$ is not cofinal, then $\Per(\cA) = G$. Otherwise $\cA$ has a limit point $f \in 2^G$. By Lemma~\ref{lem:per-fam-sub}, $\Per(\cA) = \Per(f^*) \le G$.
\end{proof}

\begin{corollary} Consider the group $G = (\ZZ, +)$ and assume $\cA \subseteq 2^{<\ZZ}$ is strongly generic. Then $\cA$ is periodic if and only if some $n \in \NN \setminus \{ 0 \}$ is a period of $\cA$.
\end{corollary}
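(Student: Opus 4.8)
The plan is to deduce this directly from the preceding corollary, which gives $\Per(\cA) \le \ZZ$ and tells us that $\cA$ is periodic if and only if $\Per(\cA)$ has finite index in $\ZZ$; the only remaining work is to translate "finite index" into "contains a nonzero natural number" using the elementary classification of subgroups of $(\ZZ, +)$. First I would recall that every subgroup of $\ZZ$ is either the trivial subgroup $\{0\}$ or of the form $n\ZZ$ for a unique $n \in \NN \setminus \{0\}$, that $n\ZZ$ has index $n$, and that $\{0\}$ has infinite index in the infinite group $\ZZ$.

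Next I would check the almost tautological facts needed to apply the previous corollary in this setting. A subgroup $H \le \ZZ$ is a generic subset of $\ZZ$ precisely when it has finite index: if $H = n\ZZ$ with $n \ge 1$, the $n$ translates $0 + H, 1 + H, \ldots, (n-1) + H$ cover $\ZZ$, whereas if $H = \{0\}$ every translate of $H$ is a singleton and no finite union of singletons covers the infinite group $\ZZ$. Hence $\cA$ is periodic $\iff$ $\Per(\cA)$ is generic $\iff$ $\Per(\cA)$ has finite index $\iff$ $\Per(\cA) \neq \{0\}$ (the last equivalence being exactly the subgroup classification just recalled). Note that this reading is consistent with the degenerate case where $\cA$ is not cofinal, since then $\Per(\cA) = \ZZ$, which is generic and contains nonzero naturals.

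Finally I would close the stated equivalence. If $\Per(\cA) \neq \{0\}$, pick a nonzero $t \in \Per(\cA)$; since $\Per(\cA)$ is a subgroup, hence symmetric, either $t$ or $-t$ lies in $\NN \setminus \{0\}$, so $\cA$ has a nonzero natural period. Conversely, if some $n \in \NN \setminus \{0\}$ is a period of $\cA$, then $n \in \Per(\cA) \setminus \{0\}$, so $\Per(\cA) \neq \{0\}$. I expect no real obstacle here: the statement is a direct specialization of the preceding corollary to $G = \ZZ$, and its only substantive input is the fact that $\Per(\cA)$ is a subgroup of $\ZZ$, which itself rests on Lemma~\ref{lem:per-fam-sub}.
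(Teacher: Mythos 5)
Your proof is correct and follows the only natural route, which is the one the paper implicitly intends: specialize the preceding corollary ($\Per(\cA) \le G$ and periodicity $\iff$ finite index) to $\ZZ$ and invoke the classification of subgroups of $(\ZZ,+)$. The paper gives no separate proof of this corollary, so your fleshed-out argument matches its intended justification exactly.
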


We are now ready to prove the main theorem.

\begin{proof}[Proof of Theorem~\ref{thm:usg-fam}] 
(1)$\implies$(2) Take a USG non-periodic subset $A \subseteq G$ and let $\cA := c(\cl(G \odot \chi_A))$. Clearly $\cA$ is cofinal and by Lemma~\ref{lem:usg-sub-fam}, it is a USG family. Furthermore, $\chi_A$ is a limit point of $\cA$, so by Lemma~\ref{lem:per-fam-sub}, $\Per(\cA) = \Per(\chi_A^*) = \Per(\chi_{A^{-1}})$. Since $A$ is non-periodic, by Remark~\ref{rem:per}, $A^{-1}$ is also non-periodic. Therefore $\Per(\cA)$ has infinite index and so $\cA$ is non-periodic.

\vspace{2mm}
\noindent
(2)$\implies$(1) Assume $\cA \subseteq 2^{<G}$ is a non-periodic USG cofinal family and take $A \subseteq G$ such that $\chi_A$ is a limit point of $\cA$. By Lemma~\ref{lem:usg-fam-sub}, $A$ is USG. Again $\Per(\cA) = \Per(\chi_{A^{-1}})$, hence $A^{-1}$ is non-periodic and so $A$ is non-periodic.
\end{proof}

Thus the following is an equivalent formulation of Question~\ref{q:usg}:

\begin{question} Is there a non-periodic USG cofinal family $\cA \subseteq 2^{<\ZZ}$?
\end{question}

\newpage
\bibliography{bibliography.bib}

\end{document}